\newtheorem{theorem}{Theorem}[section]
\newtheorem{corollary}[theorem]{Corollary}
\newtheorem{proposition}[theorem]{Proposition}
\newtheorem{lemma}[theorem]{Lemma}
\newtheorem{claim}{Claim}[theorem]
\newtheorem{observation}[theorem]{Observation}
\theoremstyle{definition}
\newtheorem{problem}[theorem]{Problem}
\newtheorem{conjecture}[theorem]{Conjecture}
\newtheorem{cas}{Case}
\newcommand{\ID}{\alpha\mbox{-ID}}
\newcommand{\sreach}{\mathrm{SReach}}
\renewcommand{\leq}{\leqslant}
\renewcommand{\geq}{\geqslant}
\newenvironment{subproof}{\par\noindent {\it Proof}.\ }{\hfill$\lozenge$\par\vspace{11pt}}
\DeclareMathOperator{\rk}{rk}
\DeclareMathOperator{\Inv}{Inv}
\DeclareMathOperator{\inv}{inv}
\DeclareMathOperator{\diam}{diam}
\DeclareMathOperator{\tw}{tw}
\DeclareMathOperator{\dist}{dist}
\DeclareMathOperator{\Ad}{Ad}
\DeclareMathOperator{\Mad}{Mad}
\DeclareMathOperator{\bigO}{\mathcal{O}}
\title{Diameter of the inversion graph}
\author[1]{Fr\'ed\'eric Havet}
\author[2]{Florian Hörsch}
\author[1]{Cl\'ement Rambaud}
\affil[1]{Universit\'e C\^ote d'Azur, CNRS, Inria, I3S, Sophia Antipolis, France}
\affil[2]{CISPA, Saarbrücken, Germany}
\date{}
\begin{document}
\maketitle

\begin{abstract}
In an oriented graph $\vec{G}$, the inversion of a subset $X$ of vertices consists in reversing the orientation of all arcs with both endvertices in $X$.
The inversion graph of a labelled graph $G$, denoted by ${\mathcal{I}}(G)$, is the graph whose vertices are the labelled orientations of $G$ in which two labelled orientations $\vec{G}_1$ and $\vec{G}_2$ of $G$ are adjacent if and only if
there is an inversion $X$ transforming $\vec{G}_1$ into $\vec{G}_2$.
In this paper, we study the inversion diameter of a graph which is the diameter of its inversion graph denoted by $\diam(\mathcal{I}(G))$. We show that the inversion diameter is tied to the star chromatic number, the acyclic chromatic number and the oriented chromatic number. Thus a graph class has bounded inversion diameter if and only if it also has bounded  star chromatic number, acyclic chromatic number and oriented chromatic number.
We give some upper bounds on the inversion diameter of a graph $G$ contained in one of the following graph classes: 
planar graphs ($\diam(\mathcal{I}(G)) \leq 12$), planar graphs of girth 8 ($\diam(\mathcal{I}(G)) \leq 3$),
graphs with maximum degree $\Delta$ ($\diam(\mathcal{I}(G)) \leq 2\Delta -1$), graphs with treewidth at mots $t$ ($\diam(\mathcal{I}(G)) \leq 2t$).

We also show that determining the inversion diameter of a given graph is NP-hard.

\medskip
    \noindent{}{\bf Keywords:}  inversion graph; diameter; orientation; reconfiguration.
\end{abstract}

\section{Introduction}

The concept of inversion was defined by Belkhechine et al. in~\cite{BBBP10} as follows.
In an oriented graph $\vec{G}$, if $X$ is a set of vertices of $\vec{G}$, the {\bf inversion} of $X$ consists in reversing the orientation of all arcs with both endvertices in $X$.
In particular, the reversal of a single arc is an inversion.
Belkhechine et al. in~\cite{BBBP10} studied the {\bf inversion number}, denoted by $\inv(D)$, which is the minimum number of inversions that transform $\vec{G}$ into a directed acyclic graph.
In particular, they proved that for every fixed $k$, determining whether a given tournament $T$ has inversion number at most $k$ is polynomial-time solvable.
In contrast, Bang-Jensen et al.~\cite{BCH} proved that deciding whether a given oriented graph has inversion number~$1$ is NP-complete.
The maximum $\inv(n)$ of the inversion numbers of all oriented graphs of order $n$ has also been investigated. Independently, Aubian et al.~\cite{inversion} and Alon et al.~\cite{APSSW} proved
$n - 2\sqrt{n\log n} \leq \inv(n) \leq n - \lceil \log (n+1) \rceil$.
Recently Duron et al.~\cite{duron2023minimum} studied the minimum number of inversions to make a digraph
$k$-arc-strong or $k$-strong.

In this paper, we study the maximum number of required inversions to transform an orientation of a given graph into another orientation of this graph.
Formally, let $G$ be a graph with vertices labelled $v_1, \dots , v_n$.
The {\bf (labelled) inversion graph} of $G$, denoted by ${\mathcal{I}}(G)$, is the graph whose vertices are the labelled orientations of $G$ in which two labelled orientations $\vec{G}_1$ and $\vec{G}_2$ of $G$ are adjacent if and only if
there is an inversion $X$ transforming $\vec{G}_1$ into $\vec{G}_2$.
The {\bf (labelled) inversion diameter} of $G$ is the diameter of ${\mathcal{I}}(G)$, denote by $\diam(\mathcal{I}(G))$.
We prove various bounds on $\diam(\mathcal{I}(G))$ depending on the structure of $G$.

We first determine the maximum inversion diameter over all graphs on $n$ vertices.

\begin{theorem}\label{thm:diam-upper}
Let $G$ be a graph on $n$ vertices.
Then $\diam ({\mathcal{I}}(G)) \leq n-1$.
\end{theorem}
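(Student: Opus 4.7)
The plan is to proceed by induction on $n$, the number of vertices. The base case $n=1$ is trivial since $G$ has a unique orientation and so $\diam(\mathcal{I}(G))=0 \leq n-1$.

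For the inductive step, suppose the bound holds for graphs on $n-1$ vertices, and let $\vec{G}_1, \vec{G}_2$ be two labelled orientations of a graph $G$ on $n$ vertices. Pick an arbitrary vertex $v$, and let $Y \subseteq V(G) \setminus \{v\}$ be the set of vertices $u$ such that the edge $uv$ of $G$ is oriented one way in $\vec{G}_1$ and the other way in $\vec{G}_2$. Performing the inversion on $X := \{v\} \cup Y$ on $\vec{G}_1$ reverses precisely the arcs at $v$ that were incorrectly oriented (and, as a side effect, some arcs inside $Y$). Call the resulting orientation $\vec{G}_1'$. By construction, every arc of $\vec{G}_1'$ incident to $v$ agrees with the corresponding arc of $\vec{G}_2$.

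Now consider the restrictions $\vec{G}_1' - v$ and $\vec{G}_2 - v$, which are two labelled orientations of the graph $G - v$ on $n-1$ vertices. By the induction hypothesis, there is a sequence of at most $n-2$ inversions $X_2, \dots, X_k$ (with $k-1 \leq n-2$) transforming $\vec{G}_1' - v$ into $\vec{G}_2 - v$. Each $X_i \subseteq V(G) \setminus \{v\}$, so applying the same inversions to $\vec{G}_1'$ (viewed as inversions in $G$) only affects arcs with both endvertices in $V(G) \setminus \{v\}$, and in particular does not modify any arc incident to $v$. Therefore after at most $1 + (n-2) = n-1$ inversions in total, we reach $\vec{G}_2$, which yields $\diam(\mathcal{I}(G)) \leq n-1$.

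There is no real obstacle here; the only subtlety is the observation that once the arcs at $v$ have been corrected by a single inversion, later inversions drawn from subsets of $V(G) \setminus \{v\}$ leave those arcs untouched, which is what allows the induction on $G - v$ to close the argument.
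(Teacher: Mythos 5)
Your proof is correct and follows exactly the paper's argument: the paper proves the same statement via Lemma~\ref{lem:rec-easy}, which inverts $X = \{v\} \cup X'$ (with $X'$ the set of neighbours of $v$ on whose edges the two orientations disagree) to fix all arcs at $v$, and then recurses on $G-v$. You have merely inlined that lemma into the induction.
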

This bound is tight for complete graphs (Theorem~\ref{thm:multipartite}). 

For sparser graphs, one can expect much better bounds.
Actually, we characterise classes of graphs having bounded inversion diameter.
Indeed, we prove that the inversion diameter is tied\footnote{Two graph parameters $\gamma_1$ and $\gamma_2$ are {\bf tied} if there are two functions $f_1$ and $f_2$ such that $\gamma_1(G) \leq f_1(\gamma_2(G))$ and $\gamma_2(G) \leq f_2(\gamma_1(G))$ for every graph $G$.}
to some well-studied parameters, namely the star chromatic number $\chi_s$, the acyclic chromatic number $\chi_a$,
and the oriented chromatic number $\chi_o$.

Recall that 
a {\bf star-forest} is a forest whose components have diameter at most 2.
The {\bf star chromatic number $\chi_s(G)$} (resp. {\bf acyclic chromatic number $\chi_a(G)$}) of a graph $G$ is the least integer $k$ such that $G$ has a proper $k$-vertex-colouring in which 
any union of two colour classes induces a star-forest (resp. a forest).

The {\bf oriented chromatic number $\chi_o(D)$} of an oriented graph $D$ is the minimum integer $k$ for which there exists a proper colouring $\phi:V(D)\rightarrow [k]$ such that for all $i,j \in [k]$, the arcs linking $\phi^{-1}(i)$ and $\phi^{-1}(j)$ either are all oriented from $\phi^{-1}(i)$ to $\phi^{-1}(j)$ or are all oriented from $\phi^{-1}(j)$ to $\phi^{-1}(i)$. Equivalently, $\chi_o(D)$ is the least integer $k$ such that there exists a homomorphism from $D$ to  a tournament on $k$ vertices. The {\bf oriented chromatic number $\chi_o(G)$} of a graph $G$ is the maximum of $\chi_o(\vec{G})$ over all orientations $\vec{G}$ of $G$.
The star chromatic number, acyclic chromatic number, and oriented chromatic number are known to be tied~\cite{Gru73, RaSo94, KSZ97}, and we add to this list of equivalent parameters the inversion diameter.

\begin{theorem}\label{grclasses}
    Let $\mathcal{G}$ be a class of graphs. The following are equivalent.
    \begin{itemize}
        \item There is a constant $C$ such that $\diam(\mathcal{I}(G)) \leq C$ for every $G \in \mathcal{G}$.
        \item There is a constant $C_s$ such that $\chi_s(G) \leq C_s$ for every $G \in \mathcal{G}$.
        \item There is a constant $C_a$ such that $\chi_a(G) \leq C_a$ for every $G \in \mathcal{G}$.
        \item There is a constant $C_o$ such that $\chi_o(G) \leq C_o$ for every $G \in \mathcal{G}$.
    \end{itemize}
\end{theorem}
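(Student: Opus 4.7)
The strategy is to use the known theorem of~\cite{Gru73,RaSo94,KSZ97} that $\chi_s$, $\chi_a$, and $\chi_o$ are pairwise tied, so that it suffices to prove inequalities between $\diam(\mathcal{I}(G))$ and any one of these parameters. For the first direction, I plan to establish $\diam(\mathcal{I}(G)) \leq \binom{\chi_s(G)}{2}$. Fix a star colouring $c:V(G)\to[k]$ with $k=\chi_s(G)$, and let $\vec{G}_1,\vec{G}_2$ be two orientations of $G$ differing on an edge set $F$. The plan is to handle the $\binom{k}{2}$ colour pairs independently using a single inversion each. For a pair $(i,j)$, the induced subgraph $G[V_i\cup V_j]$ is a star forest whose stars are vertex-disjoint components; for each star with centre $c_l$, let $L_l$ be the set of leaves whose edge to $c_l$ lies in $F$, and set $X_{ij} = \bigcup_{L_l \neq \emptyset} (\{c_l\}\cup L_l)$. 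Since $V_i$ and $V_j$ are independent, every edge of $G[X_{ij}]$ lies inside $G[V_i\cup V_j]$; since the stars are disjoint components of this star forest, these edges are exactly the edges of $F$ between $V_i$ and $V_j$. Thus a single inversion on $X_{ij}$ corrects $F$ on this colour pair.

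For the reverse direction, I plan to show $\chi_o(G) \leq (2D+1) 2^D$ where $D=\diam(\mathcal{I}(G))$. A short argument shows $\diam(\mathcal{I}(\cdot))$ is monotone under induced subgraphs: extending two orientations of $H=G[S]$ identically on $E(G)\setminus E(H)$, the restrictions $X_l\cap S$ of a short $G$-sequence form a short $H$-sequence. Counting the orientations reachable within $D$ inversions from a fixed one yields $2^{|E(G)|} \leq 2^{(n+1)D}$, whence $|E(G)|=O(nD)$; together with heredity, this forces $G$ to be $O(D)$-degenerate and so $\chi(G) = O(D)$. Now fix a proper $\chi(G)$-colouring of $G$ and orient each edge between colour classes $i$ and $j$ from the smaller to the larger index; the colouring is then a homomorphism from the resulting orientation $\vec{G}_0$ to the transitive tournament on $\chi(G)$ vertices, so $\chi_o(\vec{G}_0) \leq \chi(G) = O(D)$. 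The crux is a doubling lemma: if $\vec{G}'$ is obtained from $\vec{G}$ by inverting $X$ and $h:V\to T$ is a homomorphism to a tournament $T$, then $h'(v):=(h(v),\mathbf{1}[v\in X])$ is a homomorphism from $\vec{G}'$ to a tournament on $2|T|$ vertices. The verification is direct: for any ordered pair of image classes, the orientation of arcs of $\vec{G}'$ between them coincides with that of $T$ when the two bits are not both $1$, and is its reverse when both equal $1$. Iterating the doubling along an inversion sequence of length at most $D$ from $\vec{G}_0$ to any orientation $\vec{G}$ gives $\chi_o(\vec{G}) \leq 2^D \chi_o(\vec{G}_0) = O(D\cdot 2^D)$.

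The main obstacle is this second direction: the doubling lemma itself is easy once formulated, but is only useful if the sequence starts from a base orientation with small oriented chromatic number. The key insight is that the inversion diameter hypothesis forces low degeneracy (via heredity and a counting argument), which in turn yields a proper colouring from which a low-$\chi_o$ base orientation can be constructed.
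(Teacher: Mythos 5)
Your proposal is correct and follows essentially the same route as the paper: the forward direction is the paper's $\alpha$-ID colouring argument specialised to star colourings (Lemma~\ref{lem:H-col} with star forests having inversion diameter $1$, giving $\diam(\mathcal{I}(G))\leq\binom{\chi_s(G)}{2}$), and the reverse direction matches Theorem~\ref{thm:lower-chi_o} — bounded inversion diameter forces bounded $\Mad$ by the same counting argument, a proper colouring gives a base orientation homomorphic to a transitive tournament, and your iterated doubling lemma is exactly the paper's one-shot refinement of the colouring by the membership pattern in the inversion sets. Both then close the equivalence via the known tiedness of $\chi_s$, $\chi_a$, $\chi_o$.
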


As a consequence, graphs of bounded maximum degree, graphs of bounded treewidth, planar graphs, have all bounded inversion diameter.
Since these theoretical bounds are not very accurate, we show more precise upper bounds for these specific classes.
In particular, we show:
\begin{enumerate}[label=\alph*)]
    \item $\diam(\mathcal{I}(G)) \leq 12$ for every planar graph $G$ (Corollary~\ref{cor:planar_12}),
        and there are planar graphs with  inversion diameter at least $5$ (Proposition~\ref{prop:planar5});
    \item $\diam(\mathcal{I}(G)) \leq 3$ for every planar graph $G$ of girth at least $8$ (Theorem~\ref{thm:planar-3}),
        and there are planar graphs of arbitrary large girth with inversion diameter $3$ (Proposition~\ref{prop:planar_even_girth_inversion_3});
    \item $\diam(\mathcal{I}(G)) \leq 2\Delta-1$ for every graph $G$ with maximum degree at most $\Delta \geq 1$ (Theorem~\ref{thm:Delta}),
        and $\diam(\mathcal{I}(G))=\Delta(G)$ for every complete graph $G$ (Theorem~\ref{thm:multipartite});
    \item $\diam(\mathcal{I}(G)) \leq 2t$ for every graph $G$ of treewidth at most $t$ (Theorem~\ref{thm:Mtw2t}),
        and for every $t \geq 2$, there are graphs of treewidth $t$ with  inversion diameter $t+2$ (Theorem~\ref{thm:lower_bound_for_tw}).
\end{enumerate}

We also show that determining the inversion diameter of a given graph is NP-hard, and that determining whether two given orientations of a given bipartite planar graph are at distance at most $2$ in its inversion graph is NP-complete (Corollary~\ref{cor:NP-diam}).

\paragraph*{Structure of the paper}

In Section~\ref{sec:multipartite}, we give an algebraic characterisation of the inversion diameter that will be used throughout the paper, 
and we apply it to determine the inversion diameter of balanced complete multipartite graphs, including complete graph.
In Section~\ref{sec:strong_degeneracy}, we show that despite the fact that there are $2$-degenerate graphs with arbitrary high inversion diameter, a slightly stronger notion of degeneracy 
is an upper bound on the inversion diameter. This fact is one of our main tools to upper bound the inversion diameter.
In Section~\ref{sec:carac}, we show that the inversion diameter is tied to the star chromatic number, the acyclic chromatic number, and the oriented chromatic number.
We thus characterise the classes of graphs having inversion diameter bounded by a constant.
In Section~\ref{sec:better}, we improve the bounds on the inversion diameter as functions of $\chi_s$ and $\chi_a$ given in the previous section.
In Section~\ref{sec:Delta}, we show that $\diam(\mathcal{I}(G)) \leq 2\Delta(G)-1$ for every graph $G$ with at least one edge, and we improve this bound for $\Delta(G)\leq 3$.
In Section~\ref{sec:minor_closed}, we show various bounds for the maximum inversion diameter of graphs in some minor-closed classes.
In particular, we show the above-mentioned results on planar graphs (items a) and b)) and bounded-treewidth graph (item c)).
In Section~\ref{sec:complexity}, we show that computing distances in $\mathcal{I}(G)$ is hard. More precisely, for every $k \geq 2$, it is NP-hard to determine if two given orientations are at distance at most $k$, and it is NP-hard to determine if $\diam(\mathcal{I}(G)) \leq k$.
This is a consequence of Theorem~\ref{thm:diam_L_G_subdivided_once} which expresses the inversion diameter of $G^{(1)}$, the graph obtained from a graph $G$ by subdividing every edge once, in term of $\chi(G)$, which is well-known to be hard to compute.

\paragraph*{Notation}

For every positive integer $k$, we denote by $[k]$ the set $\{1,\dots,k\}$.
We denote by $\log(\cdot)$ the logarithm in base $2$.
Given a graph $G$, we denote by $V(G)$ its vertex set and by $E(G)$ its edge set. Graphs do not contain loops or multiple edges, otherwise, we speak of {\it multigraphs}.
Similarly, given a digraph $D$ we denote by $V(D)$ its vertex set and by $A(D)$ its arc set.
If $X$ is a set of vertices in $G$ (resp. $D$), we denote by $G\langle X \rangle$ (resp. $D\langle X \rangle$) the subgraph (resp. subdigraph) induced by $X$.

We denote by $\delta(G)$ the minimum degree of $G$ and by $\delta^*(G)$ the {\bf degeneracy} of $G$, which is $\max\{\delta(H) \mid H \mbox{ subgraph of } G\}$.
Equivalently, $\delta^*(G) \leq d$ if and only if there is an ordering $(v_1, \dots, v_n)$ of $V(G)$ such that for every $i \in [n]$, $v_i$ has at most $d$ neighbours in $\{v_j \mid j<i\}$.
We also denote by $\Ad(G)$ the average degree of $G$, that is $\frac{2|E(G)|}{|V(G)|}$, and by $\Mad(G)$ the maximum over all subgraphs $H$ of $G$ with at least one vertex of $\Ad(H)$.
Note that $\Mad(G)/2 \leq \delta^*(G) \leq \Mad(G)$.

More generally, we use standard notations in graph theory, and we refer the reader to~\cite{bondymurty} for undefined notion or notation,
such as the parameters $\delta, \Delta, \chi$.

Consider now an oriented graph $\vec{G}$ and some $X \subseteq V(\vec{G})$.
We denote by $\Inv(\vec{G};X)$ the result of the inversion of $X$ in $\vec{G}$, and more generally, if $X_1, \dots, X_t$ are sets of vertices in $\vec{G}$,
$\Inv(\vec{G};(X_i)_{i \in [t]})$ is the oriented graph resulting from the inversion of $X_1, X_2, \dots, X_t$. Note that $\Inv(\vec{G};(X_i)_{i \in [t]})$ does not depends on the 
ordering of $X_1, \dots, X_t$, since an arc $uv$ is reversed if and only if $|\{i \in [t] \mid u,v \in X_i\}|$ is odd.

We will often use  arguments from linear algebra. The vectors and matrices considered will always be over the field $\mathbb{F}_2$ with two elements $0$ and $1$.
Given a matrix $M$, we denote by $\rk(M)$ its rank over $\mathbb{F}_2$.
We denote by $\mathbf{0}$ the vector in $\mathbb{F}_2^d$ constant to $0$, for $d$ a positive integer that will be clear from the context.
Moreover, given two vectors $\mathbf{x} = (x_1, \dots,x_d),\mathbf{y} = (y_1, \dots, y_d)$ of same dimension $d$,
we denote by $\mathbf{x} \cdot \mathbf{y}$ their scalar product, that is $\sum_{i=1}^d x_i y_i$.

\section{Some tools and complete multipartite graphs}\label{sec:multipartite}

In this section, we introduce some tools to compute $\diam(\mathcal{I}(G))$, and we apply them on some highly structured graphs.

Let $\vec{G}_1$ and $\vec{G}_2$ be two orientations of a graph $G$.
If an edge $e$ has the same orientation in $\vec{G}_1$ and $\vec{G}_2$, we say that $\vec{G}_1$ and $\vec{G}_2$ {\bf agree} on $e$;
otherwise we say that they {\bf disagree} on $e$. 
We denote by $E_{=}$ the set of edges of $G$ on which $\vec{G}_1$ and $\vec{G}_2$ agree 
and by $E_{\neq}$ the set of edges of $G$ on which $\vec{G}_1$ and $\vec{G}_2$ disagree.

Theorem~\ref{thm:diam-upper} follows directly from the following lemma and an immediate induction.

\begin{lemma}\label{lem:rec-easy}
    Let $G$ be a graph and $v$ a vertex in $G$.
    Then $\diam ({\mathcal{I}}(G)) \leq \diam ({\mathcal{I}}(G-v)) +1$.
\end{lemma}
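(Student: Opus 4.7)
The plan is to reduce the problem to $G-v$ by using one extra inversion to correctly orient all edges incident to $v$, and then to handle the rest via an optimal sequence of inversions inside $V(G)\setminus\{v\}$.

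More concretely, fix two orientations $\vec{G}_1$ and $\vec{G}_2$ of $G$. Let $N \subseteq N(v)$ be the set of neighbours $u$ of $v$ such that $\vec{G}_1$ and $\vec{G}_2$ disagree on the edge $uv$. First, I would apply to $\vec{G}_1$ the inversion $X = N \cup \{v\}$, producing an orientation $\vec{G}_1'$. By definition of an inversion, the arcs of $\vec{G}_1$ reversed are precisely those with both endvertices in $X$, namely the arcs between $v$ and $N$ together with the arcs inside $N$. Hence in $\vec{G}_1'$ every edge incident to $v$ is oriented as in $\vec{G}_2$, while edges not incident to $v$ may or may not agree with $\vec{G}_2$.

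Now let $\vec{H}_1'$ and $\vec{H}_2$ be the orientations of $G-v$ obtained by restricting $\vec{G}_1'$ and $\vec{G}_2$ to $V(G)\setminus\{v\}$. By definition of $\diam(\mathcal{I}(G-v))$, there exist subsets $X_1,\dots,X_k$ of $V(G)\setminus\{v\}$ with $k \leq \diam(\mathcal{I}(G-v))$ such that $\Inv(\vec{H}_1';(X_i)_{i\in[k]}) = \vec{H}_2$. Viewing each $X_i$ as a subset of $V(G)$, I would then apply the inversions $X_1,\dots,X_k$ to $\vec{G}_1'$. Since $v \notin X_i$ for every $i$, no arc incident to $v$ is affected, so these arcs remain as in $\vec{G}_2$; and on the subgraph induced by $V(G)\setminus\{v\}$ the sequence transforms $\vec{H}_1'$ into $\vec{H}_2$. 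Therefore the result equals $\vec{G}_2$, giving a sequence of at most $1 + \diam(\mathcal{I}(G-v))$ inversions from $\vec{G}_1$ to $\vec{G}_2$, as desired.

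There is no real obstacle here: the only point worth emphasising is that the first inversion must contain $v$ (to flip edges incident to $v$) while the subsequent ones must avoid $v$ (to preserve those edges), which is exactly what the choice $X = N \cup \{v\}$ followed by subsets of $V(G)\setminus\{v\}$ achieves. The side effect of the first inversion on arcs inside $N$ is harmless because it is absorbed into the orientation $\vec{H}_1'$ on which we invoke the induction hypothesis on $G-v$.
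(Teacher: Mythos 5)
Your proposal is correct and follows essentially the same argument as the paper: invert $X = N \cup \{v\}$ to fix all edges at $v$, then apply an optimal inversion sequence for $G-v$ (whose sets avoid $v$) to finish. No issues.
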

\begin{proof}
    Let $\vec{G}_1$ and $\vec{G}_2$ be two orientations of $G$.  
    Let $X'$ be the set of neighbours $w$ of $v$ such that $\vec{G}_1$ and $\vec{G}_2$ disagree on $vw$.
    Set $X=X'\cup \{v\}$.
    Note that $v$ has the same out-neighbours and the same in-neighbours in $\Inv(\vec{G}_1 ; X)$ and $\vec{G}_2$.
    Now let $\cal{X}$ be a collection of $\diam({\mathcal{I}}(G-v))$ subsets of $V(G-v)$ such that $\Inv(\Inv(\vec{G}_1;X) -v ;{\cal X}) = \vec{G}_2 - v$. We obtain $\Inv(\vec{G}_1 ;{\cal X}\cup \{X\}) = \vec{G}_2$ and hence the statement follows.
\end{proof}

Now we give a fundamental observation that allows us to use tools from linear algebra to bound $\diam(\mathcal{I}(G))$.
\begin{observation}\label{obs:characterization_with_vectors}
    For every graph $G$, for every positive integer $t$, the following are equivalent.
    \begin{enumerate}
        \item $\diam(\mathcal{I}(G)) \leq t$.
        \item For every function $\pi \colon  E(G) \to \mathbb{F}_2$, there is a family $(\mathbf{u})_{u \in V(G)}$ of vectors in $\mathbb{F}_2^t$ such that
        \[
        \pi(uv) = \mathbf{u} \cdot \mathbf{v}
        \]
        for every edge $uv \in E(G)$.
    \end{enumerate}
\end{observation}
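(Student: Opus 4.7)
The plan is to build an explicit dictionary between two ingredients: on one side, pairs $(\vec{G}_1,\vec{G}_2)$ of orientations together with sequences of inversions taking one to the other; on the other side, functions $\pi\colon E(G)\to\mathbb{F}_2$ together with families of vectors realising them as pairwise scalar products. Each ingredient on the first list should correspond naturally to the like-indexed ingredient on the second.

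First I would normalise the statement by allowing sequences of length exactly $t$ rather than at most $t$: since inverting the empty set is a no-op, $\diam(\mathcal{I}(G))\le t$ is equivalent to saying that for every pair $\vec{G}_1,\vec{G}_2$ of orientations of $G$ there exist (possibly empty) sets $X_1,\ldots,X_t\subseteq V(G)$ with $\Inv(\vec{G}_1;(X_i)_{i\in[t]}) = \vec{G}_2$. Next, for each pair $\vec{G}_1,\vec{G}_2$ I define $\pi=\pi_{\vec{G}_1,\vec{G}_2}\colon E(G)\to\mathbb{F}_2$ by $\pi(uv)=0$ if they agree on $uv$ and $\pi(uv)=1$ otherwise, and I note that every function $\pi\colon E(G)\to\mathbb{F}_2$ arises in this way: start from any orientation $\vec{G}_1$ and flip exactly the edges where $\pi$ is $1$ to obtain $\vec{G}_2$.

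Then comes the central bijection. To a sequence $(X_i)_{i\in[t]}$ of subsets of $V(G)$ I associate the family of vectors $(\mathbf{u})_{u\in V(G)}\in\mathbb{F}_2^t$ defined by $\mathbf{u}_i=1$ iff $u\in X_i$; this map is clearly a bijection between $t$-tuples of subsets of $V(G)$ and families indexed by $V(G)$ of vectors in $\mathbb{F}_2^t$. The key identity is
\[
\mathbf{u}\cdot\mathbf{v}=\sum_{i=1}^t \mathbf{u}_i\mathbf{v}_i = \bigl|\{i\in[t]\mid u,v\in X_i\}\bigr|\pmod 2,
\]
so using the observation recalled in the Notation section---that an arc $uv$ is reversed by $\Inv(\,\cdot\,;(X_i)_{i\in[t]})$ iff $|\{i\in[t]\mid u,v\in X_i\}|$ is odd---we obtain, for every edge $uv\in E(G)$, that $\vec{G}_1$ and $\Inv(\vec{G}_1;(X_i)_{i\in[t]})$ disagree on $uv$ precisely when $\mathbf{u}\cdot\mathbf{v}=1$. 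Hence $\Inv(\vec{G}_1;(X_i)_{i\in[t]})=\vec{G}_2$ if and only if $\mathbf{u}\cdot\mathbf{v}=\pi(uv)$ for every edge $uv$.

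To conclude, I combine the two halves: $\diam(\mathcal{I}(G))\le t$ holds iff for every pair $(\vec{G}_1,\vec{G}_2)$ there is a suitable $t$-tuple of sets, iff (by the bijection) for every $\pi$ arising from such a pair there is a suitable family of vectors in $\mathbb{F}_2^t$, iff (since all $\pi$ arise this way) the second condition of the observation holds. There is no genuine obstacle: the proof is a straightforward unpacking of definitions, and the only thing to be careful about is the passage from "at most $t$" to "exactly $t$" inversions, which is handled by padding with empty sets.
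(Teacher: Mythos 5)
Your proposal is correct and follows essentially the same argument as the paper: both directions rest on the correspondence $X_i \leftrightarrow \{u : \mathbf{u}_i=1\}$ and the identity that an edge $uv$ is reversed iff $|\{i : u,v\in X_i\}|$ is odd, i.e.\ iff $\mathbf{u}\cdot\mathbf{v}=1$. Your explicit remark about padding with empty sets to pass from ``at most $t$'' to ``exactly $t$'' inversions is a minor point the paper leaves implicit, but otherwise the two proofs coincide.
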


\begin{proof}
    First suppose that $\diam(\mathcal{I}(G)) \leq t$, and let $\pi \colon  E(G) \to \mathbb{F}_2$.
    Let $\vec{G}_1, \vec{G}_2$ be two orientations of $G$ such that, for every edge $uv \in E(G)$, $\vec{G}_1$ and $\vec{G}_2$ agree on $uv$ if and only if $\pi(uv)=0$.
    By assumption, there are sets $X_1, \dots, X_t \subseteq V(G)$ such that $\vec{G}_1 = \Inv(\vec{G}_2; X_1, \dots, X_t)$.
    For every vertex $u \in V(G)$, we define $\mathbf{u}$ to be the vector in $\mathbb{F}_2^t$ whose $i^{\text{th}}$ coordinate is $1$ if and only if $u \in X_i$, for every $i \in [t]$.
    Now observe that for every edge $uv \in E(G)$, $uv$ is reversed by the inversions $X_1, \dots, X_t$ if and only if $\mathbf{u} \cdot \mathbf{v}=1$.
    Hence $\mathbf{u} \cdot \mathbf{v} = \pi(uv)$.

    Reciprocally, given two orientations $\vec{G}_1$ and $\vec{G}_2$ of $G$, for every edge $uv \in E(G)$, let $\pi(uv)=0$ if and only if $\vec{G}_1$ and $\vec{G}_2$ agree on $uv$.
    By assumption, there is a family $(\mathbf{u})_{u \in V(G)}$ of vectors in $\mathbb{F}_2^t$ such that $\pi(uv) = \mathbf{u} \cdot \mathbf{v}$ for every edge $uv \in E(G)$.
    We define $X_i = \{u \in V(G) \mid \mathbf{u}_i = 1\}$ for every $i \in [t]$.
    Then for every edge $uv \in E(G)$, $uv$ is reversed by the inversions $X_1, \dots, X_t$ if and only if $\mathbf{u} \cdot \mathbf{v}=1$, and so $\vec{G}_2 = \Inv(\vec{G}_1; X_1, \dots, X_t)$.
    This proves that $\diam(\mathcal{I}(G)) \leq t$.
\end{proof}

We now fully determine the inversion diameter of 
balanced complete multipartite graphs. 
The purpose of this result is two-fold. Firstly, its proof may serve as a warm-up exercise for the following sections. 
Secondly, it gives in particular the inversion diameter of complete graphs, which serve as tightness examples in several results in the following sections.
We denote by $K_r[\overline{K_t}]$ the complete $r$-partite graph in which every part has size $t$. 

\begin{theorem}\label{thm:multipartite}
    For all positive integers $r$ and $t$, we have $\diam\big(\mathcal{I}(K_r[\overline{K_t}])\big) = (r-1)t$.
\end{theorem}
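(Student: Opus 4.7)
I plan to prove the two inequalities separately.

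For the upper bound, I would induct on $r$. The base case $r = 1$ is vacuous, since $K_1[\overline{K_t}]$ has no edges. For the inductive step, I would apply Lemma~\ref{lem:rec-easy} exactly $t$ times, deleting the vertices of one fixed part $V_r$ one by one. Since the resulting graph is $K_{r-1}[\overline{K_t}]$, the induction hypothesis gives $\diam(\mathcal{I}(K_r[\overline{K_t}])) \leq (r-2)t + t = (r-1)t$.

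For the lower bound, I would invoke Observation~\ref{obs:characterization_with_vectors}. Label the vertex in the $k$-th position of part $V_i$ by $v_{i,k}$, and define $\pi \colon E(G) \to \mathbb{F}_2$ by setting $\pi(v_{i,k} v_{j,l}) = 1$ if and only if $|i - j| = 1$ and $k = l$. The goal is to show that any family $(\mathbf{v}_{i,k})$ of vectors in $\mathbb{F}_2^d$ realizing $\pi$ via inner products satisfies $d \geq (r-1)t$. The heart of the argument is to show that the $(r-1)t$ vectors $\{\mathbf{v}_{i,k} : i \in [r-1], k \in [t]\}$ are linearly independent. Starting from a hypothetical dependency $\sum_{i \in [r-1], k \in [t]} c_{i,k}\, \mathbf{v}_{i,k} = \mathbf{0}$, I would take the inner product with $\mathbf{v}_{r,l}$; using $\mathbf{v}_{i,k} \cdot \mathbf{v}_{r,l} = \pi(v_{i,k}v_{r,l}) = 1$ exactly when $i = r-1$ and $k = l$, this yields $c_{r-1,l} = 0$ for every $l \in [t]$. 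Iterating by taking inner products with $\mathbf{v}_{r-1,l}, \mathbf{v}_{r-2,l}, \dots, \mathbf{v}_{2,l}$ peels off the coefficients level by level and forces all $c_{i,k}$ to vanish.

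The main obstacle I anticipate is choosing the right $\pi$: more symmetric attempts, such as $\pi(v_{i,k}v_{j,l}) = \delta_{k,l}$ for all $i \neq j$, fail because an all-ones-type representation collapses the dimension (for instance, when $r = 3, t = 1$ the three required inner products can all equal $1$ in $\mathbb{F}_2^1$). The ``path-like'' structure of the chosen $\pi$, where nontrivial constraints appear only between consecutive parts, is precisely what enables the sequential elimination to run without obstruction and yield the desired lower bound of $(r-1)t$.
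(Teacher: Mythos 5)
Your proposal is correct and follows essentially the same route as the paper: the upper bound is the same repeated application of Lemma~\ref{lem:rec-easy} (merely organized as an induction on $r$), and the lower bound uses the identical ``path-like'' labelling $\pi$, with your sequential elimination of the coefficients being just a hands-on restatement of the paper's observation that $U^\top U$ contains a triangular $(r-1)t\times(r-1)t$ submatrix with unit diagonal.
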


\begin{proof}
    Let the graph $G$ be defined by $V(G) = [r] \times [t]$,  and $E(G) = \{(i,j)(i',j') \mid i \neq i'\}$. Observe that $G$ is isomorphic to $K_r[\overline{K_t}]$.
    By repeatedly applying Lemma~\ref{lem:rec-easy} for all vertices in $V(G)\setminus (\{1\}\times [t])$ and using the fact that the inversion diameter of edgeless graphs is $0$,
    we have $\diam(\mathcal{I}(G)) \leq (r-1) t + \diam(\mathcal{I}(\overline{K}_t)) = (r-1)t$.
    Let us now prove that $\diam(\mathcal{I}(G)) \geq (r-1)t$.

    Consider the function $\pi \colon E(G) \to \mathbb{F}_2$ defined by 
    $\pi((i,j)(i',j'))=1$ if and only if $|i'-i|=1$ and $j=j'$,
    for all $i,i' \in [r]$ and $j,j' \in [t]$ with $i \neq i'$.
    Suppose that there is a family $(\mathbf{v}_{i,j})_{i \in [r], j \in [t]}$ of vectors in  $\mathbb{F}_2^k$
    such that $\pi((i,j)(i',j'))= \mathbf{v}_{i,j} \cdot \mathbf{v}_{i',j'}$ for every $i,i' \in [r]$ and $j,j' \in [t]$ with $i \neq i'$.
    Let $U$ be the matrix whose column $(i,j)$ is the vector $\mathbf{v}_{i,j}$ (in the lexicographic order $(1,1),(1,2), \dots, (r,t)$). 
    Then $U^\top \cdot U$ is of the form
    \[
    U^\top \cdot U = 
    \left( 
    \begin{array}{ccccc}
        \star  & I_t    & 0      & \dots   & 0     \\
        I_t    & \star  & I_t    & \ddots  & \vdots \\
        0      & I_t    & \ddots &  \ddots &   0   \\
        \vdots & \ddots & \ddots &  \star  &  I_t  \\
        0      & \dots  & 0      &   I_t   & \star \\
    \end{array}
    \right)
    \] 
    As $U^\top \cdot U$ contains an $(r-1)t\times(r-1)t$ upper triangular matrix whose diagonal is constant to $1$, we obtain that $U^\top \cdot U$ has rank at least $(r-1)t$.
    It follows that $k \geq \rk(U) \geq \rk(U^\top \cdot U) \geq (r-1)t$. Using Observation~\ref{obs:characterization_with_vectors}, this proves that $\diam(\mathcal{I}(G)) \geq (r-1)t$.
\end{proof}




\section{Bounds in terms of degeneracy and strong degeneracy}\label{sec:strong_degeneracy}
In this section, we study the relationship between the inversion diameter of a graph $G$ and its degeneracy. 
Note that the degeneracy of a graph is strongly related to its maximum average degree, as $\delta^*(G)\leq \Mad(G)\leq 2 \delta^*(G)$ holds for every graph $G$. 
In Proposition~\ref{prop:density}, we show that bounded degeneracy (or maximum average degree) alone is not enough to bound the inversion diameter of a graph. 
On the other hand, in Theorem~\ref{theorem:high_ad_implies_high_diam}, we show that a bounded inversion diameter implies bounded degeneracy (and bounded maximum average degree). 
Afterwards, we introduce a new notion of degeneracy, which we call strong degeneracy. 
In Theorem~\ref{thm:generic_greedy_argument}, we show that, in contrast to the classic notion of degeneracy, bounded strong degeneracy guarantees a bounded inversion diameter. 
Theorem~\ref{thm:generic_greedy_argument} will serve as a valuable tool in several parts of this article. 
As a first application of Theorem~\ref{thm:generic_greedy_argument}, we prove in Corollary~\ref{cor:dqewdrqwde} that in graphs of bounded degeneracy, the maximum inversion diameter is logarithmic in the maximum degree.

We start by proving that the inversion diameter of a graph $G$ cannot be upper bounded by a function of $\delta^*(G)$ or $\Mad(G)$.

\begin{proposition}\label{prop:density}
    For every integer $\ell$, there exists a graph $G$ with  $\delta^*(G) \leq 2$  and $\diam(\mathcal{I}(G)) \geq \ell$.
\end{proposition}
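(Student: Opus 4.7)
The plan is to construct, for each integer $\ell$, a $2$-degenerate graph whose inversion diameter is at least $\ell$, by taking the $1$-subdivision of a large complete graph.

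Concretely, set $n = 2^\ell$ and let $G = K_n^{(1)}$ be the graph obtained from $K_n$ by subdividing every edge once. Write $v_1, \dots, v_n$ for the original vertices and, for each $i<j$, write $w_{ij}$ for the subdivision vertex of the edge $v_iv_j$. For the degeneracy bound I would use the ordering $v_1, \dots, v_n, w_{12}, w_{13}, \dots$: the $v_i$'s form an independent set and so contribute no back-neighbour, while each $w_{ij}$ has exactly its two endpoints $v_i, v_j$ as back-neighbours. Hence $\delta^*(G) \leq 2$.

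For the lower bound on $\diam(\mathcal{I}(G))$, I would apply Observation~\ref{obs:characterization_with_vectors} to the asymmetric function $\pi \colon E(G) \to \mathbb{F}_2$ defined by $\pi(v_iw_{ij}) = 1$ and $\pi(v_jw_{ij}) = 0$ whenever $i<j$. Assuming vectors $(\mathbf{v}_i)_{i \in [n]}$ and $(\mathbf{w}_{ij})_{i<j}$ in $\mathbb{F}_2^t$ realise $\pi$, the pair of equations $\mathbf{v}_i \cdot \mathbf{w}_{ij} = 1$ and $\mathbf{v}_j \cdot \mathbf{w}_{ij} = 0$ (for every $i<j$) forces both $\mathbf{v}_i \neq \mathbf{0}$ and $\mathbf{v}_i \neq \mathbf{v}_j$. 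Hence $\mathbf{v}_1, \dots, \mathbf{v}_n$ are $n$ pairwise distinct vectors of $\mathbb{F}_2^t$, so $n \leq 2^t$ and $t \geq \ell$. Observation~\ref{obs:characterization_with_vectors} then gives $\diam(\mathcal{I}(G)) \geq \ell$.

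The main obstacle is choosing $\pi$ wisely. A symmetric choice such as $\pi \equiv 1$ leaves too much freedom at each subdivision vertex and forces nothing beyond $\mathbf{v}_i \neq \mathbf{0}$, so it does not yield any lower bound on $t$. The asymmetry $(1,0)$ used here simultaneously rules out $\mathbf{v}_i = \mathbf{0}$ (from the ``$1$'' side) and $\mathbf{v}_i = \mathbf{v}_j$ (from the mismatch between the two sides of the subdivision path), which is precisely what a counting argument in $\mathbb{F}_2^t$ requires.
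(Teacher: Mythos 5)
Your proof is correct and uses essentially the same construction as the paper: the $1$-subdivision of a large complete graph, with the labelling $\pi$ that is asymmetric at each subdivision vertex (the paper realises this as orientations with $d^-(x_e)=1$ versus $d^-(x_e)=2$). The only difference is presentational — the paper argues by pigeonhole on the membership patterns of the original vertices across the inversion sets, whereas you recast the same fact through Observation~\ref{obs:characterization_with_vectors} as pairwise distinctness of the vectors $\mathbf{v}_1,\dots,\mathbf{v}_n$ in $\mathbb{F}_2^t$; these are the same argument in two languages.
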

\begin{proof}
    Let $G$ be the graph obtained from $K_{2^{\ell-1}+1}$ by subdividing every edge $e$ once, 
    thereby creating a vertex $x_{e}$.
    We call the vertices of $K_{2^{\ell-1}+1}$ the original vertices of $G$.
    One can easily check that $\delta^*(G) \leq 2$.
    
    Let $\vec{G}_1$ be an orientation of $G$ where $d_{\vec{G}_1}^-(x_e)=1$ for all $e \in E(K_{2^{\ell-1}+1})$ and let $\vec{G}_2$ be an orientation of $G$ where $d_{\vec{G}_2}^-(x_e)=2$ for all $e \in E(K_{2^{\ell-1}+1})$.
    Suppose that there exist $X_1, \dots X_t \subseteq V(G)$ such that $\Inv(\vec{G}_1;X_1, \dots, X_t) = \vec{G}_2$
    for some $t < \ell$.
    By the pigeon-hole principle, there exist two original vertices $u$ and $v$ such that $\{i \mid u \in X_i\} = \{i \mid v \in X_i\}$. It follows that $d^-_{\Inv(\vec{G}_1;X_1, \dots, X_t)}(x_{uv})=1$. 
    This is a contradiction to the assumption that $\Inv(\vec{G}_1;X_1, \dots, X_t) = \vec{G}_2$.
    Hence $\diam(\mathcal{I}(G)) \geq \ell$.
\end{proof}

Conversely, the following result shows that the inversion diameter grows with the maximum average degree and the degeneracy.

\begin{theorem}\label{theorem:high_ad_implies_high_diam}
    $\diam(\mathcal{I}(G)) \geq \Mad(G)/2$ for every graph $G$.
\end{theorem}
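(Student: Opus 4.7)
The plan is to use the algebraic characterisation provided by Observation~\ref{obs:characterization_with_vectors} together with a simple counting (dimension) argument applied to a densest subgraph.

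Set $t = \diam(\mathcal{I}(G))$ and let $H$ be a subgraph of $G$ with at least one vertex that achieves $\Ad(H) = \Mad(G)$, so that $|E(H)|/|V(H)| = \Mad(G)/2$. The first step is to observe that Observation~\ref{obs:characterization_with_vectors} applies not only to $G$ but also, for free, to $H$: given any $\pi_H \colon E(H) \to \mathbb{F}_2$, extend it arbitrarily to some $\pi \colon E(G) \to \mathbb{F}_2$, apply Observation~\ref{obs:characterization_with_vectors} to obtain vectors $(\mathbf{u})_{u \in V(G)}$ in $\mathbb{F}_2^t$ realising $\pi$, and restrict to $V(H)$ to realise $\pi_H$.

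Consequently, the evaluation map
\[
    \Phi \colon (\mathbb{F}_2^t)^{V(H)} \longrightarrow \mathbb{F}_2^{E(H)}, \qquad (\mathbf{u})_{u \in V(H)} \longmapsto \bigl(\mathbf{u}\cdot\mathbf{v}\bigr)_{uv \in E(H)}
\]
is surjective. Comparing the sizes of source and target gives $2^{t\,|V(H)|} \ge 2^{|E(H)|}$, hence $t \ge |E(H)|/|V(H)| = \Mad(G)/2$, which is what we wanted. Since $t$ is an integer this also yields $t \ge \lceil \Mad(G)/2 \rceil$, but the stated bound is enough.

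There is no real obstacle here: the work has already been done by Observation~\ref{obs:characterization_with_vectors}, which reduces the reconfiguration question to a purely algebraic one, and after that the bound is immediate from a counting/dimension argument on the quadratic evaluation map $\Phi$. The only thing to be careful about is noticing that the hypothesis on $G$ transfers to an arbitrary subgraph $H$ by extending $\pi_H$ to $E(G)$, so that one may freely pick a densest subgraph.
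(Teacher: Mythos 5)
Your proof is correct and is essentially the paper's argument: both restrict to a densest subgraph $H$ and compare $2^{t|V(H)|}$ (the number of $t$-tuples of inversion sets, equivalently of vector assignments) with $2^{|E(H)|}$ (the number of orientations, equivalently of functions $\pi_H$). The only difference is cosmetic — you route the count through Observation~\ref{obs:characterization_with_vectors} and surjectivity of the evaluation map, while the paper counts orientations reachable by $t$ inversions directly.
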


\begin{proof}
    Set $M=\Mad(G)$.
    Let $H$ be a subgraph of $G$ such that $\Ad(H) =M$.  
    Let $n$ be the number of vertices of $H$.
    The number of edges of $H$ is $\frac{1}{2}Mn$.
    Hence $H$ has $2^{Mn/2}$ orientations. 
    On the other hand, starting from an arbitrary orientation $\vec{G}$ of $G$, we have $2^n$ possible choices of subsets of $V(G)$ to invert. 
    Hence a simple induction shows that, for any positive integer $t$, we have that  at most $2^{nt}$ orientations can be obtained from $\vec{G}$ through $t$ inversions.
    This yields that if $t = \diam(\mathcal{I}(H))$, then $H$ has at most $2^{nt}$ orientations. Thus $t \geq M/2$.
    
    Finally, since $H$ is a subgraph of $G$, we have $\diam(\mathcal{I}(G)) \geq \diam(\mathcal{I}(H)) \geq M/2$.
\end{proof}

However, a stronger version of degeneracy permits to bound $\diam(\mathcal{I}(G))$.
First, we need a few definitions.

Let $G$ be a graph and let $<$ be a total ordering on $V(G)$.
For every pair $u,u'$ of vertices in $G$, let $N_{<u'}(u) = \{v \in N(u) \mid v<u'\}$ and $N_{>u'}(u) = \{v \in N(u) \mid v>u'\}$.
We simply write $N_{<}(u)$ for $N_{<u}(u)$ and $N_{>}(u)$ for $N_{>u}(u)$.
The ordering $<$ is {\bf $t$-strong} if for every $u\in V(G)$
\begin{itemize}
 \item $|N_<(u)| + 
        \log \left(|\{X \subseteq V(G) \mid \exists v \in N_>(u), X \subseteq N_{<u}(v)\}|\right)
    < t$ if $N_>(u) \neq \emptyset$, and
\item $|N_<(u)| \leq t$ otherwise.
 \end{itemize}


A graph is {\bf strongly $t$-degenerate} if there is a $t$-strong ordering of its vertices.
We now show that this stronger version of degeneracy is sufficient to provide a bound for the inversion diameter.

\begin{theorem}\label{thm:generic_greedy_argument}
    Let $G$ be a graph and let $t$ be a positive integer.
    If $G$ is strongly $t$-degenerate, then  $\diam(\mathcal{I}(G)) \leq t$.
\end{theorem}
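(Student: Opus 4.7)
The plan is to invoke Observation~\ref{obs:characterization_with_vectors}: it suffices to show that, for every $\pi \colon E(G) \to \mathbb{F}_2$, one can assign a vector $\mathbf{v} \in \mathbb{F}_2^t$ to each $v \in V(G)$ so that $\mathbf{u} \cdot \mathbf{v} = \pi(uv)$ for every edge $uv \in E(G)$. Let $<$ be a $t$-strong ordering of $V(G)$. I would process the vertices in the order induced by $<$, from smallest to largest, choosing the vectors greedily.

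The key invariant to maintain is the following: at the moment a vertex $u$ is about to be processed, for every not-yet-processed vertex $w$ (including $u$ itself), the family $\{\mathbf{v} : v \in N(w),\, v < u\}$ is linearly independent in $\mathbb{F}_2^t$. Applying this invariant to $u$ itself, the constraints $\mathbf{u} \cdot \mathbf{v} = \pi(uv)$ for $v \in N_<(u)$ define an affine subspace $S_u \subseteq \mathbb{F}_2^t$ of size $2^{t - |N_<(u)|}$. Since the $t$-strong condition guarantees $|N_<(u)| \leq t$ in both cases of its definition (using $|\mathcal{X}_u| \geq 1$ in the case $N_>(u)\neq\emptyset$), the subspace $S_u$ is non-empty.

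If $N_>(u) = \emptyset$, any choice of $\mathbf{u} \in S_u$ preserves the invariant trivially. Otherwise, I would pick $\mathbf{u} \in S_u$ avoiding the forbidden set $B := \bigcup_{w \in N_>(u)} W_w$, where $W_w := \mathrm{span}\{\mathbf{v} : v \in N_{<u}(w)\}$. Staying outside $B$ is exactly the condition ensuring that, once $\mathbf{u}$ is appended to the family of already-processed neighbors of each $w \in N_>(u)$, this family remains linearly independent, so the invariant is preserved.

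The main computation is to bound $|B|$. Since each $W_w$ equals $\{\sum_{v \in X} \mathbf{v} : X \subseteq N_{<u}(w)\}$, the map $X \mapsto \sum_{v \in X} \mathbf{v}$ sends $\mathcal{X}_u := \{X \subseteq V(G) \mid \exists w \in N_>(u),\, X \subseteq N_{<u}(w)\}$ surjectively onto $B$, giving $|B| \leq |\mathcal{X}_u|$. The $t$-strong condition then yields $|B| \leq |\mathcal{X}_u| < 2^{t - |N_<(u)|} = |S_u|$, so $S_u \setminus B$ is non-empty and a valid $\mathbf{u}$ can be chosen. The delicate point is that the definition of a $t$-strong ordering is calibrated precisely to this counting inequality, and identifying the right invariant (linear independence of the vectors associated with the already-processed neighbors of each pending vertex) is what makes the sizes of $S_u$ and $B$ line up.
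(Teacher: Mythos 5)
Your proposal is correct and follows essentially the same route as the paper: reduce to vector assignments via Observation~\ref{obs:characterization_with_vectors}, process vertices along the $t$-strong ordering while maintaining linear independence of the vectors already assigned to neighbours of each pending vertex, and choose each new vector in the affine solution space while avoiding the union of spans, bounded by the subset-sum surjection exactly as in the paper. No gaps.
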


\begin{proof}
    Consider a function $\pi \colon E(G) \to \mathbb{F}_2$.
    By Observation~\ref{obs:characterization_with_vectors},
    it is enough to show the existence of a family $(\mathbf{u})_{u \in V(G)}$
    of vectors in $\mathbb{F}_2^t$ such that $\mathbf{u} \cdot \mathbf{v} = \pi(uv)$ for every edge $uv$ of $G$.
    Let $<$ be a $t$-strong ordering of $V(G)$ and let $(v_1, \dots, v_n)$  be the vertices of $V(G)$ as they appear in this order.
    We iteratively construct vectors $\mathbf{v}_1, \dots, \mathbf{v}_n \in \mathbb{F}_2^t$ maintaining the following invariant:
    for $i \in [n]$
    \begin{enumerate}
        \item for every $k \in \{i+1,\ldots,n\}$, the vectors $(\mathbf{v}_j)_{ v_j \in N_{<v_i}(v_k)}$ are linearly independent, and
        \item $\mathbf{v}_j \cdot \mathbf{v}_k = \pi(v_j v_k)$ for every edge $v_j v_k$ of $G$ with $j,k\leq i$.
    \end{enumerate}
    For $i=n$, this will imply that $\diam(\mathcal{I}(G)) \leq t$ by Observation~\ref{obs:characterization_with_vectors}.

    For $i=1$, we take $\mathbf{v}_1 \in \mathbb{F}_2^t \setminus\{\mathbf{0}\}$ arbitrarily.
    Now suppose $i>1$ and that $\mathbf{v}_1, \dots, \mathbf{v}_{i-1}$ have already been constructed.
    Since the vectors $(\mathbf{v}_j)_{v_j \in N_<(v_i)}$ are linearly independent, the set of vectors $\mathbf{x} \in \mathbb{F}_2^t$ satisfying $\mathbf{v}_j \cdot \mathbf{x} = \pi(v_i v_j)$ for every $v_j \in N_<(v_i)$ forms an affine space $A$ of dimension at least $t - |N_<(v_i)|$. To satisfy the second condition, we must choose $\mathbf{v}_i$ in $A$. Note that $|A|\geq 2^{t - |N_<(v_i)|}\geq 1$ because $<$ is $t$-strong.
    Observe that if $N_>(v_i)=\emptyset$, then the first condition trivially holds, and so we can choose any vector in $A$ for $\mathbf{v}_i$.

    Henceforth, we may assume that $N_>(v_i)\neq \emptyset$.
    For every $v_j\in N_>(v_i)$, the first condition of the invariant says that $\mathbf{v}_i$ must be outside the vector space $U_j$ spanned by $(\mathbf{v_k})_{v_k \in N(v_j), k<i}$.
    Thus it is enough to take $\mathbf{v}_i$ not in $U = \bigcup_{v_j \in N_>(v_i)} U_j$.
    Now observe that for every $\mathbf{y} \in U$, there is a vertex $v_j \in N_>(v_i)$ and a set $X \subseteq \{v_k \in N(v_j) \mid k<i\}$ such that $\mathbf{y} = \sum_{x \in X} \mathbf{x}$.
    Hence $|U| \leq |\{X \subseteq V(G) \mid \exists v_j \in N_>(v_i), X \subseteq N_{<v_i}(v_j) \}|$. 
    Since $<$ is $t$-strong, we obtain that $|A|=2^{t-|N_<(v_i)|} > |\{X \subseteq V(G) \mid \exists v_j \in N_>(v_i), X \subseteq N_<(v_j)\}| \geq |U|$.  
    We deduce that $A \setminus U \neq \emptyset$ and there is a vector $\mathbf{v}_i$ satisfying the invariant.
    The statement now follows for $i=n$.
\end{proof}


As a first consequence of this proposition, we prove that graphs of bounded degeneracy have inversion diameter at most logarithmic in the maximum degree.

\begin{corollary}\label{cor:dqewdrqwde}
     $\diam(\mathcal{I}(G)) \leq 2\delta^*(G) - 1 + \lceil\log(\Delta(G))\rceil$ for every graph $G$ with at least one edge.
\end{corollary}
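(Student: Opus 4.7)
The plan is to invoke Theorem~\ref{thm:generic_greedy_argument}: it suffices to exhibit a $t$-strong ordering of $V(G)$ for $t = 2\delta^*(G) - 1 + \lceil\log\Delta(G)\rceil$. The obvious candidate is any degeneracy ordering $v_1, \dots, v_n$, namely one satisfying $|N_<(v_i)| \leq d := \delta^*(G)$ for all $i$.

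The first bullet of the $t$-strong definition (the case $N_>(v_i) = \emptyset$) is immediate from $|N_<(v_i)| \leq d \leq t$. For the second bullet, the crucial observation is that whenever $v_j \in N_>(v_i)$, the vertex $v_i$ itself lies in $N_<(v_j)$ but not in $N_{<v_i}(v_j)$. This forces $|N_{<v_i}(v_j)| \leq d - 1$ and hence
\[
\left|\{X \subseteq V(G) \mid \exists v_j \in N_>(v_i),\ X \subseteq N_{<v_i}(v_j)\}\right| \leq |N_>(v_i)| \cdot 2^{d-1}.
\]
Combining this with $|N_<(v_i)| \leq d$ and $|N_>(v_i)| \leq \Delta(G)$ yields an upper bound of roughly $2d - 1 + \log\Delta(G)$ on the left-hand side of the $t$-strong inequality.

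The only subtle point is that the $t$-strong condition demands a \emph{strict} inequality, so this bound has to be sharpened by one unit precisely when $\Delta(G)$ is a power of two. The key to closing this gap is that $a := |N_<(v_i)|$ and $b := |N_>(v_i)|$ jointly satisfy $a + b \leq \deg(v_i) \leq \Delta(G)$; since $d \geq 1$ (as $G$ has at least one edge), they cannot simultaneously saturate their individual bounds $a \leq d$ and $b \leq \Delta(G)$. A short two-case analysis, splitting on whether $b = \Delta(G)$ (which forces $a = 0$) or $b < \Delta(G)$ (in which case $\log b \leq \log(\Delta(G)-1) < \lceil\log\Delta(G)\rceil$), then gives the required strict inequality $a + \log b < d + \lceil\log\Delta(G)\rceil$. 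I expect this strictness verification, rather than any of the structural content, to be the main (and in fact only) delicate moment of the proof.
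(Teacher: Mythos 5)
Your proposal is correct and follows essentially the same route as the paper: take a degeneracy ordering, use the observation that $u \in N_<(v)\setminus N_{<u}(v)$ for $v \in N_>(u)$ to get $|N_{<u}(v)|\leq d-1$, verify the ordering is $t$-strong, and invoke Theorem~\ref{thm:generic_greedy_argument}. The only (cosmetic) difference is how strictness is secured: the paper counts the union of power sets as $1+\Delta(G)(2^{d-1}-1)<\Delta(G)2^{d-1}$ for $\Delta(G)\geq 2$ and treats $\Delta(G)=1$ separately, whereas you split on whether $|N_>(u)|=\Delta(G)$; both work.
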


\begin{proof}
    Let $d = \delta^*(G)$, let $t=2\delta^*(G)-1 + \lceil\log(\Delta(G))\rceil$, and 
    let $<$ be an ordering of $V(G)$ witnessing the fact that $\delta^*(G) \leq d$.
    If $\Delta(G)=1$, then one can easily check that $\diam(\mathcal{I}(G)) \leq 1 \leq 2d-1$.
    Now assume $\Delta(G) \geq 2$.
    For every vertex $u \in V(G)$, we have $|N_<(u)| \leq d$ by definition.
    Moreover, if $N_>(u) \neq \emptyset$ then $|\{X \subseteq V(G) \mid \exists v \in N_>(u), X \subseteq N_{<u}(v)\}| \leq 1 + \sum_{v \in N(u), u<v} (2^{|N_{<u}(v)|}-1) \leq 1+\Delta(G)(2^{d-1}-1)<\Delta(G)2^{d-1}$ (since $\Delta(G)\geq 2$).
    Finally, since $t\geq 2d-1 + \log(\Delta(G))$, the ordering $<$ is $t$-strong, and the result follows from Theorem~\ref{thm:generic_greedy_argument}.    
\end{proof}

\section{Tying the inversion diameter}\label{sec:carac}

The aim of this section is to prove that the inversion diameter of a graph behaves asymptotically in the same way as three other graph parameters which have been studied before. More precisely, we show the following result relating the inversion diameter of a graph to its star chromatic number, its acyclic chromatic number, and its oriented chromatic number.

\begin{theorem}\label{thm:tied}
The inversion diameter is tied to the star chromatic number, the acyclic chromatic number, and the oriented chromatic number.
\end{theorem}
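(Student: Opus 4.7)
The strategy is to reduce to a single parameter. Since $\chi_s$, $\chi_a$ and $\chi_o$ are already pairwise tied by the classical results of~\cite{Gru73,RaSo94,KSZ97}, it suffices to show that $\diam(\mathcal{I}(\cdot))$ is tied to any one of them; I choose $\chi_s$ and prove the two inequalities separately.

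To establish $\diam(\mathcal{I}(G))\leq\binom{\chi_s(G)}{2}$, fix a star $k$-colouring $c$ of $G$ with $k=\chi_s(G)$ and two orientations $\vec G_1,\vec G_2$. For each pair $i<j$ of colours, the bichromatic subgraph $G[V_i\cup V_j]$ is a star-forest whose star-components are vertex-disjoint and capture every edge of $G$ between $V_i$ and $V_j$. For every such component with centre $a$ and leaf set $L$, I place into a set $X_{ij}$ the vertex $a$ together with each leaf $\ell\in L$ on which $\vec G_1$ and $\vec G_2$ disagree, and I do nothing for components carrying no disagreement. Since distinct star-components share no vertex and $c$ is proper, the only edges of $G$ contained in $X_{ij}$ are the selected star-edges, so inverting $X_{ij}$ flips exactly the disagreements on $G[V_i\cup V_j]$. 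Doing this once per colour pair transforms $\vec G_1$ into $\vec G_2$ in at most $\binom{k}{2}$ steps.

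The converse bound $\chi_s(G)\leq f(\diam(\mathcal{I}(G)))$ is the main obstacle; I plan to bound $\chi_o(G)$ (or equivalently $\chi_a(G)$) by a function of $t=\diam(\mathcal{I}(G))$ and then invoke the classical ties. The central tool is Observation~\ref{obs:characterization_with_vectors}: for every $\pi\colon E(G)\to\mathbb F_2$ there is a labelling $\ell_\pi\colon V(G)\to\mathbb F_2^t$ with $\ell_\pi(u)\cdot\ell_\pi(v)=\pi(uv)$ on every edge. Fix a vertex order $<$ on $V(G)$ and the associated transitive reference orientation $\vec G_0$; for every orientation $\vec G$, applying the observation with $\pi$ the indicator of disagreements between $\vec G$ and $\vec G_0$ yields vectors $\mathbf u\in\mathbb F_2^t$ such that, on every edge $uv$, $\vec G$ has the arc $u\to v$ if and only if $(u<v)\oplus(\mathbf u\cdot\mathbf v=1)$. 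I would use $u\mapsto\mathbf u$ as a first approximation of a homomorphism into a bounded-size tournament and then refine it by attaching additional coordinates drawn from a proper $(2t+1)$-colouring of $G$, which exists because Theorem~\ref{theorem:high_ad_implies_high_diam} gives $\Mad(G)\leq 2t$ and hence $G$ is $2t$-degenerate.

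The hardest step is to engineer the extra coordinates so that between any pair of resulting colour classes all arcs of $\vec G$ go in the same direction: the raw vector labelling does not control this, because two vertices with the same vector may lie on opposite sides of $<$ relative to a common neighbour, producing arcs in both directions between the same vector-pair. I expect that resolving this ambiguity yields a bound of the form $\chi_o(G)\leq 2^{O(t)}$, which is sufficient to complete the tie via~\cite{RaSo94,KSZ97}; the quality of the bound is then sharpened in Section~\ref{sec:better}.
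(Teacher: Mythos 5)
Your first inequality, $\diam(\mathcal{I}(G))\leq\binom{\chi_s(G)}{2}$, is correct and is essentially the paper's argument (Lemma~\ref{lem:H-col} specialised to star forests, which have inversion diameter at most $1$): the one point to be careful about --- that an edge $uv$ with colours $i,j$ can only lie inside the set $X_{ij}$, so the $\binom{k}{2}$ inversions compose correctly --- is handled by your remark that the colouring is proper and the star components are vertex-disjoint.

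The second direction, however, contains a genuine gap, and it is exactly the step you flag as ``the hardest''. With your choice of reference orientation --- transitive with respect to an \emph{arbitrary} vertex order $<$ --- the ambiguity you describe is real and is \emph{not} removed by attaching coordinates from a proper $(2t+1)$-colouring $\phi$: two edges $uv$ and $u'v'$ with $\phi(u)=\phi(u')$, $\phi(v)=\phi(v')$, $\mathbf{u}=\mathbf{u'}$ and $\mathbf{v}=\mathbf{v'}$ can still satisfy $u<v$ but $v'<u'$, in which case they receive opposite orientations in $\vec{G}_0$ and hence opposite orientations in $\vec{G}$ after the (identical) reversals dictated by $\mathbf{u}\cdot\mathbf{v}$. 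So the product labelling is not an oriented colouring, and no further ``engineering of extra coordinates'' on top of this $\vec{G}_0$ will fix it. The missing idea (which is what the paper does in Theorem~\ref{thm:lower-chi_o}) is to let the proper colouring itself dictate the reference orientation: take $\phi\colon V(G)\to[2t+1]$ (which exists since $\chi(G)\leq\Mad(G)+1\leq 2t+1$ by Theorem~\ref{theorem:high_ad_implies_high_diam}) and define $\vec{G}_0$ by $uv\in A(\vec{G}_0)$ iff $\phi(u)<\phi(v)$. Then all $\vec{G}_0$-arcs between two classes $\phi^{-1}(c)$ and $\phi^{-1}(c')$ already point the same way, and if $X_1,\dots,X_t$ transform $\vec{G}_0$ into $\vec{G}$, the colouring $v\mapsto(\phi(v),\{i\mid v\in X_i\})$ reverses all arcs between any two of its classes simultaneously; this gives directly $\chi_o(G)\leq(2t+1)2^t$, without any appeal to Observation~\ref{obs:characterization_with_vectors}, and the tie then follows from the known equivalences as you intend.
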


Observe that Theorem \ref{thm:tied} directly implies Theorem \ref{grclasses}. In order to prove Theorem~\ref{thm:tied}, we first give some simple preliminary results on the inversion diameter of forests in Section~\ref{sec:for}. 
These observations will also be reused in some later sections. After, in Section~\ref{sec:relate}, we give the main proof of Theorem~\ref{thm:tied}.

\subsection{Forests}\label{sec:for}

\begin{theorem}\label{thm:L-arbre}
If $F$ is a forest, then $\diam ({\mathcal{I}}(F)) \leq 2$.  
\end{theorem}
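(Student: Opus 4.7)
The plan is to reduce the statement to Theorem~\ref{thm:generic_greedy_argument} by exhibiting a $2$-strong ordering of $V(F)$. Since the inequalities defining $t$-strongness involve only neighbors of each vertex, vertices in different connected components of $F$ impose no mutual constraints, so it suffices to construct such an ordering on each tree of $F$ separately.

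Fix a tree $T$ of $F$. I would root $T$ at an arbitrary vertex $r$ and list the vertices in an order $v_1,\dots,v_{|V(T)|}$ in which each non-root vertex appears after its parent, for instance a BFS order from $r$. With this choice, $N_<(u)$ consists of the parent of $u$ (or is empty if $u=r$), so $|N_<(u)|\leq 1$ for every vertex $u$.

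To verify the $2$-strong condition, I would take any $u$ with $N_>(u)\neq\emptyset$ and observe that every $w\in N_>(u)$ is a child of $u$ in the rooted tree; hence the neighbors of $w$ in $F$ are either $u$ itself or descendants of $w$, all of which appear no earlier than $u$ in the ordering. Therefore $N_{<u}(w)=\emptyset$ for every such $w$, and the set
\[
\{X\subseteq V(F)\mid \exists w\in N_>(u),\ X\subseteq N_{<u}(w)\}
\]
reduces to $\{\emptyset\}$, contributing $\log 1 = 0$. The first bullet of the $2$-strong definition becomes $|N_<(u)|+0\leq 1 < 2$. If instead $N_>(u)=\emptyset$, the second bullet just requires $|N_<(u)|\leq 2$, which we already have. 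Applying Theorem~\ref{thm:generic_greedy_argument} with $t=2$ then yields $\diam(\mathcal{I}(F))\leq 2$.

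I do not anticipate any substantive obstacle: the entire argument amounts to the observation that in a rooted tree, the parent-before-child ordering forces $N_{<u}(w)$ to be empty for every $w\in N_>(u)$, collapsing the exponential term in the $2$-strong inequality to a single element. As a sanity check, a direct construction through Observation~\ref{obs:characterization_with_vectors}---assigning a nonzero vector in $\mathbb{F}_2^2$ to each root and then, for every non-root vertex $v$ with parent $p$, picking a nonzero $\mathbf{v}\in\mathbb{F}_2^2$ with $\mathbf{v}\cdot\mathbf{p}=\pi(vp)$, which is always possible because the equation $\mathbf{p}\cdot\mathbf{x}=b$ has two solutions in $\mathbb{F}_2^2$, at most one of which is $\mathbf{0}$---would give the same bound via essentially the same reasoning.
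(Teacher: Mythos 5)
Your proof is correct, but it takes a genuinely different route from the paper. The paper argues directly and combinatorially: given two orientations $\vec{F}_1,\vec{F}_2$, it contracts the components of $(V(F),E_{\neq})$ to obtain a minor $H_{\neq}$ of $F$, which is again a forest and hence bipartite, and then lifts a bipartition of $H_{\neq}$ to two subsets $X_1,X_2$ of $V(F)$ whose inversion transforms $\vec{F}_1$ into $\vec{F}_2$; this yields the two inversion sets explicitly. You instead invoke the strong-degeneracy machinery of Theorem~\ref{thm:generic_greedy_argument}, observing that a parent-before-child ordering of each rooted tree is $2$-strong because $|N_<(u)|\leq 1$ and $N_{<u}(w)=\emptyset$ for every later neighbour $w$ of $u$, so the exponential term collapses to $\log 1=0$; your sanity-check via Observation~\ref{obs:characterization_with_vectors} is the same greedy vector assignment unrolled by hand, with the nonzero invariant on $\mathbf{v}$ playing exactly the role of condition~1 in that theorem's proof. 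Both arguments are sound and short. The paper's version is self-contained and produces the inverting sets concretely, which is occasionally useful elsewhere (e.g.\ in the proof of Theorem~\ref{thm:L-cycle}, which deletes a vertex of a cycle and reuses the explicit sets); yours shows the slightly stronger structural fact that forests are strongly $2$-degenerate and fits naturally into the paper's general framework. One small presentational point: when you reduce to a single tree you should note that concatenating (or arbitrarily interleaving) the per-tree orderings gives a global $2$-strong ordering, since all quantities in the definition are computed over neighbourhoods and every neighbour of a vertex lies in its own component --- you assert this and it is true, but it is the one place where a referee might ask you to spell it out.
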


\begin{proof}
    Let $\vec{F}_1$ and $\vec{F}_2$ be two orientations of a forest $F$. Recall that $E_{\neq}$ is the set of edges on which $\vec{F}_1$ and $\vec{F}_2$ disagree.
    Consider the graph $H_{\neq}$ whose vertices are connected components of $(V(F),E_{\neq})$ ,
    and where two such connected components $C$ and $C'$ are connected if and only if
    there is an edge $uv$ in $E(F)$ with $u \in C$ and $v \in C'$.
    Observe that $H_{\neq}$ is a minor of $F$, and so is a forest. In particular,
    $H_{\neq}$ is bipartite. Let $(X'_1, X'_2)$ be a bipartition of $H_{\neq}$ and let $(X_1,X_2)$ the partition of $V(F)$ where a vertex is contained in $X_i$ if it is contained in a component of $H_{\neq}$ that belongs to $X_i'$ for $i=1,2$.
    For every edge $uv$ in $E(F)$, we have $uv \in E_{\neq}$
    if and only if $\{u,v\} \subseteq X_1$ or $\{u,v\} \subseteq X_2$.
    Thus $\Inv(\vec{F}_1;(X_1,X_2))=\vec{F}_2$.
\end{proof}

We next give a simple characterisation of the graphs whose inversion diameter is at most $1$.
A {\bf star forest} is a graph whose connected components are all isomorphic to a star (including $K_1$ and $K_2$).
\begin{proposition}\label{prop:diam1}
    Let $G$ be a graph.
   \begin{itemize}
   \item[(i)]  $\diam(\mathcal{I}(G)) =0$ if and only if $G$ has no edges.
   \item[(ii)]  $\diam(\mathcal{I}(G))\leq 1$ if and only if $G$ is a star forest.
   \end{itemize}
\end{proposition}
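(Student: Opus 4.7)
For (i), a graph with $m$ edges admits $2^m$ distinct orientations, so $\mathcal{I}(G)$ has a single vertex precisely when $m=0$; thus $\diam(\mathcal{I}(G))=0$ if and only if $G$ has no edges.

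For (ii), the key combinatorial fact I would establish first is that $G$ is a star forest if and only if $G$ contains neither $P_4$ (the path on four vertices) nor $C_3$ (the triangle) as a subgraph. Indeed, a star forest is acyclic and all of its components have diameter at most $2$; conversely, any graph containing a cycle of length at least $4$ contains $P_4$, and any tree that is not a star has diameter at least $3$ and hence contains $P_4$.

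For the ``if'' direction of (ii), given two orientations $\vec{G}_1, \vec{G}_2$ of a star forest $G$, I would construct an inversion set $X$ component by component. In each star component with center $c$ and leaf set $L$, let $L_{\neq}\subseteq L$ consist of those leaves $\ell$ such that $\vec{G}_1$ and $\vec{G}_2$ disagree on the edge $c\ell$, and include $\{c\}\cup L_{\neq}$ in $X$ whenever $L_{\neq}\neq \emptyset$ (and nothing from this component otherwise). Since distinct stars are vertex-disjoint and every edge of $G$ lies inside one of them, the inversion of $X$ reverses precisely the edges of $E_{\neq}$, giving $\Inv(\vec{G}_1;X)=\vec{G}_2$.

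For the ``only if'' direction, I would argue by contrapositive using the characterization above. Suppose $G$ contains $P_4=v_1v_2v_3v_4$ as a subgraph. Starting from any orientation $\vec{G}_1$ of $G$, let $\vec{G}_2$ be obtained by reversing only the edges $v_1v_2$ and $v_3v_4$. Any single inversion $X$ realizing $\Inv(\vec{G}_1;X)=\vec{G}_2$ would require $\{v_1,v_2\}\subseteq X$ and $\{v_3,v_4\}\subseteq X$, hence $\{v_2,v_3\}\subseteq X$, which would reverse $v_2v_3$ too -- contradicting the fact that $\vec{G}_1$ and $\vec{G}_2$ agree on $v_2v_3$. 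If instead $G$ contains $C_3=v_1v_2v_3$, let $\vec{G}_2$ differ from $\vec{G}_1$ exactly on $v_1v_2$ and $v_1v_3$; the same type of argument forces $v_1,v_2,v_3\in X$, which also flips $v_2v_3$, a contradiction. In either case $\diam(\mathcal{I}(G))\geq 2$. No step presents a real obstacle; the only slightly subtle point is the combinatorial characterization of star forests used to dispatch the contrapositive into the $P_4$ and $C_3$ cases.
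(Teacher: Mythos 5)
Your proof is correct and follows essentially the same route as the paper: both reduce the ``only if'' direction of (ii) to the observation that a non-star-forest contains $P_4$ or $K_3$, and both exhibit a pair of orientations disagreeing on exactly two edges of that subgraph to force distance at least $2$. The only cosmetic difference is that you run the lower-bound argument directly inside $G$ (correctly using that a single inversion $X$ reverses $uv$ iff $u,v\in X$), whereas the paper invokes $\diam(\mathcal{I}(K_3))=2$ from Theorem~\ref{thm:multipartite} together with monotonicity of the inversion diameter under subgraphs.
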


\begin{proof}
    (i) holds trivially.
    Let us prove (ii).
    It is easy to see that if $G$ is a star-forest, then $\diam(\mathcal{I}(G)) \leq 1$.
    Assume now that $G$ is not a star forest.
    Then $F$ has a subgraph which is isomorphic to either the complete graph on three vertices $K_3$, or the path on four vertices $P_4$.
    Now $\diam(\mathcal{I}(K_3))=2$ by Theorem~\ref{thm:multipartite}, and $\diam(\mathcal{I}(P_4))=2$ since 
    two orientations of $P_4$ agreeing only on its middle edge are at distance $2$.
    Thus
    $\diam(\mathcal{I}(G)) \geq \min \{\diam(\mathcal{I}(K_3)),\diam(\mathcal{I}(P_4))\}  = 2$. 
\end{proof}

We now use Theorem~\ref{thm:L-arbre} and Proposition~\ref{prop:diam1} to completely determine the inversion diameter of forests.
\begin{corollary}\label{cor:forest}
    If $F$ is a forest, then
    \[
    \diam(\mathcal{I}(F)) = 
    \left\{
    \begin{array}{l l}
        0 & \text{ if }F\text{ has no edge},\\
        1 & \text{ if }F\text{ is a star forest with at least one edge},\\
        2 & \text{ otherwise.}
    \end{array}
    \right.
    \]
\end{corollary}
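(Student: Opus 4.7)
The plan is a short case analysis driven entirely by the two preceding results. Since $F$ is a forest, Theorem~\ref{thm:L-arbre} gives the universal upper bound $\diam(\mathcal{I}(F)) \leq 2$, so it suffices to identify, in each of the three listed cases, the matching lower bound via Proposition~\ref{prop:diam1}.

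First, if $F$ has no edge, then Proposition~\ref{prop:diam1}(i) yields $\diam(\mathcal{I}(F)) = 0$ immediately. Next, if $F$ is a star forest with at least one edge, then Proposition~\ref{prop:diam1}(ii) gives $\diam(\mathcal{I}(F)) \leq 1$, while Proposition~\ref{prop:diam1}(i) gives $\diam(\mathcal{I}(F)) \geq 1$ since $F$ has an edge; hence $\diam(\mathcal{I}(F)) = 1$.

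Finally, assume $F$ is not a star forest. Since $F$ is a forest (hence triangle-free), $F$ cannot contain $K_3$, but since it is not a disjoint union of stars, it must contain a path $P_4$ as a subgraph (any connected component that is not a star contains two adjacent vertices each of degree at least $2$, which together with two of their further neighbours yield a $P_4$). By Proposition~\ref{prop:diam1}(ii), the presence of this non-star-forest subgraph forces $\diam(\mathcal{I}(F)) \geq 2$, which combined with Theorem~\ref{thm:L-arbre} gives $\diam(\mathcal{I}(F)) = 2$.

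There is no real obstacle here: the only point that needs a sentence of justification is the claim that a forest that is not a star forest contains $P_4$, and this is routine. The corollary is essentially a packaging of Theorem~\ref{thm:L-arbre} and Proposition~\ref{prop:diam1}.
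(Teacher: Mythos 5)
Your proof is correct and follows exactly the route the paper intends: the corollary is stated there without further argument as an immediate consequence of Theorem~\ref{thm:L-arbre} (the upper bound $2$) and Proposition~\ref{prop:diam1} (the characterisations of diameter $0$ and $\leq 1$). The only superfluous step is your re-derivation of a $P_4$ inside a non-star-forest component: the contrapositive of Proposition~\ref{prop:diam1}(ii) already gives $\diam(\mathcal{I}(F))\geq 2$ directly, so that paragraph can be omitted.
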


\subsection{Proof of Theorem~\ref{thm:tied}}\label{sec:relate}

In this section, we prove that the inversion diameter of a graph is tied to its star chromatic number, its acyclic chromatic number, and its oriented chromatic number. We first show in Corollary~\ref{cor:chi_s} that the inversion diameter of a graph is bounded by functions of its star chromatic number and its acyclic chromatic number. We then show in Theorem~\ref{thm:lower-chi_o} that the oriented chromatic number of a graph is bounded by a function of its inversion diameter. Together with some previous results showing that the star chromatic number, the acyclic chromatic number, and the oriented chromatic number are tied, we obtain Theorem~\ref{thm:tied}.

For the first part, it turns out to be convenient to introduce the following slightly more general graph parameter.

An {\bf $\alpha$-ID $k$-colouring}  of a graph $G$ is a $k$-colouring such that the subgraph induced by any two colour classes has inversion diameter at most $\alpha$
A graph is {\bf $\alpha$-ID $k$-colourable} if it admits a  $\alpha$-ID $k$-colouring and
the {\bf $\alpha$-ID chromatic number} of a graph $G$, denoted by $\ID(G)$, is the smallest integer $k$ such that
$G$ is $\alpha$-ID $k$-colourable.

Note that an $\alpha$-ID $k$-colouring is not necessarily proper. The following result shows that the inversion diameter of graphs admitting a proper $\alpha$-ID $k$-colouring is bounded by a function depending only on $\alpha$ and $k$.

\begin{lemma}\label{lem:H-col}
    Let $G$ be a graph and $\alpha$ and $k$ positive integers such that $G$ admits a proper $\alpha$-ID $k$-colouring. Then 
    \[\diam(\mathcal{I}(G)) \leq \alpha \binom{k}{2}.
    \]
\end{lemma}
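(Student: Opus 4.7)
The plan is to use the proper colouring to decompose the edge set of $G$ and handle each piece with its own short sequence of inversions. Let $c : V(G) \to [k]$ be a proper $\alpha$-ID $k$-colouring with colour classes $C_1, \dots, C_k$. Given two orientations $\vec{G}_1, \vec{G}_2$ of $G$, the goal is to reach $\vec{G}_2$ from $\vec{G}_1$ using at most $\alpha \binom{k}{2}$ inversions.

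For each pair $1 \le i < j \le k$, consider the subgraph $G_{ij} = G\langle C_i \cup C_j\rangle$. By hypothesis, $\diam(\mathcal{I}(G_{ij})) \le \alpha$, so the restrictions of $\vec{G}_1$ and $\vec{G}_2$ to $G_{ij}$ are at distance at most $\alpha$ in $\mathcal{I}(G_{ij})$. Pick sets $X^{ij}_1, \dots, X^{ij}_\alpha \subseteq C_i \cup C_j$ whose successive inversions transform $\vec{G}_1[C_i \cup C_j]$ into $\vec{G}_2[C_i \cup C_j]$. Viewing each $X^{ij}_s$ as a subset of $V(G)$, we plan to apply all $\alpha \binom{k}{2}$ inversions to $\vec{G}_1$.

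The key observation, which makes the different pairs not interfere with each other, uses propriety of $c$. Consider an edge $uv \in E(G)$, with $u \in C_i$ and $v \in C_j$, so $i \neq j$. For any pair $\{i',j'\} \neq \{i,j\}$ and any $X \subseteq C_{i'} \cup C_{j'}$, we cannot have both $u \in X$ and $v \in X$, because this would require both $i$ and $j$ to lie in $\{i',j'\}$. Hence the inversions $X^{i'j'}_s$ with $\{i',j'\} \neq \{i,j\}$ do not flip $uv$, while the inversions $X^{ij}_1, \dots, X^{ij}_\alpha$ do take the orientation of $uv$ in $\vec{G}_1$ to its orientation in $\vec{G}_2$ (by the choice of these sets inside $G_{ij}$, recalling from the introduction that the result of a sequence of inversions depends only on the parity of each vertex's membership and not on the ordering). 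Therefore, after applying all $\alpha \binom{k}{2}$ inversions, every edge has its orientation in $\vec{G}_2$, yielding $\diam(\mathcal{I}(G)) \le \alpha \binom{k}{2}$.

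There is no real obstacle here: the only point one must be careful about is the non-interference between the inversions assigned to different colour pairs, and this follows immediately from the fact that a proper colouring forbids both endpoints of an edge from sharing a colour. Once this is observed, the claim is essentially a direct application of Observation~\ref{obs:characterization_with_vectors}'s underlying idea that inversions combine by XOR.
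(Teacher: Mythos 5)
Your proof is correct and follows essentially the same route as the paper's: decompose the edges by pairs of colour classes, apply the $\alpha$ inversions guaranteed for each induced bipair, and use properness of the colouring to show that inversions assigned to different pairs cannot both contain the two endpoints of any edge, so they do not interfere. No gaps.
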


\begin{proof}
    Let $\vec{G}_1$ and $\vec{G}_2$ be two orientations of $G$.
    Assume that $G$ admits a proper $\alpha$-ID $k$-colouring $c \colon V(G)\rightarrow [k].$ For every $i \in [k]$, let $S_i=c^{-1}(i)$.
    For every $i,j$ in $[k]$ such that $i<j$, let $V_{i,j} = S_i\cup S_j$.  By assumption,  there exists a family ${\cal X}_{i,j}$ of $\alpha$ subsets of $V_{i,j}$ whose inversion transforms
    $\vec{G}_1\langle V_{i,j}\rangle$ into $\vec{G}_2\langle V_{i,j}\rangle$. Let $\mathcal{X} = \bigcup_{1\leq i < j\leq k} {\cal X}_{i,j}$.
    We claim that 
    $\Inv(\vec{G}_1,\mathcal{X}) = \vec{G}_2$.
    
    To see that, consider an arc $uv$ of $\vec{G}_1$, and let $i=c(u)$ and $j=c(v)$.
    Note that $i\neq j$ because $c$ is a proper colouring.
    Then $uv$ is reversed only when inverting sets of ${\cal X}_{i,j}$.
    As inverting ${\cal X}_{i,j}$ transforms $\vec{G}_1\langle V_{i,j}\rangle$ into $\vec{G}_2\langle V_{i,j}\rangle$, it follows that 
    $uv$ has been inverted if and only if it belongs to $E_{\neq}$.
    In other words, we obtain $\Inv(\vec{G}_1;\mathcal{X}) = \vec{G}_2$.
\end{proof}

Using Lemma~\ref{lem:H-col} and the fact that star-forests and forests have inversion diameter at most $1$ and $2$ respectively by Corollary~\ref{cor:forest}, we deduce the following.

\begin{corollary}\label{cor:chi_s}\label{cor:L-acyclic}
Let $G$ be a graph. Then 
$\displaystyle 
\diam(\mathcal{I}(G)) \leq \binom{\chi_s(G)}{2}
$  and
    $\displaystyle 
\diam(\mathcal{I}(G)) \leq 2\binom{\chi_a(G)}{2}
$.
\end{corollary}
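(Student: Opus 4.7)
The proof is essentially a direct combination of the two tools already established in the preceding paragraphs, so my plan is simply to instantiate Lemma~\ref{lem:H-col} with the appropriate value of $\alpha$ for each of the two statements.

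For the first bound, I would start from an optimal star colouring $c\colon V(G)\to[\chi_s(G)]$. By definition of $\chi_s$, the subgraph induced by any two colour classes is a star-forest, and by Corollary~\ref{cor:forest} every star-forest has inversion diameter at most $1$. Hence $c$ is a proper $1$-ID $\chi_s(G)$-colouring of $G$, and Lemma~\ref{lem:H-col} applied with $\alpha = 1$ and $k = \chi_s(G)$ yields $\diam(\mathcal{I}(G)) \le \binom{\chi_s(G)}{2}$.

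For the second bound, I would proceed identically with an optimal acyclic colouring $c\colon V(G)\to[\chi_a(G)]$. The union of any two colour classes induces a forest, which by Corollary~\ref{cor:forest} has inversion diameter at most $2$. Thus $c$ is a proper $2$-ID $\chi_a(G)$-colouring, and Lemma~\ref{lem:H-col} with $\alpha=2$ and $k=\chi_a(G)$ gives $\diam(\mathcal{I}(G)) \le 2\binom{\chi_a(G)}{2}$.

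There is no real obstacle here: once Lemma~\ref{lem:H-col} has been proved and the inversion diameter of forests and star-forests has been pinned down in Corollary~\ref{cor:forest}, the corollary follows by observing that the colourings witnessing $\chi_s(G)$ and $\chi_a(G)$ are, by definition, proper $\alpha$-ID colourings for $\alpha=1$ and $\alpha=2$ respectively.
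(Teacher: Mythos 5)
Your proposal is correct and is exactly the paper's argument: the paper derives this corollary in one line by combining Lemma~\ref{lem:H-col} with the fact (Corollary~\ref{cor:forest}) that star-forests and forests have inversion diameter at most $1$ and $2$, respectively, which is precisely your instantiation with $\alpha=1$ and $\alpha=2$. Nothing is missing.
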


We now show that the oriented chromatic number of a graph is upper bounded by a function of its inversion diameter.

\begin{theorem}\label{thm:lower-chi_o}
    Let $G$ be a graph with inversion diameter $\ell$. Then $\chi_o(G) \leq (2\ell+1)2^\ell$.
\end{theorem}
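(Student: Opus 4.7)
The plan is to combine two ingredients: an ordinary proper colouring of $G$, and the algebraic vector encoding supplied by Observation~\ref{obs:characterization_with_vectors}. Fix any orientation $\vec{G}$ of $G$; I want to produce a homomorphism from $\vec{G}$ to a tournament on at most $(2\ell+1)2^\ell$ vertices. First, I would use Theorem~\ref{theorem:high_ad_implies_high_diam} to deduce $\Mad(G) \leq 2\ell$, hence $\delta^*(G) \leq \Mad(G) \leq 2\ell$, hence $\chi(G) \leq 2\ell+1$. Let $c \colon V(G) \to [2\ell+1]$ be a proper colouring, and let $\vec{G}_0$ be the acyclic orientation in which every edge $uv$ is oriented from the endpoint of smaller $c$-value to the endpoint of larger $c$-value.

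Next, define $\pi \colon E(G) \to \mathbb{F}_2$ by $\pi(uv) = 0$ if $\vec{G}$ and $\vec{G}_0$ agree on $uv$, and $1$ otherwise. Since $\diam(\mathcal{I}(G)) \leq \ell$, Observation~\ref{obs:characterization_with_vectors} furnishes vectors $(\mathbf{v}_u)_{u \in V(G)}$ in $\mathbb{F}_2^\ell$ with $\mathbf{v}_u \cdot \mathbf{v}_v = \pi(uv)$ for every edge $uv$. I then assign to each vertex $u$ the label $\phi(u) = (c(u), \mathbf{v}_u)$, which takes at most $(2\ell+1)2^\ell$ values.

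Finally I would verify that $\phi$ realises $\vec{G}$ as a homomorphism into a tournament on the image of $\phi$. Properness follows immediately from properness of $c$: adjacent vertices have distinct $c$-values, hence distinct labels. For the oriented chromatic number condition, take two edges $u_1u_2$ and $v_1v_2$ of $\vec{G}$ with $\phi(u_i) = \phi(v_i) = (c_i, \mathbf{w}_i)$ for $i=1,2$. Since $c_1 \neq c_2$, say $c_1 < c_2$, both edges are oriented from the $c_1$-endpoint to the $c_2$-endpoint in $\vec{G}_0$, and in $\vec{G}$ each of them is reversed from $\vec{G}_0$ if and only if $\mathbf{w}_1 \cdot \mathbf{w}_2 = 1$; so they point the same way in $\vec{G}$. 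This yields a well-defined tournament structure on $\phi(V(G))$ extending $\phi$ to an oriented-graph homomorphism, proving $\chi_o(\vec{G}) \leq (2\ell+1)2^\ell$. The only nontrivial step is the density bound giving $\chi(G) \leq 2\ell+1$; everything else is a bookkeeping check once the vector encoding is aligned with a compatible reference orientation.
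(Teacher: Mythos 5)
Your proposal is correct and follows essentially the same route as the paper: bound $\chi(G)\leq 2\ell+1$ via Theorem~\ref{theorem:high_ad_implies_high_diam}, orient along the colouring to get a reference orientation $\vec{G}_0$, and refine each colour class by the inversion data relating $\vec{G}_0$ to the target orientation. The only cosmetic difference is that you package that inversion data as the vectors of Observation~\ref{obs:characterization_with_vectors} rather than as the sets $X_1,\dots,X_\ell$ themselves, which are equivalent by that very observation.
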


\begin{proof}
    Let $G$ be a graph with inversion diameter $\ell$.
    Let $\vec{G}_1$ be an orientation of $G$. We will show that $\vec{G}_1$ has a homomorphism
    to an orientation of $K_{(2\ell+1)2^\ell}$.
    
    Since $\chi(G)\leq \Mad(G)+1$ and $\Mad(G) \leq 2\ell$ by Theorem~\ref{theorem:high_ad_implies_high_diam}, $G$ has a proper $(2\ell+1)$-colouring 
    $\phi: V(G) \to [2\ell+1]$.
    Let $\vec{G}_0$ be the orientation of $G$ such that $uv \in A(\vec{G}_0)$ if and only if
    $\phi(u)<\phi(v)$.
    By definition of $\ell$, there exists a sequence $X_1, \dots, X_\ell$ of subsets of $V(G)$
    such that $\Inv(\vec{G}_0, \{X_1, \dots, X_\ell\}) = \vec{G}_1$.
    We define the colouring $\psi$ as follows:  for every vertex $v$ in $G$, $\psi(v) = (\phi(v) , \{i \in [\ell], v \in X_i\})$.
    Clearly, this is a proper $(2\ell+1)2^\ell$-colouring of $G$.
   Further, for $c \in [2 \ell+1]$ and $I \subseteq [\ell]$, let $V_{(c,I)}$ be the set of vertices coloured $(c,I)$. 
    We claim that, in $\vec{G}_1$, all the arcs between $V_{(c,I)}$ and $V_{(c',I')}$ are either all oriented towards  $V_{(c,I)}$ or all oriented towards  $V_{(c',I')}$ for all $c,c' \in [2 \ell+1]$ and $I,I' \subseteq [\ell]$.
    To see that, observe that those arcs are in the same direction in 
    $\vec{G}_0$, that is from $\phi^{-1}(c)$ to $\phi^{-1}(c')$ if $c<c'$, and that they are always reversed together.
    
    Therefore $\psi$ is an oriented colouring of $\vec{G}_1$. As  $\vec{G}_1$ was arbitrary, we get $\chi_o(G) \leq (2\ell+1)2^\ell$.
\end{proof}

Corollary~\ref{cor:chi_s}, Theorem~\ref{thm:lower-chi_o}, and the fact that
$\chi_s$, $\chi_a$, and $\chi_o$ are tied~\cite{Gru73, RaSo94, KSZ97} imply Theorem~\ref{thm:tied}.

\section{Better bounds in terms of \texorpdfstring{$\chi_s$, $\chi_a$ and $\chi_o$}{chis, chia and chio}}\label{sec:better}

The objective of this section is to quantitatively improve on some of the results obtained in Section~\ref{sec:carac}. In Theorem~\ref{thm:col-acy-gen}, we give an improvement of Lemma~\ref{lem:H-col} in the case that $\alpha \geq 7$. An application of this new result immediately gives an improvement on Corollary~\ref{cor:chi_s}, hence yielding a better understanding of the relationship of the inversion diameter and the parameters $\chi_s(G)$ and $\chi_a(G)$. Further, in Theorem~\ref{thm:diam_L_leq_chi_o_square}, we give a better upper bound on the inversion diameter in terms of $\chi_o$.


For convenience, for a graph parameter $\gamma$ and an integer $k$, we define $M(\gamma \leq k)$ as the maximum inversion diameter over all graphs $G$ such that $\gamma(G) \leq k$ if such a maximum exists, and $+\infty$ otherwise.

\medskip

The main technical contribution for the proof of Theorem~\ref{thm:col-acy-gen} is contained in the following lemma. 
Roughly speaking, we improve on Lemma~\ref{lem:H-col}, by executing simultaneously some of the operations which were executed consecutively before.
\begin{lemma}\label{lem:recur-ID}
$M (\ID\leq  t)\leq  M (\ID\leq  \lceil\frac{t}{2}\rceil)+\alpha \lceil\frac{t}{2}\rceil\lfloor\frac{t}{2}\rfloor$ for every nonnegative integer $t$.   
\end{lemma}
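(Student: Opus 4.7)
The plan is to invoke the vector characterisation of Observation~\ref{obs:characterization_with_vectors} to build directly a family of vectors of the claimed dimension, rather than applying Lemma~\ref{lem:H-col} twice. Let $G$ be a graph admitting an $\alpha$-ID $t$-colouring $c\colon V(G) \to [t]$. I split the colours into $A$ of size $p := \lceil t/2 \rceil$ and $B$ of size $q := \lfloor t/2 \rfloor$, and set $S_i = c^{-1}(i)$, $V_A = c^{-1}(A)$, $V_B = c^{-1}(B)$. The restriction of $c$ to $V_A$ (resp.\ $V_B$) is an $\alpha$-ID colouring with at most $p$ colours, so both $G\langle V_A\rangle$ and $G\langle V_B\rangle$ have inversion diameter at most $r := M(\ID \leq \lceil t/2\rceil)$.

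Given an arbitrary $\pi \colon E(G) \to \mathbb{F}_2$, I will exhibit vectors $(\mathbf{u})_{u \in V(G)}$ in $\mathbb{F}_2^{r + \alpha pq}$ with $\mathbf{u} \cdot \mathbf{v} = \pi(uv)$ on every edge. Each vector is composed of one block of $r$ \emph{internal} coordinates plus, for every pair $(i,j) \in A \times B$, one \emph{bipartite} block of $\alpha$ coordinates. Using Observation~\ref{obs:characterization_with_vectors} on $G\langle V_A\rangle$ and $G\langle V_B\rangle$ with the restrictions of $\pi$, I obtain internal vectors $(\mathbf{a}_u)_{u \in V_A}, (\mathbf{b}_v)_{v \in V_B} \subset \mathbb{F}_2^r$ realising $\pi$ inside each half. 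For $u \in S_i \subseteq V_A$ I force the $(i',j')$-bipartite block of $\mathbf{u}$ to vanish whenever $i' \neq i$; dually, for $v \in S_j \subseteq V_B$, it vanishes whenever $j' \neq j$. As a consequence, the $(i,j)$-block is the only bipartite block that can survive when computing the product of a $V_A$-vector by a $V_B$-vector.

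To fill the remaining bipartite blocks, for each $(i,j) \in A \times B$ I define $\tau_{i,j}\colon E(G\langle S_i \cup S_j\rangle) \to \mathbb{F}_2$ by $\tau_{i,j}(xy) := \pi(xy) + \mathbf{a}_x \cdot \mathbf{b}_y$ when $x \in S_i$ and $y \in S_j$, and $\tau_{i,j}(xy) := 0$ on edges internal to $S_i$ or to $S_j$. Since $G\langle S_i \cup S_j\rangle$ has inversion diameter at most $\alpha$ by hypothesis, Observation~\ref{obs:characterization_with_vectors} yields vectors $(\mathbf{w}^{(i,j)}_x)_{x \in S_i \cup S_j}$ in $\mathbb{F}_2^\alpha$ realising $\tau_{i,j}$; I install them as the $(i,j)$-bipartite block of the corresponding vertices.

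Verification splits into three cases. For a bipartite edge $uv$ with $c(u)=i, c(v)=j$ only the $(i,j)$-block contributes, giving $\mathbf{u} \cdot \mathbf{v} = \mathbf{a}_u \cdot \mathbf{b}_v + \tau_{i,j}(uv) = \pi(uv)$. For an edge $uv$ inside $V_A$ with $c(u) \neq c(v)$ every bipartite coordinate is zero on at least one endpoint, so $\mathbf{u} \cdot \mathbf{v} = \mathbf{a}_u \cdot \mathbf{a}_v = \pi(uv)$, and the $V_B$-case is symmetric. The delicate case --- the main obstacle, and the reason for setting $\tau_{i,j} \equiv 0$ on internal edges --- is an edge $uv$ lying inside a single class $S_i$, which can occur when $c$ is not proper; here the bipartite contribution collapses to $\sum_{j \in B} \mathbf{w}^{(i,j)}_u \cdot \mathbf{w}^{(i,j)}_v = \sum_{j \in B} \tau_{i,j}(uv) = 0$, so once again $\mathbf{u} \cdot \mathbf{v} = \pi(uv)$. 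Applying Observation~\ref{obs:characterization_with_vectors} yields $\diam(\mathcal{I}(G)) \leq r + \alpha pq$; taking the supremum over all $G$ with $\alpha$-ID chromatic number at most $t$ proves the lemma.
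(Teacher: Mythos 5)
Your proof is correct. It uses the same high-level decomposition as the paper --- split the $t$ colour classes into two halves of sizes $\lceil t/2\rceil$ and $\lfloor t/2\rfloor$, recurse on each half, and spend $\alpha$ extra units on each of the $\lceil t/2\rceil\lfloor t/2\rfloor$ cross-pairs --- but the implementation is genuinely different. The paper works directly with inversions: it first applies the $p$ inversions for the two halves simultaneously (taking unions $X_j^1\cup X_j^2$), then processes the cross-pairs $(i_k,j_k)$ \emph{sequentially}, each step being computed relative to the current orientation $D_{k-1}$; correctness for edges internal to a class $S_i$ then requires the ``take the largest $k$ with $i\in\{i_k,j_k\}$'' argument, since later steps repeatedly disturb and re-fix such edges. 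You instead work statically in the vector picture of Observation~\ref{obs:characterization_with_vectors}, with an internal block and one $\alpha$-dimensional block per cross-pair, zeroed out on all but the relevant classes so that distinct blocks cannot interfere. Your correction term $\tau_{i,j}(xy)=\pi(xy)+\mathbf{a}_x\cdot\mathbf{b}_y$ plays the role of the paper's ``invert relative to $D_{k-1}$'', and your choice $\tau_{i,j}\equiv 0$ on edges internal to $S_i$ or $S_j$ (needed because $\alpha$-ID colourings need not be proper) cleanly replaces the paper's last-touching-inversion argument: each bipartite block contributes $0$ on internal edges, so the internal block alone realises $\pi$ there. The two proofs yield the same bound; yours trades the paper's order-dependence for a slightly more elaborate vector bookkeeping, and is arguably easier to verify edge by edge.
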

\begin{proof}
Let $G$ be a graph with $\ID(G) \leq t$.   
    Let $\vec{G_1}$ and $\vec{G_2}$ be two orientations of $G$ that are at distance $\diam ({\mathcal{I}}(G))$ in ${\mathcal{I}}(G)$. By assumption, there exists a partition $(S_1,\ldots,S_t)$ of $V(G)$ such that $\diam(\mathcal{I}(G\langle S_i\cup S_j\rangle))\leq \alpha$ holds for every $\{i,j\}\subseteq [t]$. Let $H_1=G\langle S_1\cup\ldots \cup S_{\lceil\frac{t}{2}\rceil}\rangle$ and $H_2=G\langle S_{\lceil\frac{t}{2}\rceil+1}\cup \ldots \cup S_t\rangle$. 
    Clearly, $\ID(H_1) \leq \lceil\frac{t}{2}\rceil$ for all $i\in [2]$. Set $p= M (\ID\leq  \lceil\frac{t}{2}\rceil)$.
    Thus, for each $i\in [2]$, there exists a collection of sets $X_1^{i},\ldots,X_p^{i}\subseteq V(H_i)$ such that $\Inv(\vec{G_1}\langle V(H_i)\rangle;(X^{i}_j)_{j \in [p]})=\vec{G_2}\langle V(H_i)\rangle$. 
    For all $j\in [p]$, we set $X_j=X_j^1\cup X_j^2$.
    Let $D_0 = \Inv(\vec{G_1};(X_i)_{i \in [p]})$.
    Observe that $D_0$ and $\vec{G}_2$ agree on the edges of $E(H_1)\cup E(H_2)$.
    
    Set $q=\lceil\frac{t}{2}\rceil\lfloor\frac{t}{2}\rfloor$, and let $(i_1,j_1),\ldots,(i_q,j_q)$ be an arbitrary ordering of $[\lceil\frac{t}{2}\rceil]\times [\lfloor\frac{t}{2}\rfloor]$. 
    Now for $k=1,\dots,q$, we iteratively construct $\alpha$-tuples $(Y^\ell_k)_{\ell\in [\alpha]}$, of subsets of $S_{i_k}\cup S_{j_k}$ such that $D_k=\Inv(D_{k-1}; (Y^\ell_k)_{\ell\in [\alpha]})$ agrees with $\vec{G}_2$ on 
$E(G\langle S_{i_k}\cup S_{j_k} \rangle)$. This is possible  because $\diam({\mathcal{I}}(G\langle S_{i_k}\cup S_{j_k} \rangle) \leq \alpha$ by assumption.

    We claim that $D_q=\vec{G_2}$. Consider some $uv \in E(G)$ with $u \in S_i$ and $v \in S_{i'}$ for some $i,i' \in [t]$.  
    First suppose that $i=i'$. Let $k \in [q]$ be the largest index such that $i \in \{i_k,j_k\}$. Then, by construction of $Y_k^1,\ldots,Y_k^{\alpha}$, we have that $D_k$ and $\vec{G_2}$ agree on $uv$.
    As $\{u,v\}\cap (Y_{k'}^1\cup \ldots \cup Y_{k'}^{\alpha})=\emptyset$ for all $k' \in \{k+1,\ldots,q\}$, we obtain that $D_q$ and $\vec{G_2}$ agree on $uv$. 
    Now suppose that $i$ and $i'$ are distinct and $i,i' \in [\lceil\frac{t}{2}\rceil]$. 
    Then by definition of $X_1,\ldots,X_p$, we have that $D_0$ and $\vec{G_2}$ agree on $uv$.
    Next, for every $k \in [q]$, at least one of $u,v$ is not in $S_{i_k}\cup S_{j_k}$ and so not in any $Y_k^\ell$ for all $k \in [q]$ and all $\ell \in [\alpha]$. Hence, an immediate induction yields that $D_q$ and $\vec{G_2}$ agree on $uv$.
    Similarly, if $i$ and $i'$ are distinct and $i,i' \in \{\lceil\frac{t}{2}\rceil+1,\ldots,t\}$  we get that $D_q$ and $\vec{G_2}$ agree on $uv$.
    Finally consider the case $i \in [\lceil\frac{t}{2}\rceil]$ and $i' \in \{\lceil\frac{t}{2}\rceil+1,\ldots,t\}$. 
    Then there is some $k \in [q]$ such that $\{i,i'\}=\{i_k,j_k\}$. 
    By construction of $Y_k^1,\ldots,Y_k^{\alpha}$, $D_k$ and $\vec{G_2}$ agree on $uv$. 
    Further, for all $k' \in \{k+1,\ldots,q\}$, there exists some $w \in \{u,v\}$ such that $w \notin S_{i_{k'}}\cup S_{j_{k'}}$. It follows that $\{u,v\}$ is not a subset of $Y_{k'}^\ell$ for all $k' \in \{k+1,\ldots,q\}$ and all $\ell \in [\alpha]$. 
     Hence, $D_q$ and $\vec{G_2}$ agree on $uv$. 
    This shows that $D_q$ and $\vec{G_2}$ agree on all edges, so
    $\Inv(\vec{G_1};(X_1,\ldots,X_p,(Y_{k}^{\ell})_{k \in [q],\ell \in [\alpha]})) = D_q=\vec{G_2}$.
    
    Therefore $\diam ({\mathcal{I}}(G)) \leq p+\alpha q = M (\ID\leq  \lceil\frac{t}{2}\rceil)+\alpha \lceil\frac{t}{2}\rceil\lfloor\frac{t}{2}\rfloor$.
    As $G$ was arbitrary, we get $M (\ID\leq  t)\leq  M (\ID\leq  \lceil\frac{t}{2}\rceil)+\alpha \lceil\frac{t}{2}\rceil\lfloor\frac{t}{2}\rfloor$. 
\end{proof}

We are now ready to prove the following result which improves on Lemma~\ref{lem:H-col}.

\begin{theorem}\label{thm:col-acy-gen}
Let $\alpha$ be a positive integer. 
Then 
$M (\ID\leq  t) \leq \frac{\alpha}{3} t^2+\frac{\alpha}{3}t + \frac{\alpha}{3}$ for all $t\geq 1$.
\end{theorem}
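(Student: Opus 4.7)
The plan is to prove this by strong induction on $t$, with Lemma~\ref{lem:recur-ID} providing the recurrence that drives the argument. The key is to guess the right quadratic ansatz $g(t) = \frac{\alpha}{3}(t^2 + t + 1)$ and verify that it is preserved under the halving step $g(t) \geq g(\lceil t/2\rceil) + \alpha\lceil t/2\rceil\lfloor t/2 \rfloor$.

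For the base case $t=1$, I would observe that an $\alpha$-ID $1$-colouring forces $G$ itself to have inversion diameter at most $\alpha$ (apply the defining condition with the single colour class paired with itself), so $M(\alpha\text{-ID} \leq 1) \leq \alpha = \frac{\alpha}{3}(1^2+1+1)$ as required. For the inductive step $t \geq 2$, I would apply Lemma~\ref{lem:recur-ID} to obtain
\[
M(\alpha\text{-ID} \leq t) \;\leq\; M(\alpha\text{-ID} \leq \lceil t/2\rceil) + \alpha \lceil t/2\rceil \lfloor t/2\rfloor,
\]
and use the induction hypothesis on $\lceil t/2\rceil \leq t-1$.

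The verification splits into the parity cases. If $t = 2s$ is even, then $\lceil t/2\rceil = \lfloor t/2\rfloor = s$, and the right-hand side is at most $\frac{\alpha}{3}(s^2+s+1) + \alpha s^2 = \frac{\alpha}{3}(4s^2+s+1)$, which is strictly less than $\frac{\alpha}{3}(4s^2+2s+1) = \frac{\alpha}{3}(t^2+t+1)$, with slack linear in $s$. If $t = 2s+1$ is odd, then $\lceil t/2\rceil = s+1$ and $\lfloor t/2\rfloor = s$, and applying the hypothesis to $s+1$ gives
\[
\tfrac{\alpha}{3}\bigl((s{+}1)^2+(s{+}1)+1\bigr) + \alpha s(s{+}1) \;=\; \tfrac{\alpha}{3}(s^2+3s+3) + \tfrac{\alpha}{3}(3s^2+3s) \;=\; \tfrac{\alpha}{3}(4s^2+6s+3),
\]
which equals $\frac{\alpha}{3}\bigl((2s{+}1)^2+(2s{+}1)+1\bigr) = \frac{\alpha}{3}(t^2+t+1)$ exactly.

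There is no real obstacle here beyond selecting the correct quadratic form: the odd case being tight is precisely what forces the leading coefficient to be $\alpha/3$ and the specific shape $t^2+t+1$, and this is how one is led to the ansatz in the first place (by writing $g(t) = at^2 + bt + c$ and solving for the coefficients that yield equality in $g(2s+1) = g(s+1) + \alpha s(s+1)$). Once this ansatz is in place, the induction is a direct calculation.
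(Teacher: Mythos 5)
Your proof is correct and follows essentially the same route as the paper: induction on $t$ driven by Lemma~\ref{lem:recur-ID}, a base case read off from the definition of an $\alpha$-ID colouring, and the same parity split with the odd case holding with equality. The only cosmetic difference is that you anchor the induction at $t=1$ and run the step for all $t\geq 2$ (the paper uses $t=1,2$ as base cases), and your algebra in both parity cases checks out.
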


\begin{proof}
    
    
    \medskip
    Let us now prove $M (\ID\leq  t) \leq \frac{\alpha}{3} t^2+\frac{\alpha}{3}t +\frac{\alpha}{3}$ by induction on $t$.
    It is clear from the definitions that $M (\ID\leq  1) \leq M (\ID\leq  2)\leq \alpha$.
    Assume now $t \geq 4$. 
    By Lemma~\ref{lem:recur-ID} and the induction hypothesis, we have

    \begin{align*}
        M (\ID\leq  t) & \leq  M\left(\ID\leq \left\lceil\frac{t}{2}\right\rceil\right)+ \alpha\left\lceil\frac{t}{2}\right\rceil\left\lfloor\frac{t}{2}\right\rfloor \\
                      & \leq  \frac{\alpha}{3}\left\lceil\frac{t}{2}\right\rceil^2+\frac{\alpha}{3}\left\lceil\frac{t}{2}\right\rceil +\frac{\alpha}{3} + \alpha\left\lceil\frac{t}{2}\right\rceil\left\lfloor\frac{t}{2}\right\rfloor. 
    \intertext{If $t$ is even, we obtain }       
    M (\ID\leq  t) 
    & \leq  \frac{\alpha}{3}\frac{t^2}{4}+\frac{\alpha}{3}\frac{t}{2} +\frac{\alpha}{3} + \alpha \frac{t^2}{4} \\
    & \leq  \frac{\alpha}{3}t^2+\frac{\alpha t}{6} +\frac{\alpha}{3} \\
    & \leq  \frac{\alpha}{3}t^2+\frac{\alpha t}{3}+\frac{\alpha}{3}.
    \intertext{If $t$ is odd, we obtain }
    M (\ID\leq  t) & \leq   \frac{\alpha}{3}\left(\frac{t+1}{2}\right)^2+\frac{\alpha}{3}\frac{t+1}{2} +\frac{\alpha}{3} + \alpha \frac{t+1}{2}\frac{t-1}{2}\\
    & =  \frac{\alpha}{3}t^2+\frac{\alpha t}{3} +\frac{\alpha}{3}.
    \end{align*}   

\end{proof}

The following two results improving on Corollary~\ref {cor:chi_s} can immediately be obtained from Theorem~\ref{thm:col-acy-gen}.
\begin{corollary}\label{cor:chi_s-2}\label{cor:L-acyclic-2}
    For every $t \geq 2$, we have $\displaystyle M(\chi_s\leq t) \leq \frac{1}{3}t^2 + \frac{1}{3}t +  \frac{1}{3}$ and
    $M(\chi_a\leq t) \leq \frac{2}{3}t^2 + \frac{2}{3}t + \frac{2}{3}$.
\end{corollary}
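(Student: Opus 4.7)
The plan is to derive both bounds as immediate consequences of Theorem~\ref{thm:col-acy-gen}, applied with $\alpha=1$ and $\alpha=2$ respectively. The main point is to observe that a star colouring is essentially a $1$-ID colouring, and that an acyclic colouring is essentially a $2$-ID colouring.

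First, I would show that $\text{1-ID}(G) \leq \chi_s(G)$ for every graph $G$. Indeed, if $\phi$ is a proper star colouring of $G$ using $\chi_s(G)$ colours, then for any two colour classes $C_i, C_j$, the subgraph $G\langle C_i \cup C_j\rangle$ is a star forest, and by Proposition~\ref{prop:diam1}(ii), $\diam(\mathcal{I}(G\langle C_i \cup C_j\rangle)) \leq 1$. Hence $\phi$ is a $1$-ID colouring. Applying Theorem~\ref{thm:col-acy-gen} with $\alpha = 1$ gives, for every $G$ with $\chi_s(G) \leq t$,
\[
\diam(\mathcal{I}(G)) \leq M(\text{1-ID} \leq t) \leq \tfrac{1}{3}t^2 + \tfrac{1}{3}t + \tfrac{1}{3},
\]
which yields the first bound.

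Second, I would observe analogously that $\text{2-ID}(G) \leq \chi_a(G)$. If $\phi$ is a proper acyclic colouring of $G$ using $\chi_a(G)$ colours, then for any two colour classes $C_i, C_j$, the subgraph $G\langle C_i \cup C_j\rangle$ is a forest, and by Theorem~\ref{thm:L-arbre}, $\diam(\mathcal{I}(G\langle C_i \cup C_j\rangle)) \leq 2$. Hence $\phi$ is a $2$-ID colouring. Applying Theorem~\ref{thm:col-acy-gen} with $\alpha = 2$ gives the second bound in exactly the same way.

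There is no real obstacle here; the work has already been done in Theorem~\ref{thm:col-acy-gen}, and the only thing to verify is that the two standard colouring notions specialize to the $\alpha$-ID framework with $\alpha \in \{1, 2\}$, which is immediate from the forest/star-forest inversion-diameter bounds established in Section~\ref{sec:for}.
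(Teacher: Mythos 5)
Your proposal is correct and is exactly the paper's (implicit) argument: the paper derives the corollary "immediately" from Theorem~\ref{thm:col-acy-gen}, and your write-up simply makes explicit the two facts it relies on, namely that a proper star colouring is a $1$-ID colouring (via Proposition~\ref{prop:diam1}) and a proper acyclic colouring is a $2$-ID colouring (via Theorem~\ref{thm:L-arbre}).
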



We now turn our attention to the task of finding better upper bounds on the inversion diameter in terms of $\chi_o$. 
Using the above-mentioned upper bound of $\chi_o$ in terms of $\chi_a$ due to Kostochka, Sopena, and Zhu~\cite{KSZ97},
with Corollary~\ref{cor:chi_s} or Corollary~\ref{cor:chi_s-2}, we get that $M(\chi_o\leq t) = \bigO \left( t^{6+ 2 \lceil 
\log \log t \rceil}\right)$.
We now establish a much better upper bound as a function of $\chi_o$.

\begin{theorem}\label{thm:diam_L_leq_chi_o_square} $M(\chi_o\leq t) \leq t^2-1$.
\end{theorem}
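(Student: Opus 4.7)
The plan is to reduce the computation of the distance between any two orientations of $G$ in $\mathcal{I}(G)$ to the inversion diameter of a small auxiliary graph on at most $t^2$ vertices, and then invoke Theorem~\ref{thm:diam-upper}.

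Let $\vec{G}_1, \vec{G}_2$ be two orientations of $G$. Since $\chi_o(\vec{G}_k) \leq \chi_o(G) \leq t$ for $k \in \{1,2\}$, each $\vec{G}_k$ admits a homomorphism $\phi_k \colon \vec{G}_k \to T_k$ for some tournament $T_k$ whose vertex set I may take to be contained in $[t]$. I form the joint labelling $\psi \colon V(G) \to [t]^2$ defined by $\psi(v) = (\phi_1(v), \phi_2(v))$, and set $W_{ij} = \psi^{-1}((i,j))$ for each $(i,j) \in [t]^2$. Since both $\phi_1$ and $\phi_2$ are proper (being homomorphisms to loopless digraphs), the endpoints of any edge of $G$ have labels differing in both coordinates. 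Crucially, because each $\phi_k$ is a homomorphism to a tournament, all edges of $G$ between $W_{ij}$ and $W_{i'j'}$ are oriented the same way in $\vec{G}_1$ (as prescribed by the arc of $T_1$ between $i$ and $i'$) and the same way in $\vec{G}_2$ (as prescribed by the arc of $T_2$ between $j$ and $j'$).

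Using this uniformity, I define the auxiliary graph $H$ with vertex set $\{(i,j) \in [t]^2 \mid W_{ij} \neq \emptyset\}$ and an edge $(i,j)(i',j')$ whenever at least one edge of $G$ joins $W_{ij}$ and $W_{i'j'}$, and I equip $H$ with the two orientations $\vec{H}_1, \vec{H}_2$ induced by $\vec{G}_1, \vec{G}_2$. For any inversion sequence $Y_1, \ldots, Y_m \subseteq V(H)$ transforming $\vec{H}_1$ into $\vec{H}_2$, I lift it to the sequence $X_\ell = \bigcup_{(i,j) \in Y_\ell} W_{ij}$ of subsets of $V(G)$. For every edge $uv$ of $G$ with $u \in W_{ij}$ and $v \in W_{i'j'}$, the inversion $X_\ell$ reverses $uv$ if and only if both $(i,j) \in Y_\ell$ and $(i',j') \in Y_\ell$, i.e., if and only if the edge $(i,j)(i',j')$ of $H$ is reversed by $Y_\ell$; so the parity of reversals matches, and the lifted sequence satisfies $\Inv(\vec{G}_1; X_1, \ldots, X_m) = \vec{G}_2$.

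It follows that $\dist_{\mathcal{I}(G)}(\vec{G}_1, \vec{G}_2) \leq \diam(\mathcal{I}(H))$, and since $|V(H)| \leq t^2$, Theorem~\ref{thm:diam-upper} yields $\diam(\mathcal{I}(H)) \leq t^2 - 1$; taking the maximum over pairs of orientations completes the proof. The argument is essentially bookkeeping once the reduction through the double labelling is set up; the only point that requires care is checking that $\vec{H}_1$ and $\vec{H}_2$ are well-defined orientations of $H$, which is precisely the content of the uniformity observation and follows immediately from the homomorphism property of $\phi_1$ and $\phi_2$.
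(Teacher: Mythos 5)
Your proof is correct and follows essentially the same route as the paper: the key step in both is to intersect an oriented colouring of $\vec{G}_1$ with one of $\vec{G}_2$, producing at most $t^2$ classes such that the endpoints of every edge lie in distinct classes and all edges between any two classes are uniformly oriented in each $\vec{G}_k$. The only difference is the final step: the paper concludes via a dedicated induction (Lemma~\ref{lemma:inversion_n_1_extended}, which peels off one class per inversion), whereas you pass to the quotient graph $H$ on the nonempty classes and lift an inversion sequence for $H$ back to $G$ before invoking Theorem~\ref{thm:diam-upper}; the two arguments are equivalent and both yield $t^2-1$.
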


To prove this theorem, we will need the following lemma.

\begin{lemma}\label{lemma:inversion_n_1_extended}
    Let $\vec{G}_1,\vec{G}_2$ be two orientations of a graph $G$,
    and let $(S_1, \dots , S_k)$ be a proper $k$-colouring of $G$
    such that for every pair of distinct colours $i,j$, the set $E(S_i,S_j)$ of edges between $S_i$ and $S_j$ is either included in $E_{=}$, or included in $E_{\neq}$.
    Then $\dist_{\mathcal{I}(G)}(\vec{G}_1,\vec{G}_2) \leq k-1$.
\end{lemma}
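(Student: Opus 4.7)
The plan is to prove the lemma by induction on $k$, reducing the number of colour classes by one at each step using a single, carefully chosen inversion. The base case $k=1$ is immediate: since the colouring is proper and $V(G) = S_1$ is an independent set, $G$ has no edges, hence $\vec{G}_1 = \vec{G}_2$ and the distance is $0 = k-1$.

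For the inductive step with $k \geq 2$, I would single out $S_k$ and let
\[
D = \{j \in [k-1] \mid E(S_j, S_k) \subseteq E_{\neq} \text{ and } E(S_j, S_k) \neq \emptyset\}.
\]
Set $X = S_k \cup \bigcup_{j \in D} S_j$ and let $\vec{G}_1' = \Inv(\vec{G}_1; X)$. The first verification is that $\vec{G}_1'$ agrees with $\vec{G}_2$ on every edge incident to $S_k$: an edge $uv$ with $u \in S_k$ and $v \in S_j$ has both endpoints in $X$ precisely when $j \in D$, so it is flipped exactly when it was in disagreement, and left untouched when it was already in agreement. The second verification is that the $(k-1)$-colouring $(S_1, \dots, S_{k-1})$ of $G - S_k$ still satisfies the lemma's hypothesis with respect to $\vec{G}_1'$ and $\vec{G}_2$: since $X$ is a union of colour classes, for any pair $\{i,j\} \subseteq [k-1]$ the edges in $E(S_i, S_j)$ are either all flipped (when both $i, j \in D$) or all left untouched, so the ``all agree or all disagree'' property persists.

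The induction hypothesis then yields $k-2$ further inversions within $V(G) \setminus S_k$ that transform $\vec{G}_1'\langle V(G) \setminus S_k\rangle$ into $\vec{G}_2\langle V(G) \setminus S_k\rangle$. Being disjoint from $S_k$, these inversions do not disturb the agreement already achieved on the edges incident to $S_k$; combined with $X$, they give the desired sequence of $k-1$ inversions transforming $\vec{G}_1$ into $\vec{G}_2$.

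The main technical insight is really just the choice of $X$. Taking $X = S_k$ alone flips no edge at all (since $S_k$ is independent), while $X = \bigcup_{j \in D} S_j$ only puts one endpoint of each edge between $S_k$ and $S_j$ inside $X$; to flip exactly the disagreeing edges incident to $S_k$, both endpoints must lie in $X$, which forces the choice $X = S_k \cup \bigcup_{j \in D} S_j$. The fact that $X$ is itself a union of colour classes is precisely what automatically preserves the structural hypothesis for the recursive step, so no additional difficulty arises.
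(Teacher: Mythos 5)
Your proof is correct and follows essentially the same route as the paper's: the set $X$ you invert is exactly the paper's $X_{k-1} = S_k \cup \bigcup\{S_i \mid E_{\neq}\cap E(S_i,S_k)\neq\emptyset\}$ (your condition defining $D$ is equivalent under the lemma's hypothesis), and the induction on the remaining $k-1$ colour classes is the same. The only difference is that you spell out the verification that the hypothesis persists for $(S_1,\dots,S_{k-1})$ slightly more explicitly than the paper does.
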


\begin{proof}
    We proceed by induction on $k$.
    For $k=1$, the result is clear as $G$ has no edge.
    Now suppose $k \geq 2$.
    Let $X_{k-1} = S_k \cup \bigcup \{S_i \mid E_{\neq} \cap E(S_i, S_k) \neq \emptyset\}$,
    and let $\vec{G}'_1 = \Inv(\vec{G}_1; X_{k-1}) - S_k$
    and $\vec{G}'_2 = \vec{G}_2 - S_k$.
    
    Observe that $(S_1, \dots , S_{k-1})$ is a proper $(k-1)$-colouring of $G-S_k$,
    and $\vec{G}'_1$ and $\vec{G}'_2$ satisfy the hypothesis of the lemma. 
    Thus, by the induction hypothesis, there exist $X_1, \dots X_{k-2} \subseteq V(G -S_k)$
    such that $\vec{G}'_2 = \Inv(\vec{G}'_1; X_1, \dots ,X_{k-2})$.
    We claim that $\vec{G}_2 = \Inv(\vec{G}_1; X_1, \dots ,X_{k-1})$.
    Indeed, for every edge $uv \in E(G)$, if $u\in S_k$ or $v\in S_k$,
    then $\{u,v\} \subseteq X_{k-1}$ if and only if $uv \in E_{\neq}$ (by assumption on the colouring).
    Otherwise $u,v \in V(\vec{G}'_1)=V(\vec{G}'_2)$, and $uv$ is inverted
    by $X_1, \dots , X_{k-2}$ if and only if it is oriented differently
    in $\Inv(\vec{G}_1; X_{k-1})$ and $\vec{G}_2$.
    This proves the lemma.
\end{proof}

We are now ready to give the main proof of Theorem~\ref{thm:diam_L_leq_chi_o_square}.

\begin{proof}[Proof of Theorem~\ref{thm:diam_L_leq_chi_o_square}]
    Let $\vec{G}_1,\vec{G}_2$ be two orientations of a graph $G$ with oriented chromatic number $\chi_o(G)=t$.
    For every $\ell\in \{1,2\}$, $\vec{G}_i$ has an oriented colouring $(S_1^\ell, \dots , S_t^\ell)$.
    For every pair $(i,j) \in [t]\times [t]$, let 
    $S_{i,j} = S_i^1\cap S_j^2$. 
    Observe that, for any two distincts pairs $(i,j)$ and $(i',j')$, the set $E(S_{i,j}, S_{i',j'})$  is either included in $E_=$, or included in $E_{\neq}$.
    Hence, by Lemma~\ref{lemma:inversion_n_1_extended}, we obtain
    $\dist_{\mathcal{I}(G)}(\vec{G}_1,\vec{G}_2) \leq t^2-1$.
\end{proof}

\section{Better bounds in terms of the maximum degree}\label{sec:Delta}

The fact that the inversion diameter of a graph is upper bounded by a function of its maximum degree follows from Corollary~\ref{cor:dqewdrqwde} as well as the results established in Section~\ref{sec:carac}.
By Theorem~\ref{thm:multipartite}, a complete graph $K$ satisfies $\diam(\mathcal{I}(K)) = \Delta(K)$, so 
$M(\Delta \leq k) \geq k$.
We conjecture that equality holds $M(\Delta \leq k) = k$.

\begin{conjecture}\label{conj:Delta}
    For every graph $G$, we have $\diam(\mathcal{I}(G)) \leq \Delta(G)$.
\end{conjecture}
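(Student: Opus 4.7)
The statement is equivalent, via Observation~\ref{obs:characterization_with_vectors}, to the following linear-algebraic claim: for every graph $G$ of maximum degree $\Delta$ and every function $\pi\colon E(G)\to \mathbb{F}_2$, one can find vectors $(\mathbf{u})_{u\in V(G)}$ in $\mathbb{F}_2^{\Delta}$ with $\mathbf{u}\cdot\mathbf{v} = \pi(uv)$ for every edge $uv$. The plan is to prove this reformulation by induction on $|V(G)|$ (or on $|E(G)|$), isolating the "tight" case in which $G$ is $\Delta$-regular. Note that the strong-degeneracy machinery of Theorem~\ref{thm:generic_greedy_argument} cannot give the bound directly: requiring a $\Delta$-strong ordering forces $|N_<(u)| + \log|\cdots| < \Delta$ at almost every vertex, which is incompatible with maximum degree $\Delta$ already for $\Delta\ge 2$. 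A genuinely new argument is therefore needed.

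If $G$ has a vertex $v$ of degree $d<\Delta$, the natural move is to apply induction to $G-v$ and then extend the resulting assignment to $v$. Given vectors $(\mathbf{u})_{u\in V(G-v)}\subseteq \mathbb{F}_2^{\Delta}$ satisfying the inductive conclusion, the set of admissible choices for $\mathbf{v}$ is the affine subspace of $\mathbb{F}_2^\Delta$ cut out by the $d$ equations $\mathbf{u}\cdot\mathbf{v} = \pi(uv)$ for $u\in N(v)$, which is non-empty as soon as the vectors $(\mathbf{u})_{u\in N(v)}$ are linearly independent (or, more generally, satisfy the right compatibility relations). To ensure this, I would strengthen the inductive hypothesis to guarantee that the assignment can be chosen so that the vectors on any specified neighbourhood of size at most $\Delta$ are linearly independent. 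A preprocessing step that reduces to the case where $G$ has no cut vertex and is connected should simplify bookkeeping.

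The core difficulty is the $\Delta$-regular case, where every vertex is tight and there is no slack to exploit by deleting a vertex. Here I would pick an arbitrary edge $e=uv$, apply induction to $G-e$ (which has maximum degree $\Delta$ but fewer edges), and obtain vectors satisfying all constraints except possibly the one at $e$. If $\mathbf{u}\cdot\mathbf{v}\neq\pi(uv)$, one must redistribute the assignment so as to flip exactly this single scalar product. A promising route is to exhibit an alternating structure connecting $u$ and $v$ — say, a closed walk along which one adds a common vector $\mathbf{z}\in\mathbb{F}_2^\Delta$ — that toggles $\mathbf{u}\cdot\mathbf{v}$ modulo $2$ while leaving every other scalar product unchanged. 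Finding such a correcting structure is essentially an $\mathbb{F}_2$-cohomological question on $G$, and the analysis will likely need to exploit odd cycles or a Vizing-type edge decomposition of $G$ into $\Delta+1$ matchings, with a case-analysis on whether the parities obstruct or permit the fix.

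The main obstacle, and presumably the reason the authors left the statement as a conjecture, lies exactly in this regular case: the extremal examples from Theorem~\ref{thm:multipartite} (complete graphs, where $\diam(\mathcal{I}(K_{\Delta+1}))=\Delta$) show that the bound is tight with no room at all, so a successful proof has to simultaneously satisfy a rigid linear-algebraic constraint and an intricate parity/combinatorial one. I would expect the base cases $\Delta\in\{1,2\}$ to follow directly (the $\Delta=2$ case reduces to cycles and paths and is within reach by a direct ad hoc argument), and $\Delta=3$ to serve as a warm-up where the Vizing-style edge-colouring approach can be tested in full detail before attempting the general induction.
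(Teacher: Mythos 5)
The statement you are trying to prove is stated in the paper as Conjecture~\ref{conj:Delta} and is \emph{not} proved there: the authors only establish the weaker bounds $\diam(\mathcal{I}(G))\le 2\Delta-1$ in general (Theorem~\ref{thm:L_2Delta-1}), $\Delta+\lceil\log\Delta\rceil-1$ for bipartite graphs, the exact value for $\Delta\le 2$, and $\le 4$ for subcubic graphs; they explicitly leave even the case $\Delta=3$ open. Your proposal is honest about being a plan rather than a proof, but as it stands it has two genuine gaps, and they sit exactly where the difficulty of the conjecture lies.

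First, the strengthened induction hypothesis you propose for the non-regular case --- that the vectors assigned to any prescribed neighbourhood of size at most $\Delta$ can be kept linearly independent --- is precisely what the paper's strong-degeneracy argument maintains, and maintaining it is what costs the factor of two. To keep $(\mathbf{w})_{w\in N_{<u}(v)}$ independent for every later neighbour $v$ of $u$, the new vector $\mathbf{u}$ must avoid the union of the subspaces they span while also satisfying the $|N_<(u)|$ affine constraints; the counting in the proof of Theorem~\ref{thm:L_2Delta-1} (via Lemma~\ref{lem:technical_bound_for_2Delta-1}) shows this forces dimension roughly $2\Delta$, not $\Delta$. You correctly observe that the strong-degeneracy machinery cannot give $\Delta$ directly, but your proposed fix reintroduces the same obstruction without explaining how to beat the count.

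Second, in the $\Delta$-regular case the single-edge correction step is unjustified. Replacing $\mathbf{u}$ by $\mathbf{u}+\mathbf{z}$ so as to flip only $\mathbf{u}\cdot\mathbf{v}$ requires $\mathbf{z}\cdot\mathbf{w}=0$ for all $w\in N(u)\setminus\{v\}$ and $\mathbf{z}\cdot\mathbf{v}=1$, which is possible if and only if $\mathbf{v}$ lies outside the span of the other $\Delta-1$ neighbour vectors; nothing in the inductive setup guarantees this, and when it fails you must propagate modifications to the neighbours, each of which perturbs $\Delta$ further products. You give no argument that this cascade can be organised (your ``closed walk'' and ``$\mathbb{F}_2$-cohomological'' remarks name the problem rather than solve it). The tightness of $K_{\Delta+1}$ from Theorem~\ref{thm:multipartite} means there is no dimension to spare anywhere in this argument, so a hand-wavy correction step cannot be patched by adding slack. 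In short: the proposal identifies the right reformulation (Observation~\ref{obs:characterization_with_vectors}) and the right obstacles, but it does not prove the conjecture, which remains open in the paper.
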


In this section, we give some evidences for this conjecture to be true.
First, in Subsection~\ref{sec:mdarb}, we show that $M(\Delta \leq k)\leq 2k-1$ for every positive integer $k$, and that $\diam(\mathcal{I}(G)) \leq \Delta(G) + o(\Delta(G))$ for every bipartite graph $G$.
In Sections~\ref{sec:md2} and~\ref{sec:md3}, we prove $M(\Delta \leq 2) = 2$ and
$M(\Delta \leq 3) \leq 4$.

\subsection{Bounds for arbitrary maximum degree}\label{sec:mdarb}
In this subsection, we prove that $M(\Delta \leq k)\leq 2k-1$ for every positive integer $k$, which can be restated as follows. 

\begin{theorem}\label{thm:L_2Delta-1}
    For every graph $G$ with at least one edge, $\diam(\mathcal{I}(G)) \leq 2\Delta(G) - 1$.
\end{theorem}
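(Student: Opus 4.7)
My plan is to use the linear-algebraic characterisation of Observation~\ref{obs:characterization_with_vectors}: proving $\diam(\mathcal{I}(G))\leq 2\Delta-1$ amounts to exhibiting, for every $\pi\colon E(G)\to\mathbb{F}_2$, a family $(\mathbf{v})_{v\in V(G)}$ of vectors in $\mathbb{F}_2^{2\Delta-1}$ satisfying $\mathbf{u}\cdot\mathbf{v}=\pi(uv)$ on every edge $uv$. The main tool will be Vizing's theorem, which furnishes a proper edge colouring $c\colon E(G)\to[\Delta+1]$ partitioning $E(G)$ into matchings $M_1,\ldots,M_{\Delta+1}$. A key source of slack is that every vertex of $G$, having degree at most $\Delta$, is missing at least one of these $\Delta+1$ colours.

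A natural first attempt is a block construction that devotes two coordinates to each colour class, using $2(\Delta+1)=2\Delta+2$ coordinates in total. Orient each matching $M_i$ arbitrarily and, for an edge $uv \in M_i$ oriented $u\to v$, set the block-$i$ coordinates of $u$ to $(1,1)$ and those of $v$ to $(\pi(uv),0)$; the block-$i$ contribution to $\mathbf{u}\cdot\mathbf{v}$ is then exactly $\pi(uv)$. The first part of the proof is to show that the contributions of blocks $j\neq i$ on an edge of colour $i$ can be made to cancel, which I would do via a global orientation of $G$ chosen to be consistent with the per-matching source/target designations, together with a case analysis on how the endpoints of each edge use the colours.

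The second and main obstacle is to save three coordinates so as to come down to $2\Delta-1$. The plan is to exploit the missing-colour slack: in the block indexed by a colour missing at $v$, the entries of $v$ are not forced by any edge incident to $v$, so there is room to identify coordinates between vertices that share a missing colour, and a careful choice of a system of missing-colour representatives should let one overlay three coordinate pairs globally. If this direct vectorial construction proves too delicate, my fallback is an induction on $\Delta$: extract a matching $M\subseteq E(G)$ covering every maximum-degree vertex (so that $\Delta(G-E(M))\leq\Delta-1$), handle $M$ with two inversions, and apply the induction hypothesis to $G-E(M)$ for the remaining $2\Delta-3$ inversions. The same cross-interference issue resurfaces here, since the two inversions for $M$ also flip edges outside $M$; resolving it ultimately forces one to design all $2\Delta-1$ inversions jointly via the linear-algebraic bookkeeping, which is why I expect the vector construction based on Vizing to be the cleanest path.
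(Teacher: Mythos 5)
Your starting point---reducing the statement, via Observation~\ref{obs:characterization_with_vectors}, to finding vectors in $\mathbb{F}_2^{2\Delta-1}$ with $\mathbf{u}\cdot\mathbf{v}=\pi(uv)$---is the same as the paper's, but everything after that is a plan whose two critical steps both have genuine gaps. First, the cross-block contributions in your Vizing-based construction do not cancel and cannot be made to cancel by any choice of orientation. Concretely, take an edge $uv$ of colour $i$ and a colour $j\neq i$ present at both endpoints: if $u$ is the source of its colour-$j$ edge (block $j$ of $u$ equal to $(1,1)$) and $v$ is the target of its colour-$j$ edge $v'v$ with $\pi(v'v)=1$ (block $j$ of $v$ equal to $(1,0)$), then block $j$ contributes $1$ to $\mathbf{u}\cdot\mathbf{v}$; if both are targets the contribution is $\pi(u'u)\pi(v'v)$. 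These parasitic terms depend on the adversarial $\pi$-values of edges \emph{other than} $uv$, and no assignment of two fixed vectors per block as a function of a vertex's role in that colour can simultaneously realise $\pi$ on the edge of that colour and be orthogonal on all other edges. Absorbing the error into the colour-$i$ entries of $u$ or $v$ perturbs the block-$i$ cross-contributions to the other edges at $u$ and $v$, producing a circular system you have not shown to be solvable. Second, the descent from $2\Delta+2$ to $2\Delta-1$ coordinates is asserted, not constructed: globally identifying coordinate blocks reintroduces nonzero products between the identified blocks for \emph{all} adjacent pairs, not only those enjoying missing-colour slack. Your fallback induction fails for the reason you yourself name: the two inversions fixing the matching $M$ also reverse edges outside $M$, and the $2\Delta-3$ inversions for $G-E(M)$ reverse edges of $M$; saying the inversions must be ``designed jointly'' restates the problem rather than solving it.

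For comparison, the paper resolves exactly this joint-design difficulty without any edge colouring, by a greedy vertex-by-vertex vector construction (Theorem~\ref{thm:generic_greedy_argument} applied to an \emph{arbitrary} ordering of $V(G)$). One assigns $\mathbf{v}_1,\dots,\mathbf{v}_n\in\mathbb{F}_2^{2\Delta-1}$ in order while maintaining the invariant that, for every later vertex, the vectors already assigned to its neighbours are linearly independent. The constraints $\mathbf{v}_j\cdot\mathbf{v}_i=\pi(v_jv_i)$ from earlier neighbours then cut out an affine space of dimension at least $2\Delta-1-|N_<(v_i)|$, and the counting inequality of Lemma~\ref{lem:technical_bound_for_2Delta-1} shows this space is strictly larger than the union of the spans of the already-placed neighbourhoods of the later neighbours of $v_i$, so a valid choice always exists. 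If you wish to pursue your route you must supply both missing constructions; I do not see how to do so, and the greedy argument is the intended path.
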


We need the following lemma.

\begin{lemma}\label{lem:technical_bound_for_2Delta-1}
    For every positive $\Delta$ with  $\Delta \geq 2$, for every integer $d$ with $0\leq d\leq \Delta$, 
    \[
        2^{2\Delta-1 -d} > 1 + (\Delta - d)(2^{\Delta-1}-1).
    \]
\end{lemma}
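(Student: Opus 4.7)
The plan is to rewrite the inequality with a convenient change of variable and then reduce it to the elementary fact that $2^k \geq k+1$ for every nonnegative integer $k$.

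First I would set $k = \Delta - d$, so that $k$ ranges over the integers in $[0,\Delta]$ and the exponent satisfies $2\Delta-1-d = \Delta-1+k$. The inequality to be proved becomes
\[
2^{\Delta-1+k} \;>\; 1 + k\bigl(2^{\Delta-1}-1\bigr).
\]
Since $2^{\Delta-1+k} = 2^{\Delta-1}\cdot 2^k$ and $k(2^{\Delta-1}-1) = k\cdot 2^{\Delta-1} - k$, this rearranges to
\[
2^{\Delta-1}\bigl(2^k - k\bigr) \;>\; 1 - k.
\]

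Next I would invoke the standard inequality $2^k \geq k+1$, valid for every nonnegative integer $k$ (and provable by a one-line induction). This gives $2^k - k \geq 1$, so the left-hand side is at least $2^{\Delta-1}$. Because the hypothesis $\Delta \geq 2$ ensures $2^{\Delta-1} \geq 2$, and the right-hand side $1-k$ is at most $1$, we conclude
\[
2^{\Delta-1}\bigl(2^k - k\bigr) \;\geq\; 2^{\Delta-1} \;\geq\; 2 \;>\; 1 \;\geq\; 1 - k,
\]
which is the desired strict inequality for every $k \in \{0,1,\ldots,\Delta\}$.

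There is no real obstacle here: the only thing to watch is that the inequality is strict and therefore has to survive the boundary case $k=0$ (i.e.\ $d=\Delta$), where the right-hand side equals $1$; this is precisely where the assumption $\Delta\geq 2$ is used, since for $\Delta=1$ one would get $2^{\Delta-1}=1$ and equality instead of strict inequality. All other cases have substantial slack, so no finer estimate is needed.
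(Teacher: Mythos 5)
Your proof is correct and takes essentially the same approach as the paper: both arguments reduce to the elementary inequality $2^k \geq k+1$ (the paper uses it in the form $k < 2^k$). The only cosmetic difference is that your rearrangement handles the boundary case $d=\Delta$ uniformly, whereas the paper treats it as a separate case before applying the same estimate.
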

\begin{proof}
    If $\Delta=d$, we have $2^{2\Delta-1 -d}=2^{\Delta-1 }\geq 2>1=1 + (\Delta - d)(2^{\Delta-1}-1)$. Otherwise, as $k<2^k$ holds for every nonnegative integer $k$, we have 
    \begin{align*}
        1 + (\Delta - d)(2^{\Delta-1}-1)&\leq (\Delta - d)2^{\Delta-1}\\
        &<2^{\Delta - d}2^{\Delta-1}\\
        &=2^{2\Delta-1 -d}.\qedhere
     \end{align*}
\end{proof}

\begin{proof}[Proof of Theorem~\ref{thm:L_2Delta-1}]\label{thm:Delta}
    If $\Delta(G)=1$, then $\diam(\mathcal{I}(G)) = 1$.
    Otherwise, let $\Delta=\Delta(G)$ and let $<$ be an arbitrary ordering of $V(G)$. Now consider some $u \in V(G)$ and let $d = |N_>(u)|$. If $N_{>}(u) \neq \emptyset$, then we have
    \begin{align*}
        |N_<(u)| &+ 
        \log \left(|\{X \subseteq V(G) \mid \exists~v \in N_>(u), X \subseteq  N_<(v) \}|\right) \\
        &\leq d + \log(1+(\Delta-d)(2^{\Delta-1}-1)) \\
        &<2\Delta - 1~ \mbox{by Lemma~\ref{lem:technical_bound_for_2Delta-1}}.
    \end{align*}
    If $N_{>}(u) = \emptyset$, then $|N_<(u)| \leq \Delta - 1 \leq 2\Delta-1$.
    Hence $G$ is strongly $(2\Delta - 1)$-degenerate, so $\diam(\mathcal{I}(G)) \leq 2\Delta - 1$ by Theorem~\ref{thm:generic_greedy_argument}.
\end{proof}

For bipartite graphs, Theorem~\ref{thm:L_2Delta-1} can be improved as follows.

\begin{theorem}\label{thm:L_bipartite}
$\diam(\mathcal{I}(G)) \leq \Delta + \lceil\log\Delta\rceil - 1$ for every bipartite graph $G$ of maximum degree $\Delta \geq 2$.
\end{theorem}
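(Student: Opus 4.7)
The plan is to invoke Theorem~\ref{thm:generic_greedy_argument} by exhibiting a $t$-strong ordering of $V(G)$ with $t = \Delta + \lceil \log \Delta \rceil - 1$. Let $(A, B)$ be a bipartition of $G$ and let $<$ be any linear order on $V(G)$ subject to the single constraint that every vertex of $A$ precedes every vertex of $B$; the relative order inside each side is irrelevant.

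There are two types of vertex to inspect. For $u \in B$, every neighbour of $u$ lies in $A$ and is thus below $u$ in the ordering, so $N_>(u) = \emptyset$ and $|N_<(u)| \leq \Delta$. Since $\lceil \log \Delta \rceil \geq 1$ for $\Delta \geq 2$, we get $|N_<(u)| \leq t$, which is exactly the second clause of the $t$-strong definition. For $u \in A$, conversely, $N_<(u) = \emptyset$ while $N_>(u) = N(u) \subseteq B$; moreover, for every $v \in N_>(u)$ we have $N_{<u}(v) \subseteq N(v) \setminus \{u\}$, so $|N_{<u}(v)| \leq \Delta - 1$. Reasoning as in the proof of Corollary~\ref{cor:dqewdrqwde}, I would bound
\[
\bigl|\{X \subseteq V(G) \mid \exists v \in N_>(u),\ X \subseteq N_{<u}(v)\}\bigr| \leq 1 + \Delta\bigl(2^{\Delta - 1} - 1\bigr) = \Delta \cdot 2^{\Delta - 1} - \Delta + 1.
\]

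The heart of the proof reduces to checking the strict numerical inequality
\[
\Delta \cdot 2^{\Delta - 1} - \Delta + 1 < 2^{t} = 2^{\Delta + \lceil \log \Delta \rceil - 1}.
\]
I would split this into two cases. If $\Delta$ is a power of $2$, then $\lceil \log \Delta \rceil = \log \Delta$ and both sides share the dominant term $\Delta \cdot 2^{\Delta - 1}$, so the inequality reduces to $-\Delta + 1 < 0$, which holds because $\Delta \geq 2$. If $\Delta$ is not a power of $2$, then $2^{\lceil \log \Delta \rceil} \geq \Delta + 1$, whence $2^{t} \geq (\Delta + 1) \cdot 2^{\Delta - 1} > \Delta \cdot 2^{\Delta - 1}$ and the inequality is comfortable. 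With both clauses of the $t$-strong definition verified, Theorem~\ref{thm:generic_greedy_argument} delivers $\diam(\mathcal{I}(G)) \leq t$.

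The only subtlety is the powers-of-$2$ regime, where the two sides are so close that the whole argument rests on the small $-\Delta + 1$ slack saved by counting the empty set only once across the $|N_>(u)|$ families $2^{N_{<u}(v)}$, together with the strict inequality that is built into the definition of a $t$-strong ordering. Away from powers of $2$ the estimate is far from tight and no additional care is needed.
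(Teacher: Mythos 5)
Your proposal is correct and follows essentially the same route as the paper: the same bipartition-respecting ordering, the same count of at most $1+\Delta(2^{\Delta-1}-1)$ candidate sets for a vertex of $A$, and the same conclusion via Theorem~\ref{thm:generic_greedy_argument}. The paper verifies the numerical inequality in one line ($1+\Delta(2^{\Delta-1}-1)<\Delta 2^{\Delta-1}\leq 2^{t}$) rather than by your two-case split, but the content is identical.
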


\begin{proof}
    Let $t=\Delta + \lceil\log\Delta\rceil - 1$.
    Let $G$ be a bipartite graph of maximum degree $\Delta$ and let $(A,B)$ be a bipartition of $G$.
    Let $<$ be an ordering of $V(G)$ such that $a<b$ for every $a \in A$ and $b \in B$.
    For every $a \in A$ we have $N_<(a) = \emptyset$ and $|\{X \subseteq V(G) \mid \exists~v \in N_>(a), X \subseteq N_{<a}(v)\}| \leq 1+\Delta(2^{\Delta-1}-1)<\Delta 2^{\Delta-1} \leq 2^t$.
    For every $b \in B$, $|N_<(b)| \leq \Delta \leq t$ and $|\{X \subseteq V(G) \mid \exists~v \in N_>(b), X \subseteq N_<(v)\}|=0$.
    Hence $<$ is a $t$-strong ordering of $G$. Thus, by Theorem~\ref{thm:generic_greedy_argument}, we have $\diam(\mathcal{I}(G)) \leq t$.
\end{proof}

\subsection{Maximum degree at most \texorpdfstring{$2$}{2}}
\label{sec:md2}

We here prove the following result for graphs of maximum degree at most 2.

\begin{theorem}\label{thm:L-cycle}
    Let $G$ be a graph.
    If $\Delta(G) \leq 2$, then $\diam ({\mathcal{I}}(G)) \leq 2$.  
\end{theorem}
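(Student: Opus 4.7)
The plan is to reduce to connected components and then handle the two kinds of components that arise. Since $\Delta(G) \leq 2$ forces $G$ to be a vertex-disjoint union of paths and cycles, and any inversion restricted to a single component affects only the edges of that component, the inversion diameter of $G$ is the maximum of those of its components; it therefore suffices to treat the case where $G$ is connected. Paths (including isolated vertices) are already covered by Theorem~\ref{thm:L-arbre}, so the real work is to show $\diam(\mathcal{I}(C_n)) \leq 2$ for every $n \geq 3$.

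For a cycle $C_n$ with edges $e_i = v_iv_{i+1}$ (indices cyclically modulo $n$), I would invoke Observation~\ref{obs:characterization_with_vectors}: given the disagreement pattern $\pi \colon E(C_n) \to \mathbb{F}_2$ between two orientations, it suffices to exhibit vectors $\mathbf{v}_1, \ldots, \mathbf{v}_n \in \mathbb{F}_2^2$ with $\mathbf{v}_i \cdot \mathbf{v}_{i+1} = \pi(e_i)$ for every $i$. If $\pi \equiv 1$, setting each $\mathbf{v}_i = (1,0)$ works. Otherwise $E_{=} \neq \emptyset$, and if some vertex $v^*$ has both incident edges in $E_{=}$, I would set $\mathbf{v}_{v^*} = \mathbf{0}$ and apply Theorem~\ref{thm:L-arbre} to the path $C_n - v^*$ to obtain the remaining labels; the resulting inversion sets are disjoint from $\{v^*\}$ and so leave the two $E_{=}$ edges at $v^*$ untouched.

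What remains is the case in which every vertex is incident to some $E_{\neq}$ edge, so $E_{=}$ is a nonempty matching of some size $q$. If $q = 1$ with $E_{=} = \{uv\}$, then $X_1 = V(C_n)$ and $X_2 = \{u,v\}$ reverse every edge of $C_n$ except $uv$ exactly once. If $q$ is even, the component graph $H_{\neq}$ is an even cycle and therefore bipartite; its 2-coloring yields the two required inversions in the same way as in the proof of Theorem~\ref{thm:L-arbre}. If $q \geq 3$ is odd, I would build the labels arc-by-arc, giving most $E_{\neq}$ arcs the constant labels $(1,0)$ and $(0,1)$ alternately and routing the unavoidable parity defect through one remaining arc whose labels walk through the isotropic vector $(1,1)$; when every $E_{\neq}$ arc has length $1$ (so $n = 2q$), one verifies directly that the required length-$q$ closed walk exists in the auxiliary graph on candidate endpoint pairs.

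The main obstacle is this last sub-case: when $q$ is odd the component graph $H_{\neq}$ is an odd cycle, and no bipartition of its vertices suffices; the essential ingredient is the isotropic vector $(1,1) \in \mathbb{F}_2^2$, whose existence is precisely what allows one to close the cycle using only two inversions.
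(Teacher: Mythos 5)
Your proposal is correct and follows the same skeleton as the paper's proof: reduce to cycles, handle the case of two consecutive agreeing edges by deleting their common vertex and invoking Theorem~\ref{thm:L-arbre}, and otherwise split on the parity of the number $q$ of agreeing edges (your $q$ is the paper's $t$, the number of components of $C\setminus E_{=}$). The only real divergence is the odd case, and there the paper's construction is both simpler and spares you your last sub-case. You thread the isotropic vector $(1,1)$ through the interior of one run of disagreeing edges, which forces that run to have length at least $2$ and leaves the configuration $n=2q$ with $q$ odd to a separate argument that you only assert ("one verifies directly that the required length-$q$ closed walk exists") without exhibiting the walk. That assertion is true -- for instance the labels $(1,0),(1,0),(0,1),(1,1),(1,1),(0,1)$ close up a hexagon and one can pad with the $2$-cycle between $((1,0),(1,0))$ and $((0,1),(0,1))$ -- but as written it is the one unverified step. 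The paper instead picks the agreeing edge $xy$ joining $P_1$ to $P_t$ and puts \emph{both} of its endpoints in both inversion sets, i.e.\ assigns $\mathbf{x}=\mathbf{y}=(1,1)$; since $(1,1)\cdot(1,1)=0$ the agreeing edge is preserved, while $(1,1)\cdot(1,0)=(1,1)\cdot(0,1)=1$ handles the adjacent disagreeing edges. Placing the isotropic vector across an $E_{=}$ edge rather than inside an $E_{\neq}$ run works uniformly for all run lengths, so no residual case remains. You should either adopt that variant or actually display the closed walk for the $n=2q$ sub-case.
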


\begin{proof}
    By possibly adding some edges, we can assume that every connected component of $G$ is a cycle or a single edge.
    Moreover, as $\diam(\mathcal{I}(G))$ is the maximum of $ \diam(\mathcal{I}(C))$ over all components $C$ of $G$, and since $\diam(\mathcal{I}(K_2)) = 1$, it is enough to prove the statement for connected components $C$ of $G$ which are a cycle.
    
    Let $\vec{C}_1$ and $\vec{C}_2$ be two orientations of a cycle $C$.
    First assume that $E_=$ contains two consecutive edges.
    Let $x$ be the common vertex of those two edges.
    By Theorem~\ref{thm:L-arbre}, there are two sets $X_1$ and $X_2$ of $V(C-x)$ such that
    $\Inv(\vec{C}_1-x ; X_1, X_2)= \vec{C}_2-x$.
    Then $\Inv(\vec{C}_1 ; X_1, X_2)= \vec{C}_2$.
    
    Now assume that there are no two consecutive edges in $E_{=}$. 
    If $E_{=}=\emptyset$, then inverting $V(C)$ transforms 
    $\vec{C}_1$ into $\vec{C}_2$.
    Henceforth, we may assume that $E_{=}\neq \emptyset$.
    Let $P_1, P_2 , \dots , P_t$ be the connected components of $C\setminus E_{=}$.
    If $t$ is even, let $X_1=\bigcup_{i=1}^{t/2} V(P_{2i-1})$ and $X_2=\bigcup_{i=1}^{t/2} V(P_{2i})$; we have $\Inv(\vec{C}_1 ; X_1, X_2)= \vec{C}_2$.
    If $t$ is odd, set $s=\frac{t-1}{2}$ and let $X_1= \bigcup_{i=0}^{s} V(P_{2i+1})$ and $X_2= \{x,y\} \cup \bigcup_{i=1}^{s} V(P_{2i})$ where $xy$ is the edge between $P_1$ and $P_t$; we have $\Inv(\vec{C}_1 ; X_1, X_2)= \vec{C}_2$.
\end{proof}

\subsection{Subcubic graphs}\label{sec:md3}

For subcubic graphs, we can slightly improve on Theorem~\ref{thm:L_2Delta-1}.
First we need some preliminaries.


Let $G$ be a subcubic multigraph, $\pi \colon E(G) \to \mathbb{F}_2$ a mapping, and let $\sigma = (v_1, \dots, v_n)$ be an ordering of $V(G)$.
We say that a vertex $v_i$ is {\bf bad} for $\sigma$ if there exist $j_1, j_2, j_3$ such that 
$j_1 \leq j_2 < i < j_3$, $v_{j_1}v_i,v_{j_2}v_i$, and $v_{j_3}v_i$ are distinct edges of $E(G)$, and $\pi(v_{j_1}v_i) = \pi(v_{j_2}v_i)=0$ and $\pi(v_iv_{j_3})=1$. 
The ordering $\sigma$ is {\bf good} if it has no bad vertex.
The following result is the main ingredient in the proof of Theorem~\ref{thm:L-cubic}.

\begin{lemma}\label{lem:good-ordering}
    Let $G$ be a subcubic multigraph and $\pi \colon E(G) \to \mathbb{F}_2$ a mapping.
    Then there is a good ordering of $V(G)$.
\end{lemma}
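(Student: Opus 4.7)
The plan is to prove the lemma by induction on $|V(G)|$, with a trivial base case $|V(G)| \leq 1$: no vertex can then have three distinct incident edges, so no vertex is ever bad.

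For the inductive step, my strategy is to identify a vertex $v$ whose removal lets me invoke the induction hypothesis on $G-v$, and whose reinsertion at one end of the resulting good ordering gives a good ordering of $G$. The key observation is that a vertex of degree at most $2$ in $G$ cannot itself be bad (badness requires three distinct incident edges). So if $\deg(v) \leq 2$, the only risk upon insertion is that a neighbor of $v$ becomes newly bad, which can happen only by $v$ becoming either a new backward $0$-neighbor of some $u$ (when $v$ is prepended and $\pi(uv)=0$) or a new forward $1$-neighbor of some $u$ (when $v$ is appended and $\pi(uv)=1$). This yields the easy cases: if $\deg(v) \leq 1$, place $v$ at the appropriate end according to the label of its (possible) incident edge; if $\deg(v)=2$ with both edges sharing the same label, append when both are $0$-edges and prepend when both are $1$-edges. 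In each case the insertion is neutral and induction on $G-v$ finishes.

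The remaining case is when every vertex of degree at most $2$ has mixed edges (one $0$-edge and one $1$-edge). Here I would focus on a vertex $v \in B$ (of degree $3$, with two $0$-edges and one $1$-edge). The natural attempt is to insert $v$ in the ordering of $G-v$ immediately after its $1$-neighbor $b(v)$, so that $v$'s own constraint is satisfied via the disjunct $b(v) < v$. The main obstacle is that this insertion may itself render $b(v)$ bad (when $b(v) \in B$) or change the status of $v$'s $0$-neighbors.

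To handle this obstacle, I would exploit the structural observation that the $1$-edges between $B$-vertices form a matching: since each $v \in B$ has exactly one incident $1$-edge, whenever $vv'$ is a $1$-edge with both endpoints in $B$, the vertices $v$ and $v'$ are each other's unique $1$-neighbor. Thus $B$ decomposes into unpaired vertices (with $b(v) \notin B$), for which inserting $v$ immediately after $b(v)$ is harmless, and mutually $1$-adjacent pairs $\{v, v'\} \subseteq B$, for which only one of the two can satisfy its constraint via the disjunct $b(\cdot) < \cdot$ while the other must rely on one of its $0$-neighbors being placed after it. The principal technical difficulty is handling these pairs coherently; one approach is to delete a whole pair $\{v, v'\}$, apply induction to $G - \{v, v'\}$, and insert $v$ and $v'$ consecutively at a carefully chosen position so that the partner needing the $0$-neighbor disjunct has one of its $0$-neighbors ahead of it in the ordering. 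Verifying that such a placement always exists and that no other vertex becomes bad is where the most delicate part of the argument lies.
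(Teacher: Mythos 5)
Your reduction to the matched pairs in $B$ is the right diagnosis (only degree-$3$ vertices with two $0$-edges and one $1$-edge can ever be bad, and the $1$-edges inside $B$ form a matching), but the treatment of a pair is where the proof actually lives, and the plan you sketch cannot work as stated. If you delete a pair $\{v,v'\}$ and recurse on $G-\{v,v'\}$, the four outer $0$-neighbours $t_1,t_2$ of $v$ and $w_1,w_2$ of $v'$ drop to degree $2$, so the inductive good ordering places them, their remaining $0$-neighbours and their $1$-neighbours completely arbitrarily. In particular it may place, for every $x\in\{t_1,t_2,w_1,w_2\}$, the other $0$-neighbour of $x$ before $x$ and the $1$-neighbour of $x$ after $x$. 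Then no consecutive insertion of the pair succeeds: with $v'$ placed before $v$, keeping $v'$ from being bad forces some $w_j$ to lie after the pair, and that $w_j$ then acquires a second backward $0$-edge from $v'$ and becomes bad; swapping the roles of $v$ and $v'$ hits the same wall with $t_1,t_2$, and the two extreme positions fail as well (at the front all four outer neighbours become bad, at the back the first vertex of the pair does). So ``such a placement always exists'' is exactly the unproven step, and it is false relative to an arbitrary inductive ordering. A smaller instance of the same oversight is your claim that the unpaired case is ``harmless'': inserting $v$ immediately after $b(v)$ can turn a $0$-neighbour $t_i\in B$ of $v$ bad; there the fix is easy (append $v$ at the very end, so all edges at $v$'s neighbours point forward and only the $1$-edge at $b(v)\notin B$ matters).

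The missing idea is to modify the graph before recursing rather than recursing on $G-\{v,v'\}$ itself. The paper deletes the two endpoints of such a $1$-edge and adds label-$0$ edges among $\{t_1,t_2,w_1,w_2\}$ (the pattern depending on which of these coincide, which is also why the parallel-edge and coincidence cases your sketch ignores must be treated separately), keeping the graph subcubic. The good ordering of the modified graph then already ``charges'' each $t_i,w_j$ for one backward $0$-edge at a controlled position, and $u,v$ are re-inserted exactly where the artificial edges sat, so no outer vertex gains a backward $0$-edge it did not already account for. This surgery is what gives the induction the control your version lacks; without it, or some substitute for it, the pair case remains open and the proof is incomplete.
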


\begin{proof}
Suppose otherwise and let $(G,\pi)$ be a minimum counterexample to the statement. 
We say that an edge $e=uv \in E(G)$ is {\it critical} if $\pi(e)=1, d_G(u)=3$, $d_G(v)=3$, $E(G)\setminus e$ does not contain an edge which is parallel to $e$, and $\pi(e')=0$ for every $e' \in E(G)\setminus e$ which is incident to at least one of $u$ and $v$.
\begin{claim}\label{clm:crit}
    No edge in $E(G)$ is critical.
\end{claim}
\begin{subproof}
    Suppose otherwise, so $E(G)$ contains a critical edge $e_0=uv$. 
    Let $ut_1$ and $ut_2$, and $vw_1$ and $vw_2$ be the edges distinct from $e$ incident to $u$ and $v$, respectively. 
    Observe that $\{u,v\}\cap \{t_1,t_2,w_1,w_2\}=\emptyset$ by assumption. 
    In the following, we distinguish several cases on the structure of these neighbourhoods, and obtain contradictions which eventually prove the claim. 
    By symmetry, we may suppose that one of the following four cases occurs.
    \begin{cas}
        $t_1=t_2$ and $w_1=w_2$.
    \end{cas}
    Let $G'$ be obtained from $G-\{u,v\}$ by adding two edges $e_1,e_2$ linking $t_1$ and $w_1$. 
    We further define $\pi'\colon E(G') \to \mathbb{F}_2$ by $\pi'(e_1)=\pi'(e_2)=0$ and $\pi'(e)=\pi(e)$ for all $e \in E(G)\cap E(G')$. 
    As $G'$ is subcubic and smaller than $G$, there exists a good ordering $\sigma'=(v_1,\ldots,v_{n-2})$ of $V(G')$ with respect to $\pi'$. 
    Let $i,j \in [n-2]$ such that $v_i=t_1$ and $v_j=w_1$. Since $G$ is subcubic, we have $i \neq j$ and by symmetry, we may suppose that $i<j$. 
    Now consider the ordering $\sigma=(v_1,\ldots,v_i,v,u,v_{i+1},\ldots,v_{n-2})$. We obtain that none of $t_1,u$, and $v$ are bad in $\sigma$. 
    Furthermore, $w_1$ is not bad in $\sigma$ as $\sigma'$ is a good ordering of $V(G')$ with respect to $\pi'$. 
    Finally, no vertex in $V(G)\setminus\{u,v,t_1,w_1\}$ is bad in $\sigma$ as $\sigma'$ is a good ordering of $V(G')$ with respect to $\pi'$. 
    It follows that $\sigma$ is a good ordering of $V(G)$ with respect to $\pi$, a contradiction to the choice of $(G,\pi)$.
    \begin{cas}
        $t_2=w_1=w_2$.
    \end{cas}
    Let $G'=G-\{u,v,t_2\}$ and $\pi'=\pi\vert_{E(G')}$. As $G'$ is subcubic and smaller than $G$, there exists a good ordering $\sigma'=(v_1,\ldots,v_{n-3})$ of $V(G')$ with respect to $\pi'$. Now consider the ordering $\sigma=(v_1,\ldots,v_{n-3},u,v,t_2)$.  We obtain that none of $t_1,t_2,u$, and $v$ are bad in $\sigma$. Further, no vertex in $V(G)\setminus\{u,v,t_1,t_2\}$ is bad in $\sigma$ as $\sigma'$ is a good ordering of $V(G')$ with respect to $\pi'$. It follows that $\sigma$ is a good ordering of $V(G)$ with respect to $\pi$, a contradiction to the choice of $(G,\pi)$.
    \begin{cas}
        $w_1=w_2$ and $t_1,t_2$, and $w_1$ are pairwise distinct.
    \end{cas}
    Let $G'$ be obtained from $G-\{u,v\}$ by adding two edges: $e_1$ with endvertices $t_1,w_1$, and $e_2$ with endvertices $t_2,w_1$. We further define $\pi' \colon E(G') \to \mathbb{F}_2$ by $\pi'(e_1)=\pi'(e_2)=0$ and $\pi'(e)=\pi(e)$ for all $e \in E(G)\cap E(G')$. As $G'$ is subcubic and smaller than $G$, there exists a good ordering $\sigma'=(v_1,\ldots,v_{n-2})$ of $V(G')$ with respect to $\pi'$.  Let $i,j,k \in [n-2]$ be such that $v_i=t_1$, $v_j = t_2$ and $v_k = w_1$.
    Note that $i,j,$ and $k$ are pairwise distinct. By symmetry, we may suppose that $i<j$.
    \begin{itemize}
        \item[(a)] If $i<k<j$, then let $\sigma = (v_1, \dots, v_{k-1}, u, w_1, v, v_{k+1}, \dots, v_{n-2})$.
        \item[(b)] If $k<i<j$, then let $\sigma = (v_1, \dots, v_{k-1}, u, w_1, v, v_{k+1}, \dots, v_{n-2})$.
        \item[(c)] If $i<j<k$, then let $\sigma = (v_1, \dots, v_{k-1}, v, u, w_1, v_{k+1}, \dots, v_{n-2})$.
    \end{itemize}
    All of these cases are illustrated in Figure~\ref{fig:good_ordering}.
    Similar arguments as above show that $\sigma$ is a good ordering of $G$ with respect to $\pi$. This contradicts the choice of $(G,\pi)$.

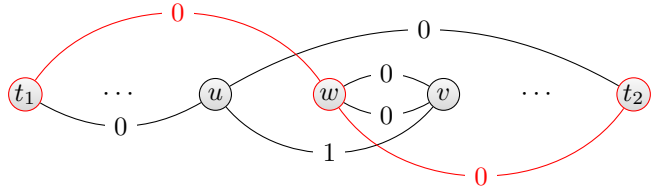
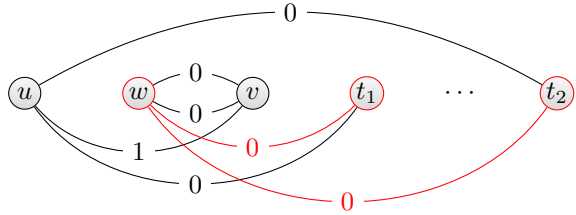
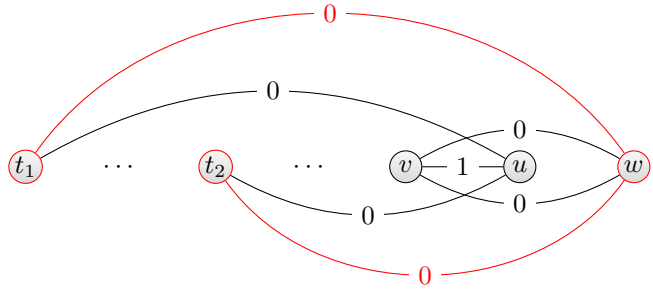
\begin{figure}[!hbt]
    \centering
    \begin{subfigure}[ht]{0.9\textwidth}
        \centering
        \begin{tikzpicture}
            \tikzstyle{vertex}=[circle,draw, top color=gray!5, 
            bottom color=gray!30, minimum size=12pt, scale=1, inner sep=0.5pt]
            \tikzstyle{vertexR}=[circle,draw=red, top color=gray!5, 
            bottom color=gray!30, minimum size=12pt, scale=1, inner sep=0.5pt]
            \node[vertexR] (t1) at (0-1,0) {$t_1$};
            \node[vertex] (u) at (1.5,0) {$u$};
            \node[vertexR] (w) at (3,0) {$w$};
            \node[vertex] (v) at (4.5,0) {$v$};
            \node[vertexR] (t2) at (6+1,0) {$t_2$};

            \node at (0.25,0) {$\cdots$};
            \node at (5.75,0) {$\cdots$};

            \draw[bend right] (t1) to node[midway, fill=white] {$0$} (u);
            \draw[bend left] (u) to node[midway, fill=white] {$0$} (t2);
            \draw[bend right=50] (u) to node[midway, fill=white] {$1$} (v);
            \draw[bend left] (v) to node[midway, fill=white] {$0$} (w);
            \draw[bend right] (v) to node[midway, fill=white] {$0$} (w);

            \draw[red, bend left=55] (t1) to node[midway, fill=white] {$0$} (w);
            \draw[red, bend left=55] (t2) to node[midway, fill=white] {$0$} (w);
        \end{tikzpicture}
        \caption{Case 3 (a) : $i<k<j$.}
    \end{subfigure}
    
    \begin{subfigure}[ht]{0.9\textwidth}
        \centering
        \begin{tikzpicture}
            \tikzstyle{vertex}=[circle,draw, top color=gray!5, 
            bottom color=gray!30, minimum size=12pt, scale=1, inner sep=0.5pt]
            \tikzstyle{vertexR}=[circle,draw=red, top color=gray!5, 
            bottom color=gray!30, minimum size=12pt, scale=1, inner sep=0.5pt]
            \node[vertexR] (t1) at (4.5,0) {$t_1$};
            \node[vertex] (u) at (0,0) {$u$};
            \node[vertexR] (w) at (1.5,0) {$w$};
            \node[vertex] (v) at (3,0) {$v$};
            \node[vertexR] (t2) at (6+1,0) {$t_2$};

            \node at (5.75,0) {$\cdots$};

            \draw[bend left=55] (t1) to node[midway, fill=white] {$0$} (u);
            \draw[bend left] (u) to node[midway, fill=white] {$0$} (t2);
            \draw[bend right=50] (u) to node[midway, fill=white] {$1$} (v);
            \draw[bend left] (v) to node[midway, fill=white] {$0$} (w);
            \draw[bend right] (v) to node[midway, fill=white] {$0$} (w);

            \draw[red, bend left=45] (t1) to node[midway, fill=white] {$0$} (w);
            \draw[red, bend left=55] (t2) to node[midway, fill=white] {$0$} (w);
        \end{tikzpicture}
        \caption{Case 3 (b) :  $k<i<j$.}
    \end{subfigure}
    
    \begin{subfigure}[ht]{0.9\textwidth}
        \centering
        \begin{tikzpicture}
            \tikzstyle{vertex}=[circle,draw, top color=gray!5, 
            bottom color=gray!30, minimum size=12pt, scale=1, inner sep=0.5pt]
            \tikzstyle{vertexR}=[circle,draw=red, top color=gray!5, 
            bottom color=gray!30, minimum size=12pt, scale=1, inner sep=0.5pt]
            \node[vertexR] (t1) at (0-2,0) {$t_1$};
            \node[vertex] (u) at (4.5,0) {$u$};
            \node[vertexR] (w) at (6,0) {$w$};
            \node[vertex] (v) at (3,0) {$v$};
            \node[vertexR] (t2) at (1.5-1,0) {$t_2$};

            \node at (-0.75,0) {$\cdots$};
            \node at (1.75,0) {$\cdots$};

            \draw[bend left] (t1) to node[midway, fill=white] {$0$} (u);
            \draw[bend left] (u) to node[midway, fill=white] {$0$} (t2);
            \draw (u) to node[midway, fill=white] {$1$} (v);
            \draw[bend left] (v) to node[midway, fill=white] {$0$} (w);
            \draw[bend right] (v) to node[midway, fill=white] {$0$} (w);

            \draw[red, bend left=55] (t1) to node[midway, fill=white] {$0$} (w);
            \draw[red, bend right=55] (t2) to node[midway, fill=white] {$0$} (w);
        \end{tikzpicture}
        \caption{Case 3 (c) : $i<j<k$.}
    \end{subfigure}
    \caption{Some cases in the proof of Lemma~\ref{lem:good-ordering}. The vertices are placed from left to right according to $\sigma$.
    The edges in $E(G') \setminus E(G)$ and the vertices in $V(G')$ are depicted in red.}
    \label{fig:good_ordering}
\end{figure}
    \begin{figure}[!hbtp]
    \centering
    \begin{tikzpicture}
        \tikzstyle{vertex}=[circle,draw, top color=gray!5, 
        bottom color=gray!30, minimum size=12pt, scale=1, inner sep=0.5pt]
        \tikzstyle{vertexR}=[circle,draw=red, top color=gray!5, 
        bottom color=gray!30, minimum size=12pt, scale=1, inner sep=0.5pt]
        \node[vertexR] (t1) at (-3-1,0) {$t_1$};
        \node[vertex] (u) at (-1.5,0) {$u$};
        \node[vertexR] (w1) at (0,-1.5) {$w_1$};
        \node[vertexR] (w2) at (1+3,0) {$w_2$};
        \node[vertex] (v) at (1+1.5,0) {$v$};
        \node[vertexR] (t2) at (0,0) {$t_2$};

        \node at (-2.75,0) {$\cdots$};
        \node at (1.25,0) {$\cdots$};

        \draw[bend left=55] (t1) to node[midway, fill=white] {$0$} (u);
        \draw (u) to node[midway, fill=white] {$0$} (t2);
        \draw[bend left=55] (u) to node[midway, fill=white] {$1$} (v);
        \draw (v) to node[midway, fill=white] {$0$} (w1);
        \draw (v) to node[midway, fill=white] {$0$} (w2);

        \draw[red, bend right] (w1) to node[midway, fill=white] {$0$} (w2);
        \draw[red, bend right=55] (t1) to node[midway, fill=white] {$0$} (t2);
    \end{tikzpicture}
    \caption{The case $i<j,k<l,j<l$ in the proof of Lemma~\ref{lem:good-ordering}, with vertices placed from left to right according to $\sigma$.
    Since the relative position of $w_1$ with $t_1$ and $t_2$ is unknown, $w_1$ is placed below.
    The edges in $E(G') \setminus E(G)$ and the vertices in $V(G')$ are depicted in red.}
    \label{fig:good_ordering_general_case}
\end{figure}
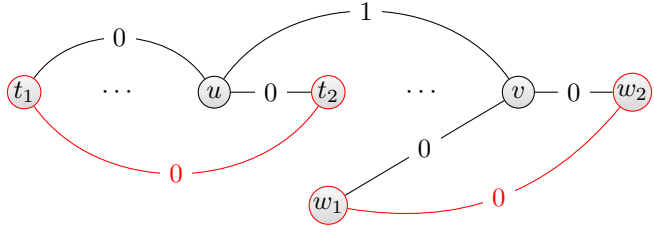
    
    \begin{cas}
        $t_1\neq t_2$ and $w_1\neq w_2$.
        \end{cas}
    Let $G'$ be obtained from $G-\{u,v\}$ by adding two edges: $e_1$ with endvertices $t_1,t_2$, and $e_2$ with endvertices $w_1,w_2$. 
    We further define $\pi' \colon E(G') \to \mathbb{F}_2$ by $\pi'(e_1)=\pi'(e_2)=0$ and $\pi'(e)=\pi(e)$ for all $e \in E(G)\cap E(G')$. 
    As $G'$ is subcubic and smaller than $G$, there exists a good ordering $\sigma'=(v_1,\ldots,v_{n-2})$ of $V(G')$ with respect to $\pi'$. 
    Let $i,j,k,\ell \in [n-2]$ be such that $v_i=t_1$, $v_j = t_2,v_k=w_1$, and $v_\ell = w_2$. Without loss of generality, we may suppose that $i<j, k < \ell$, and $j \leq \ell$.  
    If $j<\ell$, then let 
    \[
        \sigma = (v_1, \dots ,  v_{j-1}, u, t_2, v_{j+1}, \dots, v_{\ell-1}, v, w_2, v_{\ell +1}, \dots, v_{n-2}).
    \]
    This case is illustrated in Figure~\ref{fig:good_ordering_general_case}.
    If $j=\ell$, then observe that $t_2=w_2$. Let
    \[
        \sigma = (v_1, \dots ,  v_{j-1}, u, v, w_2, v_{j +1}, \dots, v_{n-2}).
    \]
    In either case, we obtain that none of $t_1,u,w_1$, and $v$ is bad in $\sigma$. 
    Furthermore, $t_2$ and $w_2$ are not bad in $\sigma$ as $\sigma'$ is a good ordering of $V(G')$ with respect to $\pi'$. 
    Finally, no vertex in $V(G)\setminus\{u,v,t_1,w_1\}$ is bad in $\sigma$ as $\sigma'$ is a good ordering of $V(G')$ with respect to $\pi'$. 
    It follows that $\sigma$ is a good ordering of $V(G)$ with respect to $\pi$, a contradiction to the choice of $(G,\pi)$.
    
    This finishes the proof of the claim.
\end{subproof}

Let $V_0$ be the set of vertices $v \in V(G)$ with $d_G(v)=3$ that are incident to exactly one edge in $\pi^{-1}(1)$. 
Now consider  an ordering $\sigma$ of $V(G)$ in which all vertices in $V_0$ are preceded by all vertices in $V(G)\setminus V_0$. 
Clearly, no vertex in $V(G)\setminus V_0$ is bad in $\sigma$. Now consider some vertex $v \in V_0$ and let $w$ be its unique neighbour such that $\pi(vw)=1$. 
By Claim~\ref{clm:crit}, we obtain that $w \in V(G)\setminus V_0$. By construction, it follows that $w<v$ with respect to $\sigma$. 
Hence $v$ is not bad with respect to $\sigma$, and it follows that $\sigma$ is a good ordering of $G$ with respect to $\pi$. 
This contradicts the choice of $(G,\pi)$.
\end{proof}

We now give the main result of this section. The approach is very similar to the one of Theorem~\ref{thm:generic_greedy_argument}. We use the ordering whose existence is guaranteed by Lemma~\ref{lem:good-ordering} and make some finer case distinctions.
\begin{theorem}\label{thm:L-cubic}
Let $G$ be a graph.
If $\Delta(G)\leq 3$, then $\diam(\mathcal{I}(G)) \leq 4$.
\end{theorem}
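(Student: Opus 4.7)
The approach is to emulate the linear-algebraic greedy argument of Theorem~\ref{thm:generic_greedy_argument} with $t=4$, but guided by the ordering from Lemma~\ref{lem:good-ordering} and with a controlled relaxation of the linear-independence invariant. By Observation~\ref{obs:characterization_with_vectors}, it suffices to show that, for every $\pi\colon E(G)\to\mathbb{F}_2$, there is a family $(\mathbf{v}_u)_{u\in V(G)}$ of vectors in $\mathbb{F}_2^4$ with $\mathbf{v}_u\cdot\mathbf{v}_w=\pi(uw)$ for every edge $uw$. Fix such a $\pi$ and take a good ordering $\sigma=(v_1,\dots,v_n)$ of $V(G)$ with respect to $\pi$.

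I build $\mathbf{v}_1,\dots,\mathbf{v}_n$ iteratively, trying to maintain the two invariants from the proof of Theorem~\ref{thm:generic_greedy_argument}. At step $i$, we must choose $\mathbf{v}_i$ in the affine space $A\subseteq\mathbb{F}_2^4$ of vectors $\mathbf{v}$ with $\mathbf{v}\cdot\mathbf{v}_j=\pi(v_jv_i)$ for all $v_j\in N_<(v_i)$, and, to preserve independence, outside $U=\bigcup_{v_k\in N_>(v_i)}U_k$ where $U_k$ is the span of $\{\mathbf{v}_j:v_j\in N_{<v_i}(v_k)\}$. Under $\Delta(G)\le 3$ and the previous-step invariant, $|A|=2^{4-|N_<(v_i)|}$ and each $U_k$ has cardinality at most $2^2=4$, so $|U|\le 1+3|N_>(v_i)|$. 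A quick check over the pairs $(|N_<(v_i)|,|N_>(v_i)|)$ shows that $|A|>|U|$ in every case except $(2,1)$, in which $|A|=|U|=4$ is possible; in all other cases I simply pick $\mathbf{v}_i\in A\setminus U$.

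In the remaining case $(|N_<(v_i)|,|N_>(v_i)|)=(2,1)$, write $N_<(v_i)=\{v_{j_1},v_{j_2}\}$ and $N_>(v_i)=\{v_{j_3}\}$. If $|N_{<v_i}(v_{j_3})|\le 1$, then $|U|\le 2<4=|A|$ and I pick $\mathbf{v}_i\in A\setminus U$. Otherwise $v_{j_3}$ has two other already-processed neighbors $w_1,w_2$; the invariant yields that $\mathbf{v}_{w_1},\mathbf{v}_{w_2}$ are linearly independent, so $U$ is their span and $|U|=|A|=4$. If $A\ne U$, then $|A\setminus U|\ge 1$ and I pick $\mathbf{v}_i\in A\setminus U$. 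If $A=U$, then $\mathbf{0}\in A$, which forces $\pi(v_{j_1}v_i)=\pi(v_{j_2}v_i)=0$; the hypothesis that $v_i$ is not bad in $\sigma$ then imposes $\pi(v_iv_{j_3})=0$. I set $\mathbf{v}_i:=\mathbf{0}$, which lies in $A$ and satisfies every constraint on $\mathbf{v}_i$, even though it breaks the independence invariant at $v_{j_3}$ (and only there, since $v_i$ has no other later neighbor).

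The main obstacle is to ensure that the step at $v_{j_3}$ still succeeds in spite of this violation; this is exactly what the definition of good ordering is tailored for. Since $\Delta(G)\le 3$ and $v_{j_3}$ already has three processed neighbors $w_1,w_2,v_i$, we have $N_>(v_{j_3})=\emptyset$. The constraint $\mathbf{v}_{j_3}\cdot\mathbf{v}_i=\pi(v_iv_{j_3})$ collapses to the tautology $\mathbf{v}_{j_3}\cdot\mathbf{0}=0$, while the remaining two constraints coming from $\mathbf{v}_{w_1}$ and $\mathbf{v}_{w_2}$ are independent and hence yield a $2$-dimensional affine space of valid choices for $\mathbf{v}_{j_3}$, any of which works. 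Invoking Observation~\ref{obs:characterization_with_vectors} then concludes the proof and gives $\diam(\mathcal{I}(G))\le 4$.
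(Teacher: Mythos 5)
Your proof is correct and follows essentially the same route as the paper's: the good ordering of Lemma~\ref{lem:good-ordering} combined with the greedy linear-algebra construction of Theorem~\ref{thm:generic_greedy_argument} and Observation~\ref{obs:characterization_with_vectors}, with the same case analysis reducing everything to the configuration $(|N_<(v_i)|,|N_>(v_i)|)=(2,1)$ with a saturated later neighbour. The only difference is a local technical device: where you assign $\mathbf{v}_i=\mathbf{0}$ inline (justified by non-badness) and relax the independence invariant at $v_{j_3}$, the paper first removes by induction all vertices incident only to $0$-edges, so that in the critical case $v_{i+1}$ necessarily has an earlier neighbour on a $1$-edge, giving $\mathbf{0}\notin A$ and hence $|A\setminus U|\geq 4-3>0$ directly.
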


\begin{proof}
    Let $G$ be a graph of maximum degree at most $3$.
    Consider a  function $\pi \colon  E(G) \to \mathbb{F}_2$.
    We show that there is a family $(\mathbf{u})_{u \in V(G)}$ of vectors in $\mathbb{F}_2^4$ such that $\pi(uv) = \mathbf{u} \cdot \mathbf{v}$ for every edge $uv \in E(G)$.
    If there is a vertex $u$ incident only to edges $e$ with $\pi(e)=0$,
    then we apply induction on $G-u$ and set $\mathbf{u}=\mathbf{0}$.
    Now we assume that every vertex of $G$ is incident to at least one edge $e$ with $\pi(e)=1$.

   By Lemma~\ref{lem:good-ordering}, there is a good ordering $\sigma = (v_1, \dots, v_n)$ of $G$ with respect to $\pi$. 
    We iteratively construct $\mathbf{v}_1, \dots, \mathbf{v}_n \in \mathbb{F}_2^4$ with the following property for every $i \in [n]$:
    \begin{enumerate}
        \item\label{item:cubic_lin_indep}  for every $k>i$, $(\mathbf{v}_j)_{j \leq i, v_j \in N(v_k)}$ are linearly independent, and 
        \item\label{item:cubic_correct_orientation} for every edge $v_j v_{j'}$ with $j,j' \leq i$, $\mathbf{v}_j \cdot \mathbf{v}_{j'} = \pi(v_j v_{j'})$. 
    \end{enumerate}
    Initially, it suffices to take $\mathbf{v}_1 \neq \mathbf{0}$.
    Now suppose that $\mathbf{v}_1, \dots, \mathbf{v}_i$ satisfy this invariant for some $i \in [n-1]$.
    We denote by $d$ the number of neighbours of $v_{i+1}$ in $\{v_1, \dots, v_i\}$.
    Condition~\ref{item:cubic_lin_indep} forbids for each neighbour $v_k$ of $v_{i+1}$ with $k>i+1$ a vector space of dimension at most $3-1=2$, and so in total,
    their union $U_1$ has size at most $1+(3-d)(2^{2}-1)$ and contains $\mathbf{0}$.
    Condition~\ref{item:cubic_correct_orientation} allows at least $2^{4-d}$ values of $\mathbf{v}_{i+1}$. 
    Indeed, as $(\mathbf{v}_j)_{v_j \in N(v_{i+1}), j \leq i}$ are linearly independent, the affine space $U_2$
    of the solutions of the system of equations$, \mathbf{v}_j \cdot \mathbf{v}_{i+1} = \pi(v_j v_{i+1})$, for $v_j \in N(v_{i+1})$ with $j \leq i$, has dimension at least $4-d$.
    It is enough to show that there is a vector in $U_2 \setminus U_1$.
    If $d = 0$, we have $|U_2| = 2^4 > 1 + 3 \times 3 \geq |U_1|$. If $d = 1$, we obtain $|U_2| = 2^3 > 1 + 2 \times 3 \geq |U_1|$. If $d = 3$, then $|U_2| = 2 > 1  \geq |U_1|$. As $G$ is subcubic, it remains to consider the case $d=2$. If $d_G(v_{i+1})=2$, we obtain $|U_1| =0 < 2^2 = |U_2|$. We may hence suppose that there exists a unique neighbour $v_k$ of $v_{i+1}$ with $k>i+1$. If $v_k$ has a higher-indexed neighbour, then $|U_1| \leq 2^1 < 2^2 = |U_2|$. We may hence suppose that $v_k$ has no higher-indexed neighbour. As $v_{i+1}$ is not bad in $\sigma$ and  $v_{i+1}$ is incident to at least one edge in $\pi^{-1}(1)$, $v_{i+1}$ has a neighbour $v_j$ with $j<i+1$ such that $\pi(v_j v_{i+1})=1$. This implies that $\mathbf{0} \not\in U_2$,
    and so $|U_2 \setminus U_1| = |U_2 \setminus (U_1 \setminus \{\mathbf{0}\})| \geq 2^2 - 3 >0$.

    In each case, we found $\mathbf{v}_{i+1}$ such that $\mathbf{v}_1, \dots, \mathbf{v}_{i+1}$ satisfies the invariant.
    At the end of the process, we have $\mathbf{v}_1, \dots, \mathbf{v}_n$ such that for every edge $v_iv_j$, $\mathbf{v}_i \cdot \mathbf{v}_j = \pi(v_i v_j)$.
    Using Observation~\ref{obs:characterization_with_vectors}, this proves the theorem.
\end{proof}


It follows from Theorem~\ref{thm:multipartite} applied to $K_4$ that the bound in Theorem~\ref{thm:L-cubic} cannot be improved further than 3.
Quite surprisingly, the following result shows that $K_4$ is not a particular example with this property. Indeed, the inversion diameter of any cubic graph is at least $3$.

\begin{theorem}\label{thm:every_cubic_at_least_3}
  Let $G$ be a graph. 
    If $\delta(G)\geq 3$, then $\diam(\mathcal{I}(G)) \geq 3$.
\end{theorem}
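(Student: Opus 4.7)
My plan is to apply Observation~\ref{obs:characterization_with_vectors}: since $\diam(\mathcal{I}(G)) \geq 3$ is equivalent to exhibiting a function $\pi \colon E(G) \to \mathbb{F}_2$ with no realization by a family $(\mathbf{v}_u)_{u \in V(G)}$ of vectors in $\mathbb{F}_2^2$ satisfying $\mathbf{v}_u \cdot \mathbf{v}_v = \pi(uv)$ on every edge, I will construct such an unrealizable $\pi$. The starting point is that $\delta(G) \geq 3$ allows a greedy walk to produce a path $v_0 v_1 v_2 v_3$ on four distinct vertices. I set $\pi(v_0 v_1) = \pi(v_1 v_2) = \pi(v_2 v_3) = 1$ and $\pi(e) = 0$ on every other edge of $G$.

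Suppose for contradiction a realization exists, and form the symmetric matrix $M$ over $\mathbb{F}_2$ with $M_{uv} = \mathbf{v}_u \cdot \mathbf{v}_v$; since $M = U^\top U$ for $U \in \mathbb{F}_2^{2 \times V(G)}$, one has $\rk M \leq 2$. The idea is to look at the $3 \times 3$ submatrix on rows $\{v_0, v_1, v_2\}$ and columns $\{v_1, v_2, v_3\}$,
\[
N = \begin{pmatrix} 1 & \alpha_1 & \alpha_2 \\ d_1 & 1 & \alpha_3 \\ 1 & d_2 & 1 \end{pmatrix},
\]
where the off-diagonal entries $\alpha_1, \alpha_2, \alpha_3$ are pinned to $0$ by $\pi$ whenever the corresponding chord pairs $\{v_0,v_2\}, \{v_0,v_3\}, \{v_1,v_3\}$ are edges of $G$. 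When all three chords are present -- i.e.\ when the four chosen vertices span a $K_4$ in $G$ -- the matrix $N$ becomes lower triangular with unit diagonal, so $\det N = 1$ regardless of the ``free'' diagonal entries $d_1, d_2$, contradicting $\rk M \leq 2$. Thus the argument immediately settles the case in which $G$ contains $K_4$ as a subgraph.

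The main obstacle is the $K_4$-free case: when some chord is a non-edge the corresponding $\alpha_i$ is not forced to $0$ by $\pi$, and one can check that $\det N$ can then vanish, so that specific submatrix no longer certifies rank $\geq 3$. To handle this case I would invoke Dirac's classical theorem that $\delta(G) \geq 3$ forces $G$ to contain a subdivision of $K_4$, and build the obstruction inside such a subdivision $H$ with branch vertices $v_1, v_2, v_3, v_4$. The point is that on a subdivision-path of length $\geq 2$ between $v_i$ and $v_j$, choosing the $\pi$-pattern $(1,0)$ on the two end-edges is easily seen to be realizable if and only if $\mathbf{v}_{v_i} \neq \mathbf{0}$ and $\mathbf{v}_{v_i} \neq \mathbf{v}_{v_j}$; imposing six such ``distinct and nonzero'' conditions (one per path, with orientations chosen so that every branch vertex appears as ``tail'') would force $\mathbf{v}_{v_1},\ldots,\mathbf{v}_{v_4}$ to be four pairwise distinct nonzero vectors of $\mathbb{F}_2^2$, which is impossible since $\mathbb{F}_2^2 \setminus \{\mathbf{0}\}$ has only three elements.

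The delicate point I expect to wrestle with is mixing the two strategies when the $K_4$-subdivision unavoidably contains a path of length $1$ (a true $G$-edge between two branch vertices), as occurs for example in the triangular prism: the direct inner-product constraint from a length-$1$ path cannot by itself enforce both nonzeroness and distinctness of the two endpoint vectors. The resolution I have in mind is either to show that in the $K_4$-free regime one can choose a $K_4$-subdivision all of whose six paths have length $\geq 2$, or, failing that, to combine length-$1$ ``inner-product $=\sigma$'' constraints with length-$\geq 2$ ``distinct nonzero'' constraints so that the joint system is still unsatisfiable -- a small-case analysis on the lengths $\ell_{ij}$ of the six paths.
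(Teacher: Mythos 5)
Your reduction via Observation~\ref{obs:characterization_with_vectors} and your treatment of the case $K_4\subseteq G$ are correct, but the proposal does not amount to a proof: it has one gap you acknowledge and one error you do not. The acknowledged gap is the case of $K_4$-subdivisions containing length-$1$ branch paths, for which you only list two candidate resolutions without carrying either out. The unacknowledged error is the claim that on a branch path of length at least $2$ the pattern $(1,0)$ on the two end-edges (with, presumably, $0$ on internal edges) is realizable if and only if $\mathbf{v}_{v_i}\neq\mathbf{0}$ and $\mathbf{v}_{v_i}\neq\mathbf{v}_{v_j}$. This holds only for length exactly $2$. For a path $v_i=u_0u_1\cdots u_\ell=v_j$ with $\ell\geq 3$, set $\mathbf{u}_2=\cdots=\mathbf{u}_{\ell-1}=\mathbf{0}$ and pick $\mathbf{u}_1$ with $\mathbf{u}_0\cdot\mathbf{u}_1=1$: this realizes the pattern for every value of $\mathbf{v}_{v_j}$, so no distinctness is enforced (and it is not clear any internal pattern can enforce it through a long path in $\mathbb{F}_2^2$). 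Since the $K_4$-subdivision provided by Dirac's theorem in a graph of minimum degree $3$ may well have branch paths of length $3$ or more, the pigeonhole step collapses in general, not only in the length-$1$ case you flagged.

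The paper takes a shorter and more robust route. A longest-path argument (Lemma~\ref{lem:cubic_even_cycle}) shows that $\delta(G)\geq 3$ forces an even cycle $C$, and every vertex of $C$ has degree at least $3$ in $G$. Lemma~\ref{lem:aumoins3} then considers two orientations agreeing exactly on $E(C)$ minus one edge and disagreeing on all other edges of $G$, and rules out distance $2$ by a direct parity argument on the two inversion sets. In your language: every vertex of $C$ is incident to a $\pi=1$ edge and hence receives a nonzero vector of $\mathbb{F}_2^2$; two consecutive cycle vertices joined by a $\pi=0$ edge must carry either the pair $\{(1,0),(0,1)\}$ or both carry $(1,1)$; propagating this around the even cycle contradicts $\pi=1$ on the remaining cycle edge. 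If you wish to keep your linear-algebraic framework, replacing the $K_4$-subdivision by such an even cycle is the natural fix; the degree hypothesis is used only to make every cycle vertex nonzero, not to force four pairwise distinct vectors.
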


Hence, a positive answer to the aforementioned problem would show that the inversion diameter of all cubic graphs is exactly $3$.

Theorem~\ref{thm:every_cubic_at_least_3} easily follows from the following two lemmas, the latter of which will be reused in Section~\ref{sec:minor_closed}.

\begin{lemma}\label{lem:cubic_even_cycle}
    Let $G$ be a graph. 
    If $\delta(G)\geq 3$, then $G$ has an even cycle.
\end{lemma}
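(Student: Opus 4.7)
The plan is to exploit a longest path in $G$ and a short parity argument to locate an even cycle. Let $P = v_0 v_1 \dots v_k$ be a longest path in $G$ (which exists because $\delta(G) \geq 3$ in particular forces $G$ to have at least one edge). Since $P$ is longest, every neighbour of $v_0$ must lie on $P$, for otherwise $P$ could be extended at its endpoint $v_0$. Combined with $\deg_G(v_0) \geq 3$, this yields at least three indices $1 = i_1 < i_2 < i_3 \leq k$ such that $v_0 v_{i_j} \in E(G)$ for $j \in \{1,2,3\}$; note that $i_1 = 1$ because $v_1$ is automatically a neighbour of $v_0$ on $P$.

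These three chords produce three natural cycles in $G$: the cycle $C_1 = v_0 v_1 \dots v_{i_2} v_0$ of length $i_2+1$, the cycle $C_2 = v_0 v_1 \dots v_{i_3} v_0$ of length $i_3+1$, and the cycle $C_3 = v_0 v_{i_2} v_{i_2+1} \dots v_{i_3} v_0$ of length $i_3 - i_2 + 2$. I will argue that at least one of these three has even length. Suppose for contradiction that all three are odd. Then $i_2+1$ odd forces $i_2$ to be even, $i_3+1$ odd forces $i_3$ to be even, and $i_3-i_2+2$ odd forces $i_3-i_2$ to be odd. But $i_2, i_3$ both even implies $i_3 - i_2$ even, a contradiction. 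Hence one of $C_1, C_2, C_3$ is an even cycle in $G$.

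There is essentially no main obstacle: the one point that requires care is verifying that the longest path has length at least $i_3 \geq 3$, which is guaranteed by the fact that $v_0$ has at least three distinct neighbours on $P$ at positions $i_1 < i_2 < i_3$, forcing $k \geq i_3 \geq 3$. The rest is a purely combinatorial pigeonhole on parities, so the whole argument should fit in a compact paragraph in the actual writeup.
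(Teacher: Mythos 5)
Your proof is correct and follows essentially the same route as the paper: take a longest path, note that an endpoint of degree at least $3$ has two neighbours on the path besides its path-successor, form the three resulting cycles, and observe by parity that they cannot all be odd. The only cosmetic difference is that the paper phrases the parity step via the identity $|V(C_1)|+|V(C_2)|=|V(C_3)|+2$ rather than a case analysis on $i_2$ and $i_3$.
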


\begin{proof}
    Let $P$ be a path of maximal length in $G$ and let $y$ be an endvertex of $P$, and let $z$ be its predecessor along $P$.  As $P$ is maximal, $N(y) \subseteq V(P)$. For any two vertices $u,v$ of $P$, we denote by $P[u,v]$ the subpath of $P$ with endvertices $u$ and $v$.
    Let $x_1,x_2 \in V(P)$ be such that $N(y) = \{x_1,x_2,z\}$ with $x_2$ closer to $y$ than $x_1$ in $P$.
   Consider the three cycles $C_1 = P[x_1,x_2] \cup x_2 yx_1$, $C_2 = P[x_2,y] \cup yx_2$ and $C_3 =  P[x_1,y] \cup yx_1$.
    Observe that $|V(C_1)| + |V(C_2)| = |V(C_3)| + 2$, thus at least one of $|V(C_1)|$, $|V(C_2)|$ or $|V(C_3)|$ is even, and so $G$ has an even cycle.
\end{proof}

\begin{lemma}\label{lem:aumoins3}
    Let $G$ be a graph that contains an even cycle $C$ with $d_G(v)\geq 3$ for all $v \in V(C)$. Then $\diam(\mathcal{I}(G)) \geq 3$.
\end{lemma}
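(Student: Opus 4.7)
My plan is to apply Observation~\ref{obs:characterization_with_vectors}: to show $\diam(\mathcal{I}(G)) \geq 3$, it suffices to exhibit a function $\pi\colon E(G)\to\mathbb{F}_2$ which cannot be realised by any family of vectors in $\mathbb{F}_2^2$. Writing $C = v_0 v_1 \cdots v_{2k-1} v_0$ with $2k \geq 4$, I will use $d_G(v_i) \geq 3$ to pick, for each $i$, an edge $e_i$ of $G$ incident to $v_i$ but not lying on $C$. I then define $\pi$ by $\pi(v_0 v_1) = 1$, $\pi(v_i v_{i+1}) = 0$ for every $i \in \{1, \ldots, 2k-1\}$ (indices modulo $2k$), $\pi(e_i) = 1$ for each $i$, and $\pi(e) = 0$ on every remaining edge.

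The core of the argument will be to suppose for contradiction that vectors $(\mathbf{u})_{u\in V(G)} \subseteq \mathbb{F}_2^2$ realise $\pi$ in the sense of Observation~\ref{obs:characterization_with_vectors}, and to derive an impossibility. First, since $e_i$ is incident to $v_i$ and carries $\pi$-value $1$, each $\mathbf{v}_i$ must be non-zero, hence lies in $\mathbb{F}_2^2 \setminus \{\mathbf{0}\} = \{(1,0),(0,1),(1,1)\}$. Among these three vectors, the only pair with zero scalar product is $\{(1,0),(0,1)\}$. Consequently, for each $i \in \{1,\ldots,2k-1\}$ the condition $\mathbf{v}_i \cdot \mathbf{v}_{i+1} = 0$ forces $\{\mathbf{v}_i,\mathbf{v}_{i+1}\} = \{(1,0),(0,1)\}$.

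Propagating this alternation along the path $v_1, v_2, \ldots, v_{2k-1}, v_0$, which has odd length $2k-1$, the two endpoints $\mathbf{v}_0$ and $\mathbf{v}_1$ end up with distinct values in $\{(1,0),(0,1)\}$, so that $\mathbf{v}_0 \cdot \mathbf{v}_1 = 0$; this contradicts $\pi(v_0 v_1) = 1$ and concludes the proof. The parity of $|V(C)|$ enters the argument exactly at this step.

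The main subtlety to double-check is that the construction remains valid when some of the edges $e_i$ are chords of $C$ or coincide with one another (for instance, both $v_i$ and $v_j$ might pick a common chord $v_iv_j$ as their $e_i$ and $e_j$). This is not a genuine obstacle: the derivation only exploits two things, namely that each cycle vertex is incident to some edge carrying $\pi$-value $1$ (forcing $\mathbf{v}_i \neq \mathbf{0}$) and the prescribed values on the cycle edges themselves. Any additional scalar-product constraints arising from chords or edges outside $C$ can only over-determine the system and cannot rescue the assumed realisation.
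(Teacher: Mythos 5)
There is a gap in the key step of your argument: the claim that, among the three non-zero vectors of $\mathbb{F}_2^2$, ``the only pair with zero scalar product is $\{(1,0),(0,1)\}$'' is false. Over $\mathbb{F}_2$ the vector $(1,1)$ is self-orthogonal, since $(1,1)\cdot(1,1)=1+1=0$. Hence the constraint $\mathbf{v}_i\cdot\mathbf{v}_{i+1}=0$ with both vectors non-zero does \emph{not} force $\{\mathbf{v}_i,\mathbf{v}_{i+1}\}=\{(1,0),(0,1)\}$; it also admits $\mathbf{v}_i=\mathbf{v}_{i+1}=(1,1)$. Your alternation/parity argument therefore does not get off the ground as written, because the vectors along the cycle need not alternate at all.

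The gap is fixable and the overall strategy is sound. Since the only non-zero vector orthogonal to $(1,1)$ is $(1,1)$ itself, if any $\mathbf{v}_i$ equals $(1,1)$ then the zero-constraints on the cycle edges propagate $(1,1)$ to every vertex of $C$, whence $\mathbf{v}_0\cdot\mathbf{v}_1=0\neq 1=\pi(v_0v_1)$, a contradiction; in the remaining case all $\mathbf{v}_i$ lie in $\{(1,0),(0,1)\}$ and your odd-length-path alternation argument applies. With this extra case your proof is complete, and it is then essentially the paper's proof translated into the vector language of Observation~\ref{obs:characterization_with_vectors}: the paper argues directly with two inversion sets $X_1,X_2$, where its case ``some $v_i\in X_1\cap X_2$'' is exactly your missing case $\mathbf{v}_i=(1,1)$, and its case ``no vertex of $C$ lies in $X_1\cap X_2$'' is your alternation case.
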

\begin{proof}
    Let $C=v_1v_2 \cdots v_gv_1 $.
    Let $\vec{G}_1, \vec{G}_2$ be two orientations of $G$ that agree on $E(C)\setminus\{v_g v_1\}$, and disagree on all other edges in $E(G)$.
    Suppose, for the sake of a contradiction, that there is an inversion sequence $(X_1,X_2)$ that transforms $\vec{G}_1$ into $\vec{G}_2$.
    As $d_G(v)\geq 3$ for all $v \in V(C)$, every vertex in $V(C)$ is incident to an edge of $E_{\neq}$. 
    Hence $X_1 \cup X_2$ covers $V(C)$.
    
    If $v_i \in X_1 \cap X_2$ for some $i\in \{1,\ldots,g-1\}$, then $v_{i+1} \in X_1 \cap X_2$  
    as $v_i v_{i+1} \in E_{=}$. 
    Similarly, if $v_i \in X_1 \cap X_2$ for some $i\in \{2,\ldots,g\}$, then $v_{i-1} \in X_1 \cap X_2$. 
    Repeatedly applying these arguments, if there is some $i \in \{1,\ldots,g-1\}$ with $v_i \in X_1 \cap X_2$, 
    then $v_1,v_2, \dots ,v_g \in X_1 \cap X_2$. 
    However, it then follows that $v_g v_1$ is not reversed, a contradiction.
    Now suppose that no vertex in $V(C)$ belongs to $X_1 \cap X_2$.
    
    Without loss of generality, suppose that $v_1 \in X_1$.
    For every $i \in [g-1]$ and $j\in [2]$, we have $v_i \in X_j$ if and only if $v_{i+1} \in X_{3-j}$ as $v_i v_{i+1}\in E_{=}$.
    Thus, by induction, $v_i \in X_1$ if $i$ odd and $v_i \in X_2$ if $i$ even.
    In particular, $v_g \in X_2$, but then $v_g v_1$ is not reversed, a contradiction.
\end{proof}

We are now ready to finish the proof of Theorem~\ref{thm:every_cubic_at_least_3}.

\begin{proof}[Proof of Theorem~\ref{thm:every_cubic_at_least_3}]
    By Lemma~\ref{lem:cubic_even_cycle}, $G$ has an even cycle $C$. 
    As $G$ is cubic, we have $d_G(v)\geq 3$ for all $v \in V(C)$. It now follows from Lemma~\ref{lem:aumoins3} that $\diam(\mathcal{I}(G)) \geq 3$.
\end{proof}

\section{Bounds in terms of treewidth}\label{sec:tw}

The treewidth of a graph $G$ is denoted by $\tw(G)$. A formal definition of treewidth can for example be found in~\cite{diestel}.
In this section, we consider $M(\tw\leq t)$, which is the maximum inversion diameter of a graph with treewidth at most $t$. 
The graphs with treewidth $1$ are the forests, so $M(\tw\leq 1)=2$ by Corollary~\ref{cor:forest}. 
We shall prove $t+2 \leq M(\tw\leq t) \leq 2t$ for all $t\geq 2$. We start by proving the upper bound $M(\tw\leq t) \leq 2t$.


\begin{theorem}\label{thm:Mtw2t}
    $\diam(\mathcal{I}(G)) \leq 2 \tw(G)$ for every  graph $G$. 
\end{theorem}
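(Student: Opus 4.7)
The plan is to apply Theorem~\ref{thm:generic_greedy_argument} by exhibiting a $(2t)$-strong ordering of $V(G)$ derived from a tree decomposition of width $t=\tw(G)$. I would fix such a decomposition $(T,(B_x)_{x\in V(T)})$, root $T$ at an arbitrary node, and write $\mathrm{top}(v)$ for the node of $T$ closest to the root whose bag contains $v$ (well-defined since the bags containing a fixed vertex induce a connected subtree of $T$). The ordering $<$ on $V(G)$ would be defined by $u<v$ whenever $\mathrm{top}(u)$ is a strict ancestor of $\mathrm{top}(v)$ in $T$, with ties broken arbitrarily.

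The first step is to show that for every $v\in V(G)$, both $N_<(v)$ and $\bigcup_{w\in N_>(v)} N_{<v}(w)$ are contained in $B_{\mathrm{top}(v)}\setminus\{v\}$. This follows from a single connectedness argument: the bag-subtree of any $u\in N_{<v}(w)$ (or $u\in N_<(v)$) contains $\mathrm{top}(u)$, which is an ancestor of $\mathrm{top}(v)$, together with a bag shared with $w$ (respectively $v$), which lies in the subtree of $T$ rooted at $\mathrm{top}(v)$; by connectedness, this bag-subtree must then also contain $\mathrm{top}(v)$, so $u\in B_{\mathrm{top}(v)}$. In particular $|N_<(v)|\leq t$, and the family $\mathcal{F}(v)=\{X\subseteq V(G):\exists\,w\in N_>(v),\,X\subseteq N_{<v}(w)\}$ consists of subsets of $B_{\mathrm{top}(v)}\setminus\{v\}$, hence has size at most $2^{|B_{\mathrm{top}(v)}|-1}\leq 2^t$.

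The main obstacle is to upgrade this to the strict inequality $|N_<(v)|+\log|\mathcal{F}(v)|<2t$ required for $(2t)$-strongness. When $|B_{\mathrm{top}(v)}|\leq t$, both summands are bounded by $t-1$ and the inequality is immediate. In the delicate case $|B_{\mathrm{top}(v)}|=t+1$, I would argue that $\mathcal{F}(v)$ misses at least one subset: otherwise $B_{\mathrm{top}(v)}\setminus\{v\}$ itself lies in $\mathcal{F}(v)$, so some $w\in N_>(v)$ is adjacent to each of the $t$ other members of the bag, and since $w\sim v$, to every vertex of $B_{\mathrm{top}(v)}$. A standard tree-decomposition pigeonhole---if $w$ were outside $B_{\mathrm{top}(v)}$, the bag-subtrees of the $t+1$ common neighbors would all have to pass through the bag of the $T$-node of $T_w$ closest to $\mathrm{top}(v)$, producing a bag of size $\geq t+2$---forces $w\in B_{\mathrm{top}(v)}\setminus\{v\}$, and then the relation $w\in N(w)$ is absurd. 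Hence $|\mathcal{F}(v)|\leq 2^t-1$ and $\log|\mathcal{F}(v)|<t$, giving the strict bound.

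Combining the two cases, $<$ is $(2t)$-strong, and Theorem~\ref{thm:generic_greedy_argument} then yields $\diam(\mathcal{I}(G))\leq 2t$.
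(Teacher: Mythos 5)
Your proposal is correct and follows essentially the same route as the paper: the same root-ordering by topmost bags, the same containment of $N_<(v)$ and of the sets $N_{<v}(w)$ in $B_{\mathrm{top}(v)}\setminus\{v\}$, and the same pigeonhole showing the full set $B_{\mathrm{top}(v)}\setminus\{v\}$ is never realised when the bag has size $t+1$, before invoking Theorem~\ref{thm:generic_greedy_argument}. The only cosmetic difference is that the paper first completes each bag into a clique, which your argument shows is unnecessary.
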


\begin{proof}
    Let $G$ be a graph of treewidth $t$.
    Let $(T,(B_x)_{x \in V(T)})$ be a tree-decomposition of $G$ of width $t$.
    Since adding edges does not decrease the inversion diameter, we can assume that for every $x \in V(T)$, $B_x$ induces a complete graph in $G$.
    Root $T$ on an arbitrary vertex, and consider an ordering $<$ of $V(G)$ satisfying the following condition: 
    for every pair $u,v$ of vertices, if the root of the subtree of $T$ induced by $\{y \in V(T) \mid u \in B_y\}$ 
    is an ancestor of the root of the subtree induced by $\{y \in V(T) \mid v \in B_y\}$, then $u<v$.
    One can easily check that such an ordering $<$ exists and that for every vertex $u$,
    if $x$ is the root of the subtree induced by $\{y \in V(T) \mid u \in B_y\}$, then
    \begin{enumerate}
        \item $N_{<}(u) \subseteq B_x \setminus \{u\}$, and
        \item for every $v \in N_{>}(u)$, $N_{<u}(v) \subseteq B_x \setminus \{u\}$.
    \end{enumerate}
    First suppose that $N_{>}(u) \neq \emptyset$.
    We show that if $|B_x|=t+1$, then there is no vertex $v \in N_>(u)$ such that $B_x \setminus \{u\}\subseteq N(v)$. 
    Otherwise, let $x' \in V(T)$ be the node at which the subtree induced by $\{x \in V(T) \mid v \in B_x\}$ is rooted.
    As $vw \in E(G)$ for all $w \in B_x$, we obtain $B_x \subseteq B_{x'}$, so $|B_{x'}|\geq |B_x|+1=t+2$, a contradiction.
    We deduce that
    \[
    |\{X \subseteq V(G) \mid \exists v \in N(u), v>u, X \subseteq N_{<u}(v)\}| \leq 2^t-1
    \]
    and so
    \[
    |N_{<}(u)| + 
        \log \left(|\{X \subseteq V(G) \mid \exists v \in N(u), v>u, X \subseteq N_{<u}(v)\}|\right) \leq t + \log(2^t-1) <2t.
    \]
    If $N_{>}(u) = \emptyset$, then $|N_{<}(u)| \leq t \leq 2t$.
    Hence $G$ is $2t$-strongly-degenerate and the result follows from Theorem~\ref{thm:generic_greedy_argument}.   
\end{proof}

For the  lower bound, observe that $M(\tw\leq t)\geq t$ can be easily proven by combining Theorem~\ref{thm:multipartite} applied to complete graph $K_t$ with the well-known fact that $\tw(K_t)=t-1$ for every $t\geq 2$. 
Somewhat surprisingly, it turns out that complete graphs are not tight examples. 
Indeed, we slightly improve this lower bound, showing that $M(\tw\leq t)\geq t+2$ for every $t\geq 2$. 
First we need a few well-known results about treewidth.

\begin{proposition}\label{prop:some_prop_of_tw}
\leavevmode
\begin{enumerate}[label=(\roman*)]
    \item Let $G$ be a graph of treewidth at least $2$ and let $u,v,w \in V(G)$ with $uv,uw,vw\in E(G)$ and $d_G(u)=2$. Then $\tw(G)=tw(G-u)$. \label{item:twtriangle}
    \item Let $G_1,G_2$ be graphs, $v_i \in V(G_i)$ for $i \in [2]$, and let $G$ be obtained from $G_1$ and $G_2$ by identifying $v_1$ and $v_2$. 
    Then $\tw(G)=\max\{\tw(G_1),\tw(G_2)\}$. \label{item:twmerge}
    \item Let $G$ be a graph and $v \in V(G)$. Then $\tw(G-v)\geq \tw(G)-1$. Moreover, if $v$ is adjacent to all vertices in $V(G)-v$ in $G$, then equality holds. \label{item:twdelete}
\end{enumerate}
\end{proposition}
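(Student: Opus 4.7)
The overall plan is to tackle each of the three items with the same general strategy: one direction is monotonicity, and for the other direction we exhibit an explicit tree-decomposition of the desired width. Since the three statements are standard, the only work lies in verifying the coverage and connectivity axioms of the tree-decompositions we build.

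For item~\ref{item:twtriangle}, the direction $\tw(G-u)\leq \tw(G)$ is immediate because $G-u$ is a subgraph of $G$. Conversely, start from an optimal tree-decomposition $(T,(B_x)_{x\in V(T)})$ of $G-u$. Since $vw\in E(G-u)$, some bag $B_{x_0}$ contains both $v$ and $w$. Attach a new leaf $x_1$ to $x_0$ with bag $B_{x_1}=\{u,v,w\}$. The result is a tree-decomposition of $G$ of width $\max(\tw(G-u),2)$, which equals $\tw(G-u)$ provided $\tw(G-u)\geq 2$; the hypothesis $\tw(G)\geq 2$ together with monotonicity is used here to rule out the degenerate situation.

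For item~\ref{item:twmerge}, the inequality $\tw(G)\geq\max(\tw(G_1),\tw(G_2))$ holds because each $G_i$ is a subgraph of $G$. For the reverse inequality, take optimal tree-decompositions $(T_i,(B^i_x))$ of $G_i$ and pick a bag $B^i_{x_i}$ containing $v_i$ for $i\in[2]$. Form $T$ as the disjoint union of $T_1$ and $T_2$ with a single new edge $x_1x_2$, and define the bags by setting $B_x=B^i_x$ in $T_i$ after identifying $v_1$ with $v_2$ to form $v$. The edge $x_1x_2$ links the two connected subtrees witnessing $v$, so the subtree associated with $v$ in the combined decomposition is connected, and every edge of $G$ is covered because it lies in some $E(G_i)$. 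The width is $\max(\tw(G_1),\tw(G_2))$.

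For item~\ref{item:twdelete}, the universal inequality $\tw(G-v)\geq \tw(G)-1$ follows by taking an optimal tree-decomposition of $G-v$ and adding $v$ to every bag: this remains a valid decomposition of $G$ (since all edges incident to $v$ are then covered) and increases the width by exactly $1$. When $v$ is universal in $G$, we must show $\tw(G-v)\leq \tw(G)-1$. Take an optimal tree-decomposition $(T,(B_x))$ of $G$ and let $T_v=\{x\in V(T)\mid v\in B_x\}$, a nonempty connected subtree. For every other vertex $u$, the subtree $T_u$ of nodes whose bag contains $u$ intersects $T_v$ because $uv\in E(G)$. Restrict the decomposition to $T_v$ and delete $v$ from every remaining bag; the width drops by at least one, giving at most $\tw(G)-1$. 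The main obstacle here, and the only non-routine point in the whole proof, is checking edge coverage: for an edge $uw$ of $G-v$, we need a bag in $T_v$ containing both $u$ and $w$. This follows from the Helly property of subtrees of a tree applied to the three pairwise intersecting subtrees $T_u$, $T_w$, $T_v$ (where the pairwise intersections are nonempty because $v$ is adjacent to both $u$ and $w$, and $uw$ is itself an edge of $G$), yielding a common node.
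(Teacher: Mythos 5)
The paper states this proposition without proof (it is invoked as ``well-known''), so there is no in-paper argument to compare against; I judge your write-up on its own. Your proofs of items~(ii) and~(iii) are correct and are the standard arguments: gluing two optimal decompositions along bags containing $v_1$ and $v_2$ for (ii), and for (iii) adding $v$ to every bag for the general inequality, then restricting to the subtree $T_v$ and invoking the Helly property of subtrees for the universal-vertex case. The Helly step is exactly the right tool for edge coverage there.

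Item~(i), however, contains a genuine gap. Your construction correctly yields $\tw(G)\le\max(\tw(G-u),2)$, but the closing claim --- that ``the hypothesis $\tw(G)\ge 2$ together with monotonicity'' rules out the case $\tw(G-u)\le 1$ --- is a non sequitur: monotonicity gives $\tw(G-u)\le\tw(G)$, which points in the wrong direction and cannot force $\tw(G-u)\ge 2$. The degenerate case is real: take $G=K_3$ with vertices $u,v,w$. All hypotheses hold ($\tw(G)=2$, $d_G(u)=2$, all three edges present), yet $\tw(G-u)=\tw(K_2)=1\ne 2=\tw(G)$. So the statement as literally written is false, and no argument can close this gap without strengthening the hypothesis. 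The correct version, which is what your construction actually proves and what the paper implicitly uses (note that in both applications the authors first check that the relevant graph ``is not a tree'', i.e.\ has treewidth at least $2$ after the deletions), is: $\tw(G)=\max\{2,\tw(G-u)\}$, or equivalently $\tw(G)=\tw(G-u)$ under the additional assumption $\tw(G-u)\ge 2$. You should either add that hypothesis or state the conclusion in the $\max$ form; as written, the last sentence of your proof of (i) asserts something false.
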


We further need the following definition. Given a vector $\mathbf{v}$ in $\mathbb{F}_2^t$ for some positive integer $t$, its {\it support} is the number of its coordinates whose value is $1$.

The proof of the lower bound on $M(\tw\leq t)$ for $t \geq 2$ is by induction on $t$. The following construction for $t=2$ will serve as the base case.

\begin{proposition}\label{proposition:tw2diam4}
    There is a graph $G$ with treewidth $2$ such that $\diam(\mathcal{I}(G))\geq 4$.
\end{proposition}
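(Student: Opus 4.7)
My plan is to apply Observation~\ref{obs:characterization_with_vectors}: to show $\diam(\mathcal{I}(G)) \geq 4$ it suffices to exhibit a treewidth-$2$ graph $G$ together with a function $\pi \colon E(G)\to\mathbb{F}_2$ that admits no family of vectors $(\mathbf{v})_{v\in V(G)}$ in $\mathbb{F}_2^3$ satisfying $\mathbf{u}\cdot\mathbf{v}=\pi(uv)$ for every edge $uv$.

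To build $G$, I would work within the class of $2$-trees, which are exactly the edge-maximal treewidth-$2$ graphs. Proposition~\ref{prop:some_prop_of_tw}(i) allows one to freely attach degree-$2$ vertices inside triangles, and Proposition~\ref{prop:some_prop_of_tw}(ii) permits gluing gadgets at a single vertex, so one can compose several small series-parallel pieces while keeping the treewidth equal to $2$. The candidate $G$ should single out three or four ``pole'' vertices that the assigned vectors will be forced to realize in general position, surrounded by auxiliary triangle-completing vertices whose role is to encode additional linear equations between the poles (each degree-$2$ vertex in a triangle contributes two equations that jointly constrain its own vector, which in turn constrains the poles via consistency).

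The function $\pi$ is then chosen so that the prescribed pairwise scalar products assemble into a symmetric matrix whose $\mathbb{F}_2$-rank necessarily exceeds $3$, in the spirit of the block-matrix argument used for Theorem~\ref{thm:multipartite} on $K_{2,2,2}$. A natural recipe is to set $\pi\equiv 1$ on triangles connecting the poles, forcing the pole vectors into pairwise linearly independent positions (via the ``support'' remark just before the proposition, which isolates the parity of $\mathbf{v}\cdot\mathbf{v}$ as a distinguishing invariant), and then to use a second layer of triangles with carefully chosen $\pi$-values to create a linear dependency among the poles that is incompatible with $\dim \mathbb{F}_2^3 = 3$. The conclusion will follow from a rank calculation over $\mathbb{F}_2$, most likely combined with a short case analysis on which of the $8$ elements of $\mathbb{F}_2^3$ each pole could take.

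The main obstacle is that a treewidth-$2$ graph has at most $2|V(G)|-3$ edges, hence far fewer prescribed matrix entries than in $K_{2,2,2}$; a priori, the non-edge entries of the scalar-product matrix are free and could be used to lower its rank to $3$. One must therefore design the gadget so tightly that, even with all non-edge entries left free, every low-dimensional assignment is ruled out. I expect this to be verified by picking an explicit small instance (roughly $6$ to $8$ vertices) and performing a direct contradiction argument: assume a realization in $\mathbb{F}_2^3$ exists, use the triangle constraints to fix the poles up to a small ambiguity, and then exhibit an inconsistency on one of the remaining edges.
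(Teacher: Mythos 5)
There is a genuine gap: your proposal correctly identifies the framework (reduce to the non-existence of a $(G,\pi)$-realisation in $\mathbb{F}_2^3$ via Observation~\ref{obs:characterization_with_vectors}, and control the treewidth with Proposition~\ref{prop:some_prop_of_tw}), but it never produces the witness pair $(G,\pi)$, and producing it is essentially the entire content of the proposition. Everything after ``the candidate $G$ should single out three or four pole vertices'' is a search heuristic, and the final sentence (``I expect this to be verified by picking an explicit small instance\dots'') defers exactly the step that needs to be done. An existence statement cannot be established by describing the kind of example one would look for; you must exhibit the graph, the labelling, and the contradiction.

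Two of the specific heuristics you propose are also shaky. First, the rank argument in the style of Theorem~\ref{thm:multipartite} does not transfer: as you yourself note, in a treewidth-$2$ graph almost all entries of the Gram matrix are unconstrained, so no rank lower bound on a partially specified symmetric matrix is available, and the paper's argument for this proposition is not a rank computation at all. Second, putting $\pi\equiv 1$ on triangles does not force the pole vectors into ``general position'': over $\mathbb{F}_2$, $\mathbf{u}\cdot\mathbf{v}=1$ is compatible with $\mathbf{u}=\mathbf{v}$, so triangles alone do not even force distinctness. What actually works (and what the paper does) is to build small series--parallel gadgets whose realisability in $\mathbb{F}_2^3$ encodes \emph{inequations}: a gadget forcing $\mathbf{x}\neq\mathbf{y}$, one forcing $\mathbf{x}+\mathbf{y}\neq(1,1,1)$ when both are nonzero, and one forcing $\mathbf{x}\neq(1,1,1)$; these are then attached to a $5$-edge core on four poles $a,b,c,d$, and a short case analysis on the supports of the pole vectors among the seven nonzero elements of $\mathbb{F}_2^3$ yields the contradiction. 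Your outline is compatible with this, but without the explicit gadgets and the case analysis it is not a proof.
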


\begin{proof}
Consider the gadgets $G_1(x,y), G_2(x,y)$ and $G_3(x)$ depicted in Figure~\ref{fig:construction_tw2}. The label functions $\pi_1$, $\pi_2$, $\pi_3$ mapping the edges of $G_1(x,y), G_2(x,y)$, and $G_3(x)$, respectively, to $\mathbb{F}_2$ can also be found in Figure~\ref{fig:construction_tw2}.
For every $i\in [3]$, we say that a family $(\mathbf{u})_{u \in V(G_i)}$ of vectors over $\mathbb{F}_2$ \emph{satisfies} $(G_i,\pi_i)$ if $\pi_i(uv) = \mathbf{u} \cdot \mathbf{v}$ for every $uv \in E(G_i)$.
The following observations can be easily checked.
\begin{itemize}
    \item For every family $(\mathbf{u})_{u \in V(G_1(x,y))}$ of vectors in $\mathbb{F}_2^3$ satisfying $(G_1(x,y),\pi_1)$, we have $\mathbf{x} \neq \mathbf{y}$.
    \item For every family $(\mathbf{u})_{u \in V(G_2(x,y))}$ of vectors in $\mathbb{F}_2^3$ satisfying $(G_2(x,y),\pi_1)$, if $\mathbf{x},\mathbf{y} \neq \mathbf{0}$, then $\mathbf{x} + \mathbf{y} \neq (1,1,1)$.
    \item For every family $(\mathbf{u})_{u \in V(G_3(x))}$ of vectors in $\mathbb{F}_2^3$ satisfying $(G_3(x),\pi_3)$, we have $\mathbf{x} \neq (1,1,1)$.
\end{itemize}

Now consider the graph $G$ with the function $\pi \colon E(G) \to \mathbb{F}_2$ depicted on Figure~\ref{subfig:base_of_the_construction}.
We claim that $\diam(\mathcal{I}(G)) >3$.
Suppose for the sake of a contradiction that there is a family $(\mathbf{u})_{u \in V(G_1(x,y))}$ of vectors in $\mathbb{F}_2^3$ satisfying the constraints $\mathbf{u} \cdot \mathbf{v} = \pi(uv)$ for every edge $uv$.
By the previous observations, we have $(1,1,1) \not\in \{\mathbf{a}, \mathbf{b}, \mathbf{c}, \mathbf{d}\}$, $\mathbf{a} \neq \mathbf{c}$ and $\mathbf{b} \neq \mathbf{d}$.
Since every vertex in $\{a,b,c,d\}$ is incident to an edge $e$ with $\pi(e)=1$, we also have $(0,0,0) \not\in \{\mathbf{a}, \mathbf{b}, \mathbf{c}, \mathbf{d}\}$.

By the above observations, we obtain that the support of $\mathbf{b}$ is either $1$ or $2$.
If $\mathbf{b}$ has support $1$, say $\mathbf{b}=(1,0,0)$, then $\mathbf{c} \in \{(1,1,0),(1,0,1),(1,0,0)\}$ as $\pi(bc)=1$ and $\mathbf{c}\neq (1,1,1)$. If $\mathbf{c}=(1,0,0)$, we obtain $0=\pi(bd)=\mathbf{b} \cdot \mathbf{d}=\mathbf{c} \cdot \mathbf{d}=\pi(cd)=1$, a contradiction. This yields $\mathbf{c} \in \{(1,1,0),(1,0,1)\}$ and so $\mathbf{c}$ has support $2$.
Symmetrically, if $\mathbf{c}$ has support 1, then $\mathbf{b}$ has support $2$.
We conclude that either $\mathbf{b}$ or $\mathbf{c}$ has support $2$.
Without loss of generality, suppose that $\mathbf{b}$ has support $2$ and $\mathbf{b} = (1,1,0)$.
Since $\mathbf{b} \cdot \mathbf{d}=0$, we have $\mathbf{d} \in \{(0,0,0),(0,0,1),(1,1,0),(1,1,1)\}$.
However, by the above observations, we have $\mathbf{d} \notin \{(0,0,0),(1,1,1)\}$, $\mathbf{b}\neq \mathbf{d}$, and $\mathbf{b} + \mathbf{d} \neq (1,1,1)$, and so $\mathbf{d} \notin \{(0,0,0),(0,0,1),(1,1,0),(1,1,1)\}$, a contradiction.

Thus $\diam(\mathcal{I}(G)) >3$ by Observation~\ref{obs:characterization_with_vectors}.
Further, as $G$ is not a tree and repeatedly applying Proposition~\ref{prop:some_prop_of_tw}~\ref{item:twtriangle} and~\ref{item:twmerge}, we obtain that $G$ has treewidth $2$. This proves the proposition.
\end{proof}

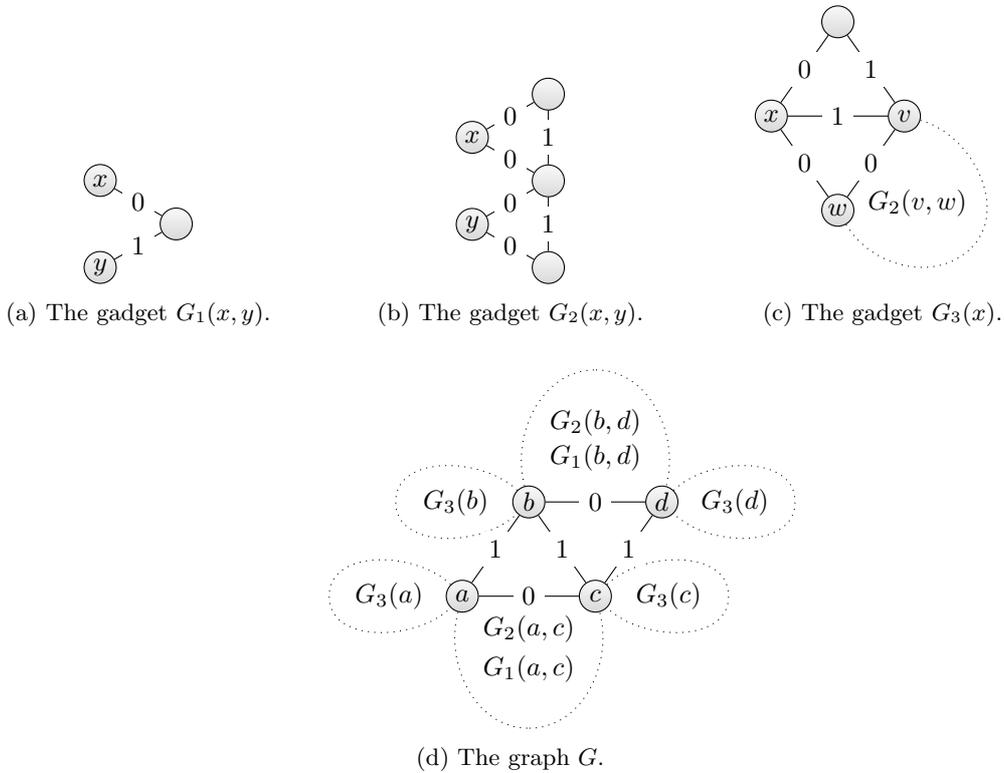
\begin{figure}[hbtp]
    \begin{center}
        \begin{subfigure}[b]{0.3\textwidth}
        \centering
        \begin{tikzpicture}
            \tikzstyle{vertex}=[circle,draw, top color=gray!5, 
	    bottom color=gray!30, minimum size=12pt, scale=1, inner sep=0.5pt]
            \node[vertex] (x) at (0,0.577) {$x$};
            \node[vertex] (y) at (0,-0.577) {$y$};
            \node[vertex] (u) at (1,0) {};

            \draw (x) -- node[midway,fill=white]{$0$} (u);
            \draw (y) -- node[midway,fill=white]{$1$} (u);
          \end{tikzpicture}
      \caption{The gadget $G_1(x,y)$.}
      \end{subfigure}
      \begin{subfigure}[b]{0.3\textwidth}
        \centering
        \begin{tikzpicture}
            \tikzstyle{vertex}=[circle,draw, top color=gray!5, 
	    bottom color=gray!30, minimum size=12pt, scale=1, inner sep=0.5pt]
            \node[vertex] (x) at (0,0.577) {$x$};
            \node[vertex] (y) at (0,-0.577) {$y$};
            \node[vertex] (u) at (1,0) {};
            \node[vertex] (w) at (1,1.15) {};
            \node[vertex] (w') at (1,-1.15) {};

            \draw (x) -- node[midway,fill=white]{$0$} (u);
            \draw (y) -- node[midway,fill=white]{$0$} (u);
            \draw (x) -- node[midway,fill=white]{$0$} (w);
            \draw (w) -- node[midway,fill=white]{$1$} (u);
            \draw (y) -- node[midway,fill=white]{$0$} (w');
            \draw (u) -- node[midway,fill=white]{$1$} (w');
          \end{tikzpicture}
      \caption{The gadget $G_2(x,y)$.}
    \end{subfigure}
    \begin{subfigure}[b]{0.3\textwidth}
        \centering
        \begin{tikzpicture}[scale=1.75]
            \tikzstyle{vertex}=[circle,draw, top color=gray!5, 
	    bottom color=gray!30, minimum size=12pt, scale=1, inner sep=0.5pt]
            \node[vertex] (x) at (0,0) {$x$};
            \node[vertex] (a) at (1,0) {$v$};
            \node[vertex] (w) at (0.5,0.714) {};
            \node[vertex] (w') at (0.5,-0.714) {$w$};

            \node[minimum size=0pt, inner sep=0pt] (temp) at (1.5, -1) {};
            \node (label) at (1.1,-0.66) {$G_2(v,w)$};

            \draw (x) -- node[midway,fill=white]{$1$} (a);
            \draw (x) -- node[midway,fill=white]{$0$} (w);
            \draw (x) -- node[midway,fill=white]{$0$} (w');
            \draw (a) -- node[midway,fill=white]{$1$} (w);
            \draw (a) -- node[midway,fill=white]{$0$} (w');
            \draw[dotted] (a) to[out=-15, in=45] (temp);
            \draw[dotted] (w') to[out=-60, in=-135] (temp);
          \end{tikzpicture}
      \caption{The gadget $G_3(x)$.}
    \end{subfigure}

    \vspace{5mm}
    
    \begin{subfigure}[b]{0.3\textwidth}
        \centering
        \begin{tikzpicture}[scale=1.75]
            \tikzstyle{vertex}=[circle,draw, top color=gray!5, 
	    bottom color=gray!30, minimum size=12pt, scale=1, inner sep=0.5pt]
            \node[vertex] (a) at (0,0) {$a$};
            \node[vertex] (c) at (1,0) {$c$};
            \node[vertex] (b) at (0.5,0.714) {$b$};
            \node[vertex] (d) at (1.5,0.714) {$d$};

            \node[minimum size=0pt, inner sep=0pt] (temp) at (0.5, -1) {};
            \draw[dotted] (a) to[out=-105, in=-180] (temp);
            \draw[dotted] (temp) to[out=0, in=-75] (c);
            \node (label) at (0.5,-0.25) {$G_2(a,c)$};
            \node (labelb) at (0.5,-0.55) {$G_1(a,c)$};

            \node[minimum size=0pt, inner sep=0pt] (temp2) at (1, 1.714) {};
            \draw[dotted] (b) to[out=105, in=180] (temp2);
            \draw[dotted] (temp2) to[out=0, in=75] (d);
            \node (label2) at (1,1.05) {$G_1(b,d)$};
            \node (label22) at (1,1.314) {$G_2(b,d)$};

            \node[minimum size=0pt, inner sep=0pt] (temp3) at (-1, 0) {};
            \draw[dotted] (a) to[out=135, in=90] (temp3);
            \draw[dotted] (temp3) to[out=-90, in=-135] (a);
            \node (label3) at (-0.55,0) {$G_3(a)$};

            \node[minimum size=0pt, inner sep=0pt] (temp4) at (-0.5, 0.714) {};
            \draw[dotted] (b) to[out=135, in=90] (temp4);
            \draw[dotted] (temp4) to[out=-90, in=-135] (b);
            \node (label32) at (-0.05,0.714) {$G_3(b)$};

            \node[minimum size=0pt, inner sep=0pt] (temp5) at (2, 0) {};
            \draw[dotted] (c) to[out=45, in=90] (temp5);
            \draw[dotted] (temp5) to[out=-90, in=-45] (c);
            \node (label33) at (1.55,0) {$G_3(c)$};

            \node[minimum size=0pt, inner sep=0pt] (temp6) at (2.5, 0.714) {};
            \draw[dotted] (d) to[out=45, in=90] (temp6);
            \draw[dotted] (temp6) to[out=-90, in=-45] (d);
            \node (label34) at (2.05,0.714) {$G_3(d)$};

            \draw (a) -- node[midway,fill=white]{$1$} (b);
            \draw (b) -- node[midway,fill=white]{$1$} (c);
            \draw (c) -- node[midway,fill=white]{$1$} (d);
            \draw (a) -- node[midway,fill=white]{$0$} (c);
            \draw (b) -- node[midway,fill=white]{$0$} (d);
            
          \end{tikzpicture}
      \caption{The graph $G$.}
      \label{subfig:base_of_the_construction}
    \end{subfigure}
    \end{center}
    \caption{Construction of a graph with treewidth $2$ and inversion diameter $4$.}
    \label{fig:construction_tw2}
\end{figure}

We now show that more generally there are graphs of treewidth $t$ and inversion diameter $t+2$, for every $t \geq 2$.
First we need the following lemma.
Given a vector $\mathbf{u} \in \mathbb{F}_2^t$, we denote by $\mathbf{u}^\bot$ the subspace $\{\mathbf{v} \in \mathbb{F}_2^t \mid \mathbf{u} \cdot \mathbf{v} = 0\}$.

In order to extend the lower bound for arbitrary values of $t$, we need the following result. For some positive integer $t$, a basis $(\mathbf{x}_1,\ldots,\mathbf{x}_t)$ of $\mathbb{F}_2^t$ is called {\it orthogonal} if $\mathbf{x_i} \cdot \mathbf{x_i}=1$ holds for all $i \in [t]$ and $\mathbf{x_i} \cdot\mathbf{x_j}=0$ holds for all distinct $i,j \in [t]$.
\begin{lemma}\label{lemma:find_u_st_ubot_has_an_orthogonal_basis}
Let $t$ be an integer such that $t \geq 2$. Let $\mathbf{u}$ be a vector in $\mathbb{F}_2^t$ whose support is odd and different from $t$.  
Then the subspace $\mathbf{u}^\bot$ 
has an orthogonal basis.
In particular, there are at most $2^{t-1}+1$ vectors $\mathbf{u}$ in $\mathbb{F}_2^t$ such that $\mathbf{u}^\bot$ does not admit an orthogonal basis.
\end{lemma}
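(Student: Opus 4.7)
The plan is to prove that $\mathbf{u}^\bot$ admits an orthogonal basis by establishing two properties of the scalar product restricted to $\mathbf{u}^\bot$: it is non-degenerate, and it is not alternating (i.e., some vector in $\mathbf{u}^\bot$ has self-inner product $1$). For non-degeneracy, $\mathbf{u}\cdot\mathbf{u}$ equals the support of $\mathbf{u}$ modulo $2$, which is $1$ by hypothesis; hence $\mathbf{u}\notin \mathbf{u}^\bot$, and the radical $\mathbf{u}^\bot \cap \langle\mathbf{u}\rangle$ of the restricted form is trivial. For non-alternating, since the support of $\mathbf{u}$ is strictly smaller than $t$, there is a coordinate $i$ where $\mathbf{u}$ vanishes, and then the standard basis vector $\mathbf{e}_i$ lies in $\mathbf{u}^\bot$ and satisfies $\mathbf{e}_i \cdot \mathbf{e}_i = 1$.

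The core step is the following inductive claim: any subspace $V$ of $\mathbb{F}_2^t$ on which the scalar product is non-degenerate and admits some vector $v$ with $v \cdot v = 1$ has an orthogonal basis. I would prove this by induction on $\dim V$, with the base case $\dim V = 1$ being immediate. In the inductive step, I fix such a $v$ and set $W = v^\bot \cap V$; a standard argument gives $\dim W = \dim V - 1$ and the restricted form on $W$ is non-degenerate. If $W$ contains a self-orthogonal vector, I apply the induction hypothesis to $W$ and prepend $v$ to the resulting basis, which gives an orthogonal basis of $V$.

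The main obstacle is handling the case where the form on $W$ is alternating. In that case, I pick any nonzero $e \in W$ and use the non-degeneracy of the form on $W$ to find $f \in W$ with $e \cdot f = 1$; note that $e \cdot e = f \cdot f = 0$ because the form is alternating on $W$. A direct calculation in characteristic $2$ then shows that $v + e$ and $v + f$ each have self-inner product $1$ and satisfy $(v+e)\cdot(v+f) = 0$. I would set $V'' = \langle v+e, v+f\rangle^\bot \cap V$, check that $v + e + f$ lies in $V''$ with $(v+e+f)\cdot(v+e+f) = 1$, and conclude that $V''$ has dimension $\dim V - 2$ with non-degenerate and non-alternating restricted form. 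The induction hypothesis then yields an orthogonal basis of $V''$, which prepended with $(v+e,\, v+f)$ gives an orthogonal basis of $V$.

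Finally, the counting statement follows by contraposition: any $\mathbf{u}$ for which $\mathbf{u}^\bot$ does not admit an orthogonal basis must have even support or support equal to $t$. There are $2^{t-1}$ vectors of even support and exactly one vector of support $t$ (the all-ones vector), giving the claimed bound of $2^{t-1} + 1$.
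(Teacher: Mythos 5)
Your proof is correct, but it follows a genuinely different route from the paper's. The paper gives a short explicit construction: after permuting coordinates so that $\mathbf{u}=(1,\dots,1,0,\dots,0)$ with odd support $\ell$, it exhibits a symmetric matrix $M$ (a block with the "all-ones-except-anti-diagonal" matrix $J_\ell$ and an identity block) satisfying $M^2=I_t$ and having $\mathbf{u}$ as its first column; the remaining $t-1$ columns are then immediately an orthogonal basis of $\mathbf{u}^\bot$. You instead reprove the classical structure theorem for symmetric bilinear forms over $\mathbb{F}_2$: a subspace on which the standard form is non-degenerate and non-alternating admits an orthogonal basis, handling the alternating-subspace case in the induction via the hyperbolic pair $(e,f)$ and the vectors $v+e$, $v+f$, $v+e+f$. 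Your reduction of the lemma's hypotheses to these two conditions is exactly right (odd support gives $\mathbf{u}\notin\mathbf{u}^\bot$, hence non-degeneracy of the restriction since the radical is $\mathbf{u}^\bot\cap\langle\mathbf{u}\rangle$; support $<t$ supplies a standard basis vector $\mathbf{e}_i\in\mathbf{u}^\bot$ with $\mathbf{e}_i\cdot\mathbf{e}_i=1$), and all the verifications in the alternating case check out, including that this case forces $\dim W\geq 2$ so that $V''$ is nonzero and contains the non-isotropic vector $v+e+f$. What your approach buys is conceptual clarity — it explains precisely why both hypotheses on the support are needed and generalises beyond hyperplanes — at the cost of a longer induction; the paper's construction is shorter but ad hoc. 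One cosmetic remark: where you write "if $W$ contains a self-orthogonal vector", you mean a vector $w$ with $w\cdot w=1$ (i.e., the form on $W$ is non-alternating); "self-orthogonal" usually means the opposite, so rephrase to avoid confusion. The counting at the end is the same easy contrapositive in both treatments.
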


\begin{proof}
    Let $\ell$ be the support of $\mathbf{u}$.
    Without loss of generality, $\mathbf{u}=(1,\dots,1,0,\dots,0)$.
    Let $J_\ell$ be the $(\ell+1) \times (\ell+1)$ matrix all of whose entries are $1$ except those on the anti-diagonal, that is $(J_{\ell})_{i,j}=0$ if and only if $i=\ell+2-j$ for every $i,j \in [\ell+1]$.
    Since $\ell$ is odd, we have $J_\ell ^ 2 = I_{\ell +1}$.
    Then the matrix
    \[
    M = 
    \left(
    \begin{array}{c c}
        J_\ell & 0 \\
        0 & I_{t - \ell-1} \\
    \end{array}
    \right)
    \]
    is such that $M^2 = I_t$ and its first column is $\mathbf{u}$.
    Hence the columns of $M$ form an orthogonal basis of $\mathbb{F}_2^t$ containing $\mathbf{u}$.
    Therefore the family of columns obtained from the columns of $M$ by deleting the first one forms an orthogonal basis of $\mathbf{u}^\bot$.
\end{proof}

We are now ready to prove the following restatement of the fact that $M(\tw \leq t)\geq t+2$.
\begin{theorem}\label{thm:lower_bound_for_tw}
    For every $t \geq 2$, there is a graph $G_t$ of treewidth $t$ such that $\diam(\mathcal{I}(G)) \geq t+2$.
\end{theorem}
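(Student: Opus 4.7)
We proceed by induction on $t \geq 2$, with the base case $t=2$ supplied by Proposition~\ref{proposition:tw2diam4}. For the inductive step, assume a graph $G_{t-1}$ of treewidth $t-1$ with $\diam(\mathcal{I}(G_{t-1})) \geq t+1$ has already been constructed; by Observation~\ref{obs:characterization_with_vectors} there exists $\pi_{t-1}\colon E(G_{t-1}) \to \mathbb{F}_2$ that admits no family of vectors in $\mathbb{F}_2^t$ realizing the prescribed scalar products. The construction I would try is to let $G_t$ be obtained from $G_{t-1}$ by adding a single vertex $v$ adjacent to every vertex of $V(G_{t-1})$; Proposition~\ref{prop:some_prop_of_tw}~\ref{item:twdelete} then gives $\tw(G_t)=t$, so the treewidth is immediate and the main work is to force $\diam(\mathcal{I}(G_t))\geq t+2$.

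I would extend $\pi_{t-1}$ to $\pi_t$ on $E(G_t)$ by setting $\pi_t(vw)=1$ for every $w \in V(G_{t-1})$ and argue by contradiction. Suppose there is a family $(\mathbf{u})_{u \in V(G_t)}$ in $\mathbb{F}_2^{t+1}$ with $\mathbf{u}_1\cdot \mathbf{u}_2=\pi_t(u_1u_2)$ for every edge, and let $\mathbf{v}$ be the label of $v$. Then $\mathbf{v}\cdot \mathbf{w}=1$ for every $w \in V(G_{t-1})$, forcing $\mathbf{v}\neq \mathbf{0}$. Provided $\mathbf{v}^\bot$ admits an orthogonal basis---which by Lemma~\ref{lemma:find_u_st_ubot_has_an_orthogonal_basis} occurs for all but at most $2^t+1$ exceptional vectors $\mathbf{v}\in \mathbb{F}_2^{t+1}$---we have $\mathbf{v}\cdot \mathbf{v}=1$, so each $\mathbf{w}$ decomposes uniquely as $\mathbf{w}=\mathbf{v}+\mathbf{w}'$ with $\mathbf{w}'\in \mathbf{v}^\bot$, and the cross terms vanish to give
\[
\mathbf{w}_1\cdot \mathbf{w}_2 \;=\; 1 + \mathbf{w}_1'\cdot \mathbf{w}_2'.
\]
Expressing the $\mathbf{w}'$s in the orthogonal basis of $\mathbf{v}^\bot$ then yields a family in $(\mathbb{F}_2^t,\text{standard inner product})$ that satisfies the shifted labelling $\pi_{t-1}+\mathbf{1}$. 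To reach a contradiction with the inductive hypothesis one strengthens the hypothesis by requiring that \emph{both} $\pi_{t-1}$ and $\pi_{t-1}+\mathbf{1}$ fail to admit a satisfying family in $\mathbb{F}_2^t$---a mild strengthening that is easy to arrange at the base case by verifying it against the $\pi$ constructed in Proposition~\ref{proposition:tw2diam4}, and which propagates through the induction.

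The main obstacle is the exceptional vectors: those $\mathbf{v}$ of even support, together with the all-ones vector when $t+1$ is odd, for which $\mathbf{v}^\bot$ fails to admit an orthogonal basis and the decomposition above breaks down. To rule these out I would enrich $G_t$ with a bounded collection of small auxiliary gadgets attached to $v$, analogous to the gadget $G_3(x)$ used in the proof of Proposition~\ref{proposition:tw2diam4}, each imposing a pattern of scalar products with $\mathbf{v}$ that is incompatible with $\mathbf{v}$ taking a given exceptional value. Because every such gadget consists of a constant number of vertices all adjacent to (or close to) $v$, each gadget can be absorbed into a bag of the tree-decomposition of $G_t$ that already contains $v$, and the treewidth is preserved at $t$. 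The delicate point---designing gadgets that collectively forbid every exceptional $\mathbf{v}$ while not introducing spurious solutions in the generic case---is the main technical obstacle, and the explicit characterization of the exceptional set provided by Lemma~\ref{lemma:find_u_st_ubot_has_an_orthogonal_basis} is what makes this design tractable.
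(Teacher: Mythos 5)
Your skeleton---induction on $t$, translation to the vector formulation via Observation~\ref{obs:characterization_with_vectors}, and using Lemma~\ref{lemma:find_u_st_ubot_has_an_orthogonal_basis} to change basis inside $\mathbf{v}^\bot$ and drop one dimension---matches the paper's, but the step you yourself flag as ``the main technical obstacle'' is in fact the heart of the proof, and it is left unresolved. With a single apex $v$ you must forbid every exceptional value of $\mathbf{v}$ (all $2^t$ even-support vectors of $\mathbb{F}_2^{t+1}$, plus possibly the all-ones vector). No such gadgets are exhibited, and it is not clear that constant-size ones exist: the set of values of $\mathbf{v}$ that a rooted gadget permits is invariant under the orthogonal group of the form, and the natural way to ``detect'' that $\mathbf{v}$ has even support (equivalently, that the form restricted to $\mathbf{v}^\bot$ is degenerate) is to attach a structure that needs full rank inside $\mathbf{v}^\bot$---which is essentially a copy of $G_{t-1}$, not a bounded gadget; your claim that such gadgets preserve treewidth is likewise unsubstantiated. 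The paper avoids designing them altogether: it attaches a separate copy of $G_{t-1}$, joined with label $0$, to each of $2^t+t$ vertices $b$ whose vectors are forced pairwise distinct by small auxiliary gadgets, and then pigeonholes against the bound $2^t+1$ of Lemma~\ref{lemma:find_u_st_ubot_has_an_orthogonal_basis}: some $\mathbf{b}$ must be non-exceptional, and the copy attached to that $b$ yields the contradiction.

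There is a second gap: your choice $\pi_t(vw)=1$ forces the strengthened hypothesis that \emph{both} $\pi_{t-1}$ and $\pi_{t-1}+\mathbf{1}$ fail in $\mathbb{F}_2^t$, and this strengthening does not propagate. Your inductive step (even granting the gadgets) only shows that $\pi_t$ fails in $\mathbb{F}_2^{t+1}$; to show that $\pi_t+\mathbf{1}$ also fails you would have to deal with $\mathbf{v}=\mathbf{0}$, which is now permitted since all apex labels become $0$, and in that case the restriction to $G_{t-1}$ is merely a realisation of $\pi_{t-1}+\mathbf{1}$ in $\mathbb{F}_2^{t+1}$---one dimension more than your hypothesis controls. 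The paper sidesteps this entirely by putting label $0$ on the edges into the copies of $G_{t-1}$, so that the induced realisation is of $\pi_{t-1}$ itself and the unmodified induction hypothesis applies.
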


\begin{proof}
    We proceed by induction on $t$.
    For $t=2$, the result follows from Proposition~\ref{proposition:tw2diam4}.
    Now assume $t \geq 3$.
    By the induction hypothesis and Observation~\ref{obs:characterization_with_vectors}, there is a graph $G_{t-1}$ with $\tw(G_{t-1})=t-1$ and a function $\pi_{t-1} \colon E(G_{t-1}) \to \mathbb{F}_2$ such that there is no family of vectors $(\mathbf{z})_{z \in V(G_{t-1})}$ in $\mathbb{F}_2^{t}$ such that $\pi_{t-1}(yz) = \mathbf{y} \cdot \mathbf{z}$ for every edge $yz \in E(G_{t-1})$.
    
    We now construct $G_t$ as follows.
    We first let $V(G_t)$ contain $t$ vertices $v_1, \dots, v_t$.
    Then, for every $\mathbf{x} \in \mathbb{F}_2^t$, we add a vertex $w_\mathbf{x}$ adjacent to all vertices in $\{v_1, \dots, v_t\}$
    and we set $\pi(v_i w_\mathbf{x}) = \mathbf{x}_i$ for every $i \in [t]$.
    Moreover, for every $i \in [t]$ and $\mathbf{x}\in \mathbb{F}_2^t$, we add a vertex $a_{i,\mathbf{x}}$ with adjacent to $v_i$ and $w_\mathbf{x}$, and set $\pi(a_{i,\mathbf{x}} v_i)=0$ and $\pi(a_{i,\mathbf{x}} w_\mathbf{x})=1$.
    Finally, for every $b \in B$, we add a copy $G_b$ of $G_{t-1}$ with the labels $\pi_{t-1}$ and, for all $z \in V(G_b)$, we add the edge $b z$ and set $\pi(bz)=0$ where $B=\{v_1, \dots, v_t\} \cup \{w_\mathbf{x}\}_{\mathbf{x} \in \mathbb{F}_2^t}$.

    We first show that the treewidth of $G_t$ is as desired.
    \begin{claim}\label{twcalc}
        $\tw(G_t)=t$.
    \end{claim}
    \begin{subproof}
        As $\tw(G_{t-1})=t-1$ and by Proposition~\ref{prop:some_prop_of_tw}~\ref{item:twdelete}, we have $\tw(G_t[V(G_b)\cup \{b\}])=t$ for all $b \in B$. 
        By Proposition~\ref{prop:some_prop_of_tw}~\ref{item:twmerge}, we obtain $\tw(G_t)\geq t$ and $\tw(G_t) = \max\{t, \tw(G_t-\bigcup_{b \in B}V(G_b))\}$. 
        Next, as $G_t[B]$ is not a tree, we obtain that $\tw(G_t-\bigcup_{b \in B}V(G_b)) = \tw(G_t[B])$ by Proposition~\ref{prop:some_prop_of_tw}~\ref{item:twtriangle}. 
        Then, by Proposition~\ref{prop:some_prop_of_tw}~\ref{item:twdelete}, we obtain $\tw(G_t[B])\leq \tw(G_t[B-\{v_1,\ldots,v_t\}])+t$. As $B-\{v_1,\ldots,v_t\}$ is an independent set in $G_t$, we obtain $\tw(G_t[B])\leq t$. This yields $\tw(G_t) = \max\{t, \tw(G_t-\bigcup_{b \in B}V(G_b))\} = t$.
    \end{subproof}

    Suppose for the sake of a contradiction that there is a family $(\mathbf{u})_{u \in V(G_t)}$ of vectors in $\mathbb{F}_2^{t+1}$ such that $\pi(yz) = \mathbf{y} \cdot \mathbf{z}$
    for every edge $yz$ of $G_t$.
    \begin{claim}
        The vectors $(\mathbf{b})_{b \in B}$ are pairwise distinct.
    \end{claim}\label{dist}
    
    \begin{subproof}
        First consider some distinct $\mathbf{x},\mathbf{x}' \in \mathbb{F}_2^{t}$. Then there exists some $i \in [t]$ such that $\mathbf{x}_i \neq \mathbf{x}'_i$.
        If $\mathbf{w}_\mathbf{x}=\mathbf{w}_{\mathbf{x}'}$, we obtain 
        $\pi(w_\mathbf{x} v_i)=\mathbf{w}_\mathbf{x} \mathbf{v}_i=\mathbf{w}_{\mathbf{x}'}\mathbf{v}_i=\pi(w_{\mathbf{x}'}v_i)$, a contradiction to the construction.
        
        Now consider some distinct $i,j \in [t]$ and some $\mathbf{x} \in \mathbb{F}_2^{t}$ with $\mathbf{x}_i \neq \mathbf{x}_j$. If $\mathbf{v}_i=\mathbf{v}_{j}$, 
        we obtain $\pi(w_\mathbf{x} v_i)=\mathbf{w}_\mathbf{x} \mathbf{v}_i=\mathbf{w}_{\mathbf{x}}\mathbf{v}_j=\pi(w_\mathbf{x} v_j)$, a contradiction to the construction.

        Finally consider some  $i \in [t]$ and $\mathbf{x} \in \mathbb{F}_2^{t}$. If $\mathbf{v}_i=\mathbf{w}_{\mathbf{x}}$, 
        we obtain $\pi(v_ia_{i,\mathbf{x}})=\mathbf{v}_i\mathbf{a}_{i,\mathbf{x}}=\mathbf{w}_{\mathbf{x}}\mathbf{a}_{i,\mathbf{x}}=\pi(w_xa_{i,\mathbf{x}})$, 
        a contradiction to the construction.
    \end{subproof}
    As $|B|=2^t+t>2^t+1$, by Claim~\ref{dist} and by Lemma~\ref{lemma:find_u_st_ubot_has_an_orthogonal_basis}, there exists $b \in B$
    such that $\mathbf{b}^\bot$ has an orthogonal basis $\mathbf{x}_1, \dots, \mathbf{x}_{t}$.
    Now, for every $z \in V(G_b)$, we have $\mathbf{z} \in \mathbf{b}^\bot$ and so there are scalars $z_1, \dots, z_t$ in $\mathbb{F}_2$ such that $\mathbf{z} = \sum_{i=1}^{t} z_i \, \mathbf{x}_i$.
    Let $\mathbf{z}' = (z_1, \dots, z_{t}) \in \mathbb{F}_2^t$ for every $z \in V(G_b)$.
    Then, as $\mathbf{x}_1, \dots, \mathbf{x}_{t}$ is an orthogonal basis of $\mathbf{b}^\bot$, for every edge $yz \in E(G_b)$, we have
    \begin{align*}
        \mathbf{y}' \cdot \mathbf{z}'
        &=\sum_{i \in [t]}y_iz_i\\
        &=\sum_{i,j \in [t], i \neq j}y_iz_j \, \mathbf{x}_i \cdot \mathbf{x}_j+\sum_{i \in [t]}y_iz_i \, \mathbf{x}_i \cdot \mathbf{x}_i\\
        &=\sum_{i\in [t]}y_i\mathbf{x}_i \cdot \sum_{j \in [t]}z_j\mathbf{x}_j\\
        &=\mathbf{y} \cdot \mathbf{z}\\
        &=\pi(yz).
    \end{align*}
    This contradicts the assumption on $G_{t-1}$.

    Thus, by Observation~\ref{obs:characterization_with_vectors}, we have $\diam(\mathcal{I}(G_t)) > t+1$.
    Since $G_t$ has treewidth $t$ by Claim~\ref{twcalc}, this proves the theorem.    
\end{proof}

\section{\texorpdfstring{$K_t$}{Kt}-minor-free and planar graphs}\label{sec:minor_closed}
This section is dedicated to proving the results guaranteeing small inversion diameter for graphs contained in certain minor-closed classes.
It was proven by van den Heuvel at al. \cite{van2017generalised} that for every $t\geq 4$ and every $K_t$-minor-free graph $G$, we have $\chi_s(G) \leq 5\binom{t}{2}(t-3)$. 
Together with Corollary~\ref{cor:chi_s}, we obtain the following result.

\begin{corollary}\label{c_2}
    For every proper minor-closed class $\mathcal{G}$ of graphs, there exists a constant $\alpha_\mathcal{G}$ such that $\diam ({\mathcal{I}}(G))\leq \alpha_\mathcal{G}$ for every $G \in \mathcal{G}$.
\end{corollary}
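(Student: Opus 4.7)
The plan is to combine two ingredients already at our disposal: the bound of van den Heuvel et al.\ on the star chromatic number of $K_t$-minor-free graphs, and Corollary~\ref{cor:chi_s} relating the inversion diameter to the star chromatic number. The proper minor-closed assumption will feed in through the standard observation that such a class excludes some fixed complete graph as a minor.

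First I would fix $\mathcal{G}$ and invoke the fact that since $\mathcal{G}$ is proper minor-closed, there exists some integer $t \geq 4$ such that $K_t \notin \mathcal{G}$; because $\mathcal{G}$ is closed under minors, every $G \in \mathcal{G}$ is then $K_t$-minor-free. Next, I would apply the result of van den Heuvel et al.\ cited just before the corollary, which yields $\chi_s(G) \leq 5\binom{t}{2}(t-3)$ for every $G \in \mathcal{G}$. Finally, plugging this into Corollary~\ref{cor:chi_s}, which asserts $\diam(\mathcal{I}(G)) \leq \binom{\chi_s(G)}{2}$ (or, slightly sharper, into Corollary~\ref{cor:chi_s-2}), I obtain
\[
\diam(\mathcal{I}(G)) \leq \binom{5\binom{t}{2}(t-3)}{2}
\]
for every $G \in \mathcal{G}$, so it suffices to set $\alpha_{\mathcal{G}} := \binom{5\binom{t}{2}(t-3)}{2}$.

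There is essentially no obstacle here: the whole content of the corollary is packaged into the two previously cited results, and the argument reduces to stringing them together after noting that a proper minor-closed class excludes some $K_t$. The only minor care needed is to handle the trivial edge cases ($t \leq 3$, i.e.\ $\mathcal{G}$ containing only graphs with very few vertices), which can be absorbed by simply choosing $t$ large enough to apply the bound of van den Heuvel et al.
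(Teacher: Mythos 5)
Your proposal is correct and follows exactly the route the paper intends: a proper minor-closed class excludes some $K_t$, the cited bound of van den Heuvel et al.\ then gives $\chi_s(G) \leq 5\binom{t}{2}(t-3)$ for every $G \in \mathcal{G}$, and Corollary~\ref{cor:chi_s} converts this into a constant bound on $\diam(\mathcal{I}(G))$. This is precisely the paper's (one-sentence) derivation, so there is nothing to add.
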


Hence proper minor-closed families have bounded inversion diameter. In the remainder of this section, we prove more precise bounds for some particular minor-closed classes. 

\subsection{\texorpdfstring{$K_t$}{Kt}-minor-free graphs}

In this section, we deal with $K_t$-minor-free graphs. 
To do so, we will use known bounds on strong colouring numbers. By the {\bf length} of a path, we refer to its number of edges.
Given a graph $G$, an ordering $<$ of its vertices, an integer $r \geq 0$, and $u,v \in V(G)$, we say that  $v$ is {\bf $r$-strongly reachable from $u$ in $G$ for $<$}
if there is a path $P$ from $u$ to $v$ of length at most $r$ such that $V(P) \cap \{w \in V(G) \mid w\leq u\} = \{u,v\}$.
Note that $u$ is $r$-strongly reachable from $u$.
We denote by $\sreach_r[G,<,u]$ the set of all vertices in $G$ $r$-strongly reachable from $u$ in $G$ for $<$.

\begin{theorem}[\cite{van2017generalised}]\label{thm:vdHeuvel_et_al_Kt_minor_free}
    For every integer $t \geq 3$ and for every $K_t$-minor-free graph $G$, there is an ordering $<$ of $V(G)$ such that
    \[
    |\sreach_r[G,<,u]| \leq \binom{t-1}{2}(2r+1)
    \]
    for every nonnegative integer $r$ and every vertex $u \in V(G)$.
\end{theorem}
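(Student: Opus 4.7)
The plan is to follow the strategy of van den Heuvel, Ossona de Mendez, Quiroz, Rabinovich and Siebertz based on \emph{chordal partitions}. The first step would be to establish a structural lemma: every $K_t$-minor-free graph $G$ admits a sequence of pairwise disjoint vertex sets $V_1, V_2, \ldots, V_s$ covering $V(G)$ such that
\begin{enumerate}
    \item[(a)] each $V_i$ induces a geodesic (shortest path) in the graph $G_i := G[V_i \cup V_{i+1} \cup \ldots \cup V_s]$, from a distinguished \emph{entry vertex} of $V_i$;
    \item[(b)] the intersection graph $H$ on $\{V_1,\ldots,V_s\}$, where $V_i$ and $V_j$ are adjacent iff some edge of $G$ joins them, is chordal and has clique number at most $t-1$.
\end{enumerate}

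I would build this partition inductively. Starting from a connected component, pick a vertex $r$ (the root of the component, or a neighbour in an already-processed bag), take a shortest path $P$ in the current residual graph starting at $r$, set $V_i = V(P)$, and recurse on each connected component of the remainder, using as entry vertex for each component a neighbour in $V(P)$. The $K_t$-minor-freeness is used to bound the clique number of $H$: a clique $\{V_{i_1}, \ldots, V_{i_{t-1}}\}$ of $H$ together with a common ``cradle'' bag from which they all branch yields $t$ pairwise adjacent connected subgraphs, hence a $K_t$-minor, contradiction.

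Given such a partition, the ordering is obtained by listing the bags in their construction order $V_1, V_2, \ldots, V_s$, and inside each $V_i$ ordering the vertices along the geodesic starting from the entry vertex. To bound $|\sreach_r[G,<,u]|$ for some $u \in V_i$, observe that any strongly reachable vertex $v$ must lie in a bag $V_j$ with $j \leq i$, and the path from $u$ to $v$ witnessing strong reachability forces $V_j$ to lie in a common clique of $H$ containing $V_i$. By (b) there are at most $t-2$ such ``earlier'' bags, together with $V_i$ itself. In each bag, the vertices within distance $r$ from $u$ along the geodesic form a contiguous subpath of length at most $2r$, hence contribute at most $2r+1$ vertices. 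A careful accounting, pairing each earlier bag with the clique-siblings through which $u$ reaches it, upgrades the naive bound $(t-1)(2r+1)$ to the sharper $\binom{t-1}{2}(2r+1)$.

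The main obstacle is the structural lemma, specifically the sharp bound of $t-1$ on the clique number of the chordal intersection graph $H$, which requires a careful inductive choice of geodesics so that the attachments of subsequently constructed bags combine with the ancestral geodesics into $K_t$-minors whenever the clique number would exceed $t-1$. A secondary difficulty is the refined counting that yields $\binom{t-1}{2}(2r+1)$ rather than the looser $(t-1)(2r+1)$; this requires organizing the reachable paths by the unordered pair of bags they traverse, rather than by the terminal bag alone.
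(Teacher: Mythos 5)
This statement is not proved in the paper at all: it is imported verbatim from van den Heuvel, Ossona de Mendez, Quiroz, Rabinovich and Siebertz \cite{van2017generalised}, with only a footnote observing that the ordering produced there is independent of $r$. So the comparison can only be against the argument in that reference, whose strategy you have correctly identified in outline (greedy partition into geodesic-like parts whose quotient behaves like a chordal graph of clique number at most $t-1$). However, your sketch has two genuine gaps, both of which you partly acknowledge but which are not merely technical.

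First, the structural lemma is misstated. A connected partition of a $K_t$-minor-free graph automatically has a quotient of clique number at most $t-1$ (contract the parts); the difficulty is to get the chordality/``attachments form a clique'' property while keeping the parts small. With a \emph{single} geodesic per part this fails: when you process a component $C$ of $G_i$ whose attachment consists of $m\leq t-2$ earlier parts, the new part must be a connected subgraph of $C$ meeting the neighbourhood of each of these $m$ parts (otherwise the attachments of the components of $C-V_i$ need not be cliques), and a single shortest path cannot in general touch $m\geq 3$ prescribed sets. In the actual construction each part is a union of up to $t-3$ geodesics of the residual graph, one for each additional earlier part it must reach. Second, and relatedly, your counting is inverted: for $t\geq 4$ one has $\binom{t-1}{2}(2r+1)\geq (t-1)(2r+1)$, so the binomial bound is \emph{weaker}, not ``sharper'', than the naive one, and there is nothing to ``upgrade'' to. The coefficient $\binom{t-1}{2}$ arises because one counts \emph{geodesics} rather than parts: the at most $t-1$ relevant parts form a clique in the quotient, and the $\ell$-th part of that clique (in construction order) is a union of roughly $\ell-1$ geodesics, each contributing at most $2r+1$ strongly reachable vertices, whence $\sum_{\ell}(\ell-1)\leq\binom{t-1}{2}$. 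Since you explicitly defer both the structural lemma and this count as ``obstacles'', what you have is a correct identification of the method of \cite{van2017generalised} rather than a proof; as written, the two concrete steps you do commit to (single-geodesic parts, and the $(t-1)(2r+1)\rightarrow\binom{t-1}{2}(2r+1)$ ``refinement'') are respectively unjustified and backwards.
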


Note that in~\cite{van2017generalised}, this theorem is stated without specifying that $<$ does not depend on $r$. However, this fact follows from the proof.

Together with Theorem~\ref{thm:generic_greedy_argument}, Theorem~\ref{thm:vdHeuvel_et_al_Kt_minor_free} implies the
following.

\begin{corollary}\label{cor:Kt-free}
    Let $t$ be an integer with $t \geq 3$. For every $K_t$-minor-free graph $G$,
    \[
    \diam(\mathcal{I}(G)) \leq 8\binom{t-1}{2}.
    \]
\end{corollary}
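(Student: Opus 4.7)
The plan is to apply Theorem~\ref{thm:generic_greedy_argument} to the ordering $<$ produced by Theorem~\ref{thm:vdHeuvel_et_al_Kt_minor_free}. More precisely, let $G$ be a $K_t$-minor-free graph and set $s = 8\binom{t-1}{2}$. I would take the ordering $<$ from Theorem~\ref{thm:vdHeuvel_et_al_Kt_minor_free} and verify that it is $s$-strong, so that Theorem~\ref{thm:generic_greedy_argument} yields $\diam(\mathcal{I}(G)) \leq s$.

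To check the $s$-strongness, fix a vertex $u \in V(G)$. The first step is to bound $|N_<(u)|$. Every $v \in N_<(u)$ lies in $\sreach_1[G,<,u]$ via the single edge $uv$, and $u$ itself also belongs to $\sreach_1[G,<,u]$ by the trivial path. Thus
\[
|N_<(u)| \leq |\sreach_1[G,<,u]| - 1 \leq 3\binom{t-1}{2} - 1,
\]
using Theorem~\ref{thm:vdHeuvel_et_al_Kt_minor_free} with $r = 1$. If $N_>(u) = \emptyset$, this already gives $|N_<(u)| \leq s$ and we are done for~$u$.

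The key step is the second case $N_>(u) \neq \emptyset$: I bound the set $T = \bigcup_{v \in N_>(u)} N_{<u}(v)$, which clearly contains every $X$ appearing in the strong-degeneracy condition, so that $|\{X \subseteq V(G) \mid \exists v \in N_>(u),\ X \subseteq N_{<u}(v)\}| \leq 2^{|T|}$. The observation is that any $w \in T$ is reached from $u$ by a path $u v w$ of length $2$ with $v > u$ and $w < u$, which is exactly a $2$-strong-reachability witness; hence $T \subseteq \sreach_2[G,<,u] \setminus \{u\}$ and by Theorem~\ref{thm:vdHeuvel_et_al_Kt_minor_free} with $r = 2$,
\[
|T| \leq 5\binom{t-1}{2} - 1.
\]

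Combining the two bounds, I get
\[
|N_<(u)| + \log\!\bigl(|\{X \subseteq V(G) \mid \exists v \in N_>(u),\ X \subseteq N_{<u}(v)\}|\bigr) \leq 8\binom{t-1}{2} - 2 < s,
\]
so $<$ is $s$-strong. The conclusion then follows from Theorem~\ref{thm:generic_greedy_argument}. The only subtle step is the second one, identifying that the relevant subsets are controlled by $2$-strong reachability so that the theorem of van den Heuvel et al. can be applied with $r = 2$; the remaining computations are essentially bookkeeping.
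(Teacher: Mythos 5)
Your proposal is correct and follows exactly the paper's argument: take the ordering from Theorem~\ref{thm:vdHeuvel_et_al_Kt_minor_free}, bound $|N_<(u)|$ via $\sreach_1$ and the relevant subsets via $\sreach_2$, and conclude strong $8\binom{t-1}{2}$-degeneracy so that Theorem~\ref{thm:generic_greedy_argument} applies. The paper merely states the two numerical bounds without spelling out the containment $\bigcup_{v \in N_>(u)} N_{<u}(v) \subseteq \sreach_2[G,<,u]$, which you verify explicitly.
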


\begin{proof}
    Let $<$ be an ordering of $V(G)$ as in Theorem~\ref{thm:vdHeuvel_et_al_Kt_minor_free}.
    For every vertex $u$ of $G$ we have
    \begin{enumerate}
        \item $|N_<(u) | \leq 3\binom{t-1}{2}-1$, and
        \item $|\{X \subseteq V(G) \mid \exists~v \in N(u), u<v, X \subseteq \{ w \in N(v) \mid w<u\}\}| \leq 2^{5\binom{t-1}{2}}$.
    \end{enumerate}
    Hence $G$ is strongly $8\binom{t-1}{2}$-degenerate and so $\diam(\mathcal{I}(G)) \leq 8\binom{t-1}{2}$ by Theorem~\ref{thm:generic_greedy_argument}.
\end{proof}

\subsection{Planar graphs}\label{sec:planar}

\subsubsection{General bound}

A planar graph is $K_5$-minor-free, so by Corollary~\ref{cor:Kt-free}, its inversion diameter is at most 48. A planar graph has  acyclic chromatic number at most 5 as shown by Borodin~\cite{Bor79}. Thus, by Corollary~\ref{cor:L-acyclic-2}, its inversion diameter is at most 18.
We shall improve on this bound by showing that $\diam(\mathcal{I}(G)) \leq 12$ for every planar graph $G$.
This bound is obtained by using
the following result establishing the existence of an ordering $<$ such that $\sreach_r[G,<,u]$ has small size for every $u$.\footnote{As in Theorem~\ref{thm:vdHeuvel_et_al_Kt_minor_free}, the statement in~\cite{van2017generalised} of Theorem~\ref{thm:vdHeuvel_et_al_scol} does not specify that the ordering $<$ is the same for every $r$.
However, this fact follows from the proof.}

\begin{theorem}[\cite{van2017generalised}]\label{thm:vdHeuvel_et_al_scol}
    For every planar graph $G$, there is an ordering $<$ of $V(G)$
    such that
    \[
        |\sreach_r[G,<,u]| \leq 5r+1
    \]
    for every vertex $u \in V(G)$ and every nonnegative integer $r$.
\end{theorem}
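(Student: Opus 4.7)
The plan is to construct the ordering $<$ from a BFS layering of $G$, and to bound $|\sreach_r[G,<,u]|$ by a planar substructure argument. Fix a planar embedding of $G$ and an arbitrary root $v_0$, and let $L_0 = \{v_0\}, L_1, L_2, \ldots$ denote the BFS layers from $v_0$, with $T$ the associated BFS tree. The coarse rule for $<$ is that shallower layers come first: if $u \in L_i$, $v \in L_j$, and $i < j$, then $u < v$. Within each layer $L_i$, the order is refined by decomposing $L_i$ into a bounded number of ``tree-like pieces'' using $T$ and the planar embedding, and linearly ordering the vertices of each piece by a DFS consistent with the embedding.

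Fix $u \in L_i$ and consider $v \in \sreach_r[G,<,u]\setminus\{u\}$, joined to $u$ by a path $P$ of length at most $r$ whose internal vertices are all strictly larger than $u$, hence live in layers of depth $\geq i$ (with those inside $L_i$ coming after $u$ in the within-layer order). Since neighbours in $G$ have BFS depths differing by at most one, the last edge of $P$ forces $v \in L_{i-1} \cup L_i$; inside $L_i$, the requirement $v \leq u$ further places $v$ before $u$ in the within-layer order. The counting task thus reduces to bounding the number of admissible endpoints in $L_{i-1} \cup L_i$.

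For this count, the plan is to exploit a structural property of planar BFS layerings. Contracting each connected component of $G \setminus (L_0 \cup \cdots \cup L_{i-1})$ attached to the neighbourhood of $u$ yields a planar graph in which the candidate endpoints of $P$ lie on a common face around $u$, producing an essentially outerplanar substructure. Using the outerplanar edge bound $|E| \leq 2|V| - 3$, together with the fact that each layer decomposes into a constant number of paths in (contractions of) $T$, one obtains at most $5r$ such endpoints; adding $u$ itself gives the desired $|\sreach_r[G,<,u]| \leq 5r + 1$.

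The main obstacle will be engineering the within-layer order and the planar substructure with enough precision to extract the sharp constant $5$. A naive degree-based argument, using only the fact that $|E(H)| \leq 3|V(H)| - 6$ for every planar subgraph $H$, yields only a bound that is quadratic in $r$; obtaining the linear bound $5r+1$ requires the refined decomposition of each layer into a constant number of BFS-tree paths together with the outerplanar argument above, which is the technical heart of the van den Heuvel--Ossona de Mendez--Quiroz--Rabinovich--Siebertz approach.
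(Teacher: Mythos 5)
First, note that the paper does not prove this statement at all: it is quoted verbatim from van den Heuvel et al.~\cite{van2017generalised} (the authors only remark that the ordering there can be taken independent of $r$). So the comparison is really between your sketch and the cited proof.

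Your plan has a fatal flaw at its very first step: an ordering that puts shallower BFS layers first cannot satisfy the bound even for $r=1$. Take $v_0$ adjacent to $a_1,\dots,a_n$ and a further vertex $u$ adjacent to $a_1,\dots,a_n$; this is planar, and a BFS from $v_0$ gives $L_0=\{v_0\}$, $L_1=\{a_1,\dots,a_n\}$, $L_2=\{u\}$. Under any layer-respecting order all $a_i$ precede $u$, so each $a_i$ lies in $\sreach_1[G,<,u]$ and $|\sreach_1[G,<,u]|\geq n+1$, while the theorem demands at most $6$. No within-layer refinement, contraction, or outerplanarity argument can repair this, because the problem is the number of neighbours of $u$ in the \emph{previous} layer, and outerplanar graphs (e.g.\ stars) have unbounded maximum degree, so the edge bound $|E|\leq 2|V|-3$ does not control it. Your auxiliary claim that each BFS layer of a planar graph ``decomposes into a constant number of tree-like pieces'' is also false as stated; controlling the internal structure of BFS layers is the content of the much harder product structure theorem, and even there one does not get constantly many paths per layer.

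The actual proof in~\cite{van2017generalised} does not order by BFS layers. It builds the order from a partition of $V(G)$ into geodesics: one repeatedly extracts a shortest path from the current connected component, using planarity to guarantee that each component attaches to at most two previously extracted paths; vertices are then ordered path by path, and along each path. For a vertex $u$, a strongly $r$-reachable vertex lies either on one of the at most two earlier geodesics bounding $u$'s component, where the geodesic property caps the number of such vertices at about $2r+1$ per path, or among the at most $r$ predecessors of $u$ on its own geodesic; summing these contributions yields $5r+1$. If you want to prove the theorem yourself, this geodesic-partition ordering, not a BFS layering, is the place to start.
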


\begin{corollary}\label{cor:planar_12}
    $\diam(\mathcal{I}(G)) \leq 12$ for every planar graph $G$.
\end{corollary}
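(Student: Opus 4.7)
The plan is to apply Theorem~\ref{thm:vdHeuvel_et_al_scol} to obtain an ordering $<$ of $V(G)$ satisfying $|\sreach_r[G,<,u]| \leq 5r+1$ for every vertex $u$ and every integer $r \geq 0$, show that $<$ is a $12$-strong ordering, and conclude via Theorem~\ref{thm:generic_greedy_argument}. This follows exactly the same template as the proof of Corollary~\ref{cor:Kt-free}, only with the planar bounds on $|\sreach_r|$ substituted for the $K_t$-minor-free ones.

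Concretely, I would verify that $<$ is $12$-strong by checking the defining inequality for every vertex $u \in V(G)$. The bound $|N_<(u)| \leq 5$ follows from the $r=1$ case of Theorem~\ref{thm:vdHeuvel_et_al_scol}, since $N_<(u) \subseteq \sreach_1[G,<,u] \setminus \{u\}$. For the other term appearing in the strong-degeneracy condition, the key observation is that if $v \in N_>(u)$ and $w \in N_{<u}(v)$, then the path $u, v, w$ has length~$2$ with internal vertex $v > u$, so $w \in \sreach_2[G,<,u]$. Consequently, $\bigcup_{v \in N_>(u)} N_{<u}(v) \subseteq \sreach_2[G,<,u] \setminus \{u\}$, a set of cardinality at most $10$. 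If $N_>(u) = \emptyset$, then trivially $|N_<(u)| \leq 5 \leq 12$.

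The main obstacle is to upgrade these two bounds into the strict inequality
$|N_<(u)| + \log\bigl(|\{X \subseteq V(G) \mid \exists~v \in N_>(u),~X \subseteq N_{<u}(v)\}|\bigr) < 12$
required by the definition of $12$-strong ordering. A crude substitution only gives $5 + 10 = 15$, which would be enough to conclude strong $16$-degeneracy but not strong $12$-degeneracy. The refinement I would pursue exploits that $N_<(u)$ and $\bigcup_{v \in N_>(u)} N_{<u}(v)$ both live inside the same set $\sreach_2[G,<,u] \setminus \{u\}$ of size at most $10$: their overlap causes a substantial amount of double counting in the naïve estimate. Cashing in this overlap---either by directly bounding the number of subsets $X$ by $2^{|N_<(u) \cup \bigcup_v N_{<u}(v)|-|N_<(u)|}$, or by arguing at the level of the linear algebra of Theorem~\ref{thm:generic_greedy_argument} that every vector in the forbidden span $U$ which happens to lie in the affine space $A$ has already been discounted by the constraints coming from $N_<(u)$---saves the handful of dimensions needed to reach a bound strictly below $12$, establishing $12$-strong degeneracy and hence $\diam(\mathcal{I}(G)) \leq 12$.
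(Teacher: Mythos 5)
You set the argument up exactly as the paper does: take the ordering $<$ from Theorem~\ref{thm:vdHeuvel_et_al_scol}, note $N_<(u)\subseteq \sreach_1[G,<,u]\setminus\{u\}$ (size at most $5$) and $\bigcup_{v\in N_>(u)}N_{<u}(v)\subseteq \sreach_2[G,<,u]\setminus\{u\}$ (size at most $10$), and aim to verify $12$-strong degeneracy and invoke Theorem~\ref{thm:generic_greedy_argument}. But the step you defer --- ``cashing in the overlap'' --- is precisely where the content of the proof lies, and neither of your two suggestions supplies it. The bound $|\{X\mid \exists v\in N_>(u),\ X\subseteq N_{<u}(v)\}|\le 2^{|N_<(u)\cup\bigcup_v N_{<u}(v)|-|N_<(u)|}$ is simply false: the left-hand side always contains all $2^{|N_{<u}(v)|}$ subsets of any single $N_{<u}(v)$, no matter how much that set overlaps $N_<(u)$ (if they coincided with size $5$, your bound would give $1$ against a true value of $32$). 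The linear-algebra variant is equally unsupported: in the proof of Theorem~\ref{thm:generic_greedy_argument} one needs $|A|>|U|$ where $|A|$ equals exactly $2^{t-|N_<(v_i)|}$, and the argument compares cardinalities rather than intersections, so nothing in the forbidden union $U$ has been ``already discounted'' by the affine constraints. Even granting the best a priori size restrictions ($|N_{<u}(v)|\le 4$ since $N_{<u}(v)\cup\{u\}\subseteq N_<(v)$, so each $X$ is a $(\le 4)$-subset of a $10$-element universe), the count of admissible $X$ could still reach $\binom{10}{0}+\dots+\binom{10}{4}=386>2^8$, giving only strong $14$-degeneracy.

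What actually closes the gap in the paper is a \emph{second} use of planarity. Fix $u$ and form the bipartite graph $H$ with parts $A=\{u\}\cup\bigcup_{v\in N_>(u)}N_{<u}(v)$ (so $|A|\le 11$) and $B=N_>(u)$; discard the vertices of $B$ with $|N_{<u}(v)|\le 1$, which contribute at most the $10$ singletons and the empty set. The surviving bipartite planar graph $H'$ has at most $2(|A|+|B'|)-4$ edges by Euler's formula, and writing $d_i$ for the number of $v\in B'$ with $|N_{<u}(v)|=i$ (so $i\in\{2,3,4\}$), this yields $d_2+2d_3+3d_4\le 18$, hence $d_2+4d_3+11d_4\le 66$ and a total of at most $1+10+66=77<2^7$ admissible sets $X$. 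That is what turns $5+10$ into $5+\log 77<12$. Without this Euler-formula counting, or a substitute for it that genuinely bounds how many $v\in N_>(u)$ can have two or more back-neighbours below $u$, your proof does not reach $12$.
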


\begin{proof}
    Let $<$ be an ordering of $V(G)$ as in  Theorem~\ref{thm:vdHeuvel_et_al_scol}.
    Let $u \in V(G)$.
    Let $A = \{u\} \cup \bigcup_{v \in N_{>}(u)}N_{<u}(v) \subseteq \sreach_2[G,<,u]$ and $B = N_{>}(u)$.
    Consider the bipartite graph $H$ with bipartition $(A,B)$ and with edges inherited from $G$.
    Let $H'=(A,B',E')$ be the graph obtained from $H$ by removing vertices of degree at most $2$ in $B$.
    Observe that $H'$ is a subgraph of $G$ and hence planar. 
    Moreover, as $H'$ is bipartite, we obtain that the girth of $H'$ is at least 4.
    It hence follows from Euler's formula that $m(H') = \sum_{a \in A} d_{H'}(a) = \sum_{b \in B'}d_{H'}(b) \leq 2(|A|+|B'|)-4$.
    Moreover, every vertex in $B'$ has degree at most $5$, and has $u$ as a neighbour.
    For $i\in \{2,3,4\}$, let $d_i$ be the number of vertices in $B'$ of degree $i+1$ in $H'$. 
    Then we have
    \[
    |\{X \subseteq V(G) \mid \exists~v \in N(u), u<v, X \subseteq N_{<u}(v)\}| \leq 1+\binom{10}{1} + \binom{2}{2} d_2 + \binom{3}{2\leq\cdot\leq 3}d_3 + \binom{4}{2\leq\cdot\leq 4}d_4 
    \]
    using the notation $\binom{n}{k_1 \leq \cdot \leq k_2} = \sum_{k_1 \leq k \leq k_2} \binom{n}{k}$.
    Moreover,
    \[
    3d_2+4d_3+5d_4 \leq m(H') \leq 2(|A|+|B'|)-4 \leq 2(11+d_2+d_3+d_4)-4.
    \]
    We deduce
    \[
        d_2 + 2d_3 + 3d_4 \leq 18
    \]
    and so 
    \[
        d_2 + 4d_3 + 11d_4 \leq \frac{11}{3}(d_2 + 2d_3 + 3d_4) \leq 66.
    \]
    Finally we have
    \[
        |\{X \subseteq V(G) \mid \exists~v \in N(u), u<v, X \subseteq N_{<u}(v)\}| \leq 11 + d_2 + 4d_3 + 11 d_4 \leq 77.
    \]
    Therefore,
    \begin{enumerate}
        \item $|\{v \in N(u) \mid v<u\}| \leq 5$ and
        \item $|\{X \subseteq V(G) \mid \exists~v \in N(u), u<v, X \subseteq N_{<u}(v)\}| \leq 77 < 2^7$,
    \end{enumerate}
    and it follows that $G$ is $12$-strongly-degenerate and so $\diam(\mathcal{I}(G)) \leq 12$ by Theorem~\ref{thm:generic_greedy_argument}.
\end{proof}

For the lower bound on the maximum inversion diameter of a planar graph, we executed a computer search. 
As a result, we found a planar graph whose inversion diameter is at least 5. 

\begin{proposition}\label{prop:planar5}
    If $G$ if the planar graph depicted on Figure~\ref{fig:planar5}, then $\diam(\mathcal{I}(G)) \geq 5$.
\end{proposition}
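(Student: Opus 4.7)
The plan is to invoke the algebraic criterion of Observation~\ref{obs:characterization_with_vectors}: proving $\diam(\mathcal{I}(G)) \geq 5$ reduces to exhibiting an edge-labelling $\pi\colon E(G)\to \mathbb{F}_2$ for which no family $(\mathbf{v}_u)_{u\in V(G)}$ of vectors in $\mathbb{F}_2^4$ satisfies $\mathbf{v}_u\cdot \mathbf{v}_w=\pi(uw)$ for every edge $uw$. Such a $\pi$ should be the one displayed in the figure; the whole proof then consists in certifying the nonexistence of a $4$-dimensional representation of $(G,\pi)$.

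First, I would assume for contradiction that such vectors $(\mathbf{v}_u)\subseteq \mathbb{F}_2^4$ exist. Since $\mathrm{GL}_4(\mathbb{F}_2)$ preserves the bilinear form up to a basis change only when the chosen form is non-degenerate, I would explicitly work with the standard dot product and instead use the following mild normalization: pick a vertex $u_0$ incident to an edge $e_0$ with $\pi(e_0)=1$, so $\mathbf{v}_{u_0}\neq \mathbf{0}$, and by applying a suitable element of $\mathrm{GL}_4(\mathbb{F}_2)$ preserving the dot product (that is, an orthogonal transformation) one may normalize $\mathbf{v}_{u_0}$ to a fixed vector of the same support class. Via Lemma~\ref{lemma:find_u_st_ubot_has_an_orthogonal_basis}, vectors of odd support different from $4$ have an orthogonal basis of $\mathbf{v}_{u_0}^\bot$ at our disposal, which is often the right normalization; otherwise we treat the few exceptional support patterns separately.

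Next I would propagate the constraints in a greedy ordering, mimicking the constructive side of Theorem~\ref{thm:generic_greedy_argument} but reversed: I would order the vertices so that each new vertex $u$ already has several previously-treated neighbours, spanning a subspace of dimension $d$. The linear system $\mathbf{v}_u\cdot \mathbf{v}_{u_j}=\pi(u u_j)$ then constrains $\mathbf{v}_u$ to an affine space of dimension $4-d$, so for $d\in\{3,4\}$ only one or two candidate vectors remain. I would track the resulting branches and, at each step, eliminate those branches in which an already-present edge $uw$ fails $\mathbf{v}_u\cdot \mathbf{v}_w=\pi(uw)$, or a later neighbourhood becomes linearly dependent in a way forbidden by $\pi$ (exactly the obstruction encountered in the analysis of $G_1,G_2,G_3$ in Proposition~\ref{proposition:tw2diam4}). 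If the labelling $\pi$ is well-chosen, every branch eventually closes, producing the desired contradiction.

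The main obstacle is of course the combinatorial bulk: even after normalization and a good vertex ordering, the case tree can contain dozens of branches, which is precisely why the graph was found by a computer search rather than by hand. I would therefore present the proof by (i) reproducing the labelled graph from the figure, (ii) fixing the normalization of one or two ``anchor'' vertices, (iii) displaying the forced values of $\mathbf{v}_u$ for all remaining vertices in a table, and (iv) pointing to a single edge $uw$ where $\mathbf{v}_u\cdot\mathbf{v}_w\neq \pi(uw)$ in every surviving branch. This way the verification is routine and could be double-checked either by hand or by an independent computer run, while the proof itself remains readable.
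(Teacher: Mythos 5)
Your overall strategy coincides with the paper's: Proposition~\ref{prop:planar5} is established exactly by invoking Observation~\ref{obs:characterization_with_vectors} and asserting that the labelling $\pi$ shown in Figure~\ref{fig:planar5} admits no family of vectors in $\mathbb{F}_2^4$ with $\mathbf{u}\cdot\mathbf{v}=\pi(uv)$ on every edge; the paper discharges this finite claim by an exhaustive computer search and records the certificate only as the labelled figure. The difficulty with your proposal is that, as written, it never performs this verification: the reduction via Observation~\ref{obs:characterization_with_vectors} is routine, so the nonexistence check \emph{is} the entire content of the proposition, and a description of how one would organise the branching (anchor vertices, forced affine subspaces, a table of surviving candidates) is not a proof until the table is actually produced and every branch is closed. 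You should either carry out the case analysis explicitly for the specific $10$-vertex, $21$-edge labelled graph, or state plainly, as the authors do, that the check was done by machine. One further caution about your normalization step: over $\mathbb{F}_2$ a linear map $M$ preserves the dot product iff $M^\top M=I$, and such maps preserve the value $\mathbf{x}\cdot\mathbf{x}$, i.e.\ the parity of the support; the orthogonal group does not act transitively on nonzero vectors, and even transitivity within a fixed parity class is something you would have to prove before using it (Lemma~\ref{lemma:find_u_st_ubot_has_an_orthogonal_basis} shows that $\mathbf{u}^\bot$ need not even admit an orthogonal basis for some $\mathbf{u}$, which signals that these normalizations are genuinely delicate in characteristic $2$). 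None of this makes the approach wrong, but the proof is incomplete until the finite verification is supplied.
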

\begin{figure}[hbtp]
    \centering
    \begin{tikzpicture}
        \tikzstyle{vertex}=[circle,draw, top color=gray!5, bottom color=gray!30, minimum size=12pt, scale=1, inner sep=0.5pt]
        \node[vertex] (7) at (-3,0) {};
        \node[vertex] (1) at (-1,0) {};
        \node[vertex] (2) at (1,0) {};
        \node[vertex] (3) at (3,0) {};

        \node[vertex] (6) at (-2,1.73) {};
        \node[vertex] (8) at (0,1.73) {};
        \node[vertex] (4) at (2,1.73) {};

        \node[vertex] (5) at (0,3.5) {};

        \node[vertex] (0) at (0,-2.75) {};

        \draw (7) to node[midway, fill=white] {$0$} (1);
        \draw (1) to node[midway, fill=white] {$1$} (2);
        \draw (2) to node[midway, fill=white] {$0$} (3);
        \draw (7) to node[midway, fill=white] {$1$} (6);
        \draw (6) to node[midway, fill=white] {$0$} (1);
        \draw (1) to node[midway, fill=white] {$0$} (8);
        \draw (8) to node[midway, fill=white] {$0$} (2);
        \draw (2) to node[midway, fill=white] {$0$} (4);
        \draw (4) to node[midway, fill=white] {$1$} (3);
        \draw (6) to node[midway, fill=white] {$0$} (8);
        \draw (8) to node[midway, fill=white] {$0$} (4);
        \draw (6) to node[midway, fill=white] {$1$} (5);
        \draw (8) to node[midway, fill=white] {$1$} (5);
        \draw (4) to node[midway, fill=white] {$0$} (5);

        \draw (0) to node[midway, fill=white] {$1$} (7);
        \draw (0) to node[midway, fill=white] {$0$} (1);
        \draw (0) to node[midway, fill=white] {$0$} (2);
        \draw (0) to node[midway, fill=white] {$0$} (3);

        \draw[out=170, in=190, looseness=1.65] (0) to node[midway, fill=white] {$0$} (6);
        \draw[bend left=90, looseness=2.25] (0) to node[midway, fill=white] {$0$} (5);
        \draw[out=10, in=-10, looseness=1.65] (0) to node[midway, fill=white] {$0$} (4);
    \end{tikzpicture}
    \caption{A planar graph $G$ with inversion diameter at least $5$.
        More precisely, if $\pi \colon E(G) \to \mathbb{F}_2$ is the labelling function depicted, then there is no family $(\mathbf{u})_{u \in V(G)} \in \mathbb{F}_2^4$ such that $\pi(uv) = \mathbf{u} \cdot \mathbf{v}$ for every $uv \in E(G)$.}
    \label{fig:planar5}
\end{figure}

\subsubsection{Planar graphs of large girth}

We now examine to which extent the upper bounds on the inversion diameter of a planar graph can be improved assuming a large girth. The {\bf girth} of a graph is defined to be the length of its shortest cycle. 
Borodin et al.~\cite{BKW99} proved that a planar graph has acyclic chromatic number at most $4$ (resp. $3$) if it has girth at least $5$ (resp. $7$).
With Corollary~\ref{cor:L-acyclic-2}, this yields the following.
\begin{corollary}\label{thm:planar-girth}
    Let $G$ be a planar graph.
\begin{itemize}
    \item If $G$ has girth at least $5$, then $\diam(\mathcal{I}(G)) \leq 10$.
    \item If $G$ has girth at least $7$, then $\diam(\mathcal{I}(G)) \leq 6$.
    \end{itemize}

\end{corollary}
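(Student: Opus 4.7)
The plan is to combine the acyclic-chromatic-number bounds of Borodin, Kostochka and Woodall~\cite{BKW99} with the recursive machinery developed in Section~\ref{sec:better}. By~\cite{BKW99}, every planar graph of girth at least $5$ satisfies $\chi_a(G)\leq 4$, and every planar graph of girth at least $7$ satisfies $\chi_a(G)\leq 3$. Since any union of two colour classes in an acyclic colouring induces a forest, and forests have inversion diameter at most $2$ by Corollary~\ref{cor:forest}, an acyclic $k$-colouring is in particular a proper $2$-ID $k$-colouring. Hence $\chi_a(G)\leq k$ implies that the $2$-ID chromatic number of $G$ is at most $k$.

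I would then apply Lemma~\ref{lem:recur-ID} with $\alpha=2$, starting from the trivial base bound $M(2\text{-ID}\leq 2)\leq 2$. Indeed, if a graph $H$ admits a $2$-ID $2$-colouring $(S_1,S_2)$, then $S_1\cup S_2 = V(H)$, so $\diam(\mathcal{I}(H))\leq 2$ follows directly from the definition of $2$-ID colouring. A single application of the recursion to $t=3$ will give
\[M(2\text{-ID}\leq 3)\leq M(2\text{-ID}\leq 2)+2\cdot\lceil 3/2\rceil\cdot\lfloor 3/2\rfloor = 2+4 = 6,\]
yielding the bound for girth at least $7$; the same recursion applied to $t=4$ will give
\[M(2\text{-ID}\leq 4)\leq M(2\text{-ID}\leq 2)+2\cdot\lceil 4/2\rceil\cdot\lfloor 4/2\rfloor = 2+8 = 10,\]
yielding the bound for girth at least $5$.

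The whole argument reduces to plugging~\cite{BKW99} into the recursion of Lemma~\ref{lem:recur-ID}, so no real technical obstacle arises. The only point to be careful about is that the closed-form estimate of Corollary~\ref{cor:L-acyclic-2} alone would yield only $\lfloor 26/3\rfloor = 8$ and $14$ for $t=3$ and $t=4$ respectively; to obtain the sharper bounds of $6$ and $10$ one must instead apply Lemma~\ref{lem:recur-ID} one step at a time, exploiting the optimal base case $M(2\text{-ID}\leq 2)\leq 2$ rather than the cruder bound $2\alpha=4$ that the recursion itself would produce at $t=2$.
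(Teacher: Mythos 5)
Your proof is correct, and it follows the same overall strategy as the paper: invoke the bounds $\chi_a\leq 4$ (girth $\geq 5$) and $\chi_a\leq 3$ (girth $\geq 7$) from~\cite{BKW99}, observe that an acyclic colouring is a proper $2$-ID colouring because forests have inversion diameter at most $2$, and then run the $\alpha$-ID machinery of Section~\ref{sec:better}. The one place where you diverge is worth highlighting: the paper's text derives the corollary by citing the closed-form bound of Corollary~\ref{cor:L-acyclic-2}, namely $M(\chi_a\leq t)\leq \frac{2}{3}t^2+\frac{2}{3}t+\frac{2}{3}$, which at $t=3$ and $t=4$ only gives $8$ and $14$ respectively — not the stated $6$ and $10$. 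Your route, a single application of Lemma~\ref{lem:recur-ID} with $\alpha=2$ from the base case $M(2\text{-ID}\leq 2)\leq 2$ (which is exactly the base case used in the proof of Theorem~\ref{thm:col-acy-gen}), yields $2+2\cdot 2\cdot 1=6$ and $2+2\cdot 2\cdot 2=10$ and is therefore the correct justification of the constants as stated; your closing remark correctly identifies why the cruder closed form is insufficient. All the individual steps check out: the base case is immediate from the definition of a $2$-ID $2$-colouring, and properness of the acyclic colouring is needed (and available) since Lemma~\ref{lem:recur-ID} is stated for partitions into colour classes whose pairwise unions induce subgraphs of inversion diameter at most $\alpha$.
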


In the remainder of this subsection, we show that sufficiently large girth guarantees that a planar graph has inversion diameter at most 3. 

\begin{theorem}\label{thm:planar-3}
    Let $G$ be a planar graph.
    If $G$ has girth at least $8$, then $\diam(\mathcal{I}(G)) \leq 3$.
\end{theorem}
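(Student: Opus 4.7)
By Observation~\ref{obs:characterization_with_vectors}, proving $\diam(\mathcal{I}(G))\leq 3$ amounts to showing that for every $\pi\colon E(G)\to\mathbb{F}_2$ there is a family $(\mathbf{u})_{u\in V(G)}$ of vectors in $\mathbb{F}_2^3$ with $\mathbf{u}\cdot\mathbf{v}=\pi(uv)$ for every edge $uv$. By Theorem~\ref{thm:generic_greedy_argument}, it therefore suffices to produce a $3$-strong vertex ordering, i.e.\ to show that every planar graph of girth at least $8$ is strongly $3$-degenerate. This is the goal I would pursue.

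I would prove strong $3$-degeneracy by induction on $|V(G)|$. The inductive step identifies in $G$ a \emph{reducible configuration} $H$ that admits a tail extension: from a $3$-strong ordering of $G-V(H)$, one forms a $3$-strong ordering of $G$ by placing the vertices of $V(H)$ at the end in a prescribed internal order. Natural candidates for $H$ are (i) a vertex of degree at most $1$, and (ii) a thread, i.e.\ a path $u_1-u_2-\cdots-u_\ell$ of consecutive degree-$2$ vertices of $G$ with $\ell\geq 2$. For a thread, the key is to order it so that $u_\ell<u_{\ell-1}<\cdots<u_1$ at the very end of the ordering: then each $u_i$ satisfies $|N_<(u_i)|\leq 2$ and, for $i\geq 3$, $N_{<u_i}(u_{i-1})=\emptyset$, while for $i\leq 2$ the set $N_{<u_i}(u_{i-1})$ is a singleton among the anchor vertices; the quantity $|N_<(u_i)|+\log|\{X\mid \exists w\in N_>(u_i),\, X\subseteq N_{<u_i}(w)\}|$ is then bounded by $2$ in every case. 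Depending on what the discharging forces, I expect to also need small configurations centred at a degree-$3$ vertex with several degree-$2$ neighbours, handled analogously.

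The combinatorial half is a discharging argument: assign each vertex the initial charge $d(v)-\tfrac{8}{3}$. Euler's formula combined with girth $\geq 8$ yields $|E(G)|\leq \tfrac{4}{3}(|V(G)|-2)$, and hence strictly negative total charge. I would then design redistribution rules (for instance, every vertex of degree $\geq 3$ sends $\tfrac{1}{3}$ to each degree-$2$ neighbour, with supplementary rules to compensate degree-$3$ vertices with many degree-$2$ neighbours) so that, in the absence of any reducible configuration, every vertex ends up with non-negative charge, contradicting the negativity of the total.

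The main obstacle is the interaction between the two halves. When a configuration $H$ is removed and later reinserted at the tail of the ordering, each anchor $a\in N(V(H))\setminus V(H)$ acquires new later neighbours in the extended ordering; consequently, its forbidden family $\{X\mid \exists w\in N_>(a),\, X\subseteq N_{<a}(w)\}$ can grow, and one must in particular prevent the anchor from reaching $|N_<(a)|=3$ while $N_>(a)\neq\emptyset$. Guaranteeing this may require enlarging $H$ to include the anchors or a controlled second shell of structure around them, which in turn forces the discharging to recognise richer configurations. Balancing the extendibility requirement against what the girth-$8$ planarity can be made to deliver via discharging is the delicate point; once a compatible list of reducible configurations is pinned down, both halves should run through, yielding the desired $3$-strong ordering and hence $\diam(\mathcal{I}(G))\leq 3$.
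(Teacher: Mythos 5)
Your overall architecture (reduce to $\Mad(G)<2+\tfrac{2}{3}$ via Euler's formula, then discharging with charge $d(v)-\tfrac{8}{3}$ against a list of reducible configurations) is the same as the paper's, and your discharging half is essentially right: for girth $8$ the only configurations one needs to exclude are a $(\leq 1)$-vertex, two adjacent $2$-vertices, and a $3$-vertex with two $2$-neighbours. The genuine gap is in the reducibility half, and it is exactly the obstacle you flag at the end but do not resolve: the tail-extension induction for $3$-strong orderings fails already for the \emph{simplest} configuration, a pendant vertex $v$ with anchor $u$. If in the $3$-strong ordering of $G-v$ the anchor satisfies $N_>(u)=\emptyset$ and $|N_<(u)|=3$ (which the definition permits, and which you cannot forbid without strengthening the inductive hypothesis to $2$-degeneracy-like back-degree bounds that the greedy then cannot exploit), then after appending $v$ we get $N_>(u)=\{v\}\neq\emptyset$ and the condition becomes $3+\log|\{\emptyset\}|=3<3$, which is false. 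The same failure occurs for the anchors of a thread. "Enlarging $H$ to include the anchors" does not obviously help, because the anchors are arbitrary vertices of the remaining graph and their own anchors inherit the problem. So as written the plan does not close, and it is not clear that planar girth-$8$ graphs are strongly $3$-degenerate at all; that statement is strictly stronger than what the theorem needs.

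The paper sidesteps this by not going through strong degeneracy. It works directly with families of vectors: a \emph{strict} $(G,\pi)$-realisation of dimension $3$ is one in which every vector is nonzero, and this invariant — unlike a $3$-strong ordering — is trivially inherited by every induced subgraph, so a minimum counterexample argument is available. Each reducible configuration then comes with an \emph{extension lemma} proved by direct linear algebra in $\mathbb{F}_2^3$ (Lemmas~\ref{lem:1-vertex}--\ref{lem:3-vertex}): a pendant vertex always admits three nonzero extensions; a $2$-vertex extends unless its two neighbours received equal vectors with conflicting labels; a $3$-vertex with two $2$-neighbours is handled by first choosing its vector to dodge the two second-neighbours and then extending to the $2$-vertices. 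These extensions choose vectors for the deleted vertices only and never revisit the anchors, so the anchor problem never arises. (The paper in fact proves the stronger Theorem~\ref{thm:3decharg} for $\Mad\leq 2+\tfrac{8}{11}$, which requires one further configuration — a path of three deficient $3$-vertices, Lemma~\ref{lem:deficient} — whose extension needs a coordinated, non-greedy choice from candidate sets and genuinely does not fit the single-pass framework of Theorem~\ref{thm:generic_greedy_argument}.) To repair your proof you would either have to replace the $3$-strong-ordering invariant by this weaker "strict realisation" invariant, or substantially strengthen the inductive hypothesis on the ordering; the former is what the paper does.
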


Inversion diameter 3 is the smallest that 
a girth condition can guarantee for a planar graph as shown by the following 
proposition. 

\begin{proposition}\label{prop:planar_even_girth_inversion_3}
    For every even positive integer $g$, there is a bipartite planar graph $G$ with girth $g$ such that 
    $\diam({\mathcal{I}}(G)) \geq 3$.
\end{proposition}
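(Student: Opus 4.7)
The plan is to apply Lemma~\ref{lem:aumoins3}, which asserts $\diam(\mathcal{I}(G)) \geq 3$ as soon as $G$ contains an even cycle $C$ with $d_G(v) \geq 3$ for every $v \in V(C)$. Hence it is enough to construct, for each even $g \geq 4$, a bipartite planar graph of girth $g$ that contains such a cycle. (For $g = 2$ the statement is vacuous for simple graphs, and for $g = 4$ one can use the construction below.)

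The construction I would use is minimal: take a single cycle $C = v_1 v_2 \cdots v_g v_1$ of length $g$, and for each $i \in [g]$ attach one pendant vertex $u_i$ joined to $v_i$ by a single edge. Let $G$ be the resulting graph. I would then verify in turn that: (i) $G$ is planar (embed $C$ as a polygon and put each $u_i$ outside the polygon); (ii) $G$ is bipartite (the canonical $2$-coloring of $C_g$, which exists because $g$ is even, extends to $G$ by placing each $u_i$ in the part opposite to $v_i$); (iii) the girth of $G$ is exactly $g$ (pendant edges create no new cycles, so $C$ is the unique cycle); (iv) every vertex on $C$ has degree $3$ in $G$, namely its two neighbors along $C$ together with $u_i$.

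Given these four properties, $C$ is an even cycle in $G$ whose vertices all have degree at least $3$ in $G$, so Lemma~\ref{lem:aumoins3} immediately yields $\diam(\mathcal{I}(G)) \geq 3$. There is no real obstacle in this argument; the only point requiring a moment of care is to raise the degree of each cycle vertex to at least $3$ without decreasing the girth or destroying bipartiteness, and pendants accomplish both for free.
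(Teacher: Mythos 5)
Your proposal is correct and is essentially identical to the paper's proof: the same pendant-vertex construction on an even cycle, followed by the same application of Lemma~\ref{lem:aumoins3}. The additional verifications (planarity, bipartiteness, girth) and the remark about $g=2$ are fine but not needed beyond what the paper states.
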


\begin{proof}
    Let $G$ be the graph obtained from a cycle on $g$ vertices $C=v_1v_2 \cdots v_{g}$ by adding for every vertex $v\in V(C)$ a new vertex $w_v$ adjacent uniquely to $v$. By construction, we have $d_G(v)\geq 3$ for all $v \in V(C)$. We hence obtain $\diam({\mathcal{I}}(G)) \geq 3$ by Lemma~\ref{lem:aumoins3}. 
\end{proof}

\medskip

In fact, we shall prove a result  on graphs of maximum average degree at most $2 + \frac{8}{11}$ 
which implies Theorem~\ref{thm:planar-3} using the following well-known bound on the maximum average degree of planar graphs of large girth.

\begin{lemma}[Folklore]\label{lem:Ad-girth}
    If $G$ is a planar graph with girth at least $g$, then $\Mad(G) < 2+\frac{4}{g-2}$.
\end{lemma}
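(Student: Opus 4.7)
The proof will be a standard discharging-style application of Euler's formula. Let me outline it.

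First, I would reduce to the case of a connected subgraph. Observe that $\Mad(G)$ is the maximum of $\Ad(H)$ over subgraphs $H$ of $G$ with at least one vertex, and if $H$ is disconnected with components $C_1,\dots,C_k$, then $\Ad(H) = \frac{\sum_i 2|E(C_i)|}{\sum_i |V(C_i)|}$ is a weighted average of the $\Ad(C_i)$, so some connected component $C_i$ satisfies $\Ad(C_i) \geq \Ad(H)$. Hence $\Mad(G)$ is achieved by a connected subgraph $H$ of $G$.

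Next, I would dispose of the acyclic case. If this $H$ has no cycle, it is a tree (or a single vertex), so $|E(H)| \leq |V(H)| - 1$ and $\Ad(H) < 2 \leq 2 + \tfrac{4}{g-2}$.

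The main case is when $H$ contains a cycle. Then $H$ is a connected planar graph (as a subgraph of $G$) of girth at least $g$. Setting $n = |V(H)|$, $m = |E(H)|$, and $f$ the number of faces of a plane embedding of $H$, Euler's formula gives $n - m + f = 2$. Since $H$ has a cycle, every face boundary walk contains a cycle, so each face has length at least $g$, and summing face lengths (each edge is counted at most twice) gives $2m \geq g f$, i.e.\ $f \leq 2m/g$. Substituting, I would get
\[
2 \leq n - m + \frac{2m}{g} = n - \frac{(g-2)m}{g},
\]
which rearranges to $m \leq \frac{g(n-2)}{g-2} < \frac{gn}{g-2}$, and therefore
\[
\Ad(H) = \frac{2m}{n} < \frac{2g}{g-2} = 2 + \frac{4}{g-2}.
\]
Combining the two cases yields $\Mad(G) < 2 + \tfrac{4}{g-2}$, as desired.

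The only mildly delicate point is the claim that each face has length at least $g$ when $H$ may have bridges or pendant subtrees dangling off a cyclic part. If a face boundary walk traverses a bridge, that bridge is counted twice for that face, which only helps the inequality $2m \geq gf$. What matters is that because $H$ contains a cycle and is connected, every face boundary walk actually contains a cycle (one can take a shortest closed subwalk, which is a cycle of $H$ of length $\geq g$). So the inequality $2m \geq gf$ holds without modification, and no additional case analysis is needed.
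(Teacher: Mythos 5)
Your proof is correct and is exactly the ``direct application of Euler's formula'' that the paper has in mind (the paper explicitly leaves the proof to the reader), including the correct reduction to a connected subgraph and the disposal of the acyclic case. The only blemish is your justification of the key fact that every face boundary walk contains a cycle: a shortest closed subwalk of a closed walk need not be a cycle (traversing a pendant edge back and forth is a closed subwalk of length $2$), so that sentence does not prove the claim. The claim itself is true and standard --- for instance, if some face of a connected plane graph had a boundary whose edge set were a forest $T$, then $T$ would not separate the plane and that face would have to be all of $\mathbb{R}^2\setminus T$, forcing $H=T$ to be a forest, contradicting the assumption that $H$ contains a cycle --- so the inequality $2m\geq gf$ and the rest of your computation stand.
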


The proof of this lemma, which is a direct application of Euler's formula, is left to the reader.
By Lemma~\ref{lem:Ad-girth}, in order to prove Theorem~\ref{thm:planar-3}, it is sufficient to prove that if $\Mad(G) < 2+\frac{2}{3}$ then $\diam(\mathcal{I}(G)) \leq 3$.
We shall in fact prove the following stronger result.
\begin{theorem}\label{thm:3decharg}
    Let $G$ be a graph.
    If $\Mad(G) \leq 2 + \frac{8}{11}$, then $\diam(\mathcal{I}(G)) \leq 3$.
\end{theorem}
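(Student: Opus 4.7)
The plan is a minimum counterexample argument combined with a discharging phase. Suppose $G$ minimises $|V(G)|$ among graphs with $\Mad(G)\leq \tfrac{30}{11}$ and $\diam(\mathcal{I}(G))\geq 4$, and, by Observation~\ref{obs:characterization_with_vectors} applied with $t=3$, fix a labelling $\pi\colon E(G)\to \mathbb{F}_2$ admitting no family $(\mathbf{v})_{v\in V(G)}$ of vectors in $\mathbb{F}_2^3$ with $\mathbf{u}\cdot \mathbf{v}=\pi(uv)$ for all $uv\in E(G)$. I would then proceed in the standard reducibility-plus-discharging manner, exploiting the freedom in $\mathbb{F}_2^3$ (seven nonzero vectors, with orthogonality easy to control).

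I would first establish that $G$ contains none of a short list of \emph{reducible configurations}. The easiest are (i) a vertex of degree at most $1$, (ii) two adjacent vertices of degree $2$, and (iii) a vertex of degree $3$ incident to several vertices of degree $2$. For each configuration, I would delete (part of) it, apply the inductive hypothesis to get vectors for the resulting smaller graph $G'$ (whose $\Mad$ is still at most $\tfrac{30}{11}$ since $\Mad$ is subgraph-monotone), and then extend them to a valid family for $G$, contradicting the choice of $\pi$. The main extension device is the following: if the inherited vector $\mathbf{u}$ at a boundary vertex is $\mathbf{0}$ but must be nonzero to satisfy a removed edge labelled $1$, one may replace $\mathbf{u}$ by $\mathbf{u}+\mathbf{a}$ for any nonzero $\mathbf{a}$ in the orthogonal complement of the vectors of $u$'s remaining neighbours; such an $\mathbf{a}$ exists whenever those vectors do not span $\mathbb{F}_2^3$, and the low degrees appearing in configurations (i)--(iii) precisely guarantee this.

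Next I would run a discharging argument. Assign initial charges $\mu(v)=d(v)-\tfrac{30}{11}$; the $\Mad$ bound yields $\sum_v\mu(v)\leq 0$, while the charges take the values $-\tfrac{30}{11},-\tfrac{19}{11},-\tfrac{8}{11},\tfrac{3}{11},\tfrac{14}{11},\ldots$ for degrees $0,1,2,3,4,\ldots$. After excluding (i), only vertices of degree $2$ are in deficit. I would send charge from each vertex of degree at least $3$ to its $2$-neighbours with rules tuned so that, using the forbidden configurations, the final charge $\mu^*(v)$ is nonnegative at every $v$. This would contradict $\sum_v\mu^*(v)=\sum_v\mu(v)\leq 0$, completing the proof.

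The hardest part will be the reducibility of the configurations involving several $2$-vertices clustered around a $3$- or $4$-vertex: upon removal, the shared neighbour's inherited vector must simultaneously satisfy two or more new boundary constraints, so the degrees of freedom in $\mathbb{F}_2^3$ have to be juggled carefully, sometimes by perturbing two vectors at once, each by an element of an appropriate orthogonal complement, and by invoking a slightly strengthened induction hypothesis that allows a boundary vector to be pre-fixed to a chosen nonzero value. In parallel, the discharging is tight: the deficit $\tfrac{8}{11}$ at each $2$-vertex against the surplus $\tfrac{3}{11}$ at each $3$-vertex is precisely what pins down the threshold $2+\tfrac{8}{11}$ in the statement, so the forbidden-configuration list and the discharging rules must be calibrated together to leave no slack.
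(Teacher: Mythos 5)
Your overall strategy --- minimum counterexample, reducible configurations, then discharging against the threshold $2+\frac{8}{11}$ --- is exactly the paper's, but two concrete gaps remain. First, your configuration list (i)--(iii) is too short for any discharging to balance. A $2$-vertex carries deficit $\frac{8}{11}$ while a $3$-vertex carries surplus only $\frac{3}{11}$; since a $2$-vertex may have two $3$-neighbours, its deficit cannot be paid by its neighbours alone ($2\times\frac{3}{11}<\frac{8}{11}$), so charge must be routed from the second neighbourhood, and correspondingly some configuration living in the second neighbourhood must be proved reducible. The paper's solution is to call a $3$-vertex with a $2$-neighbour \emph{deficient}, to add a rule sending $\frac{1}{11}$ from each non-deficient $(\geq 3)$-vertex to its deficient neighbours, and to prove reducibility of three consecutive deficient vertices (Lemma~\ref{lem:deficient}); this is the hardest reducibility lemma and is entirely absent from your plan, which only forbids configurations determined by a vertex and its first neighbourhood. (You also need that a $d$-vertex with $4\le d\le 6$ is not adjacent to $d$ vertices of degree $2$, cf.\ Lemma~\ref{lem:d-vertex}, to handle $4$-vertices.)

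Second, your device for repairing a zero vector --- replacing $\mathbf{u}$ by $\mathbf{u}+\mathbf{a}$ with $\mathbf{a}$ a nonzero vector orthogonal to the vectors of $u$'s remaining neighbours --- fails in general: already in configuration (i), the neighbour $u$ of the deleted $1$-vertex may have arbitrarily large degree in the smaller graph, and its neighbours' vectors may span $\mathbb{F}_2^3$, leaving no admissible $\mathbf{a}$. The paper avoids this by strengthening the statement being proved: every graph with maximum average degree below the threshold admits a \emph{strict} realisation, one in which all vectors are nonzero, so inherited boundary vectors are nonzero for free. This strengthening is also what makes the extension counts go through (a nonzero $\mathbf{u}$ leaves an affine plane of four choices for a pendant neighbour, at least three of them nonzero). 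Your alternative of a "slightly strengthened induction hypothesis that allows a boundary vector to be pre-fixed" is a much stronger, list-type hypothesis that would itself have to be verified for every configuration; requiring strictness throughout is the cheaper and sufficient fix.
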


Let $G$ be a graph, and $k$ a positive integer.
A {\bf $k$-vertex} (resp. $(\geq k)$-vertex, $(\leq k)$-vertex) is a vertex with degree $k$ (resp. at least $k$, at most $k$).


Let $G$ be a graph and let $\pi\colon E(G) \to \mathbb{F}_2$.
For every positive integer $d$, a {\bf $(G,\pi)$-realisation} of dimension $d$ 
is a family $(\mathbf{u})_{u \in V(G)}$ of vectors 
in $\mathbb{F}_2^d$ such that for every edge $uv$ of $G$, we have $\pi(uv) = \mathbf{u} \cdot \mathbf{v}$.
A $(G,\pi)$-realisation is {\bf strict} if $\mathbf{u} \neq \mathbf{0}$ for every $u \in V(G)$.
Hence, Observation~\ref{obs:characterization_with_vectors} can be rephrased as follows: $\diam(\mathcal{I}(G)) \leq d$ if and only if 
for every $\pi \colon E(G) \to \mathbb{F}_2$, there is a $(G,\pi)$-realisation of dimension $d$.

If $H$ is a subgraph of $G$, we abbreviate an $(H,\pi\vert_{E(H)})$-realisation to an {\it $(H,\pi)$-realisation.} We further say that a $(G,\pi)$-realisation {\bf extends} an $(H,\pi)$-realisation if the two realisations coincide on $V(H)$.

The proof of Theorem~\ref{thm:3decharg} uses the Discharging Method. However, instead of choosing a minimum counterexample to our main result, we choose a minimum counterexample to the slightly stronger result that no strict $(G,\pi)$-realisation exists for the given instance. We first present a collection of structural properties. As some of these results need to be proved in a slightly stronger form in order to be reused later, we give them in a form describing when certain realisations can be extended rather than explicitly speaking about a minimum counterexample. These structural results are contained in Lemmas~\ref{lem:1-vertex} to~\ref{lem:deficient}. After, we give the main proof of Theorem~\ref{thm:3decharg} in the form of a discharging procedure.

We first deal with 1-vertices.
\begin{lemma}\label{lem:1-vertex}
    Let $G$ be a graph, let $\pi \colon  E(G) \to \mathbb{F}_2$ and let $v$ a $1$-vertex in $G$. 
    Any strict $(G-v,\pi)$-realisation of dimension $3$ can be extended into three different strict $(G,\pi)$-realisations of dimension~$3$.
\end{lemma}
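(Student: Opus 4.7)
The plan is straightforward: fix the strict $(G-v,\pi)$-realisation $(\mathbf{w})_{w \in V(G)-v}$ and count how many vectors $\mathbf{v} \in \mathbb{F}_2^3 \setminus \{\mathbf{0}\}$ can be assigned to $v$ in order to extend it into a strict $(G,\pi)$-realisation. Let $u$ denote the unique neighbour of $v$ in $G$. Since $v$ has degree $1$, the only new constraint imposed by extending the realisation is
\[
\mathbf{u} \cdot \mathbf{v} = \pi(uv),
\]
so I only need to count vectors in $\mathbb{F}_2^3 \setminus \{\mathbf{0}\}$ satisfying this equation.

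Because the $(G-v,\pi)$-realisation is strict, $\mathbf{u} \neq \mathbf{0}$, so the set $A = \{\mathbf{x} \in \mathbb{F}_2^3 \mid \mathbf{u} \cdot \mathbf{x} = \pi(uv)\}$ is an affine hyperplane of $\mathbb{F}_2^3$, of cardinality $2^{3-1} = 4$. I distinguish two cases. If $\pi(uv) = 1$, then $\mathbf{u} \cdot \mathbf{0} = 0 \neq 1$, hence $\mathbf{0} \notin A$ and $|A \setminus \{\mathbf{0}\}| = 4 \geq 3$. If $\pi(uv) = 0$, then $\mathbf{0} \in A$, and $|A \setminus \{\mathbf{0}\}| = 3$.

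In either case there are at least three distinct nonzero choices of $\mathbf{v}$, and each such choice produces a strict $(G,\pi)$-realisation extending the given one; these three extensions are pairwise distinct because they differ already on the vertex $v$. There is no real obstacle here: the lemma is essentially a one-line counting argument once we observe that $\mathbf{u} \neq \mathbf{0}$ forces the solution set of the single linear constraint to be an affine hyperplane of size $4$.
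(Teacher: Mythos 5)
Your proof is correct and follows exactly the same argument as the paper: since $\mathbf{u} \neq \mathbf{0}$ by strictness, the single constraint $\mathbf{u} \cdot \mathbf{v} = \pi(uv)$ defines an affine plane of $4$ vectors, at most one of which is $\mathbf{0}$, leaving at least $3$ strict extensions. The case split on $\pi(uv)$ is a minor elaboration the paper omits but changes nothing.
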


\begin{proof}
    Let $u$ be the unique neighbour of $v$.
    Since $\mathbf{u} \neq \mathbf{0}$,
    the solutions $\mathbf{v}$ of the equation $\mathbf{u} \cdot \mathbf{v} = \pi(uv)$ form an affine space of dimension $3-1 = 2$.
    Thus, there are at least $2^2=4$ vectors $\mathbf{v}$ such that $\mathbf{u} \cdot \mathbf{v} = \pi(uv)$.
    Since at most one of them is $\mathbf{0}$, we obtain at least $3$ extensions.
\end{proof}

We now show that the configurations when a realisation cannot be extended to a 2-vertex are very restricted.
\begin{lemma}\label{lem:2-vertex}
    Let $G$ be a graph, let $\pi \colon  E(G) \to \mathbb{F}_2$, and let $v$ be a $2$-vertex in $G$ with neighbours $u_1$ and $u_2$.
    A strict $(G-v,\pi)$-realisation $(\mathbf{u})_{u \in V(G-v)}$ can be extended into a strict $(G,\pi)$-realisation unless
    $\mathbf{u_1} = \mathbf{u_2}$ and $\pi(u_1v) \neq \pi(u_2v)$.
\end{lemma}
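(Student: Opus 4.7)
The plan is to reduce the extension problem to solving a small linear system over $\mathbb{F}_2$ and examine when a nonzero solution exists. Extending the realisation to $v$ amounts to finding $\mathbf{v} \in \mathbb{F}_2^3 \setminus \{\mathbf{0}\}$ satisfying the two equations $\mathbf{u_1} \cdot \mathbf{v} = \pi(u_1 v)$ and $\mathbf{u_2} \cdot \mathbf{v} = \pi(u_2 v)$. I would split according to whether $\mathbf{u_1}$ and $\mathbf{u_2}$ are equal, since over $\mathbb{F}_2$ two nonzero vectors are linearly dependent if and only if they are equal.

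First, suppose $\mathbf{u_1} \neq \mathbf{u_2}$. Since both vectors are nonzero (the realisation being strict), they are linearly independent, so the affine solution space of the system has dimension $3-2 = 1$ and thus contains exactly two vectors. If the zero vector satisfies the system, then necessarily $\pi(u_1v) = \pi(u_2v) = 0$, and the second solution is nonzero; otherwise both solutions are nonzero. In either case we can pick a nonzero $\mathbf{v}$ and extend the realisation.

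Next, suppose $\mathbf{u_1} = \mathbf{u_2}$. If $\pi(u_1v) = \pi(u_2v)$, the two equations collapse into a single nontrivial equation whose solution space is an affine subspace of dimension $2$, hence has $4$ elements. At most one of them is $\mathbf{0}$, so at least three nonzero choices of $\mathbf{v}$ are available (this is why the stronger "three extensions" statement of Lemma~\ref{lem:1-vertex} has no analogue needed here, but the extension certainly exists). If on the other hand $\pi(u_1v) \neq \pi(u_2v)$, then the two equations are contradictory and no extension is possible; this is exactly the excluded case.

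The argument is entirely routine linear algebra over $\mathbb{F}_2$, so there is no real obstacle; the only subtlety is remembering that over $\mathbb{F}_2$ linear independence of two nonzero vectors is equivalent to their being distinct, which is what makes the case $\mathbf{u_1} \neq \mathbf{u_2}$ work cleanly in dimension $3$.
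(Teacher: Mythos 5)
Your proof is correct and follows essentially the same route as the paper's: linear independence of the two distinct nonzero vectors gives a one-dimensional affine solution space with a nonzero point, and the collapsed single equation in the equal-vector case gives a two-dimensional affine space with a nonzero point. You are in fact slightly more explicit than the paper about why a nonzero solution survives in each case, which is a small improvement in rigour but not a different argument.
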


\begin{proof}
    If $\mathbf{u_1} \neq \mathbf{u_2}$, then $\mathbf{u_1}, \mathbf{u_2}$ are linearly independent.
    This implies that the affine space of the solutions $\mathbf{v}$ of
    \[
    \left\{
    \begin{array}{r c l}
        \mathbf{u_1} \cdot \mathbf{v} &=& \pi(u_1 v) \\
        \mathbf{u_2} \cdot \mathbf{v} &=& \pi(u_2 v) \\
    \end{array}
    \right.
    \]
    is non empty and has dimension $3-2 = 1$.
    Hence there is a solution $\mathbf{v}$ which is not $\mathbf{0}$.

    Now suppose $\mathbf{u_1} = \mathbf{u_2}$ and $\pi(u_1v) = \pi(u_2v)$.
    Since $\mathbf{u_1} \neq \mathbf{0}$, there is a vector $\mathbf{v}$ such that $\mathbf{u_1} \cdot \mathbf{v} = \mathbf{u_2} \cdot \mathbf{v} = \pi(u_1 v) = \pi(u v_2)$.
    This proves the lemma.
\end{proof}

We are now ready to exclude the existence of two adjacent 2-vertices in a minimum counterexample.
\begin{lemma}\label{lem:3thread}
    Let $G$ be a graph, $\pi \colon  E(G) \to \mathbb{F}_2$, and $v_1, v_2$ two adjacent $2$-vertices in $G$.
    Any strict $(G-\{v_1, v_2\},\pi)$-realisation of dimension $3$ can be extended into a strict $(G,\pi)$-realisation. 
\end{lemma}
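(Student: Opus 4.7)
The plan is to construct, by a counting argument in $\mathbb{F}_2^3$, two nonzero vectors $\mathbf{v}_1, \mathbf{v}_2$ that extend the given realisation. Let $u_i$ denote the neighbour of $v_i$ distinct from $v_{3-i}$, for $i \in \{1,2\}$ (these exist since $v_1, v_2$ have degree $2$). The three constraints to satisfy are $\mathbf{u}_1 \cdot \mathbf{v}_1 = \pi(u_1 v_1)$, $\mathbf{v}_1 \cdot \mathbf{v}_2 = \pi(v_1 v_2)$, and $\mathbf{u}_2 \cdot \mathbf{v}_2 = \pi(u_2 v_2)$, together with $\mathbf{v}_1, \mathbf{v}_2 \neq \mathbf{0}$. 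Note that the hypothesis guarantees $\mathbf{u}_1, \mathbf{u}_2 \neq \mathbf{0}$.

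First I would choose $\mathbf{v}_1$. Since $\mathbf{u}_1 \neq \mathbf{0}$, the solution set $S = \{\mathbf{v} \in \mathbb{F}_2^3 : \mathbf{u}_1 \cdot \mathbf{v} = \pi(u_1 v_1)\}$ is an affine subspace of dimension $2$, hence has exactly $4$ elements. Among these, at most one is $\mathbf{0}$ and at most one equals $\mathbf{u}_2$, so I can pick $\mathbf{v}_1 \in S$ with $\mathbf{v}_1 \neq \mathbf{0}$ and $\mathbf{v}_1 \neq \mathbf{u}_2$. This is the only delicate step, and it is the reason the lemma holds in contrast to the obstruction identified in Lemma~\ref{lem:2-vertex}: by having an extra vertex $v_1$ to play with, we can dodge the bad configuration $\mathbf{v}_1 = \mathbf{u}_2$ that would otherwise arise when applying Lemma~\ref{lem:2-vertex} to extend to $v_2$.

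Having fixed such a $\mathbf{v}_1$, the vectors $\mathbf{v}_1$ and $\mathbf{u}_2$ are both nonzero and distinct in $\mathbb{F}_2^3$, hence linearly independent. Therefore the system $\mathbf{v}_1 \cdot \mathbf{v}_2 = \pi(v_1 v_2)$ and $\mathbf{u}_2 \cdot \mathbf{v}_2 = \pi(u_2 v_2)$ admits a $1$-dimensional affine solution space, containing exactly $2$ vectors. At most one of these is $\mathbf{0}$, so at least one choice of $\mathbf{v}_2$ is nonzero. Setting this value yields a strict $(G,\pi)$-realisation extending the given one, completing the proof. The only real obstacle is the bookkeeping in the first step; once $\mathbf{v}_1$ is chosen to avoid $\mathbf{u}_2$, the second step is automatic.
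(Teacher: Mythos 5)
Your proof is correct and takes essentially the same route as the paper's: the paper picks $\mathbf{v}_1$ among the three nonzero solutions guaranteed by its one-vertex lemma so that $\mathbf{v}_1\neq\mathbf{u}_2$, then invokes its two-vertex lemma, whereas you simply inline the same linear-algebra counting. The key step (dodging $\mathbf{v}_1=\mathbf{u}_2$ so that the two constraints on $\mathbf{v}_2$ are independent) is identical.
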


\begin{proof}
    Let $(\mathbf{u})_{u \in V(G-\{v_1,v_2\})}$ be a $(G-\{v_1,v_2\}, \pi)$-realisation of dimension $3$.
    Let $u_1$ (resp. $u_2$) be the neighbour of $v_1$ (resp. $v_2$) distinct from $v_2$ (resp. $v_1$).
    By Lemma~\ref{lem:1-vertex}, there are three possible non-zero vectors $\mathbf{v}_1$ that extend the realisation $(\mathbf{u})_{u \in V(G-\{v_1,v_2\})}$ to $G-v_2$.
    So one can assign $\mathbf{v}_1$ so that $\mathbf{v}_1 \neq \mathbf{u}_2$.
    Then, by Lemma~\ref{lem:2-vertex}, there is a choice of $\mathbf{v}_2$ that extends the realisation to $G$.
\end{proof}

We use $N[v]$ to denote the closed neighbourhood of $v$, that is, the set containing $v$ and all its neighbours.

The next result shows that, in a minimum counterexample, every vertex that is adjacent only to 2-vertices is of degree at least 7.
\begin{lemma}\label{lem:d-vertex}
    Let $G$ be a graph, let $\pi \colon E(G) \to \mathbb{F}_2$, and let $v$ be a $d$-vertex for some $d\leq 6$ adjacent to $d$ $2$-vertices in $G$. 
    Any strict $(G-N[v],\pi)$-realisation of dimension $3$ can be extended into a strict $(G,\pi)$-realisation of dimension $3$. 
\end{lemma}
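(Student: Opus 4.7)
The plan is to choose a nonzero vector $\mathbf{v}\in\mathbb{F}_2^3$ for $v$ first, and then extend to its $2$-vertex neighbours $v_1,\dots,v_d$. For each $i\in[d]$, denote by $u_i$ the neighbour of $v_i$ other than $v$. Since $G$ is simple and each $v_i$ is a $2$-vertex, either $u_i\in V(G-N[v])$, in which case we call $v_i$ \emph{pendant}, or $u_i=v_j$ for some $j\neq i$, in which case necessarily $u_j=v_i$ and $\{v,v_i,v_j\}$ induces a triangle; we then call $\{v_i,v_j\}$ a \emph{triangle pair}. Every $v_i$ falls into exactly one of these two categories.

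For a pendant $v_i$, the vector $\mathbf{u}_i$ is already fixed by the given realisation of $G-N[v]$; once $\mathbf{v}$ is chosen, Lemma~\ref{lem:2-vertex} applied to the $2$-vertex $v_i$ with neighbours $v$ and $u_i$ yields a strict extension to $\mathbf{v}_i$ unless $\mathbf{v}=\mathbf{u}_i$ and $\pi(vv_i)\neq\pi(u_iv_i)$. Hence each pendant forbids at most one value of $\mathbf{v}$.

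For triangle pairs, the key step is the following sub-claim: for every nonzero $\mathbf{v}\in\mathbb{F}_2^3$ and every $a,b,c\in\mathbb{F}_2$, there exist nonzero $\mathbf{v}_i,\mathbf{v}_j\in\mathbb{F}_2^3$ with $\mathbf{v}\cdot\mathbf{v}_i=a$, $\mathbf{v}\cdot\mathbf{v}_j=b$, and $\mathbf{v}_i\cdot\mathbf{v}_j=c$. Applied with $a=\pi(vv_i)$, $b=\pi(vv_j)$, $c=\pi(v_iv_j)$, it shows that triangle pairs impose no constraint on $\mathbf{v}$. To prove the sub-claim, one first picks $\mathbf{v}_i$ nonzero satisfying $\mathbf{v}\cdot\mathbf{v}_i=a$ and $\mathbf{v}_i\neq\mathbf{v}$, so that $\mathbf{v}$ and $\mathbf{v}_i$ are linearly independent: the affine space $\{\mathbf{w}:\mathbf{v}\cdot\mathbf{w}=a\}$ has $4$ elements, at most two of which lie in $\{\mathbf{0},\mathbf{v}\}$, leaving at least two admissible choices. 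Then the system $\mathbf{v}\cdot\mathbf{v}_j=b$, $\mathbf{v}_i\cdot\mathbf{v}_j=c$ has a $1$-dimensional affine solution space $B$ of size $2$, which contains a nonzero vector: if $(b,c)\neq(0,0)$ then $\mathbf{0}\notin B$, and if $(b,c)=(0,0)$ the unique nonzero element of the $1$-dimensional subspace $\mathbf{v}^\bot\cap\mathbf{v}_i^\bot$ belongs to $B$.

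Combining both cases, if $t$ denotes the number of pendants then the total set of forbidden values of $\mathbf{v}$ has size at most $t\leq d\leq 6<7=|\mathbb{F}_2^3\setminus\{\mathbf{0}\}|$, so a valid nonzero $\mathbf{v}$ exists. Extending via Lemma~\ref{lem:2-vertex} on each pendant and via the sub-claim on each triangle pair then produces the desired strict $(G,\pi)$-realisation of dimension $3$. The main technical point will be the sub-claim for triangle pairs, but the linear-independence trick reduces it to two successive one-step extensions and avoids any explicit case analysis on the weight or shape of $\mathbf{v}$.
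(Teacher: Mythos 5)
Your proposal is correct and its core strategy is the same as the paper's: first fix a nonzero $\mathbf{v}$ avoiding at most $d\leq 6$ bad values among the $7$ nonzero vectors of $\mathbb{F}_2^3$, then extend to each $2$-neighbour via Lemma~\ref{lem:2-vertex}. The one genuine difference is that you explicitly treat the case where two $2$-neighbours of $v$ are adjacent to each other (a triangle through $v$), whereas the paper's proof tacitly assumes every $t_i$ lies in $G-N[v]$ and hence already carries a vector; your sub-claim for triangle pairs (pick $\mathbf{v}_i$ nonzero, distinct from $\mathbf{v}$, satisfying the first equation, then solve the resulting rank-$2$ system for a nonzero $\mathbf{v}_j$) is correct and closes that gap, at the cost of a short extra argument. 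In the paper's intended application this case cannot occur anyway, since a minimum counterexample has no two adjacent $2$-vertices by Lemma~\ref{lem:3thread}, but as the lemma is stated for arbitrary $G$ your version is the more complete one.
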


\begin{proof}
    Let $N(v) = \{u_1, \dots , u_d\}$ and, for all $i\in [d]$, let $t_i$ be the neighbour of $u_i$ distinct from $v$.
    Choose $\mathbf{v} \in \mathbb{F}_2^3 \setminus\{\mathbf{0}\}$ such that $\mathbf{v} \notin \{\mathbf{t_1}, \dots , \mathbf{t_d}\}$. This is possible because there are seven non-zero vectors in $\mathbb{F}_2^3$ and $d\leq 6$.
    Then, by Lemma~\ref{lem:2-vertex}, there is a choice of $\mathbf{u}_1, \dots, \mathbf{u}_d$ that extends the realisation $(\mathbf{u})_{u \in V(G-N(u))}$ to $G$.
\end{proof}

We now turn to the 3-vertices. We first deal with the case that a 3-vertex is adjacent to two 2-vertices.
\begin{lemma}\label{lem:3-vertex}
    Let $G$ be a graph, $\pi \colon  E(G) \to \mathbb{F}_2$,  and $v$ be a $3$-vertex adjacent to two $2$-vertices $u_1$ and $u_2$ in $G$.
    Any strict $(G-\{v,u_1,u_2\},\pi)$-realisation $(\mathbf{u})_{u \in V(G-\{v,u_1,u_2\})}$ of dimension $3$ can be extended into a strict $(G,\pi)$-realisation of dimension $3$. 
\end{lemma}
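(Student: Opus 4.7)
The plan is to first assign a suitable vector $\mathbf{v}$ to $v$ consistent with the edge $vw$, and then to use Lemma~\ref{lem:2-vertex} twice to extend the realisation successively to $u_1$ and to $u_2$. Let $w$ denote the third neighbour of $v$ in $G$, and for $i\in\{1,2\}$ let $t_i$ be the neighbour of $u_i$ other than $v$. The given strict $(G-\{v,u_1,u_2\},\pi)$-realisation provides non-zero vectors $\mathbf{w},\mathbf{t}_1,\mathbf{t}_2\in\mathbb{F}_2^3$; note that these three vectors need not be distinct, which only helps the argument.

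Since $\mathbf{w}\neq\mathbf{0}$, the affine subspace $A=\{\mathbf{x}\in\mathbb{F}_2^3:\mathbf{x}\cdot\mathbf{w}=\pi(vw)\}$ has dimension $2$, hence $|A|=4$. By Lemma~\ref{lem:2-vertex} applied to the $2$-vertex $u_i$ with neighbours $v$ and $t_i$, once $\mathbf{v}$ is fixed the realisation on $V(G)\setminus\{u_1,u_2\}$ extends strictly to $u_i$ unless $\mathbf{v}=\mathbf{t}_i$ and $\pi(vu_i)\neq\pi(t_iu_i)$. Setting $F_i=\{\mathbf{t}_i\}$ if $\pi(vu_i)\neq\pi(t_iu_i)$ and $F_i=\emptyset$ otherwise, it therefore suffices to choose $\mathbf{v}\in A\setminus(\{\mathbf{0}\}\cup F_1\cup F_2)$.

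The forbidden set $\{\mathbf{0}\}\cup F_1\cup F_2$ has cardinality at most $3$, so $|A\setminus(\{\mathbf{0}\}\cup F_1\cup F_2)|\geq 4-3=1$, and a valid choice of $\mathbf{v}$ exists. Fixing such $\mathbf{v}$ and then applying Lemma~\ref{lem:2-vertex} once to $u_1$ and once to $u_2$ yields the required strict $(G,\pi)$-realisation of dimension $3$. The main (but mild) obstacle is that the counting is tight: we may need to avoid $3$ vectors out of a set of $4$. Making the count work requires exploiting the full strength of Lemma~\ref{lem:2-vertex}, namely that $\mathbf{v}=\mathbf{t}_i$ is permitted precisely when the two labels at $u_i$ coincide; any weaker formulation would forbid up to $4$ vectors and the argument would break down.
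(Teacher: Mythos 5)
Your proof is correct and follows essentially the same route as the paper: fix $\mathbf{v}$ in the affine plane determined by the constraint at the third neighbour $w$, avoiding $\mathbf{0}$ and the (at most two) vectors $\mathbf{t}_1,\mathbf{t}_2$ that could block Lemma~\ref{lem:2-vertex}, then extend to $u_1$ and $u_2$; the paper phrases the first step via Lemma~\ref{lem:1-vertex} but the counting is identical. One quibble: your closing remark that the full strength of Lemma~\ref{lem:2-vertex} is needed is mistaken --- even the cruder rule of unconditionally forbidding $\mathbf{t}_1$ and $\mathbf{t}_2$ excludes only the set $\{\mathbf{0},\mathbf{t}_1,\mathbf{t}_2\}$ of size at most $3$ from the $4$-element set $A$ (this is exactly what the paper does), so the argument does not break down under the weaker formulation.
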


\begin{proof}
    Let $t_1$ (resp. $t_2$) be the neighbour of $u_1$ (resp. $u_2$) distinct from $v$.
    By Lemma~\ref{lem:1-vertex}, there are three possible non-zero vectors $\mathbf{v}$ in $\mathbb{F}_2^3$ with which we can extend the realisation to $G-\{u_1,u_2\}$.
    So one can choose $\mathbf{v}$ among them so that $\mathbf{v}\notin \{\mathbf{t_1}, \mathbf{t_2}\}$.
    Then, by Lemma~\ref{lem:2-vertex}, there is a choice of $\mathbf{u}_1,\mathbf{u}_2$ that extends the realisation $(\mathbf{u})_{u \in V(G-\{u_1,u_2\})}$ to $G$.
\end{proof}

A {\bf deficient vertex} is a $3$-vertex having one $2$-neighbour. As indicated by Lemma~\ref{lem:3-vertex}, deficient 3-vertices are in some way critical for our discharging procedure. We give one more structural result that restricts the existence of deficient vertices. 

\begin{lemma}\label{lem:deficient}
    Let $G$ be a graph, let $\pi \colon  E(G) \to \mathbb{F}_2$.
    Let $y$ be a deficient vertex adjacent to two deficient vertices $x$ and $z$ in $G$, and let $x', y', z'$ be the $2$-neighbours of $x, y, z$ respectively.
    
    Any strict $(G-\{x,y,z, x', y', z'\},\pi)$-realisation of dimension $3$ can be extended into a strict $(G,\pi)$-realisation of dimension $3$. 
\end{lemma}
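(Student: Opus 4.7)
The starting point is a strict $(G - \{x, y, z, x', y', z'\}, \pi)$-realisation of dimension $3$. Denote by $\alpha$ and $\gamma$ the neighbours of $x$ and $z$ respectively that are neither $y$ nor a pendant $2$-vertex, and by $x'', y'', z''$ the neighbours of $x', y', z'$ outside $\{x,y,z\}$. All five outer vectors $\boldsymbol{\alpha}, \boldsymbol{\gamma}, \mathbf{x''}, \mathbf{y''}, \mathbf{z''}$ are non-zero. We want to assign non-zero vectors in $\mathbb{F}_2^3$ to the six missing vertices, respecting the $10$ linear constraints coming from the $5$ internal and $5$ external edges.

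The plan is to extend the realisation in the order $x', y', z', x, z, y$. Each of $x', y', z'$ is first added as a $1$-vertex (its only assigned neighbour being its outer companion), so by Lemma~\ref{lem:1-vertex} we obtain three non-zero candidates for each of $\mathbf{x'}, \mathbf{y'}, \mathbf{z'}$. Next, $x$ is a $2$-vertex with neighbours $\alpha, x'$ in the current graph; by Lemma~\ref{lem:2-vertex} the extension succeeds whenever $\boldsymbol{\alpha} \neq \mathbf{x'}$ or the associated $\pi$-values match, so we pre-select $\mathbf{x'} \neq \boldsymbol{\alpha}$ using the three available options (and similarly $\mathbf{z'} \neq \boldsymbol{\gamma}$ to take care of $\mathbf{z}$). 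This leaves $\mathbf{x}$ (respectively $\mathbf{z}$) on an affine line of dimension $1$, which contains at least one non-zero vector.

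The crucial step is the final one: $y$ is now a genuine $3$-vertex with all neighbours $x, z, y'$ already assigned, so we must solve the system $\mathbf{y}\cdot \mathbf{x} = \pi(xy)$, $\mathbf{y}\cdot \mathbf{z} = \pi(yz)$, $\mathbf{y}\cdot \mathbf{y'} = \pi(yy')$ and hope to obtain a non-zero solution. If the vectors $\mathbf{x}, \mathbf{z}, \mathbf{y'}$ are linearly independent, the unique solution $\mathbf{y}$ is non-zero except in the degenerate case $\pi(xy)=\pi(yz)=\pi(yy')=0$; if they are linearly dependent, the system has $0$, $2$, or $4$ solutions depending on the compatibility of the right-hand sides. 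The rich flexibility obtained in the previous steps (three non-zero options per pendant, and at least one — often two — per deficient vertex) is exactly what we exploit to steer $\mathbf{x}, \mathbf{z}, \mathbf{y'}$ into the correct linear configuration: we aim for linear independence when at least one of the three relevant $\pi$-values equals $1$, and for linear dependence (forcing $\mathbf{y}$ to lie in a non-trivial orthogonal complement) when all three vanish.

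I expect the main obstacle to be verifying, through a careful case analysis, that a suitable combination of choices always exists. Problematic configurations arise when several of $\boldsymbol{\alpha}, \boldsymbol{\gamma}, \mathbf{x''}, \mathbf{y''}, \mathbf{z''}$ coincide or satisfy specific orthogonality relations that force $\mathbf{x}, \mathbf{z}, \mathbf{y'}$ into a common hyperplane; in each such case, rebalancing the choices — for example, taking the other non-zero solution for $\mathbf{x}$, or swapping $\mathbf{x'}$ among its three options to change the constraint on $\mathbf{x}$ — should break the obstruction. Combining the observation that each pendant has $3$ non-zero realisations with the $2$-fold choice inherited from Lemma~\ref{lem:2-vertex} at each deficient vertex gives more than enough freedom (at least $3 \cdot 3 \cdot 3 \cdot 2 \cdot 2 = 108$ candidate partial realisations before the final step) to handle the small number of linearly degenerate patterns that can occur in $\mathbb{F}_2^3$.
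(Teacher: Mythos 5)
Your setup is reasonable and the first steps (treating $x'$, $y'$, $z'$ via Lemma~\ref{lem:1-vertex} and $x$, $z$ via Lemma~\ref{lem:2-vertex}) are sound, but the argument stops exactly where the lemma's actual difficulty begins. By choosing the order $x',y',z',x,z,y$ you make $y$ the last vertex, so $\mathbf{y}$ must satisfy three linear constraints simultaneously, and in the case $\pi(xy)=\pi(yz)=\pi(yy')=0$ you need $\mathbf{x},\mathbf{z},\mathbf{y}'$ to span a proper subspace of $\mathbb{F}_2^3$. You assert that the available freedom suffices to arrange this, but you never verify it, and it is not automatic: the admissible values of $\mathbf{y}'$ are the non-zero points of the affine plane $\{\mathbf{v}\mid\mathbf{v}\cdot\mathbf{y}''=\pi(y'y'')\}$, which can be entirely disjoint from the linear plane spanned by $\mathbf{x}$ and $\mathbf{z}$ (two distinct translates of the same linear plane in $\mathbb{F}_2^3$ are disjoint), so for a given $\mathbf{x},\mathbf{z}$ there may be no admissible $\mathbf{y}'$ making the triple dependent. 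One would then have to show that $\mathbf{x}$ or $\mathbf{z}$ can be re-chosen to break the obstruction, but the admissible $\mathbf{x}$ are themselves confined to the plane $\{\mathbf{v}\mid\mathbf{v}\cdot\boldsymbol{\alpha}=\pi(\alpha x)\}$ and tied to the choice of $\mathbf{x}'$. Counting ``$108$ candidate partial realisations'' does not substitute for this analysis, since the candidates are far from independent. As written, the crucial step is a claim, not a proof.

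The paper sidesteps this by ordering the vertices differently: it first fixes $\mathbf{x}$ and $\mathbf{z}$ (each has at least two admissible values that in addition avoid $\mathbf{x}''$, resp.\ $\mathbf{z}''$, which secures the later extension to $x'$ and $z'$), then chooses $\mathbf{y}$ subject only to the two constraints coming from $x$ and $z$ together with the avoidance conditions $\mathbf{y}\neq\mathbf{0}$ and $\mathbf{y}\neq\mathbf{y}''$, and only afterwards extends to $x',y',z'$ by Lemma~\ref{lem:2-vertex}. With that order the only delicate case is $\pi(xy)=\pi(yz)=0$ with the two candidate sets $I_x=\{\mathbf{x}_1,\mathbf{x}_2\}$ and $I_z=\{\mathbf{z}_1,\mathbf{z}_2\}$ disjoint, and it is resolved by a short linear-algebra contradiction: if every pair $(\mathbf{x}_i,\mathbf{z}_j)$ had $\{\mathbf{x}_i,\mathbf{z}_j\}^\bot=\{\mathbf{0},\mathbf{y}''\}$, then $\mathbf{x}_1+\mathbf{z}_1+\mathbf{z}_2=\mathbf{x}_2+\mathbf{z}_1+\mathbf{z}_2=\mathbf{0}$, forcing $\mathbf{x}_1=\mathbf{x}_2$. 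If you want to keep your ordering you must supply the missing case analysis for the all-zero configuration; otherwise, reorder so that $\mathbf{y}$ is chosen before $\mathbf{y}'$.
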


\begin{proof}
    Let $x''$ (resp. $y''$, $z''$) be the neighbour of $x'$ (resp. $y'$, $z'$) distinct from $x$ (resp. $y$, $z$).

    By Lemma~\ref{lem:2-vertex}, there are three possibilities to extend $(\mathbf{u})_{u \in V(G-\{x,y,z,x',y',z'\})}$ to $x$, and so at least two of them are different from $\mathbf{x}''$.
    Let $I_x$ be the set of these vectors. Similarly, let $I_z$ be the set of possible extensions of $(\mathbf{u})_{u \in V(G-\{x,y,z,x',y',z'\})}$ to $z$ different from $\mathbf{z}''$.

    First suppose $\pi(xy) = 1$.
    Then choose $\mathbf{x} \in I_x$ and $\mathbf{z} \in I_z$ such that $\mathbf{x} \neq \mathbf{z}$.
    Then the affine space of the common solutions $\mathbf{y}$ of the equations $\mathbf{y} \cdot \mathbf{x} = \pi(xy)$ and $ \mathbf{y} \cdot \mathbf{z} = \pi(yz)$ has dimension $1$ and does not contain $\mathbf{0}$ (since $\pi(xy) \neq 0$).
    Then choose $\mathbf{y}$ among these solutions such that $\mathbf{y} \neq \mathbf{y}''$.
    Finally, extend the realisation to $x',y',z'$ using Lemma~\ref{lem:2-vertex}.
    We proceed symmetrically if $\pi(yz) = 1$.
    Now suppose $\pi(xy)=\pi(yz)=0$.

    Suppose $I_x \cap I_z \neq \emptyset$.
    Then take $\mathbf{x} = \mathbf{z} \in I_x \cap I_z$.
    The solutions $\mathbf{y}$ of the equation $\mathbf{y} \cdot \mathbf{x} = 0$ form a space of dimension $3-1 = 2$.
    Hence there are $2^2=4$ such solutions. Then take for $\mathbf{y}$ one of these solutions distinct from $\mathbf{y}''$ and from $\mathbf{0}$.
    Then we can extend the realisation to $x',y',z'$ by Lemma~\ref{lem:2-vertex}.
    Now we assume $I_x \cap I_z = \emptyset$.

    Let $I_x = \{\mathbf{x}_1,\mathbf{x}_2\}$ and $I_z = \{\mathbf{z}_1, \mathbf{z}_2\}$.
    For every $i,j \in \{1,2\}$, the choice of $\mathbf{x} = \mathbf{x}_i$ and $\mathbf{z} = \mathbf{z}_j$ can not be extended only if
    $\{\mathbf{0}, \mathbf{y}''\} = \{\mathbf{x}_i, \mathbf{z}_j\}^\bot$.
    In particular, $\{\mathbf{x}_1, \mathbf{z}_1\}$ and $\{\mathbf{x}_1, \mathbf{z}_2\}$ span the same vector space, so $\{\mathbf{x}_1, \mathbf{z}_1,\mathbf{z}_2\}$ is linearly dependent. As $\mathbf{x}_1, \mathbf{z}_1$, and $\mathbf{z}_2$ are pairwise distinct, we obtain $\mathbf{x}_1+\mathbf{z}_1+\mathbf{z}_2=0$. A similar argument shows that   $\mathbf{x}_2+\mathbf{z}_1+\mathbf{z}_2=0$. It follows that $\mathbf{x}_1=\mathbf{x}_2$, a contradiction.
    This concludes the proof of the lemma.  
\end{proof}

\begin{proof}[Proof of Theorem~\ref{thm:3decharg}]
    We will show that for every graph with $\Mad(G)<2+\frac{8}{11}$ and for every function $\pi \colon E(G) \to \mathbb{F}_2$,
    there is a family of non-zero vectors $(\mathbf{u})_{u \in V(G)}$ in $\mathbb{F}_2^3$ such that $\pi(uv) = \mathbf{u} \cdot \mathbf{v}$ for every edge $uv$ of $G$, that is, there exists a strict $(G,\pi)$-realisation of dimension $3$.
    By Observation~\ref{obs:characterization_with_vectors}, this will imply $\diam(\mathcal{I}(G)) \leq 3$, and so prove the theorem.
    Let $G$ be a minimum counterexample to this statement.
    The above lemmas yield the following properties.
    
    \begin{itemize}
    \item[(P1)] $\delta(G)\geq 2$ (Lemma~\ref{lem:1-vertex}).
    \item[(P2)] Two $2$-vertices are not adjacent (Lemma~\ref{lem:3thread}).
    \item[(P3)] A $3$-vertex has at most one $2$-neighbour (Lemma~\ref{lem:3-vertex}).
    \item[(P4)] Every $4$- , $5$-, or $6$-vertex has at least one $(\geq 3)$-neighbour (Lemma~\ref{lem:d-vertex}).
    \item[(P5)] A deficient vertex is adjacent to at most one deficient vertex (Lemma~\ref{lem:deficient}).
    \end{itemize}
    
    We shall now use the Discharging Method.
    The initial charge of each vertex $v$  is its degree $d_G(v)$.
    We now apply the following discharging rules.
    
    \begin{itemize}
    \item[(R1)] Every $(\geq 3)$-vertex sends $\frac{4}{11}$ to its $2$-neighbours.
    \item[(R2)] Every non-deficient $(\geq 3)$-vertex sends $\frac{1}{11}$ to its deficient neighbours.
    \end{itemize}
    
    Let us examine the final charge $w(v)$ of a vertex $v$ in  $V(G)$. By (P1), we have $d_G(v)\geq 2$.
    \begin{itemize}
    \item If $v$ is a $2$-vertex, then it has two $(\geq 3)$-neighbours by (P2). It receives $\frac{4}{11}$ from each of them, so
    $w(v) = 2 + 2 \times 4/11 = 2 + 8/11$.
    \item If $v$ is a deficient $3$-vertex, then it has one $2$-neighbour and has at least one non-deficient $(\geq 3)$-neighbour by (P5).
    Thus it sends  $\frac{4}{11}$ to its $2$-neighbour and receives at least $\frac{1}{11}$ from its other neighbours.
    So $w(v) \geq 3 - 4/11 + 1/11 = 2 + 8/11$.
    \item If $v$ is a non-deficient  $3$-vertex, then its sends $1/11$ to its at most three deficient neighbours.
    So  $w(v) \geq 3 - 3\times 1/11 = 2 + 8/11$.
    \item If $v$ is a $4$-vertex, then  it sends $4/11$ to each of its $2$-neighbours by (R1) and $1/11$ to each of its deficient neighbours by (R2). By (P4), $v$ has at most three $2$-neighbours. So
    $w(v) \geq 4 - 3\times 4/11 - 1/11 = 2 + 9/11$.
    
    \item If $v$ is a $(\geq 5)$-vertex, then it sends at most $4/11$ to each of its neighbours.
    So  $w(v) \geq d(v) - d(v)\times 4/11 = d(v) \times 7/11 \geq 35/11 >  2 + 8/11$.
    \end{itemize}
    
    Hence $w(v)\geq 2 + 8/11$ for every vertex $v$.
    Consequently, $\sum_{v\in V(G)} d(v) = \sum_{v\in V(G)} w(v) \geq (2+ 8/11) |V(G)|$.
    We conclude that $\Mad(G)\geq\Ad(G) \geq 2 + 8/11$, a contradiction.
\end{proof}


\section{Complexity}\label{sec:complexity}

In this section, we consider the computational complexity of the following two related problems.

\medskip

{\sc $k$-Inversion-Distance}\\
\underline{Instance:} Two orientations $\vec{G}_1$ and $\vec{G}_2$ of a graph $G$.\\
\underline{Question:} Are $\vec{G}_1$ and $\vec{G}_2$ at distance at most $k$ in   ${\mathcal{I}}(G)$ ?

\medskip

{\sc $k$-Inversion-Diameter}\\
\underline{Instance:} A graph $G$.\\
\underline{Question:} Does ${\mathcal{I}}(G)$ have diameter at most $k$ ?

\medskip

{\sc $0$-Inversion-Distance} is trivial as two orientations of some graph are at distance $0$ if and only if they are the same. Furthermore,
{\sc $1$-Inversion-Distance} can be solved in polynomial time because of the following easy characterisation of orientations at distance $1$.

\begin{proposition}
    Let $\vec{G}_1$ and $\vec{G}_2$ be two orientations of a graph $G$.
    There exists $X$ such that $\vec{G}_2 = \Inv(\vec{G}_1 ; X)$ if and only if every edge of $E_{=}$ has at most one end-vertex incident to an edge of $E_{\neq}$.
\end{proposition}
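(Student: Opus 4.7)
The plan is to exploit the basic fact that, for any $X \subseteq V(G)$, the orientation $\Inv(\vec{G}_1; X)$ differs from $\vec{G}_1$ precisely on the edges whose \emph{both} endvertices lie in $X$. Thus the existence of a set $X$ with $\vec{G}_2 = \Inv(\vec{G}_1; X)$ is equivalent to the existence of $X \subseteq V(G)$ such that, for every edge $uv \in E(G)$, $uv \in E_{\neq}$ if and only if $\{u,v\} \subseteq X$. Both implications will follow almost immediately from this characterisation.

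For the forward direction, suppose such an $X$ exists and let $uv \in E_{=}$. Since $uv \notin E_{\neq}$, at least one of $u, v$ is outside $X$; say $v \notin X$. Then no edge $vw$ can lie in $E_{\neq}$, for that would force $v \in X$. Hence $v$ is not incident to any edge of $E_{\neq}$, so the required condition holds with $u$ being the (possibly) only endvertex of $uv$ incident to an edge of $E_{\neq}$.

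For the backward direction, I would simply define $X$ to be the set of vertices of $G$ incident to at least one edge in $E_{\neq}$. Every edge $uv \in E_{\neq}$ trivially has $\{u,v\}\subseteq X$, and the hypothesis on $E_{=}$ guarantees that no edge of $E_{=}$ has both endvertices in $X$. Consequently $E_{\neq}$ is exactly the edge set inverted by $X$, giving $\Inv(\vec{G}_1;X)=\vec{G}_2$.

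There is no real technical obstacle: the statement is essentially a reformulation of the definition of a single inversion, once one observes that choosing $X$ to be the vertices touched by $E_{\neq}$ is the canonical candidate. The only thing to be careful about is to handle the case where $u$ or $v$ may be isolated in $(V(G), E_{\neq})$ when writing the argument cleanly.
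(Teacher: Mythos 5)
Your proof is correct: both directions follow cleanly from the observation that $\Inv(\vec{G}_1;X)$ reverses exactly the edges with both endvertices in $X$, and taking $X$ to be the set of vertices incident to an edge of $E_{\neq}$ is indeed the canonical witness. The paper states this proposition without proof as an easy characterisation, and your argument is precisely the one it leaves implicit.
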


Proposition~\ref{prop:diam1} implies that {\sc $0$-Inversion-Diameter} and {\sc $1$-Inversion-Diameter} can be solved in polynomial time.

The aim of this section is to prove that {\sc $k$-Inversion-Diameter} and {\sc $k$-Inversion-Distance} are NP-hard for all $k\geq 2$.

We need some preliminaries.
The first one is the following well-known lemma, which can be easily proved using a well-known theorem of Euler, see \cite{diestel}.

\begin{lemma}[Folklore]\label{lemma:orientation_indegree_1}
    Every graph $G$ admits an orientation $\vec{G}$ such that every vertex $v$ 
    has in-degree at least $\lfloor d(v)/2\rfloor$ in $\vec{G}$.
    Moreover, such an orientation can be computed in polynomial time.
\end{lemma}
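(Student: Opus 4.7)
My plan is to invoke Euler's theorem on the existence of Eulerian circuits in graphs with all even degrees. The proof naturally splits into a construction and a verification, and the polynomial-time bound will follow immediately from the use of Hierholzer's algorithm.

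First, I would reduce to the case where every vertex of $G$ has even degree. If $G$ has any vertices of odd degree, let $O \subseteq V(G)$ be the set of such vertices; by the handshake lemma, $|O|$ is even. I construct an auxiliary multigraph $G^*$ by adding a new vertex $v_0$ to $G$ and joining it by a single edge to each vertex of $O$. In $G^*$, every vertex of $O$ has degree $d_G(v)+1$ (even), the degree of $v_0$ is $|O|$ (even), and every other vertex keeps its even degree from $G$. Thus $G^*$ has all even degrees.

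Next, on each connected component of $G^*$ I compute an Eulerian circuit (which exists by Euler's classical theorem, since each component has all even degrees and is connected), and orient every edge in the direction it is traversed by the circuit. This yields an orientation of $G^*$ in which $d^-_{G^*}(v) = d^+_{G^*}(v) = d_{G^*}(v)/2$ for every vertex $v$. Now I obtain $\vec{G}$ by restricting this orientation to $E(G)$, i.e.\ by removing all arcs incident to $v_0$.

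It remains to verify the in-degree condition. For any $v \in V(G) \setminus O$, no incident edge was removed, so $d^-_{\vec{G}}(v) = d_{G^*}(v)/2 = d_G(v)/2 = \lfloor d_G(v)/2 \rfloor$. For any $v \in O$, exactly one incident arc (the one joining $v$ to $v_0$) was removed; in the worst case that arc was directed into $v$, and then $d^-_{\vec{G}}(v) \geq (d_G(v)+1)/2 - 1 = (d_G(v)-1)/2 = \lfloor d_G(v)/2 \rfloor$. Finally, since Hierholzer's algorithm constructs Eulerian circuits in time $O(|E(G^*)|) = O(|E(G)|+|V(G)|)$, the whole construction runs in polynomial time. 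The only subtlety to watch for, and thus the main ``obstacle'' (really just a bookkeeping point), is to confirm that one is allowed to pay a loss of exactly one to the in-degree when removing the fake edge at an odd-degree vertex, which the arithmetic above handles cleanly.
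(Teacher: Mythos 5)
Your proof is correct and is exactly the standard Eulerian-orientation argument the paper has in mind: the paper omits the proof entirely, remarking only that the lemma "can be easily proved using a well-known theorem of Euler," and your construction (adding an auxiliary vertex joined to the odd-degree vertices, orienting along Eulerian circuits, then deleting the auxiliary arcs) is precisely that folklore proof, with the arithmetic at odd-degree vertices handled correctly.
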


\begin{lemma}\label{lemma:separate_different_vectors}
    Let $d \geq 2$ and let $\mathbf{u},\mathbf{v} \in \mathbb{F}_2^d \setminus \{\mathbf{0}\}$ be two distinct vectors.
    For every $x,y \in \mathbb{F}_2$, there exists $\mathbf{w} \in \mathbb{F}_2^d$ such that $\mathbf{u} \cdot \mathbf{w} = x, \mathbf{v} \cdot \mathbf{w} = y$.
\end{lemma}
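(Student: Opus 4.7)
The plan is to reduce the statement to the observation that $\mathbf{u}$ and $\mathbf{v}$ are linearly independent over $\mathbb{F}_2$, which is immediate from the hypotheses: the only nonzero scalar in $\mathbb{F}_2$ is $1$, so a linear dependence between $\mathbf{u}$ and $\mathbf{v}$ would force $\mathbf{u} = \mathbf{v}$ or one of them to be $\mathbf{0}$, both excluded.

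Concretely, I would introduce the linear map $\phi \colon \mathbb{F}_2^d \to \mathbb{F}_2^2$ defined by $\phi(\mathbf{w}) = (\mathbf{u} \cdot \mathbf{w}, \mathbf{v} \cdot \mathbf{w})$. The matrix of $\phi$ (with respect to the standard basis) has $\mathbf{u}$ and $\mathbf{v}$ as its rows, so by the linear independence established above its rank is $2$. Hence $\phi$ is surjective onto $\mathbb{F}_2^2$, which gives exactly the claim: for every $(x,y) \in \mathbb{F}_2^2$ there exists $\mathbf{w} \in \mathbb{F}_2^d$ with $\mathbf{u} \cdot \mathbf{w} = x$ and $\mathbf{v} \cdot \mathbf{w} = y$. (Alternatively, one could extend $\{\mathbf{u}, \mathbf{v}\}$ to a basis of $\mathbb{F}_2^d$ and read off $\mathbf{w}$ from the dual basis, but the rank argument is cleanest.)

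There is essentially no obstacle: the statement is a textbook rank-nullity consequence. The only point that deserves a half-sentence of care is why distinct nonzero vectors over $\mathbb{F}_2$ are automatically linearly independent, since this fails over larger fields.
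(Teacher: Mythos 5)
Your proof is correct and follows essentially the same route as the paper: both form the $2\times d$ matrix with rows $\mathbf{u},\mathbf{v}$, note it has rank $2$, and conclude that $\mathbf{w}\mapsto(\mathbf{u}\cdot\mathbf{w},\mathbf{v}\cdot\mathbf{w})$ is surjective onto $\mathbb{F}_2^2$. Your explicit remark that distinct nonzero vectors over $\mathbb{F}_2$ are automatically linearly independent is a helpful detail the paper leaves implicit.
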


\begin{proof}
    As $\mathbf{u} \neq \mathbf{v}$ are distinct non zero vectors in $\mathbb{F}_2^d$, the matrix $M$ whose rows are $u$ and $v$ has rank $2$.
    Hence its image is a $2$-dimensional space, and so the mapping $\left(\begin{array}{ccl}\mathbb{F}_2^d &\to& \mathbb{F}_2^2 \\ \mathbf{w} &\mapsto& M \cdot \mathbf{w}=(\mathbf{u} \cdot \mathbf{w}, \mathbf{v} \cdot \mathbf{w})\end{array}\right)$ is surjective.
\end{proof}

For a graph $G$, we denote by $G^{(1)}$ the graph obtained from $G$ by subdividing every edge of $G$ exactly once.
The following theorem links $\diam(\mathcal{I}(G^{(1)}))$ with $\chi(G)$. Note that similar results for $\chi_s,\chi_a,\chi_o$ have been proven by Wood~\cite{wood2005acyclic}.
\begin{theorem}\label{thm:diam_L_G_subdivided_once}
    For every graph $G$ with $\delta(G) \geq 2$ and for every integer $k \geq 2$, one can construct in polynomial time 
    two orientations $O_1,O_2$ of $G^{(1)}$ such that the following are equivalent:
    \begin{enumerate}[label=(\roman*)]
        \item $\dist_{\mathcal{I}(G^{(1)})}(O_1, O_2) \leq k$, \label{item:thm_subdivision_i}
        \item $\diam(\mathcal{I}(G^{(1)})) \leq k$, \label{item:thm_subdivision_ii}
        \item $\chi(G) \leq 2^k -1$. \label{item:thm_subdivision_iii}
    \end{enumerate} 
\end{theorem}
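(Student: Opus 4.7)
The plan is to prove the three implications (ii) $\Rightarrow$ (i), (iii) $\Rightarrow$ (ii), and (i) $\Rightarrow$ (iii) after first specifying the construction of $O_1$ and $O_2$. The implication (ii) $\Rightarrow$ (i) is trivial, so the real content is in the equivalence of (ii) and (iii), together with the fact that the specific pair $(O_1,O_2)$ already witnesses the diameter.

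\textbf{Construction.} Using Lemma~\ref{lemma:orientation_indegree_1}, compute in polynomial time an orientation $\vec{G}$ of $G$ in which every vertex has in-degree at least $\lfloor d(v)/2\rfloor\geq 1$ (using $\delta(G)\geq 2$). For each edge $uv\in E(G)$, let $x_{uv}$ denote the subdivision vertex, and define $O_1,O_2$ as follows: if $u\to v$ in $\vec{G}$, then both orientations have $u\to x_{uv}$, while the edge $vx_{uv}$ is oriented $x_{uv}\to v$ in $O_1$ and $v\to x_{uv}$ in $O_2$. In other words, $O_1$ and $O_2$ agree on the ``tail'' edges and disagree exactly on the ``head'' edges of $\vec{G}$; the corresponding function $\pi \colon E(G^{(1)})\to\mathbb{F}_2$ satisfies $\pi(ux_{uv})=0$ and $\pi(vx_{uv})=1$ whenever $u\to v$ in $\vec{G}$.

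\textbf{(iii) $\Rightarrow$ (ii).} Assume $\chi(G)\leq 2^k-1$. By Observation~\ref{obs:characterization_with_vectors} it suffices to exhibit, for an arbitrary $\pi\colon E(G^{(1)})\to\mathbb{F}_2$, a family of vectors in $\mathbb{F}_2^k$ realising $\pi$. Identify the $2^k-1$ non-zero vectors of $\mathbb{F}_2^k$ with colours and pick a proper colouring $c\colon V(G)\to\mathbb{F}_2^k\setminus\{\mathbf{0}\}$. Set $\mathbf{v}=c(v)$ for every $v\in V(G)$. For every subdivision vertex $x_{uv}$ the two endpoints $u,v\in V(G)$ satisfy $\mathbf{u}\neq\mathbf{v}$ and both are non-zero, so since $k\geq 2$, Lemma~\ref{lemma:separate_different_vectors} produces $\mathbf{x}_{uv}\in\mathbb{F}_2^k$ with $\mathbf{u}\cdot\mathbf{x}_{uv}=\pi(ux_{uv})$ and $\mathbf{v}\cdot\mathbf{x}_{uv}=\pi(vx_{uv})$. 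This is the required realisation.

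\textbf{(i) $\Rightarrow$ (iii).} This is the main step. Assume $\dist_{\mathcal{I}(G^{(1)})}(O_1,O_2)\leq k$. By Observation~\ref{obs:characterization_with_vectors} there are vectors $(\mathbf{w})_{w\in V(G^{(1)})}$ in $\mathbb{F}_2^k$ with $\mathbf{w}\cdot \mathbf{w'}=\pi(ww')$ on every edge. Now for each edge $uv\in E(G)$ with $u\to v$ in $\vec{G}$, we have $\mathbf{v}\cdot \mathbf{x}_{uv}=1$, which forces $\mathbf{v}\neq\mathbf{0}$. Because every vertex of $G$ has positive in-degree in $\vec{G}$, every original vertex receives a non-zero vector. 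Moreover $\mathbf{u}\cdot\mathbf{x}_{uv}=0\neq 1=\mathbf{v}\cdot \mathbf{x}_{uv}$ implies $\mathbf{u}\neq\mathbf{v}$, so the assignment $v\mapsto\mathbf{v}$ is a proper colouring of $G$ with colours in $\mathbb{F}_2^k\setminus\{\mathbf{0}\}$. Hence $\chi(G)\leq 2^k-1$. The main subtlety is precisely the asymmetric choice of $\pi$ (one side $0$, the other side $1$) that simultaneously excludes $\mathbf{v}=\mathbf{0}$ and $\mathbf{u}=\mathbf{v}$; using $\delta(G)\geq 2$ through Lemma~\ref{lemma:orientation_indegree_1} is essential to guarantee that this asymmetry reaches every vertex.
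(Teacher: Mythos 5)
Your proof is correct and follows essentially the same route as the paper: the same orientation from Lemma~\ref{lemma:orientation_indegree_1} defining $\pi$ with value $0$ on tail edges and $1$ on head edges, the same use of Lemma~\ref{lemma:separate_different_vectors} for (iii)~$\Rightarrow$~(ii), and the same observation that positive in-degree forces every original vertex's vector to be non-zero for (i)~$\Rightarrow$~(iii). No gaps.
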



\begin{proof}
    Let $G$ be a graph and let $k \geq 2$ be an integer.
    We denote by $ux_{uv} v$ the path of length $2$ replacing the edge $uv \in E(G)$ in $G^{(1)}$.

    By Lemma~\ref{lemma:orientation_indegree_1}, $G$ has an orientation $\vec{G}$ of $G$ such that every vertex has in-degree at least $1$.
    For every arc $uv \in A(\vec{G})$, set $\pi_0(ux_{uv})=0$ and $\pi_0(x_{uv}v)=1$.
    Let $O_1$ be an arbitrary orientation of $G^{(1)}$ and let $O_2$ be obtained from $O_1$ by reversing every edge $e \in E(G^{(1)})$ such that $\pi_0(e)=1$.
    
    \medskip
    
    We first show that~\ref{item:thm_subdivision_iii} implies~\ref{item:thm_subdivision_ii}. Assume that $\chi(G) \leq 2^k -1$. Then there exists a mapping $\phi\colon V(G) \to \mathbb{F}_2^k \setminus \{0\}$ such that
    $\phi(u) \neq \phi(v)$ for every $uv \in E(G)$.
    Consider a function $\pi \colon E(G^{(1)}) \to \mathbb{F}_2$.
    We will construct a family $(\mathbf{w})_{w \in V(G)}$ of vectors in $\mathbb{F}_2^k$ such that $\pi(ww')=\mathbf{w} \cdot \mathbf{w}'$ for every edge $ww'$ of $G^{(1)}$.
    This will imply that $\diam(\mathcal{I}(G^{(1)})) \leq k$ by Observation~\ref{obs:characterization_with_vectors}.
    We take $\mathbf{u} = \phi(u)$ for every vertex $u \in V(G)$, and we choose $\mathbf{x}_{uv}$ such that $\mathbf{u} \cdot \mathbf{x}_{uv} = \pi(ux_{uv})$ and $\mathbf{v} \cdot \mathbf{x}_{uv} = \pi(x_{uv} v)$.
    This is possible by Lemma~\ref{lemma:separate_different_vectors}. Hence~\ref{item:thm_subdivision_ii} holds.

    \medskip

    Clearly,~\ref{item:thm_subdivision_ii} implies~\ref{item:thm_subdivision_i}.

    \medskip
    
    We finally show that~\ref{item:thm_subdivision_i} implies~\ref{item:thm_subdivision_iii}. Suppose that $\dist_{\mathcal{I}(G^{(1)})}(O_1, O_2) \leq k$.
    Then, by Observation~\ref{obs:characterization_with_vectors}, 
    there exists a family $(\mathbf{w})_{w \in V(G^{(1)})}$ of vectors in $\mathbb{F}_2^k$ such that $\pi_0(ww')=\mathbf{w} \cdot \mathbf{w}'$ for every edge $ww'$ of $G^{(1)}$.
    Then for every edge $uv$ of $G$, by the choice of $\pi_0$ we have $\mathbf{u} \cdot \mathbf{x}_{uv} \neq \mathbf{v} \cdot \mathbf{x}_{uv}$
    and so $\mathbf{u} \neq \mathbf{v}$. Moreover, for every $u \in V(G)$, $u$ is incident to an edge $e$ in $G^{(1)}$ with $\pi_0(e)=1$,
    which implies that $\mathbf{u} \neq \mathbf{0}$.
    Thus $u \mapsto \mathbf{u}$ is a proper $(2^k-1)$-colouring of $G$, yielding~\ref{item:thm_subdivision_iii}.
    
    This concludes the proof of the theorem.
\end{proof}

It is well-known that deciding whether a graph $G$ with $\delta(G)\geq 2$ has chromatic number at most $k$ is NP-complete for any fixed integer $k\geq 3$~\cite{Karp72}  and that
deciding whether a planar graph $G$ with $\delta(G)\geq 2$ has chromatic number at most $3$ is NP-complete~\cite{GLS76}. As a consequence, we obtain the following result.

\begin{corollary}\label{cor:NP-diam}
    For every integer $k \geq 2$, {\sc $k$-Inversion-Diameter} and {\sc $k$-Inversion-Distance} are NP-hard, even when restricted to graphs $G$ with $\Mad(G) < 4$.
    Moreover, {\sc $2$-Inversion-Diameter} and {\sc $2$-Inversion-Distance} are NP-hard even for planar bipartite graphs of girth at least $6$.
\end{corollary}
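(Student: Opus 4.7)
My plan is to derive both NP-hardness statements directly from Theorem~\ref{thm:diam_L_G_subdivided_once}. Given a graph $G$ with $\delta(G)\geq 2$ and an integer $k\geq 2$, that theorem produces in polynomial time a graph $G^{(1)}$ and two orientations $O_1,O_2$ of $G^{(1)}$ such that
\[
\dist_{\mathcal{I}(G^{(1)})}(O_1,O_2)\leq k \;\Longleftrightarrow\; \diam(\mathcal{I}(G^{(1)}))\leq k \;\Longleftrightarrow\; \chi(G)\leq 2^k-1.
\]
Since $2^k-1\geq 3$ for every $k\geq 2$, the NP-completeness of $(2^k-1)$-colouring for graphs with $\delta\geq 2$ quoted from~\cite{Karp72} transfers directly, through this pair of equivalences, to the NP-hardness of both {\sc $k$-Inversion-Distance} (the instance is $(G^{(1)},O_1,O_2)$) and {\sc $k$-Inversion-Diameter} (the instance is $G^{(1)}$).

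To secure the additional restriction $\Mad(G^{(1)})<4$, I would argue as follows: every edge of $G^{(1)}$ has exactly one endpoint that is a subdivision vertex, and every subdivision vertex has degree $2$. Hence for any subgraph $H$ of $G^{(1)}$ containing at least one edge, if $V_S(H)$ denotes the set of subdivision vertices contained in $H$, each edge of $H$ contributes $1$ to the $H$-degree of exactly one vertex of $V_S(H)$, so $|E(H)|\leq 2|V_S(H)|$. Moreover $H$ contains at least one non-subdivision vertex (the other endpoint of any edge), so $|V(H)|\geq |V_S(H)|+1$, and therefore $\Ad(H)\leq 4|V_S(H)|/(|V_S(H)|+1)<4$.

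For the moreover clause I would start from the NP-completeness of $3$-colouring for planar graphs with $\delta\geq 2$~\cite{GLS76} and apply Theorem~\ref{thm:diam_L_G_subdivided_once} with $k=2$. The output $G^{(1)}$ is planar (subdivisions preserve planarity), bipartite (the bipartition consists of the original vertices on one side and the subdivision vertices on the other), and of girth equal to $2\cdot g(G)\geq 6$. Combining these structural facts with the $k=2$ equivalence from Theorem~\ref{thm:diam_L_G_subdivided_once} delivers precisely the restricted NP-hardness claimed.

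I do not anticipate any real obstacle: once Theorem~\ref{thm:diam_L_G_subdivided_once} is available, the corollary reduces to packaging it with the two cited NP-completeness results for chromatic number and to the three short structural observations on $G^{(1)}$ outlined above. The only mildly delicate point is checking that $\Mad(G^{(1)})<4$ with a strict inequality, which is exactly where the argument above uses that any edge in a subgraph $H$ drags along a non-subdivision vertex.
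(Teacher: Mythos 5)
Your proposal is correct and follows exactly the route the paper takes: the corollary is obtained by combining Theorem~\ref{thm:diam_L_G_subdivided_once} with the NP-completeness of $(2^k-1)$-colouring for graphs of minimum degree at least $2$ and of $3$-colouring for planar such graphs. Your added verifications (the counting argument giving $\Mad(G^{(1)})<4$, and the planarity, bipartiteness and girth of $G^{(1)}$) are all sound and in fact supply details the paper leaves implicit.
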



\section{Conclusion and further work}

In this paper, we establshed some bounds on the inversion diameter of graphs in various graph classes.
We leave as open problems to improve several of them.

\begin{problem}
    Determine $M(\Delta\leq k)$. 
\end{problem}
Theorem~\ref{thm:multipartite} and Theorem~\ref{thm:Delta} imply $k \leq M(\Delta\leq k) \leq 2 k - 1$.
Conjecture~\ref{conj:Delta} states $M(\Delta\leq k)=k$. It has been proved for $k\leq 2$. So the first open case is for $k=3$. We do not know whether
 it is true that  $\diam({\mathcal{I}}(G)) \leq 3$ for every subcubic graph $G$.  

\medskip
For any graph parameter $\gamma$, let $m(\gamma\geq k)$ be the minimum inversion diameter over all graphs $G$ with $\gamma(G)\geq k$.

\begin{problem}
    Determine $m(\delta\geq k)$, $m(\delta^*\geq k)$, and $m(\Mad\geq k)$
\end{problem}

Recall that $\delta(G) \leq \delta^*(G)\leq \Mad(G)$. Thus the fact that $\diam({\mathcal{I}}(K_k)) =  k-1$ and Theorem~\ref{theorem:high_ad_implies_high_diam} imply 
\[
    \frac{k}{2} \leq m(\Mad\geq k)\leq m(\delta^*\geq k)\leq m(\delta\geq k)\leq k.
\]
Clearly, $m(\delta\geq 0)=0$ and $m(\delta\geq 1)=1$. 
A graph $G$ with  $\delta(G) \geq 2$ contains a cycle $C$, and thus has inversion diameter at least $2$ since two orientations of $G$ which disagree on all edges of $C$ but one are at distance at least $2$ in ${\mathcal{I}}(G)$. Thus $m(\delta\geq 2)=2$. 
Theorem~\ref{thm:every_cubic_at_least_3} yields $m(\delta\geq 3)=3$.

By computer search, we found the $5$-regular graph depicted in Figure~\ref{fig:5reg_inv_diam4} which has inversion diameter $4$. Thus 
$m(\delta\geq 5)\leq 4$.
\begin{figure}[ht]
    \centering
    \begin{tikzpicture}
        \tikzstyle{vertex}=[circle,draw, top color=gray!5, bottom color=gray!30, minimum size=12pt, scale=1, inner sep=0.5pt]
        \foreach \i in {0,...,4}{
            \node[vertex] (u\i) at (-1,\i) {};
            \node[vertex] (v\i) at (1,\i) {};
            \draw (u\i) -- (v\i);
        }

        \foreach \i in {1,...,4}{
            \pgfmathtruncatemacro{\I}{\i-1}%
            \foreach \j in {0,...,\I}{
                 \pgfmathtruncatemacro{\d}{(\i-\j-1)*70/3}%
                 \draw[bend right=\d] (u\i) to (u\j);
                 \draw[bend left=\d] (v\i) to (v\j);
            }
        }
    \end{tikzpicture}
    \caption{A $5$-regular graph $G$ with inversion diameter $4$. Since this graph contains $K_5$, Theorem~\ref{thm:multipartite} implies $\diam(\mathcal{I}(G)) \geq 4$.
    The other inequality $\diam(\mathcal{I}(G)) \leq 4$ was checked by computer.}
    \label{fig:5reg_inv_diam4}
\end{figure}
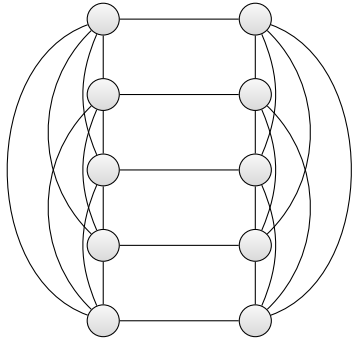

\begin{problem}  Determine $M(\tw \leq t)$. 
 \end{problem}
 
Theorem~\ref{thm:Mtw2t} and Theorem~\ref{thm:lower_bound_for_tw} yield $t+2 \leq M(\tw \leq t) \leq 2t$.
    
\begin{problem} What the maximum inversion diameter of a $K_t$-minor-free graph ? Is it in $\bigO(t\sqrt{\log t})$ ? 
\end{problem}
Corollary~\ref{cor:Kt-free} shows that it is in $\bigO(t^2)$.

\begin{problem}
    Determine $M_\mathcal{P}$ the maximum inversion diameter over all planar graphs. 
\end{problem}

Corollary~\ref{cor:planar_12} and Proposition~\ref{prop:planar5} imply $5 \leq M_\mathcal{P} \leq 12$.

\begin{problem} 
    Determine the minimum integer $g_k$ such that $\diam(\mathcal{I}(G)) \leq k$ for every planar graph of girth at least $g_k$. 
\end{problem}

Corollary~\ref{thm:planar-girth} and   Theorem~\ref{thm:planar-3} state $g_{10} \leq 5$, $g_6 \leq 7$,  and $g_3 \leq 8$.

\medskip

Particular cases mixing some of the above problems may also be studied. For example, it is already interesting to establish whether every subcubic planar graph has inversion diameter at most $3$.

\medskip

We also believe that the upper bound in Theorem~\ref{thm:multipartite} can be generalised to the blow-up of any graph as follows.
\begin{conjecture}
    For every graph $G$, for every positive integer $t$,
    \[
        \diam ({\mathcal{I}}(G[\overline{K}_t])) \leq t \cdot \diam ({\mathcal{I}}(G)).
    \] 
\end{conjecture}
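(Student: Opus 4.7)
The natural tool is Observation~\ref{obs:characterization_with_vectors}. Set $d = \diam(\mathcal{I}(G))$ and let $\pi \colon E(G[\overline{K}_t]) \to \mathbb{F}_2$. Organize $\pi$ into matrices: for each edge $uv \in E(G)$, define $M_{uv} \in \mathbb{F}_2^{t \times t}$ by $M_{uv}[i,j] = \pi((u,i)(v,j))$, so that $M_{vu} = M_{uv}^\top$. Collect the sought vectors $\mathbf{u}_{v,1}, \dots, \mathbf{u}_{v,t} \in \mathbb{F}_2^{td}$ as the columns of a matrix $U_v \in \mathbb{F}_2^{td \times t}$. Then the conjecture is equivalent to the statement that, for every family $(M_{uv})_{uv \in E(G)}$ of such matrices, there exist $(U_v)_{v \in V(G)}$ with $U_u^\top U_v = M_{uv}$ for every edge $uv \in E(G)$. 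This is a matrix-valued analogue of Observation~\ref{obs:characterization_with_vectors}: scalars are replaced by $t \times t$ matrices, $d$-dimensional vectors by $(td)\times t$ matrices, and the inner product by $U_u^\top U_v$.

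I would proceed by induction on $t$, with base case $t = 1$ given directly by the hypothesis on $G$. For the inductive step, I split off the $t$-th copy of each vertex and write
$$
U_v = \begin{pmatrix} U'_v & \mathbf{a}_v \\ B_v & \mathbf{b}_v \end{pmatrix},
$$
where $U'_v \in \mathbb{F}_2^{(t-1)d \times (t-1)}$ is supplied by the inductive hypothesis applied to $G[\overline{K}_{t-1}]$, and the new blocks $B_v \in \mathbb{F}_2^{d \times (t-1)}$, $\mathbf{a}_v \in \mathbb{F}_2^{(t-1)d}$, $\mathbf{b}_v \in \mathbb{F}_2^d$ carry the $t$-th copy. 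Matching blocks in $U_u^\top U_v = M_{uv}$ decomposes the problem into four families of equations: the top-left $(t-1)\times(t-1)$ block is automatically correct as soon as $B_u^\top B_v = 0$ on every edge; the new column and row give linear equations on $\mathbf{a}_v$ and $\mathbf{b}_v$ coupling the previously chosen $U'_u$ and $B_u$; and the bottom-right entry gives the scalar constraint $\mathbf{a}_u \cdot \mathbf{a}_v + \mathbf{b}_u \cdot \mathbf{b}_v = M_{uv}[t,t]$, which is itself an instance of the scalar hypothesis on $G$.

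The chief obstacle is that the inductive hypothesis produces \emph{some} $U'_v$ but gives no control on its structure, whereas the linear system for $\mathbf{a}_v$ demands that the matrices $\{U'_u : u \in N_G(v)\}$ jointly span enough column directions to realize the arbitrary right-hand sides coming from the new column of $M_{uv}$. Overcoming this seems to require strengthening the induction with an ``extendability'' property — a matrix analogue of the rank condition behind Theorem~\ref{thm:generic_greedy_argument} — requiring $U'_v$ to be produced so that, for every $v \in V(G)$, the stacked map $\mathbf{a} \mapsto \bigl((U'_u)^\top \mathbf{a}\bigr)_{u \in N_G(v)}$ is sufficiently surjective. Establishing this strengthened statement, essentially a matrix version of the strong-degeneracy argument, would be the technical heart of the conjecture. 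Should the direct induction resist, a plausible fallback is to fix an orientation of $G$, decompose each $M_{uv}$ as $\sum_{k=1}^t R_{uv}^{(k)}$ into rank-one matrices indexed consistently across the graph, and realize each summand in its own block of $\mathbb{F}_2^{td}$ via an independent application of the scalar hypothesis; the challenge in this route is to choose the decompositions so that the $t$ per-block applications agree on the vector assigned to each vertex.
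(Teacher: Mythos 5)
The statement you are addressing is posed in the paper as an open conjecture; the paper gives no proof of it, only the reformulation via matrices over $\mathbb{F}_2$ stated as an unnumbered observation immediately after the conjecture. Your opening reformulation ($U_u^\top U_v = M_{uv}$ with $U_v \in \mathbb{F}_2^{td\times t}$) is exactly that observation with $\ell = td$, so up to that point you are on the same footing as the authors. Beyond that, however, your text is a research sketch rather than a proof, and the gap you yourself flag is genuine and fatal to the induction as written. The inductive hypothesis only guarantees \emph{some} realisation $(U'_v)_v$ of $G[\overline{K}_{t-1}]$, with no rank control: for instance, if the top-left $(t-1)\times(t-1)$ block of every $M_{uv}$ is zero, the inductive step may hand you $U'_v = \mathbf{0}$ for every $v$, and then the new-column equations $(U'_u)^\top \mathbf{a}_v + B_u^\top \mathbf{b}_v = (\text{prescribed column})$ are unsolvable for generic right-hand sides unless the $B_u$ carry all the weight — but the $B_u$ are simultaneously constrained by $B_u^\top B_v = 0$ on every edge and live in only $d$ rows. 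The strengthened ``extendability'' invariant you propose is precisely what is missing, and there is no obvious analogue of the strong-degeneracy argument here: Theorem~\ref{thm:multipartite} shows the conjectured bound is attained with equality on complete graphs, so any induction must be lossless, leaving no spare dimensions in which to enforce surjectivity of the stacked maps $\mathbf{a}\mapsto((U'_u)^\top\mathbf{a})_{u\in N_G(v)}$.

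Your fallback route has a more basic counting obstruction worth noting: once the per-block vertex vectors are fixed, realising each rank-one summand by a scalar application of the hypothesis on $G$ encodes only $t$ bits per edge (one sign per block), whereas a general $M_{uv}$ carries $t^2$ bits; so the rank-one decompositions cannot be ``indexed consistently across the graph'' in the way the scalar hypothesis requires except in very special cases. In short, neither route closes the conjecture, and since the paper leaves it open there is no proof to compare against; what you have correctly done is rederive the authors' own matrix formulation and locate the point where the difficulty actually sits.
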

 This inequality would be tight for complete graphs by Theorem~\ref{thm:multipartite}.
Note that $\diam(\mathcal{I}(G[\overline{K}_t]))$ can be characterised as in Observation~\ref{obs:characterization_with_vectors} using matrices over $\mathbb{F}_2$.
\begin{observation}
    For every graph $G$, for every positive integers $t, \ell$, the following are equivalent.
    \begin{enumerate}
        \item $\diam(\mathcal{I}(G[\overline{K}_t])) \leq \ell$.
        \item For every function $\Pi \colon E(G) \to \mathbb{F}_2^{t \times t}$, there is a family $(U)_{u \in V(G)}$ of matrices in $\mathbb{F}_2^{\ell \times t}$ such that
        \[
        \Pi(uv) = U^\top \cdot V
        \]
        for every edge $uv \in E(G)$.
    \end{enumerate}
\end{observation}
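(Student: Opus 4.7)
The plan is to adapt the proof of Observation~\ref{obs:characterization_with_vectors} to the blow-up setting, where each vertex of $G$ is replaced by $t$ independent copies. The key conceptual move is that, for a vertex $u \in V(G)$, the $t$ vectors associated with the $t$ copies $u_1,\dots,u_t$ of $u$ in $G[\overline{K}_t]$ can be aggregated into the columns of an $\ell\times t$ matrix $U$. Then for $uv \in E(G)$, the $t\times t$ matrix $U^\top \cdot V$ has $(i,j)$-entry $\mathbf{u}_i \cdot \mathbf{v}_j$, which by the scalar-product interpretation used throughout the paper records exactly whether the edge $u_i v_j$ in $G[\overline{K}_t]$ is reversed by the inversion sequence encoded by the rows of the matrices.

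For the forward direction, I would start from an arbitrary $\Pi \colon E(G) \to \mathbb{F}_2^{t\times t}$. Let $\vec{H}_1$ be any orientation of $G[\overline{K}_t]$ and define $\vec{H}_2$ to be the orientation that disagrees with $\vec{H}_1$ on the edge $u_i v_j$ precisely when $\Pi(uv)_{i,j}=1$ (for any edge $uv \in E(G)$ and $i,j \in [t]$). By assumption there exist $X_1,\dots,X_\ell \subseteq V(G[\overline{K}_t])$ with $\vec{H}_2 = \Inv(\vec{H}_1;X_1,\dots,X_\ell)$. For each $u \in V(G)$, let $U$ be the $\ell\times t$ matrix whose $(k,i)$-entry is $1$ iff $u_i \in X_k$. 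Exactly as in Observation~\ref{obs:characterization_with_vectors}, $u_i v_j$ is reversed iff $\mathbf{u}_i \cdot \mathbf{v}_j = 1$, so $(U^\top \cdot V)_{i,j} = \Pi(uv)_{i,j}$, which gives $\Pi(uv) = U^\top \cdot V$.

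For the backward direction, given two orientations $\vec{H}_1, \vec{H}_2$ of $G[\overline{K}_t]$, define $\Pi(uv)_{i,j}=0$ iff $\vec{H}_1$ and $\vec{H}_2$ agree on $u_i v_j$. By hypothesis there is a family $(U)_{u\in V(G)}$ of matrices in $\mathbb{F}_2^{\ell\times t}$ realizing $\Pi$. Set $X_k = \{u_i : (U)_{k,i}=1\}$ for $k \in [\ell]$. The edge $u_i v_j$ is reversed by $X_1,\dots,X_\ell$ iff $\mathbf{u}_i \cdot \mathbf{v}_j = 1$ iff $\Pi(uv)_{i,j}=1$, so $\vec{H}_2 = \Inv(\vec{H}_1; X_1,\dots,X_\ell)$, proving $\diam(\mathcal{I}(G[\overline{K}_t])) \leq \ell$.

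There is no real obstacle here; the whole proof is a bookkeeping generalization of Observation~\ref{obs:characterization_with_vectors}, the only subtlety being to check that bundling the $t$ vectors attached to the copies of a single vertex $u$ into columns of a matrix $U$ converts the pointwise scalar-product condition into the matrix identity $\Pi(uv)=U^\top\cdot V$. Once that correspondence is spelled out, both implications are immediate from the scalar case.
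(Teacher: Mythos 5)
Your proof is correct and is exactly the intended argument: the paper states this observation without proof precisely because it is the direct matrix-valued generalization of Observation~\ref{obs:characterization_with_vectors}, obtained by encoding the membership of the $t$ copies of each vertex in the inversion sets $X_1,\dots,X_\ell$ as the columns of an $\ell\times t$ matrix. Both directions are handled just as the paper does in the scalar case, so nothing is missing.
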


\medskip
Further, it would be interesting to understand how the density of the graph influences the diameter of its inversion graph. In particular, we are interested in the following question.
\begin{problem}
Is there an absolute constant $\alpha$ such that $\diam(\mathcal{I}(G)) \leq \alpha \sqrt{|E(G)|}$ holds for every graph $G$?
\end{problem}
\medskip

Regarding complexity, we solved the most natural questions, but many remain. One of them is the following.

\begin{problem}
    Is {\sc $2$-Inversion-Diameter} $\Pi^{\mathrm{P}}_2$-hard? 
\end{problem}

We refer the reader to \cite{ScUm02} for some $\Pi^{\mathrm{P}}_2$-complete problems.
We could also study the complexity of the following dual problem.

\medskip

{\sc $k$-Dual-Inversion-Diameter}\\
\underline{Instance:} A graph $G$.\\
\underline{Question:} Does ${\mathcal{I}}(G)$ have diameter at least $|V(G)| -1 - k$ ?
\medskip

The first question is to determine whether it is NP-complete or polynomial-time solvable for every fixed $k$.
Trivially, $0$-Dual-Inversion-Diameter is  polynomial-time solvable as the inversion diameter of a graph $G$ is at most $|V(G)| -1$.
Since an edgeless graph has inversion diameter $0$, Lemma~\ref{lem:rec-easy} implies $\diam(\mathcal{I}(G)) \leq |V(G)| - \alpha(G)$ where $\alpha(G)$ denotes the maximum size of an independent set in $G$.
Since every non-complete graph has an independent set of size $2$, it follows that a graph $G$ has inversion diameter at most $|V(G)| -1$ if and only if it is complete. Thus $1$-Dual-Inversion-Diameter is polynomial-time solvable.

\bigskip

An interesting variant of the inversion graph is the {\bf unlabelled inversion graph}, denoted by $\mathcal{U}(G)$, which is obtained from $\mathcal{I}(G)$
by identifying vertices corresponding to isomorphic orientations. Clearly, $\diam ({\cal U}(G))\leq \diam(\mathcal{I}(G))$.
Hence it would be interesting to investigate whether all the upper bounds on $\diam(\mathcal{I}(G))$ obtained in this paper can be improved for $\diam ({\cal U}(G))$. For example, the bound $\diam(\mathcal{I}(G)) \leq n -1$ is tight, but we believe it is not tight for $\diam(\mathcal{U}(G))$.
Note that $\inv(n)$ is the eccentricity of the transitive tournament in ${\cal U}(K_n)$.
It has been proved in \cite{APSSW,inversion} that $n - 2\sqrt{n\log n} \leq \inv(n)\leq n - \lceil \log (n+1) \rceil$. Together with 
Theorem~\ref{thm:multipartite}, this implies the following.

\begin{corollary}
    $n - \bigO(\sqrt{n\log n}) \leq \diam ({\cal U}(K_n)) \leq \diam ({\mathcal{I}}(K_n)) = n-1$.
\end{corollary}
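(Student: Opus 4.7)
The plan is to split the claimed chain into three parts, each of which follows from a previously established result, with essentially no new work required.

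First, the equality $\diam(\mathcal{I}(K_n))=n-1$ is an immediate specialisation of Theorem~\ref{thm:multipartite}: taking $r=n$ and $t=1$, we have $K_n \cong K_n[\overline{K_1}]$, and the theorem yields $\diam(\mathcal{I}(K_n))=(n-1)\cdot 1=n-1$. Second, the middle inequality $\diam(\mathcal{U}(K_n)) \leq \diam(\mathcal{I}(K_n))$ follows directly from the definition of $\mathcal{U}(K_n)$ as the quotient of $\mathcal{I}(K_n)$ obtained by identifying isomorphic orientations: any walk between two orientations in $\mathcal{I}(K_n)$ projects to a walk of the same length in $\mathcal{U}(K_n)$, so distances (and hence the diameter) can only decrease under the identification.

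Third, for the lower bound $\diam(\mathcal{U}(K_n)) \geq n - \bigO(\sqrt{n\log n})$, I would invoke the observation already recalled in the paragraph preceding the corollary, namely that $\inv(n)$ equals the eccentricity of the transitive tournament in $\mathcal{U}(K_n)$. The key point to unpack is the following: for any orientation $\vec{G}$ of $K_n$, the distance in $\mathcal{U}(K_n)$ between $\vec{G}$ and the transitive tournament $TT_n$ equals $\inv(\vec{G})$, because in $\mathcal{U}(K_n)$ any acyclic orientation of $K_n$ is identified with $TT_n$ (all acyclic tournaments on $n$ vertices being isomorphic). Therefore the eccentricity of $TT_n$ in $\mathcal{U}(K_n)$ equals $\max_{\vec{G}} \inv(\vec{G}) = \inv(n)$. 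Since the diameter of any graph is at least the eccentricity of any vertex, we get $\diam(\mathcal{U}(K_n)) \geq \inv(n) \geq n - 2\sqrt{n\log n}$ by the cited bound of Aubian et al.~\cite{inversion} and Alon et al.~\cite{APSSW}, which is precisely $n - \bigO(\sqrt{n\log n})$.

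There is no genuine obstacle here: the result is essentially a repackaging of Theorem~\ref{thm:multipartite} together with the known asymptotics for $\inv(n)$. The only mild care required is in spelling out why the distance from $\vec{G}$ to $TT_n$ in the \emph{unlabelled} inversion graph coincides with $\inv(\vec{G})$, which uses the uniqueness up to isomorphism of the transitive tournament on $n$ vertices.
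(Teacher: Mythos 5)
Your proposal is correct and follows exactly the paper's (largely implicit) argument: the equality comes from Theorem~\ref{thm:multipartite}, the middle inequality from the quotient definition of $\mathcal{U}(K_n)$, and the lower bound from identifying the eccentricity of the transitive tournament in $\mathcal{U}(K_n)$ with $\inv(n)$ and invoking the cited bound $\inv(n) \geq n - 2\sqrt{n\log n}$. If anything, you spell out the eccentricity identification in more detail than the paper does.
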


However, we believe that the upper bound on $\inv(n)$  also holds for $\diam ({\cal U}(K_n))$. 
\begin{conjecture}\label{comp}
    $\diam ({\cal U}(K_n))\leq n - \lceil \log (n+1) \rceil$. 
\end{conjecture}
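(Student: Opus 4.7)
The approach is to extend the proof of $\inv(n) \leq n - \lceil \log(n+1)\rceil$ due to Aubian et al.~\cite{inversion} and Alon et al.~\cite{APSSW}, which already establishes that the eccentricity of the transitive tournament in $\mathcal{U}(K_n)$ equals the conjectured value. Since $\diam(\mathcal{I}(K_n)) = n-1$ by Theorem~\ref{thm:multipartite}, no improvement can come from the labelled inversion graph, so every saving must be extracted by relabelling.

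Through Observation~\ref{obs:characterization_with_vectors}, the task reduces to the following: for every pair of tournaments $T_1,T_2$ on $n$ vertices, find a bijection $\sigma\colon V(T_2)\to V(T_1)$ and vectors $(\mathbf{v}_i)_{i \in V(T_1)}$ in $\mathbb{F}_2^{n-\lceil \log(n+1)\rceil}$ such that, for every two distinct vertices $i,j\in V(T_1)$,
\[
    \mathbf{v}_i \cdot \mathbf{v}_j = 1 \iff T_1 \text{ and } \sigma(T_2) \text{ disagree on the edge } ij.
\]
When $T_2$ is the transitive tournament, this recovers the known statement $\inv(T_1)\leq n-\lceil \log(n+1)\rceil$, with $\sigma$ encoding an optimal linear order of the vertices of $T_1$.

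A natural first step is induction on $n$: given $T_1,T_2$, one picks vertices $v_1 \in V(T_1)$ and $v_2 \in V(T_2)$, applies the inductive hypothesis to $T_1-v_1$ and $T_2-v_2$, and prepends a single inversion handling the edges incident to $v_1$, exactly as in the proof of Lemma~\ref{lem:rec-easy}. This yields
\[
    \diam(\mathcal{U}(K_n)) \leq 1+\diam(\mathcal{U}(K_{n-1})) \leq n-\lceil \log n\rceil,
\]
which matches the conjectured bound except in the single exceptional case $n=2^k$, where it misses by exactly one.

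The main obstacle is therefore this exceptional case. I expect that handling it will require mimicking more directly the proof of $\inv(n)\leq n-\lceil \log(n+1)\rceil$: one assigns to the vertices of $T_1$ a system of distinct non-zero vectors of $\mathbb{F}_2^{\lceil \log(n+1)\rceil}$, uses this assignment to identify $n-\lceil \log(n+1)\rceil$ inversions encoding $T_1$ relative to a reference transitive tournament, and simultaneously chooses the relabelling $\sigma$ of $T_2$ so as to align with this reference. The delicate point is that the existing argument is specifically tailored for an acyclic target, whereas here the target $\sigma(T_2)$ is arbitrary; the extra flexibility offered by the choice of $\sigma$ must be harnessed to absorb the non-transitive structure of $T_2$, and I expect this to be the hardest step and precisely the additional content of Conjecture~\ref{comp} beyond what is known about $\inv(n)$.
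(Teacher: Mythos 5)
This statement is an open conjecture in the paper: the authors offer no proof of it, only the heuristic remark that they ``believe'' the upper bound on $\inv(n)$ carries over to $\diam(\mathcal{U}(K_n))$. So there is no proof in the paper to compare yours against, and the relevant question is whether your proposal actually closes the conjecture. It does not, and you say so yourself: your final paragraph explicitly defers ``the hardest step'' to future work, which means what you have written is a reduction and a plan of attack, not a proof.

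Concretely, the part of your argument that is sound is the inductive step $\diam(\mathcal{U}(K_n)) \leq 1 + \diam(\mathcal{U}(K_{n-1}))$, obtained by matching an arbitrary vertex of $T_1$ to an arbitrary vertex of $T_2$ and spending one inversion on the incident edges, as in Lemma~\ref{lem:rec-easy}. But this only propagates the conjectured bound from $n-1$ to $n$ when $\lceil \log(n+1)\rceil = \lceil \log n\rceil$, i.e.\ when $n$ is not a power of $2$. The cases $n = 2^k$ are therefore not ``a single exceptional case'' to be patched: they form an infinite family of base cases on which the entire induction rests (to prove the bound at $n = 2^k + 1$ you first need it at $n = 2^k$), and for these you offer only the hope that the known proof of $\inv(n) \leq n - \lceil\log(n+1)\rceil$ can be adapted. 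That adaptation is precisely where the difficulty lies: the existing argument exploits the acyclicity of the target tournament in an essential way, and it is not at all clear that the freedom in choosing the bijection $\sigma$ compensates for an arbitrary, non-transitive target $T_2$. A smaller inaccuracy: the known results give $\inv(n) \leq n - \lceil\log(n+1)\rceil$ together with the lower bound $n - 2\sqrt{n\log n}$, so the eccentricity of the transitive tournament in $\mathcal{U}(K_n)$ is not known to \emph{equal} the conjectured value, contrary to what your first paragraph asserts. In summary, your proposal correctly identifies where the difficulty is concentrated, but the conjecture remains open after it.
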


\section*{Acknowledgements}
This work was partially supported by the french Agence Nationale de la Recherche under contract Digraphs ANR-19-CE48-0013-01.

\bibliographystyle{alpha}
\bibliography{biblio}

\newcommand{\etalchar}[1]{$^{#1}$}
\providecommand{\noopsort}[1]{}
\begin{thebibliography}{{\noopsort{Heuvel}}vdHOdMQ{\etalchar{+}}17}

\bibitem[AHH{\etalchar{+}}22]{inversion}
Guillaume Aubian, Fr\'ed\'eric Havet, Florian H\"orsch, Felix Klingelhoefer,
  Nicolas Nisse, Cl\'ement Rambaud, and Quentin Vermande.
\newblock Problems, proofs, and disproofs on the inversion number, 2022.
\newblock \href{https://arxiv.org/abs/2212.09188}{arXiv:2212.09188}.

\bibitem[APS{\etalchar{+}}24]{APSSW}
Noga Alon, Emil Powierski, Michael Savery, Alex Scott, and Elizabeth Wilmer.
\newblock Invertibility of digraphs and tournaments.
\newblock {\em SIAM Journal on Discrete Mathematics}, 38(1):327--347, 2024.
\newblock \href{https://arxiv.org/abs/2212.11969}{arXiv:2212.11969}.

\bibitem[BBBP10]{BBBP10}
Houmem Belkhechine, Moncef Bouaziz, Imed Boudabbous, and Maurice Pouzet.
\newblock Inversion dans les tournois.
\newblock {\em Comptes Rendus Math{\'e}matique}, 348(13-14):703--707, 2010.
\newblock \href{https://arxiv.org/abs/1007.2103}{arXiv:1007.2103}.

\bibitem[BdSH21]{BCH}
J{\o}rgen Bang{-}Jensen, Jonas Costa~Ferreira da~Silva, and
  Fr{\'{e}}d{\'{e}}ric Havet.
\newblock On the inversion number of oriented graphs.
\newblock {\em Discrete Mathematics \& Theoretical Computer Science}, 23(2),
  2021.
\newblock \href{https://arxiv.org/abs/2105.04137}{arXiv:2105.04137}.

\bibitem[BKW99]{BKW99}
Oleg~V. Borodin, Alexandr~V. Kostochka, and Douglas~R. Woodall.
\newblock Acyclic colourings of planar graphs with large girth.
\newblock {\em Journal of the London Mathematical Society}, 60(2):344--352,
  1999.

\bibitem[BM08]{bondymurty}
J.~Adrian Bondy and Uppaluri S.~R. Murty.
\newblock {\em Graph Theory}.
\newblock Graduate Texts in Mathematics. Springer, 2008.

\bibitem[Bor79]{Bor79}
Oleg~V. Borodin.
\newblock On acyclic colorings of planar graphs.
\newblock {\em Discrete Mathematics}, 25(3):211--236, 1979.

\bibitem[DHHR23]{duron2023minimum}
Julien Duron, Frédéric Havet, Florian Hörsch, and Clément Rambaud.
\newblock On the minimum number of inversions to make a digraph
  $k$-(arc-)strong, 2023.
\newblock \href{https://arxiv.org/abs/2303.11719}{arXiv:2303.11719}.

\bibitem[Die05]{diestel}
Reinhard Diestel.
\newblock {\em Graph Theory (Graduate Texts in Mathematics)}.
\newblock Springer, August 2005.

\bibitem[GJS76]{GLS76}
Michael~R. Garey, David~S. Johnson, and Larry Stockmeyer.
\newblock Some simplified {NP-complete} graph problems.
\newblock {\em Theoretical Computer Science}, 1(3):237--267, 1976.

\bibitem[Gr{\"u}73]{Gru73}
Branko Gr{\"u}nbaum.
\newblock Acyclic coloring of planar graphs.
\newblock {\em Israel Journal of Mathematics}, 14:390--408, 1973.

\bibitem[{\noopsort{Heuvel}}vdHOdMQ{\etalchar{+}}17]{van2017generalised}
Jan {\noopsort{Heuvel}}\hspace{-4pt} van~den Heuvel, Patrice Ossona~de Mendez,
  Daniel Quiroz, Roman Rabinovich, and Sebastian Siebertz.
\newblock On the generalised colouring numbers of graphs that exclude a fixed
  minor.
\newblock {\em European Journal of Combinatorics}, 66:129--144, 2017.
\newblock \href{https://arxiv.org/abs/1602.09052}{arXiv:1602.09052}.

\bibitem[Kar72]{Karp72}
Richard~M. Karp.
\newblock {\em Reducibility among Combinatorial Problems}.
\newblock Springer US, 1972.

\bibitem[KSZ97]{KSZ97}
Alexandr~V. Kostochka, {\'E}ric Sopena, and Xuding Zhu.
\newblock Acyclic and oriented chromatic numbers of graphs.
\newblock {\em Journal of Graph Theory}, 24(4):331--340, 1997.

\bibitem[RS94]{RaSo94}
Andr{\'e} Raspaud and Eric Sopena.
\newblock Good and semi-strong colorings of oriented planar graphs.
\newblock {\em Information Processing Letters}, 51(4):171--174, 1994.

\bibitem[SU02]{ScUm02}
Marcus Schaefer and Christopher Umans.
\newblock Completeness in the polynomial-time hierarchy a compendium.
\newblock {\em Sigact News - SIGACT}, 33, 01 2002.

\bibitem[Woo05]{wood2005acyclic}
David~R. Wood.
\newblock Acyclic, star and oriented colourings of graph subdivisions.
\newblock {\em Discrete Mathematics \& Theoretical Computer Science},
  7(1):37--50, 2005.
\newblock
  \href{https://dmtcs.episciences.org/344/}{https://dmtcs.episciences.org/344/}.

\end{thebibliography}

\end{document}